\documentclass[10pt]{article}
\usepackage[english]{babel}
\usepackage{natbib}
\usepackage{url}
\usepackage{inputenc}
\usepackage{amsfonts}
\usepackage{amsmath}
\usepackage{empheq}
\usepackage{graphicx}
\graphicspath{{images/}}
\usepackage{parskip}
\usepackage{fancyhdr}
\usepackage{bbold}
\usepackage{amssymb}
\usepackage{vmargin}
\usepackage{array}
\usepackage{amsthm}
\usepackage[ruled,vlined]{algorithm2e}
\usepackage{hyperref}
\usepackage{caption, subcaption}
\usepackage{float}
\usepackage{fourier-orns}
\usepackage{listings}
\usepackage{epsfig}
\usepackage{amsmath}               
\newtheorem{assumption}{Assumption}
\newtheorem{theorem}{Theorem}[section]

\newtheorem{lemma}[theorem]{Lemma}
\newtheorem{remark}{Remark}[section]
\theoremstyle{definition}

\DeclareMathAlphabet\mathbfcal{OMS}{cmsy}{b}{n}

\usepackage{listings}
\usepackage[scr=boondoxo,scrscaled=1.05]{mathalfa}

\usepackage[shortlabels]{enumitem}
\mathtoolsset{showonlyrefs=true}

\usepackage[bbgreekl]{mathbbol}

\newcommand{\cha}{\mathbb{1}_{\Omega_0}}

\newcommand{\om}{{\Omega_{0}}}

\newcommand{\R}{\mathbb{R}}

\newcommand{\bu}{\mathbf{u}}
\newcommand{\bU}{\mathbf{U}}
\newcommand{\bydef}{\stackrel{\mbox{\tiny\textnormal{\raisebox{0ex}[0ex][0ex]{def}}}}{=}}
\newcommand{\bgam}{\text{\boldmath{$\gamma$}}}
\newcommand{\bGam}{\text{\boldmath{$\Gamma$}}}

\newcommand{\bpi}{\text{\boldmath{$\Pi$}}}

\newcommand{\mbf}[1]{\mathbf{#1}}
\newcommand{\mbb}[1]{\mathbb{#1}}

\usepackage[dvipsnames]{xcolor}

\makeatletter
\newcommand\subsubsubsection{\@startsection{paragraph}{4}{\z@}{-2.5ex\@plus -1ex \@minus -.25ex}{1.25ex \@plus .25ex}{\normalfont\normalsize\bfseries}}
\title{Proving the existence of localized patterns, periodic solutions, and branches of periodic solutions in the 1D Thomas model
}
\author{
Dominic Blanco \footnote{McGill University, Department of Mathematics and Statistics, 805 Sherbrooke Street West, Montreal, QC, H3A 0B9, Canada. {\tt dominic.blanco@mail.mcgill.ca}}}
\begin{document}

\maketitle
\begin{abstract}
    In this paper, we present a general framework for constructively proving  the existence of stationary localized solutions, spatially periodic solutions, and branches of spatially periodic solutions in the 1D Thomas model. Specifically, we develop the necessary analysis to compute explicit upper bounds required in a Newton--Kantorovich approach. Given an approximate solution $\bar{\mathbf{u}}$, this approach relies on establishing that a well-chosen fixed point map is contracting on a neighborhood $\bar{\mathbf{u}}$. For this matter, we construct an approximate inverse of the linearization around $\bar{\mathbf{u}}$, and establish sufficient conditions under which the contraction is achieved. This provides a framework for which computer-assisted analysis can be applied to verify the existence and local uniqueness of solutions in a vicinity of $\bar{\mathbf{u}}$, and control the linearization around $\bar{\mathbf{u}}$. Furthermore, as the Thomas model has a non-polynomial nonlinearity, we will need to use different techniques to handle it during our analysis. Our contributions are to provide a partial answer to how one can approach rigorously verifying results in the Thomas model, to adapt and combine previously developed techniques to apply to the Thomas model, and to perform the computer-assisted analysis to obtain such results. The code to perform the rigorous proofs is available on Github at \cite{ThomasProofs.jl}.
\end{abstract}
\begin{center}
{\bf \small Key words.} 
{ \small Localized stationary patterns, spatially periodic solutions, branches of periodic solutions, Thomas model, nonpolynomial nonlinearities, Computer-Assisted Proofs}
\end{center}
\section{Introduction}\label{sec : introduction}
In this paper, we constructively prove the existence of stationary localized solutions, spatially periodic solutions, and branches of spatially periodic solutions in the 1D Thomas partial differential equation (PDE). The Thomas model is a type of Reaction–Diffusion (RD) system. These systems have attracted significant interest due to their physical relevance, with applications spanning biology~\cite{root_hair_og, selkov_schankenberg_og}, chemistry~\cite{GrayScott1985, brusselator_og,schankenberg_og}, and ecology~\cite{SITEUR201481, ZELNIK201727}. These systems are particularly compelling because they often exhibit rich and complex behaviors while maintaining a relatively simple mathematical structure. A summary of several mechanisms in RD systems was outlined in the work of Champneys et al.\cite{Champneys_Bistability}.  In \cite{AlSaadi_unifying_framework}, the authors studied a general class of activator-inhibitor models and provided various results about existence of solutions and bifurcation theory. In concluding, the authors raise the critical open question of providing a rigorous proof for their results. Addressing this challenge is essential, as it bridges the gap between formal asymptotics, numerical exploration, and mathematical rigor. This question was partially answered in \cite{blanco_cadiot_fassler_saddle} where some of their results were verified.
\par We now focus on the Thomas model specifically. It is an enzyme system originally proposed in \cite{original_thomas}. As a PDE, it can be written as
\begin{equation}\label{eq:thomas}
    \begin{split}
        \partial_t u &= \nu\Delta u + \nu_4 - u - \frac{\nu_1 uv}{1+u+\nu_2 u^2},  \\
        \partial_t v &= \Delta v + \nu_3(\nu_5 - v) -  \frac{\nu_1 uv}{1+u+\nu_2 u^2}\\
    \end{split}
    \quad ~~ (u,v) = \left(u(x,t), v(x,t)\right), ~~ x \in \R,
\end{equation}
where $\nu$, and $\nu_j$ for $j = 1,2,3,4,5$ are positive  parameters, and $\Delta$ is the Laplacian operator. We also restrict $\nu_2 > \frac{1}{4}$.
Some of the earliest studies on the model were \cite{thomas_1,thomas_2,thomas_3}. In \cite{thomas_1}, the model was studied numerically and experimentally. The authors concluded that diffusion of metabolites and an autocatalysis in reaction can induce oscillations. The authors of \cite{thomas_2} then studied the model again numerically using finite element methods. Conclusions were made regarding the bifurcation structure of the model. In \cite{thomas_3}, more numerical results were obtained and the model was also studied from a more analytical perspective. The authors also provided stability analysis. Later, the model was studied by James Murray in \cite{murray_1,murray_2}. Murray proposed the model as a way to study animal coat markings. The importance of Murray's work has led some to refer to the model as the Murray model or Thomas-Murray model (cf. \cite{thomas_sanderwanner} and \cite{gen_rd} respectively). We refer the interested reader to the books \cite{murray_book1,murray_book2} for more details on Murray's work with the model. The authors of \cite{thomas_1d} then studied the model using asymptotic approximations and provided a framework on its bifurcation structure in 1D. In their conclusion, the authors posed a question regarding if their results could be made rigorous. We wish to partially answer this question in this manuscript.

\subsection{A Computer-assisted approach}

One way to approach solving PDEs is to use computer-assisted proofs (CAPs) techniques. CAPs can help derive a constructive methodology for rigorously validating solutions to PDEs. While there are various strategies for CAPs, ours will be one that relies on a Newton-Kantorovich argument. Our approach will rely on the construction of a polynomial. It involves the computation of specific bounds, which can be evaluated rigorously on the computer by the use of interval arithmetic for instance \cite{julia_interval}. By computing these bounds, one can then verify that for some value of $r$, the polynomial in question is strictly negative. If this is the case, we obtain a unique solution in a ball of radius $r$ around an approximate solution. This strategy is sometimes referred to as the Radii-Polynomial approach. Note that the Radii-Polynomial approach is not the only way to approach CAPs with a Newton-Kantorovich argument. We refer the interested reader to the works \cite{arioli_sug2,continue3,nakao_sug3}. Let us now turn our attention to the problems we wish to solve using CAPs. As we are treating periodic and localized patterns, we discuss current approaches to tackle both problems.
\par To prove periodic solutions, the theory is well developed. More specifically, Fourier analysis can be used where the PDE is turned into a zero finding problem $F(U) = 0$ on sequence spaces.  For an approximate solution $\overline{U}$, the difficulty becomes in handling $DF(\overline{U})^{-1}$. To do so, one can exploit the
fact that the Fréchet derivatives are (asymptotically) diagonally
dominant (see \cite{gabriel_pfc,MR3633778,period_kuramoto,Sander_equilibrium,JB_symmetries_1}
for some illustrations). As
$DF(0)$ (the linear part) dominates for high-order modes, the tail of $DF(\overline{U})$
can be seen as a diagonally-dominant infinite dimensional matrix. Due to compactness, one can then approximate $DF(\overline{U})^{-1}$ as a finite matrix stored on the computer and a diagonal tail which can be controlled theoretically. There are also approaches when the tail is not diagonally dominant (see \cite{MR3392647,MR4379799,cyranka2018construction} for instance). 
\par Additionally, the ability to prove branches of periodic solutions is well studied. For some of the first examples, we refer the interested reader to \cite{arioli_sug2,continue3,continue2,continue1}. Recently, a new approach was developed by the author of \cite{polynomial_chaos}. Using the constructive approach, the author first identifies an approximate branch of periodic solutions. Then, using Chebyshev series, one can obtain an approximation of this branch to use as part of a Newton-Kantorovich argument. Then, using a computer-assisted proofs, one can obtain a rigorous enclosure of the true branch of periodic solutions. The approach has been used in a variety of studies, including \cite{continuation_1,continuation_2,continuation_3}. For instance in \cite{whitham_cadiot}, the author used this approach to compute a branch of solitons in the Whitham equation. In particular, the solution can be written as a Chebyshev series where the dependency is in the wave speed parameter. Then, by sampling the branch at the Chebyshev nodes, the author uses Fourier transforms to compute the branch. When there are saddle-node (fold) bifurcations present on the branch, one can write the Chebyshev series depending on its pseudo-arclength. This requires one to use an augmented system and solve for a parameter along with the solution itself. This was demonstrated in \cite{dominic_sh_periodic,maxime_general,maxime_paper_continuation,marschal,olivier_kevin_paper,continue1} where the augmented system was considered in order to perform the proof. Note that the approach does not directly lead to a proof of the saddle-node bifurcation itself, but the augmented system allows for one to move past such bifurcations. This is possible due to the underlying mechanics being pseudo-arclength continuation. \par On unbounded domains, less theory exists. For the  ODE case, one can set up a projected boundary value problem via the parameterization method \cite{MR1976079, MR1976080, MR2177465}. This method relies on the rigorous parameterization of invariant manifolds at the zero solution. The method was first combined with finite elements in \cite{jay_jp_gs} and later combined with Chebyshev series in \cite{suspension_bridge,chebyshev_parameterization}. Still for ODEs, the analysis derived in \cite{ breden2024constructiveproofssemilinearpdes,breden2025solutionsdifferentialequationsfreudweighted} allows to produce constructive existence proofs in a well-chosen class of equations. Indeed, such equations possess a confining potential, allowing the resolvent of the differential operator to be compact on a well-chosen space of functions. Note that the method derived in \cite{breden2024constructiveproofssemilinearpdes} also applies to PDEs possessing a linear part of the form $\Delta u + \frac{x}{2} \cdot \nabla u$.
In the weak formulation for PDEs, the authors of \cite{plum_numerical_verif} presented a method for proving weak solutions to second and fourth-order PDEs. Their approach relies on the rigorous control of the spectrum of the linearization around an approximate solution. Then, they use a homotopy argument and the Temple-Lehmann-Goerisch method (see Section 10.2 in \cite{plum_numerical_verif}). This approach was then applied by Wunderlich in \cite{plum_thesis_navierstokes} where he proved the existence of a weak solution to the Navier-Stokes equations defined on an infinite strip with an obstacle. As far as strong solutions are concerned, \cite{olivier_radial} provides a methodology for proving the existence of radially symmetric solutions. Using the radially symmetric ansatz,  the PDE is transformed into an ordinary differential equation. The approach then relies on  a rigorous enclosure of the center stable manifold by using a Lyapunov-Perron operator. This allows one to solve a boundary value problem on $(0,\infty)$. Removing the radially symmetric assumption, the authors of \cite{unbounded_domain_cadiot} provide a general method for proving the existence and local uniqueness of localized solutions to semilinear autonomous PDEs on $\mathbb{R}^m$. The approach is based on Fourier series and describes the necessary tools to construct an approximate solution, $u_0$, and an approximate inverse $\mathbb{A}$ of the linearization $D\mathbb{F}(u_0)$ about $u_0$. An application to the 1D Kawahara equation was provided in the same paper. In \cite{sh_cadiot}, the method of \cite{unbounded_domain_cadiot} was applied to the planar Swift-Hohenberg PDE. The method was then extended further by the authors of \cite{gs_cadiot_blanco}. In \cite{unbounded_domain_cadiot}, the method was presented for scalar PDEs. In \cite{gs_cadiot_blanco}, the authors generalized the approach to systems of PDEs. This allows for rigorous proofs of localized solutions to systems of PDEs. The approach was demonstrated on the 2D Gray-Scott system of equations. Then, in \cite{symmetry_blanco_cadiot}, the authors extended further this methodology to treat existence proofs of symmetric solutions, where the symmetry might be given by a general group of symmetries. The 2D planar Swift-Hohenberg equation was again used as an illustration, allowing for the authors to obtain existence  proofs of solutions with various dihedral symmetries. 
\par A further challenge posed by \eqref{eq:thomas} is the presence of the nonpolynomial nonlinearity. As the approach we will use relies on Fourier series, the nonpolynomial nonlinearity has infinitely many Fourier coefficients. Various methodologies for tackling nonpolynomial nonlinearities in CAPs have been used throughout the years. For ODEs, there is the method of polynomial embedding \cite{olivier_embed,automatic_differentiation}. The approach relies on one being able to recognize the nonlinearity as the solution of an ODE. By then coupling this ODE along with the original system, one can perform a substitution and solve a polynomial system using classical CAPs techniques based on Banach algebra estimates. By coupling additional ODEs to the system, the computational cost increases. A more direct approach was developed in \cite{KAM,lindsay_suspensionbrdige}, and the thesis \cite{marco_thesis}. Here, the authors outlines a general approach for computing the Fourier coefficients of the nonpolynomial nonlinearity. This involves computing finitely many coefficients and bounding the aliasing error up to a fixed number $N$. The remaining coefficients are then rigorously upper bounded by a constant, $C_{\overline{\rho}}$ where $\overline{\rho}$ is the radius of analyticity of the nonpolynomial nonlinearity. One must then carefully verify that the choice of $\overline{\rho}$ was valid to ensure that the solution does not cross any poles in the complex plane. This approach is general and applicability for a wide variety of nonpolynomial nonlinearities. In more specific cases, different estimates can be performed. For instance, in \cite{matthieu_lindsay,lindsay_suspensionbrdige}, the authors directly treated the nonpolynomial nonlinearity $e^u$ in the case of the suspension bridge equation. In \cite{maxime_paper_continuation,olivier_kevin_paper}, the authors consider the case of inversion. That is, given a sequence $\overline{\phi}$, the authors provide estimates regarding its true inverse, $\overline{\phi}^{-1}$. The approach relies on computing an approximate inverse, $\phi_{\mathrm{inv}}$, and using it to obtain estimates allowing one to control the true inverse. As the Thomas model \eqref{eq:thomas} features inversion, we will rely on the theory provided by the authors of \cite{maxime_paper_continuation,olivier_kevin_paper} for our purposes.
\subsection{Our Contributions}
\par Our goal in this paper is to provide a partial answer to some of the questions asked by the authors of \cite{thomas_1d}. This is of theoretical significance as the authors asked in their conclusion how many of their results could be rigorously justified. With this in mind, we wish to provide the necessary tools and machinery to rigorously prove various results for the 1D Thomas model studied in \cite{thomas_1d}. Note that we will not be providing additional information on the dynamics of the Thomas model or the physical interpretation of its solutions or bifurcation structure. While our study is motivated by \cite{thomas_1d}, the methodology provided in our manuscript is applicable to the Thomas model in general. Our contribution for the Thomas model itself is to provide a constructive methodology for rigorously verifying results, such as those from \cite{thomas_1d}. Some aspects of our numerical study did rely on the insight provided by the authors of \cite{thomas_1d}; however, we did not directly use their numerical candidates as part of our CAPs. We will focus on three things: localized patterns, periodic solutions, and branches of periodic solutions. All three of these cases will contribute towards our goal of partially answering some of the questions asked by the authors of \cite{thomas_1d}.  We will focus first on localized patterns in Section \ref{sec : localized solutions} as it will introduce almost all of the necessary notation for this paper. From the perspective of validated numerics, our contributions in this section are to combine the methodology for proving localized patterns with the theory and estimates for handling the inverse of an element in a Banach space. More specifically, we will rely on the methodology developed by the authors of \cite{unbounded_domain_cadiot}. That is, we will use Fourier analysis to construct our approximate solution and approximate inverse. As we are treating a system, we use the extension of the method developed in \cite{gs_cadiot_blanco}. Our analysis will be similar to that performed in \cite{blanco_cadiot_fassler_saddle} as the authors of the aforementioned paper were treating a 1D example. Hence, the theory and methology is adapted from \cite{unbounded_domain_cadiot}. In addition to this approach, we will need to handle the nonpolynomial nonlinearity. For this, we rely on the approach used in \cite{maxime_paper_continuation,olivier_kevin_paper}. In particular, the authors of these papers provided two estimates for bounding the inverse element in the Banach space $\ell^1_{e,\tau}$ (see \eqref{ell1Gnu} for a definition). We will perform the needed analysis in order to apply these estimates for our purposes. As a result, handing the nonpolynomial nonlinearity is adapted from \cite{maxime_paper_continuation,olivier_kevin_paper}. What is a specialized implementation for the Thomas model are the estimates themselves performed in Section \ref{sec : Bounds of patterns}. Our next step will be periodic solutions, which is treated in Section \ref{sec : periodic solutions}. In this section, we will aim to address the periodic solutions provided by the authors of \cite{thomas_1d} by providing the tools to rigorously prove them. As proofs of periodic solutions are classical in CAPs, our main contribution will be to handle the nonpolynomial nonlinearity once again. We will improve the estimates provided by the authors of \cite{maxime_paper_continuation,olivier_kevin_paper} in order to obtain sharper results. Hence, a majority of our contributions in this section are to adapt previous results to the Thomas model; however, our generalization (cf. Lemma \ref{lem : phi bound stronger}) is applicable to a wider class of models and could be useful for studying other models. Finally, in Section \ref{sec : branches}, we will move to branches of periodic solutions. This section uses the theory of the previous Section \ref{sec : periodic solutions}, but parameterizes a branch of periodic solutions using Chebyshev series. We follow the previous illustrations in \cite{dominic_sh_periodic,marschal,olivier_kevin_paper} and others where an augmented zero finding problem is introduced in order to pass through saddle-node (fold) bifurcations. We are then able to perform a rigorous proof of a branch. Therefore, this section's goal is to adapt results from previous literature and sections of this manuscript to prove branches of periodic solutions. In all three problems we consider, while the computer-assisted methodology is adapted from the previous aforementioned works; however, the adaptation itself is nontrivial. The theoretical bounds we compute are specific implementations for the Thomas model. This makes the computations novel in the sense that one can attempt to apply our results generally for other localized patterns, periodic solutions, and branches of periodic solutions in the Thomas model. This makes the computation aspect of the paper significant from the perspective of validated numerics as well.

\par This manuscript is organized as follows. We begin with localized patterns in Section~\ref{sec : localized solutions}. Here, we  introduce the notation, assumptions, and restrictions used throughout this section. In Section~\ref{sec : patterns}, we introduce a Newton-Kantorovich approach associated to the construction of an approximate solution $\bar{\mathbf{u}}$ and of an approximate inverse $\mathbb{A}$ for $D\mathbb{F}(\bar{\mathbf{u}})$. Furthermore, in Section \ref{sec : Bounds of patterns}, we discuss the necessary ingredients to handle the nonpolynomial nonlinearity.  In the same section, we present applications of our analysis and derive multiple constructive existence proofs of localized solutions. In Section \ref{sec : periodic solutions}, we present the equivalent approach for periodic solutions. This includes the numerical aspects in Section \ref{sec : numerical aspects of periodic solutions} and the computation of the bounds in Section \ref{sec : Bounds of periodic solutions}. We then present the application of our analysis on periodic solutions and perform various constructive proofs of such solutions. In Section \ref{sec : branches}, we introduce the necessary tools and notations to prove branches of periodic solutions. This includes the construction of an approximate branch of periodic solutions and the needed augmented zero finding problem for which we will rigorously establish a solution of. We compute the bounds for these branches in Section \ref{sec : bounds branches}, which are based on those computed for periodic solutions. We then present a constructive proof of one such branch.

\section{Localized Solutions}\label{sec : localized solutions}
Our goal is to provide rigorous validations to some of the observations made by the authors of \cite{thomas_1d}, and we will begin with localized solutions for the spatial domain $\R$. For this matter, we introduce usual notations of functional analysis on $\R$. We first define the Lebesgue notation on a product space as $L^2 = L^2(\mathbb{R}) \times L^2(\mathbb{R})$ and $L^2(\om)$ on a bounded domain $\om$ in $\mathbb{R}$. More generally, $L^p$ denotes the usual $p$ Lebesgue product space with two components on $\mathbb{R}$ associated to its norm $\| \cdot \|_{p}$. Moreover, given $s \in \mathbb{R}$, denote by $H^s \bydef H^s(\mathbb{R}) \times H^s(\mathbb{R})$ the usual Sobolev space on $\mathbb{R}$.  For a bounded linear operator $\mathbb{K} : L^2 \to L^2$, denote by $\mathbb{K}^*$ the adjoint of $\mathbb{K}$ in $L^2$. Moreover, if $\mathbf{u} \in L^2$ where $\mathbf{u} = (u_1,u_2)$, denote by $\hat{\mathbf{u}} \bydef \mathcal{F}(\mathbf{u}) \bydef (\mathcal{F}(u_1),\mathcal{F}(u_2))$ the Fourier transform of $\mathbf{u}$. More specifically,  $\displaystyle \hat{u}_j(\xi)  \bydef  \int_{\mathbb{R}}u_j(x)e^{-i2\pi x \cdot \xi}dx$ for all $\xi \in \mathbb{R}$ and $j \in \{1,2\}$.

\par As we are looking for localized solutions, we wish to have $\bu : \R \to \R^2$ such that $\mathbb{F}(\bu) = 0$ and  satisfying $\bu(x) \to 0$ as $|x| \to \infty.$  Given a well-chosen Hilbert space $\mathcal{H}_e$ (cf. Section \ref{sec : setup}), containing even localized functions, we look for zeros of $\mathbb{F}$ in $\mathcal{H}_e$. In fact, we demonstrate that $\mathbb{F} : \mathcal{H}_e \to L^2_e$ is a smooth operator, where $L^2_e$ is the restriction of even functions of $L^2(\R) \times L^2(\R)$. Then, following the analysis derived in \cite{blanco_cadiot_fassler_saddle,gs_cadiot_blanco, unbounded_domain_cadiot}, we prove the existence of zeros of $\mathbb{F}$ using a Newton-Kantorovich approach (cf. Section \ref{sec : patterns}). In fact, our analysis relies on the construction of an approximate solution $\bar{\bu} \in \mathcal{H}_e$ and of an approximate inverse $\mathbb{A} : {L}^2_e \to \mathcal{H}_e$ for the Fréchet derivative $D\mathbb{F}(\bar{\bu})$. Indeed, we can then define a Newton-like fixed point operator $\mathbb{T}$ given as 
\begin{align*}
    \mathbb{T}(\bu) \bydef \bu - \mathbb{A}\mathbb{F}(\bu)
\end{align*}
and we prove that there exists $r>0$ such that $\mathbb{T}$ is contracting from $\overline{B_r(\bar{\bu})}$ to itself, where $\overline{B_r(\bar{\bu})}$ is the closed ball in $\mathcal{H}_e$ of radius $r$ centered at $\bar{\bu}$. Using the Banach-fixed point theorem, this implies the existence of a unique zero $\tilde{\bu}$ of $\mathbb{F}$ in $\overline{B_r(\bar{\bu})}$. The verification of the contractivity of $\mathbb{T}$ on $\overline{B_r(\bar{\bu})}$ involves the rigorous computation of different quantities, which we expose in Section \ref{sec : Bounds of patterns}. This allows us to provide novel constructive existence proofs of localized solutions in \eqref{eq:thomas}.  Let us now setup the problem at hand.

\subsection{Setup of the Problem}\label{sec : setup} 
\par Going back to the study of  \eqref{eq:thomas}, we set $\partial_t u = \partial_t v = 0$ as we are looking for stationary solutions. Then, we introduce a change of variable to ensure we can look for $\mathbf{u}(x) \to 0$ as $|x| \to \infty$. 
 \begin{align}
     \lambda_2 \bydef \frac{(1 + \lambda_1 + \nu_2 \lambda_1^2)(\nu_4 - \lambda_1)}{\nu_1 \lambda_1}
 \end{align}
 where $\lambda_1$ solves the cubic equation
 \begin{align}
     \nu_3 \nu_2 \lambda_1^3 - ((\nu_4 \nu_2 - 1)\nu_3 - \nu_1) \lambda_1^2 + ((\nu_5 \nu_1 - (\nu_4 - 1))\nu_3 - \nu_4 \nu_1) \lambda_1 - \nu_4 \nu_3 = 0.
 \end{align}
 Observe that $(\lambda_1,\lambda_2)$ is a steady state of \eqref{eq:thomas}. Therefore, we can make the change of variables
 \begin{align}
     u_1 \bydef u - \lambda_1 ~ \text{and} ~ u_2 \bydef \nu u_1 - v + \lambda_2 
 \end{align}
 to equivalently obtain
\begin{equation}\label{eq : gen_model}
\begin{split}
   0 &= \nu \Delta u_1 - u_1 - \frac{\lambda_3 u_1^2 + \lambda_4 u_1 - \nu_1 u_1 u_2 - \nu_1 \lambda_1 u_2}{1 + u_1 + \lambda_1 + \nu_2(u_1 + \lambda_1)^2},  \\
      0 &=\Delta u_2 - \nu_3 u_2 + \lambda_5 u_1
      \end{split}
\end{equation}
where
\begin{align}
    &\lambda_3 \bydef \nu_1 \nu + (\lambda_1 - \nu_4)\nu_2,~ \lambda_4 \bydef \nu_1 \lambda_2 + (1 + 2\nu_2 \lambda_1)(\lambda_1 - \nu_4) + \nu_1 \lambda_1 \nu,~\lambda_5 \bydef \nu_3 \nu - 1.
\end{align}
Using the above, $(u,v)$ solves \eqref{eq : gen_model} if and only if $(u_1,u_2)$ solves \eqref{eq:thomas}. Let us denote $\mbf{u}=(u_1, u_2)$ and define $\mathbb{F}$ as follows
\begin{align}
    \mbb{F}(\mbf{u}) \bydef \mathbb{L} \bu + \mathbb{G}(\bu),
\end{align}
where
\begin{equation}\label{def : def l and g intro}
    \mathbb{L} \bydef \begin{bmatrix}
        \nu \Delta + (\lambda_6 - 1)I_d & \lambda_7 I_d \\
        \lambda_5 I_d & \Delta - \nu_3 I_d
    \end{bmatrix}  = \begin{bmatrix}
        \mathbb{L}_{11} & \mathbb{L}_{12} \\ \mathbb{L}_{21} & \mathbb{L}_{22}
    \end{bmatrix}, ~~  \mathbb{G}(\mathbf{u}) \bydef \begin{bmatrix}
        \mathbb{g}(\mathbf{u}) \\
        0
    \end{bmatrix},
\end{equation}
and
\begin{align}
    &\lambda_6 \bydef -\frac{\lambda_1\lambda_4 + \lambda_4 + \lambda_1^2 \lambda_4 \nu_2}{(1 + \lambda_1 + \nu_2 \lambda_1^2)^2}, ~ \lambda_7 \bydef \frac{\nu_1 \lambda_1}{1+\lambda_1 + \nu_2 \lambda_1^2}, \\
    &\mathbb{g}(\mathbf{u}) \bydef -\frac{\lambda_3 u_1^2 + \lambda_4 u_1 - \nu_1 u_1 u_2 - \nu_1 \lambda_1 u_2}{1 + u_1 + \lambda_1 + \nu_2(u_1 + \lambda_1)^2} - \lambda_6 u_1 - \lambda_7 u_2.
\end{align}
Note that by defining $\mathbb{G}(\mathbf{u})$ in this way, we ensure that $\mathbb{g}(0) = 0$ and $D_{u_j}\mathbb{g}(0) = 0$ for $j = 1,2$. Adopting the notation of \cite{unbounded_domain_cadiot}, we look for $\mbf{u}=(u_1, u_2)$ such that
\begin{align}
    \mbb{F}(\mbf{u})  = 0
\end{align}
where
$\mathbb{F}$ is given in \eqref{def : def l and g intro}. Note that $\mathbb{L}$ possesses a symbol (Fourier transform) $l: \mathbb{R} \to \mathbb{R} \times \mathbb{R}$ given as 
\begin{equation}\label{def : definition of symbol l}
    l(\xi) \bydef \begin{bmatrix}
        l_{11}(\xi) & l_{12}(\xi) \\
        l_{21}(\xi) & l_{22}(\xi)
    \end{bmatrix}
    = \begin{bmatrix}
         -\nu|2\pi\xi|^2 +\lambda_6 - 1 & \lambda_7 \\
        \lambda_5 & -|2\pi\xi|^2 - \nu_3
    \end{bmatrix}
\end{equation}
for all $\xi \in \mathbb{R}$. In order to use the approach of \cite{unbounded_domain_cadiot}, we require two assumptions. The first is regarding the linear part, $\mathbb{L}$. Its analytical meaning is that we assume $\mathbb{L}$ to be invertible. As we have a system, we recall the generalized  assumption from \cite{gs_cadiot_blanco} that our framework requires. 

\begin{assumption}\label{assumption : fourier}
  Given $l$ as in \eqref{def : definition of symbol l},  assume there exists $\sigma_0 > 0$ such that 
    \begin{equation}\label{eq : fourier transform bounded away from 0}
        |\det(l(\xi))| \geq \sigma_0 \text{ for all } \xi \in \mathbb{R}.
    \end{equation}
    That, is, $\det(l(\xi))$ is bounded away uniformly from $0$.
\end{assumption}
Assuming the invertibility of $\mathbb{L}$ is a strong
assumption and it may seem unrelated to CAPs. We will justify this assumption later (cf. Remark \ref{rem : L inverse}). We now provide the values of the parameters $\nu_j$ ($j\in \{1,2,3\})$ and $\lambda_k$ ($k \in \{1,\dots,7\})$ such that $l$ satisfies Assumption \ref{assumption : fourier}. 
\begin{lemma}\label{lem : l_invertible}$l$ is invertible if
    \begin{equation}
        (\nu\nu_3-\lambda_6 + 1)^2 + 4\nu(\nu_3\lambda_6 - \nu_3 + \lambda_5\lambda_7) < 0,
    \end{equation}\label{eq : invertible condition 1}
    or if
    \begin{equation}
        \begin{cases}
            (\nu\nu_3-\lambda_6 + 1)^2 + 4\nu(\nu_3\lambda_6 - \nu_3 + \lambda_5\lambda_7) \geq 0 ~~ \text{ and } \\
            \nu\nu_3-\lambda_6 + 1 > \sqrt{(\nu\nu_3-\lambda_6 + 1)^2 + 4\nu(\nu_3\lambda_6 - \nu_3 + \lambda_5\lambda_7)}
        \end{cases}
    \end{equation}\label{eq : invertible condition 2}
\end{lemma}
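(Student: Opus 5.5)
The plan is to reduce the invertibility of $l(\xi)$ to the non-vanishing of a quadratic polynomial in the single variable $t \bydef |2\pi\xi|^2 \in [0,\infty)$. Expanding the determinant of \eqref{def : definition of symbol l} gives
\begin{align}
\det(l(\xi)) = (-\nu t + \lambda_6 - 1)(-t - \nu_3) - \lambda_5\lambda_7 = \nu t^2 + (\nu\nu_3 - \lambda_6 + 1)t - \nu_3(\lambda_6 - 1) - \lambda_5\lambda_7 \bydef p(t).
\end{align}
So $l(\xi)$ is invertible for every $\xi$ exactly when $p$ has no root on $[0,\infty)$. Moreover, since $\nu>0$ we have $p(t)\to+\infty$ as $t\to\infty$. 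Hence if $p$ has no root on $[0,\infty)$, then $p$ is bounded below by a positive constant there, and Assumption \ref{assumption : fourier} holds with $\sigma_0 = \min_{t\ge 0} p(t) > 0$.

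Set $b \bydef \nu\nu_3-\lambda_6+1$ and $c \bydef -\nu_3\lambda_6+\nu_3-\lambda_5\lambda_7$, so that the discriminant of $p$ is
\begin{align}
\Delta = b^2 - 4\nu c = (\nu\nu_3-\lambda_6 + 1)^2 + 4\nu(\nu_3\lambda_6 - \nu_3 + \lambda_5\lambda_7),
\end{align}
which is precisely the expression in the lemma. The argument then splits into two cases.

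\emph{Case $\Delta<0$.} Then $p$ has no real roots. Since its leading coefficient $\nu$ is positive, $p>0$ on all of $\mathbb{R}$, and in particular on $[0,\infty)$. This gives the first condition.

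\emph{Case $\Delta\ge 0$.} The real roots are $t_\pm = \frac{-b\pm\sqrt{\Delta}}{2\nu}$. The largest root $t_+$ is negative exactly when $-b+\sqrt{\Delta}<0$, that is, when $b>\sqrt{\Delta}$, which is the second condition. In that case both roots lie in $(-\infty,0)$. Because $p$ is an upward-opening parabola, it is positive to the right of its largest root, so $p>0$ on $[0,\infty)$. Note also that $b>\sqrt\Delta\ge0$ forces $b^2>\Delta$, i.e.\ $c>0$, so $p(0)=c>0$. This is consistent with the root picture and gives a quick sanity check.

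No step is a real obstacle. The only point needing care is the sign bookkeeping when expanding the determinant, so that the discriminant matches exactly the expression stated in the lemma, including the sign convention for $\lambda_6$ and $\lambda_5\lambda_7$. The other point is remembering that only $t\ge0$ matters, because $t=|2\pi\xi|^2$. This is why the second condition only requires the roots to be negative, not that they fail to exist.
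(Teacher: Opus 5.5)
Your proof is correct and takes the same route as the paper: you reduce $\det l(\xi)$ to the quadratic $p(t)=\nu t^2+(\nu\nu_3-\lambda_6+1)t-\nu_3\lambda_6+\nu_3-\lambda_5\lambda_7$ in $t=|2\pi\xi|^2\ge 0$, then apply elementary root analysis to it. You also make explicit the uniform positive lower bound needed for Assumption~\ref{assumption : fourier}, which the paper's one-line proof leaves implicit.
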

\begin{proof}
    $l$ is invertible if and only if $l_{11}(\xi)l_{22}(\xi) - l_{12}(\xi)l_{21}(\xi) \neq 0, \forall \xi \in \mathbb{R}$. 
    This is equivalent to studying the roots of  the second order polynomial $x \mapsto \nu x^2 + (\nu\nu_3 - \lambda_6 + 1)x - \nu_3\lambda_6 + \nu_3 - \lambda_5 \lambda_7 $. The proof is then obtained using basic properties of second order polynomials.
\end{proof}
From now on, we assume that the parameters $\nu$, $\nu_3$, and $\lambda_k$ ($k \in \{5,6,7\}$) satisfy the conditions of Lemma \ref{lem : l_invertible}, yielding that $l$ is invertible.
Since $l$ is invertible, we can define the  following norm and inner product \begin{align}\label{def : definition of the norm and inner product Hl}
    \|\mathbf{u}\|_{\mathcal{H}} \bydef \|\mathbb{L}\mathbf{u}\|_{2} ~~ \text{ and } ~~(\mathbf{u},\mathbf{v})_{\mathcal{H}} \bydef (\mathbb{L}\mathbf{u},\mathbb{L}\mathbf{v})_2 
\end{align}
for all $\mathbf{u}, \mathbf{v} \in \mathcal{H}$, where   $\mathcal{H}$ is the Hilbert space
\begin{align}\label{def : Hilbert space Hl}
    \mathcal{H} \bydef \{ \mathbf{u} \in L^2, \|\mathbf{u}\|_{\mathcal{H}} < \infty \}.
\end{align} 
 Using Plancherel's identity, we have 
\begin{align*}
(\mathbf{u}, \mathbf{v})_{\mathcal{H}} = (l\hat{\mathbf{u}},l\hat{\mathbf{v}})_2 
\end{align*}
for all $\mathbf{u}, \mathbf{v} \in \mathcal{H}.$ In particular, note that $\mathbb{L} : \mathcal{H} \to L^2$ is a well-defined bounded linear operator, which is actually an isometric isomorphism. We now want to prove that the operator $\mathbb{G}: \mathcal{H} \to L^2$ is a smooth operator. This is a nontrivial task in our case since $\mathbb{G}$ is nonpolynomial. For this matter, we recall two spaces from \cite{gs_cadiot_blanco} :  $\mathcal{M}_1$ and $\mathcal{M}_2$ given as
\begin{align}
 &\mathcal{M}_1 \bydef \left\{ M = \left(M_{i,j}\right)_{i,j \in \{ 1,2 \} }, \text{ where } M_{i,j} \in L^{\infty}(\mathbb{R}) \text{ and } ~ \|M\|_{\mathcal{M}_1} < \infty \right\}  \\
&\mathcal{M}_2 \bydef \left\{ M = \left(M_{i,j}\right)_{i,j \in \{ 1,2 \} }, \text{ where } M_{i,j} \in L^{2}(\mathbb{R}) \text{ and } ~ \|M\|_{\mathcal{M}_2} < \infty \right\}
\end{align}
with their associated norms
\begin{equation}\label{M_2__21_norm_def}
 \|M\|_{\mathcal{M}_1} \bydef \sup_{\xi \in \mathbb{R}} \sup\limits_{\substack{x \in \mathbb{R}^2\\ |x|_2=1}}  |M(\xi)x|_2,~\text{and}~\|M\|_{\mathcal{M}_2} \bydef \max_{i \in \{1,2\}} \left\{\left(\sum_{j = 1}^2 \|M_{i,j}\|_{L^2(\mathbb{R})}^2\right)^{\frac{1}{2}} \right\}\end{equation}
In fact, using $\mathcal{M}_1$ and $\mathcal{M}_2$, \cite{gs_cadiot_blanco} provides sufficient conditions for which products on $\mathcal{H}\times \mathcal{H} \to L^2$ are well-defined. Using this result, we obtain the following lemma.
\begin{lemma}\label{corr : banach algebra}
Let $\mathbf{u} = (u_1,u_2)$ and $ \mathbf{v} = (v_1,v_2)$. Suppose Assumption~\ref{assumption : fourier} is verified and let 
{\footnotesize\begin{align}
    \kappa_0 \bydef \frac{4\nu_2}{4\nu_2 - 1}, ~\kappa_1 \bydef \|l^{-1}\|_{\mathcal{M}_1}\|l^{-1}\|_{\mathcal{M}_2}, ~ \kappa_2 \bydef \|l^{-1}\|_{\mathcal{M}_2}(|\lambda_3|+\nu_1), ~ \kappa_3 \bydef |\lambda_4| + \nu_1 |\lambda_1|, ~ \kappa_4 \bydef |\lambda_6|+|\lambda_7|.
\end{align}} Then, for $j = 1,2$, 
\begin{align}
\|\mathbb{g}(\mathbf{u})v_j\|_2 \leq \kappa_1(\kappa_0 (\kappa_2 \|\mathbf{u}\|_{\mathcal{H}} + \kappa_3) + \kappa_4)\|\mathbf{u}\|_{\mathcal{H}}\|\mathbf{v}\|_{\mathcal{H}}.
\end{align} 
\end{lemma}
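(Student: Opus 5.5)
The plan is to bound $\mathbb{g}(\mathbf{u})$ pointwise in $L^\infty$ by a constant times $|u_1|+|u_2|$, and then control the product with $v_j$ through the $\mathcal{H}\times\mathcal{H}\to L^2$ estimate of \cite{gs_cadiot_blanco} built on $\mathcal{M}_1,\mathcal{M}_2$. That estimate should give, for scalar components $w,z$ of elements of $\mathcal{H}$,
\begin{align}
\|w z\|_2 \leq \|w\|_\infty \|z\|_2, \qquad \|w\|_\infty \leq \|l^{-1}\|_{\mathcal{M}_2}\|\mathbf{w}\|_{\mathcal{H}}, \qquad \|z\|_2 \leq \|l^{-1}\|_{\mathcal{M}_1}\|\mathbf{z}\|_{\mathcal{H}}.
\end{align}
Both inequalities follow from $\hat{\mathbf{w}} = l^{-1}\widehat{\mathbb{L}\mathbf{w}}$, together with Cauchy--Schwarz in Fourier space for the sup norm and Plancherel for the $L^2$ norm. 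Combining them yields $\|wz\|_2\le \kappa_1 \|\mathbf{w}\|_{\mathcal{H}}\|\mathbf{z}\|_{\mathcal{H}}$, which explains the factor $\kappa_1$ in the statement.

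The first step is to control the denominator. Set $y = u_1+\lambda_1$, so the denominator is $1+y+\nu_2 y^2$. Since $\nu_2>\frac14$, completing the square gives
\begin{align}
1+y+\nu_2y^2 = \nu_2\left(y+\tfrac{1}{2\nu_2}\right)^2 + 1 - \tfrac{1}{4\nu_2} \geq \frac{4\nu_2-1}{4\nu_2}.
\end{align}
Hence the reciprocal of the denominator is bounded by $\kappa_0$ uniformly, whatever the values of $u_1$.

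The second step is to write $\mathbb{g}(\mathbf{u}) = -\frac{N(\mathbf{u})}{D(u_1)} - \lambda_6u_1-\lambda_7u_2$ with $N(\mathbf{u}) = \lambda_3u_1^2+\lambda_4u_1-\nu_1u_1u_2-\nu_1\lambda_1u_2$. Then
\begin{align}
|\mathbb{g}(\mathbf{u})|\le \kappa_0\big((|\lambda_3|+\nu_1)\max(|u_1|,|u_2|) + |\lambda_4|+\nu_1|\lambda_1|\big)\max(|u_1|,|u_2|) + (|\lambda_6|+|\lambda_7|)\max(|u_1|,|u_2|).
\end{align}
To bound the quadratic part, write $|u_1|^2+|u_1u_2| \le \max(|u_1|,|u_2|)\,(|u_1|+|u_2|)$, or more crudely use $\max(|u_1|,|u_2|)$ in both factors.

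The third step estimates $\|\mathbb{g}(\mathbf{u})v_j\|_2$. Take the inner quadratic factor in $L^\infty$ using $\|u_i\|_\infty\le \|l^{-1}\|_{\mathcal{M}_2}\|\mathbf{u}\|_{\mathcal{H}}$, which produces $\kappa_2\|\mathbf{u}\|_{\mathcal{H}}$. Then apply the product estimate to the remaining linear factor $\max(|u_1|,|u_2|)$ times $|v_j|$, which produces $\kappa_1\|\mathbf{u}\|_{\mathcal{H}}\|\mathbf{v}\|_{\mathcal{H}}$.

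The main obstacle is the bookkeeping with $\max(|u_1|,|u_2|)$ rather than a single component. I would handle it by using, componentwise, that $\|u_i\|_\infty \le \|l^{-1}\|_{\mathcal{M}_2}\|\mathbf{u}\|_{\mathcal{H}}$ for each $i$; the row-max in the definition of $\|\cdot\|_{\mathcal{M}_2}$ is exactly what makes this hold for both components. The pointwise inequality $\max(a,b)|v_j|\le$ (the $L^\infty$ bound of the max) $\times|v_j|$ then gives the claimed constants without an extra factor of 2.
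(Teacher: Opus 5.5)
Your proposal is correct and follows essentially the same route as the paper: bound the reciprocal of the denominator by $\kappa_0$ via its minimum value $\frac{4\nu_2-1}{4\nu_2}$, bound $\|\mathbb{g}(\mathbf{u})\|_\infty$ using $\|u_i\|_\infty \leq \|l^{-1}\|_{\mathcal{M}_2}\|\mathbf{u}\|_{\mathcal{H}}$ for each component, and finish with $\|\mathbb{g}(\mathbf{u})v_j\|_2 \leq \|\mathbb{g}(\mathbf{u})\|_\infty \|v_j\|_2$ and $\|v_j\|_2 \leq \|l^{-1}\|_{\mathcal{M}_1}\|\mathbf{v}\|_{\mathcal{H}}$. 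Your use of $\max(|u_1|,|u_2|)$ is a compact repackaging of the paper's term-by-term bound with $\|u_1\|_\infty$ and $\|u_2\|_\infty$, and it yields exactly the same constants.
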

\begin{proof}
    To begin, observe that
    \begin{align}
        \|\mathbb{g}(\mathbf{u})v_j\|_{2} \leq \|\mathbb{g}(\mathbf{u})\|_{\infty} \|v_j\|_{2} \leq \|l^{-1}\|_{\mathcal{M}_1}\|\mathbb{g}(\mathbf{u})\|_{\infty} \|\mathbf{v}\|_{\mathcal{H}}\label{first part kappa comp}
    \end{align}
where the last step followed from the proof of Lemma 2.1 of \cite{blanco_cadiot_fassler_saddle}. Now, consider the denominator of $\mathbb{g}(\mathbf{u})$. We define the function
    \begin{align}
        h(x) \bydef 1 + x + \nu_2 x^2.
    \end{align}
Now, since $h$ is a quadratic equation and $\nu_2 > \frac{1}{4} > 0$, its minimum is located at its vertex point $\left(-\frac{1}{2\nu_2},\frac{4\nu_2 - 1}{4\nu_2}\right)$.
Hence, we see that
    {\small\begin{align}
        \|\mathbb{g}(\mathbf{u})\|_{\infty} &\leq \left\| \frac{\lambda_3 u_1^2 + \lambda_4 u_1 - \nu_1 u_1 u_2 - \nu_1 \lambda_1 u_2}{1 + u_1 + \lambda_1 + \nu_2(u_1 + \lambda_1)^2}\right\|_{\infty} + |\lambda_6| \|u_1\|_{\infty} + |\lambda_7| \|u_2\|_{\infty}\\
        &\leq \frac{4\nu_2}{4\nu_2 - 1} \|\lambda_3 u_1^2 + \lambda_4 u_1 - \nu_1 u_1 u_2 - \nu_1 \lambda_1 u_2\|_{\infty} + |\lambda_6| \|u_1\|_{\infty} + |\lambda_7| \|u_2\|_{\infty}\\
        &\leq \kappa_0 \left(|\lambda_3| \|u_1\|_{\infty}^2 + |\lambda_4| \|u_1\|_{\infty} + \nu_1 \|u_1\|_{\infty} \|u_2\|_{\infty} + \nu_1 |\lambda_1| \|u_2\|_{\infty}\right) + |\lambda_6| \|u_1\|_{\infty} + |\lambda_7| \|u_2\|_{\infty} \\
        &\leq \kappa_0\|l^{-1}\|_{\mathcal{M}_2} \left(\|l^{-1}\|_{\mathcal{M}_2}(|\lambda_3|+\nu_1)\|\mathbf{u}\|_{\mathcal{H}} + |\lambda_4| + \nu_1 |\lambda_1|\right)\|\mathbf{u}\|_{\mathcal{H}} + \|l^{-1}\|_{\mathcal{M}_2}(|\lambda_6|+|\lambda_7|)\|\mathbf{u}\|_{\mathcal{H}} \\
        &\bydef \|l^{-1}\|_{\mathcal{M}_2}\left(\kappa_0 (\kappa_2 \|\mathbf{u}\|_{\mathcal{H}} + \kappa_3) + \kappa_4\right)\|\mathbf{u}\|_{\mathcal{H}}
    \end{align}}
where we used the steps of Lemma 2.1 of \cite{blanco_cadiot_fassler_saddle} once again. Combining with \eqref{first part kappa comp}, we obtain
\begin{align}
    \|\mathbb{g}(\mathbf{u})v_j\|_{2} &\leq \|l^{-1}\|_{\mathcal{M}_1}\|l^{-1}\|_{\mathcal{M}_2}(\kappa_0  (\kappa_2 \|\mathbf{u}\|_{\mathcal{H}} + \kappa_3) + \kappa_4)\|\mathbf{u}\|_{\mathcal{H}}\|\mathbf{v}\|_{\mathcal{H}} \\
    &\bydef \kappa_1 (\kappa_0  (\kappa_2 \|\mathbf{u}\|_{\mathcal{H}} + \kappa_3) + \kappa_4)\|\mathbf{u}\|_{\mathcal{H}}\|\mathbf{v}\|_{\mathcal{H}}
\end{align}
as desired. Note that $\|l^{-1}\|_{\mathcal{M}_1}$ and $\|l^{-1}\|_{\mathcal{M}_2}$ can be computed using the results from \cite{blanco_cadiot_fassler_saddle}.
\end{proof}
Using Lemma \ref{corr : banach algebra}, we obtain that $\mathbb{G} : \mathcal{H} \to L^2$ is a smooth operator. Supposing that $\mathbf{u} = (u_1,u_2)$ is a solution of \eqref{eq : gen_model}, then any translation in space provides a new solution. Therefore, in order to isolate a localized solution in the set of solutions, we choose to look for even solutions. That is, we enforce that $\mathbf{u}(x) = \mathbf{u}(-x)$ for all $x \in \R$.   With this in mind, we introduce the following even restriction $\mathcal{H}_{e} \subset \mathcal{H}$
\begin{equation}\label{H_l_e_definition}
         \mathcal{H}_{e} \bydef \{ \mathbf{u} \in \mathcal{H} \ | \ \mathbf{u}(x) = \mathbf{u}(-x)\}.
    \end{equation}
Similarly, denote $L_{e}^2$ the Hilbert subspace of $L^2$ satisfying the even symmetry.
In particular we notice that if $\mathbf{u} \in \mathcal{H}_{e}$, then $\mathbb{L}\mathbf{u} \in L^2_{e}$ and $\mathbb{G}(\mathbf{u}) \in L^2_{e}$ ; hence,  $\mathbb{L}$ and $\mathbb{G}$ are well-defined as operators from $\mathcal{H}_{e}$ to $L^2_{e}$. 
 Finally, we look for solutions of the following problem
\begin{equation}\label{eq : f(u)=0 on H^l_e}
    \mathbb{F}(\mathbf{u}) = 0 ~~ \text{ and } ~~ \mathbf{u} \in \mathcal{H}_{e}.
\end{equation}
\begin{remark}\label{rem : L inverse}
As mentioned previously, Assumption \ref{assumption : fourier} is a strong assumption that may seem unrelated; however,
a key challenge for our rigorous approach will be to control the inverse of $D\mathbb{F}(\overline{\mathbf{u}})$.The authors of \cite{unbounded_domain_cadiot} show that the essential spectrum of
$D\mathbb{F}(\overline{\mathbf{u}}))= \mathbb{L} + D\mathbb{G}(\overline{\mathbf{u}})$ is equal to the essential spectrum of $\mathbb{L}$. If we suppose that $\mathbb{L}$ is singular, then there exists a $\xi$
such that $l(\xi) = 0$; hence, $0$ is in the essential spectrum of both $\mathbb{L}$ and also that of $D\mathbb{F}(\overline{\mathbf{u}})$. As a result, $D\mathbb{F}(\overline{\mathbf{u}}) : \mathcal{H} \to L^2$
will also be singular. Relaxing this assumption is a possible future work as discussed in Section 7 of \cite{unbounded_domain_cadiot}.
\end{remark}
\subsection{Periodic Spaces}\label{sec : periodic spaces}
In this section, we recall some notations introduced in Section 2.4 of \cite{unbounded_domain_cadiot}. In particular, the objects defined in the above sections have a corresponding representation in Fourier series when restricted to a bounded domain. Specifically, we define our bounded domain of interest $\Omega_0 \bydef (-d,d)$  where $1 \leq d<\infty$.
Moreover, denote $\tilde{n} = \frac{n}{2d} \in \mathbb{R}$ for all $n \in \mathbb{Z}$. As for the continuous case, we want to restrict to Fourier series representing even functions. Let $\mathbf{u}_{n} = ((u_1)_{n},(u_2)_{n})$ be the $n$th Fourier coefficient of $\mathbf{u}$. In terms of Fourier coefficients, the even  restriction reads
\begin{align*}
    \mathbf{u}_{n} = \mathbf{u}_{-n} \text{ for all } n \in \mathbb{Z}.
\end{align*}
In particular, when enforcing the even symmetry, we can restrict the indices of Fourier coefficients from $\mathbb{Z}$ to the reduced set
$
 \mathbb{N}_0 \bydef \left\{n \in \mathbb{Z} \ | \ 0 \leq n \right\}.$
Now, let $(\alpha_n)_{n \in \mathbb{N}_0}$ be defined by 
\begin{equation}\label{def : alpha_n}
    \alpha_n \bydef \begin{cases}
        1 &\text{ if } n=0\\
       2 &\text{ if } n > 0. 
    \end{cases}
\end{equation}
In particular, $\alpha_n$ is the size of $\mathrm{orb}_{\mathbb{Z}_2}(n)$.  Next, let $\ell_{\tau}^p(\mathbb{N}_0)$ denote the following Banach space
\begin{align}
    \ell_{\tau}^p(\mathbb{N}_0) \bydef \left\{U = (u_n)_{n \in \mathbb{N}_0}: ~ \|U\|_p \bydef \left( \sum_{n \in \mathbb{N}_0} \alpha_n|u_n|^p\tau^{np}\right)^\frac{1}{p} < \infty \right\}.\label{ell1Gnu}
\end{align}
We also denote $\ell^p(\mathbb{N}_0) \bydef \ell^p_{1}(\mathbb{N}_0)$ the unweighted $\ell^p$ space.
In \eqref{ell1Gnu}, note that $U$ is a sequence of scalars, and not a sequence of vectors.
In particular, $\ell^2(\mathbb{N}_0)$ is an Hilbert space associated to its natural inner product $(\cdot, \cdot)_2$ given by
\[
(U,V)_2 \bydef \sum_{n \in \mathbb{N}_0} \alpha_n u_n v_n^*
\] 
for all $U = (u_n)_{n \in \mathbb{N}_0}, V = (v_n)_{n \in \mathbb{N}_0} \in \ell^2(\mathbb{N}_0)$ and $(\cdot)^*$ denotes complex conjugation.
Then, we define the Banach space $\ell^p_{e,\tau}$  as 
\begin{align}
   \ell^p_{e,\tau} \bydef \ell_{\tau}^p(\mathbb{N}_0) \times \ell_{\tau}^p(\mathbb{N}_0), ~ \text{ with norm } ~ \|\mathbf{U}\|_{p,\tau} = (\|U_1\|_{p,\tau}^p + \|U_2\|_{p,\tau}^p)^{\frac{1}{p}}.\label{def : norm on product space}
\end{align}
Additionally, $\ell^p_e \bydef \ell^p_{e,1}$.
 For a bounded operator $K : \ell^2_{e} \to \ell^2_{e}$ (resp. $K : \ell^2(\mathbb{N}_0) \to \ell^2(\mathbb{N}_0)$), $K^*$ denotes the adjoint of $K$ in $\ell^2_{e}$ (resp. $\ell^2(\mathbb{N}_0)$). Now, similarly as what is achieved \cite{gs_cadiot_blanco}, we  define $\gamma~:~L^2_{e}(\mathbb{R}) \to \ell^2(\mathbb{N}_0)$ as
\begin{align}
    \left(\gamma(u)\right)_n \bydef  \frac{1}{|\om|}\int_\om u(x) e^{-2\pi i \tilde{n}\cdot x}dx\label{single_gamma}
\end{align}
for all $n \in \mathbb{N}_0$ and all $u \in L^2_{e}(\R)$. Similarly, we define $\gamma^\dagger : \ell^2(\mathbb{N}_0) \to L^2_{e}(\mathbb{R})$ as 
\begin{align}
    \gamma^\dagger\left(U\right)(x) \bydef \cha(x) \sum_{n \in \mathbb{N}_0} \alpha_n u_n \cos(2\pi \tilde{m} x)\label{single_gamma_dagger}
\end{align}
for all $x \in \mathbb{R}$ and all $U =\left(u_n\right)_{n \in \mathbb{N}_0} \in \ell^2(\mathbb{N}_0)$, where $\cha$ is the characteristic function on $\om$. We now introduce a similar notation for functions in the product space $L^2_{e}$. In particular, define $\bgam : L^2_{e} \to \ell^2_{e}$ and 
$\bgam^\dagger : \ell^2_{e} \to L^2_{e}$  as
\begin{align}
\bgam(\mathbf{u}) = (\gamma(u_1),\gamma(u_2)),~\bgam^\dagger(\mathbf{U}) = (\gamma^\dagger(U_1),\gamma^\dagger(U_2)).\label{def : gamma and gamma dagger}
\end{align}
More specifically, given $\mathbf{u} \in L^2_{e}$, $\bgam(\mathbf{u})$ represents the Fourier coefficients indexed on $\mathbb{N}_0$ of the restriction of $\mathbf{u}$ on $\om$. Conversely, given a sequence $\mathbf{U}\in \ell^2_{e}$, $\bgam^\dagger\left(\mathbf{U}\right)$  is the function representation of $\mathbf{U}$ in $L^2_{e}.$ In particular, notice that $\bgam^\dagger\left(\mathbf{U}\right)(x) =0$ for all $x \notin \om.$ Then, recall similar notations from \cite{unbounded_domain_cadiot}
\begin{align}
    L^2_{e,\om} \bydef \left\{\mathbf{u} \in L^2_{e} : \text{supp}(\mathbf{u}) \subset \overline{\om} \right\}~~ \text{ and } ~~
   \mathcal{H}_{e,\om} \bydef \left\{\mathbf{u} \in \mathcal{H}_{e} : \text{supp}(\mathbf{u}) \subset \overline{\om} \right\}.
\end{align}
Moreover, define $\mathcal{B}(L^2_{e})$ (respectively $\mathcal{B}(\ell^2_{e})$) as the space of bounded linear operators on $L^2_{e}$ (respectively $\ell^2_{e}$) and denote by $\mathcal{B}_\om(L^2_{e})$ the following subspace of $\mathcal{B}(L^2_{e})$
\begin{equation}\label{def : Bomega}
    \mathcal{B}_\om(L^2_{e}) \bydef \{\mathbb{K} \in \mathcal{B}(L^2_{e}) :  \mathbb{K} = \cha \mathbb{K} \cha\}.
\end{equation}
Finally, define $\bGam : \mathcal{B}(L^2_{e}) \to \mathcal{B}(\ell^2_{e})$ and $\bGam^\dagger : \mathcal{B}(\ell^2_{e}) \to \mathcal{B}(L^2_{e})$ as follows
\begin{equation}\label{def : Gamma and Gamma dagger}
    \bGam(\mathbb{K}) \bydef \bgam \mathbb{K} \bgam^\dagger ~~ \text{ and } ~~  \bGam^\dagger(K) \bydef \bgam^\dagger {K} \bgam 
\end{equation}
for all $\mathbb{K} \in \mathcal{B}(L^2_{e})$ and all $K \in \mathcal{B}(\ell^2_{e}).$

The maps defined above in \eqref{def : gamma and gamma dagger} and \eqref{def : Gamma and Gamma dagger} are fundamental in our analysis as they allow to pass from the problem on $\mathbb{R}$ to the one in $\ell^2_{e}$ and vice-versa. Furthermore, the following lemma, provides that this passage is an isometric isomorphism when restricted to the relevant spaces.

\begin{lemma}\label{lem : gamma and Gamma properties}
    The map $\sqrt{|\om|} \bgam : L^2_{e,\om} \to \ell^2_{e}$ (respectively $\bGam : \mathcal{B}_\om(L^2_{e}) \to \mathcal{B}(\ell^2_{e})$) is an isometric isomorphism whose inverse is given by $\frac{1}{\sqrt{|\om|}} \bgam^\dagger : \ell^2_{e} \to L^2_{e,\om}$ (respectively $\bGam^\dagger :   \mathcal{B}(\ell^2_{e}) \to \mathcal{B}_\om(L^2_{e})$). In particular,
    \begin{align}\label{eq : parseval's identity}
        \|\mathbf{u}\|_2 = \sqrt{\om}\|\mathbf{U}\|_2 \text{ and } \|\mathbb{K}\|_2 = \|K\|_2
    \end{align}
    for all $\mathbf{u} \in L^2_{e,\om}$ and $\mathbb{K} \in \mathcal{B}_\om(L^2_{e})$ where $\mathbf{U} \bydef \bgam(\mathbf{u})$ and $K \bydef \bGam(\mathbb{K})$.
\end{lemma}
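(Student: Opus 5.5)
The plan is to reduce everything to the scalar case and then to the classical Parseval identity for cosine series on $\om=(-d,d)$. Since $\bgam$, $\bgam^\dagger$ and the norms on $L^2_e$ and $\ell^2_e$ are all defined componentwise, with $\|\mathbf{U}\|_2^2=\|U_1\|_2^2+\|U_2\|_2^2$, it suffices to show the following: $\sqrt{|\om|}\gamma : L^2_{e,\om}(\R)\to\ell^2(\mathbb{N}_0)$ is an isometric isomorphism with inverse $\frac{1}{\sqrt{|\om|}}\gamma^\dagger$.

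\textbf{Step 1: basis and Parseval.} For $u$ even and supported in $\overline{\om}$, evenness gives
\[
(\gamma(u))_n=\frac{1}{|\om|}\int_\om u(x)\cos(2\pi\tilde n x)\,dx .
\]
The family $\{\cos(2\pi\tilde n x)\}_{n\in\mathbb{N}_0}$ is a complete orthogonal system of the even functions in $L^2(\om)$. Indeed, it is the even half of the exponential basis $\{e^{2\pi i\tilde n x}\}_{n\in\mathbb{Z}}$, and $\tilde n = n/(2d)$ with $|\om| = 2d$. Its norms are $\|\cos(2\pi\tilde n\cdot)\|_{L^2(\om)}^2=|\om|/\alpha_n$. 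Expanding $u$ in this basis therefore gives
\[
u=\cha\sum_{n}\alpha_n u_n\cos(2\pi\tilde n x)=\gamma^\dagger(\gamma(u)),
\qquad
\|u\|_2^2=\sum_n \frac{|\om|}{\alpha_n}\alpha_n^2|u_n|^2=|\om|\,\|\gamma(u)\|_2^2 .
\]
This yields the isometry and $\gamma^\dagger\gamma=I$ on $L^2_{e,\om}$. Conversely, for $U\in\ell^2(\mathbb{N}_0)$, orthogonality gives $\gamma(\gamma^\dagger U)=U$, and $\gamma^\dagger U$ lies in $L^2_{e,\om}$. So the map is bijective, with the stated inverse. (Here I read the $\tilde m$ in \eqref{single_gamma_dagger} as $\tilde n$, and $\sqrt{\om}$ in \eqref{eq : parseval's identity} as $\sqrt{|\om|}$.)

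\textbf{Step 2: operators.} For $\mathbb{K}\in\mathcal{B}_\om(L^2_e)$, let $K=\bgam\mathbb{K}\bgam^\dagger$.
\begin{itemize}
\item Injectivity of $\bGam$ and the inverse formula: since $\bgam^\dagger\bgam$ is exactly multiplication by $\cha$ on $L^2_e$, we get $\bGam^\dagger(K)=\bgam^\dagger\bgam\,\mathbb{K}\,\bgam^\dagger\bgam=\cha\mathbb{K}\cha=\mathbb{K}$.
\item Surjectivity: for $K\in\mathcal{B}(\ell^2_e)$, the operator $\bgam^\dagger K\bgam$ satisfies $\cha(\cdot)\cha=(\cdot)$, so it lies in $\mathcal{B}_\om(L^2_e)$. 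Moreover $\bGam(\bGam^\dagger K)=\bgam\bgam^\dagger K\bgam\bgam^\dagger=K$ by Step 1.
\item Isometry: write $\mathbf{v}=\bgam^\dagger\mathbf{V}$, so that $\|\mathbf{v}\|_2=\sqrt{|\om|}\|\mathbf{V}\|_2$. Since $\mathbb{K}\mathbf{v}$ is supported in $\om$, Step 1 gives
\[
\|K\mathbf{V}\|_2=\frac{1}{\sqrt{|\om|}}\|\mathbb{K}\mathbf{v}\|_2 .
\]
The factors $\sqrt{|\om|}$ cancel in the ratio, which gives $\|K\|_2\le\|\mathbb{K}\|_2$. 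Any $\mathbf{u}\in L^2_e$ can be replaced by $\cha\mathbf{u}$ without changing $\mathbb{K}\mathbf{u}$ while decreasing its norm, which gives the reverse inequality.
\end{itemize}

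\textbf{Main obstacle.} No step is genuinely hard. The only place needing care is Step 1: the weights $\alpha_n$ in the definitions of $\ell^2(\mathbb{N}_0)$ and $\gamma^\dagger$ must be matched against the cosine norms $|\om|/\alpha_n$. One must check that the $n=0$ and $n>0$ cases both give exactly the factor $|\om|$, which they do by the computation above. Completeness of the cosine system is the standard completeness of Fourier series on $L^2(\om)$, restricted to even functions.
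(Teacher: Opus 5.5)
Your proof is correct and follows essentially the same route as the paper, which only defers to Lemma 3.2 of Cadiot--Lessard--Nave. That argument is Parseval's identity on $\om$ for the function statement, and $\bgam^\dagger\bgam = \cha$, $\bgam\bgam^\dagger = I_d$ together with $\mathbb{K} = \cha\mathbb{K}\cha$ for the operator statement; your check of the weights $\alpha_n$ against the cosine norms $|\om|/\alpha_n$ is the even-symmetric adaptation the paper leaves implicit, and your readings of $\tilde m$ as $\tilde n$ and $\sqrt{\om}$ as $\sqrt{|\om|}$ are the intended ones.
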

\begin{proof}
The proof is obtained following similar steps as the ones of Lemma 3.2 in \cite{unbounded_domain_cadiot}.
\end{proof}
The above lemma not only provides a one-to-one correspondence between the elements in $L^2_{e,\om}$ (respectively $\mathcal{B}_\om(L^2_{e})$) and the ones in $\ell^2_{e}$ (respectively $\mathcal{B}(\ell^2_{e})$) but it also provides an identity on norms. This property is essential in our construction of an approximate inverse  in Section \ref{sec : A}.

Now, we define the Hilbert space $\mathscr{h}$ as 
\begin{align*}
    \mathscr{h} \bydef \left\{ \mathbf{U}  \in \ell^2_{e} \text{ such that } \|\mathbf{U}\|_{\mathscr{h}} < \infty \right\}
\end{align*} associated to its inner product $(\cdot,\cdot)_{\mathscr{h}}$ and norm $\|\cdot\|_{\mathscr{h}}$ defined as 
\begin{align*}
    (\mathbf{U},\mathbf{V})_{\mathscr{h}} \bydef \sum_{n \in \mathbb{N}_0}  \alpha_n\left(L(\tilde{n})\mathbf{u}_n\right)\cdot \left(L(\tilde{n})\mathbf{v}_n^*\right),~
    \|\mathbf{U}\|_{\mathscr{h}} \bydef \sqrt{(\mathbf{U},\mathbf{U})_{\mathscr{h}}}
\end{align*}
for all $\mathbf{U}=(\mathbf{u}_n)_{n \in \mathbb{N}_0}, \mathbf{V}=(\mathbf{v}_n)_{n \in \mathbb{N}_0} \in \mathscr{h}$ and $\alpha_n$ is defined as in \eqref{def : alpha_n}.
Denote by $L : \mathscr{h} \to \ell^2_{e}$ and $G : \mathscr{h} \to \ell^2_{e}$ the Fourier coefficients representation of $\mathbb{L}$ and $\mathbb{G}$ respectively. More specifically,
\begin{align*}
    &L \bydef \begin{bmatrix}
        L_{11} & L_{12}\\
        L_{21} & L_{22}
    \end{bmatrix},~G(\bU) \bydef \begin{bmatrix}
       g(\bU)\\
        0
    \end{bmatrix}
\end{align*}
for all $\bU \in \mathscr{h}.$ For all $(i,j) \in \{(1,1),(1,2),(2,1),(2,2)\}$, 
$L_{ij}$ is an infinite diagonal matrix with coefficients $\left(l_{ij}(\tilde{n})\right)_{n\in \mathbb{N}_0}$ on its diagonal.Then, we define $U*V \bydef \gamma(\gamma^\dagger(U)\gamma^\dagger(V))$ is defined as the discrete convolution (under even symmetry). In particular, notice that Young's inequality for convolution is applicable 
\begin{align}
    \|U*V\|_{2} \leq \|U\|_{2} \|V\|_{1}\label{young_inequality}
\end{align}
for all $U \in \ell^2(\mathbb{N}_0), V \in \ell^1(\mathbb{N}_0)$. 
We now define $g(\mathbf{U})$ as
{\small\begin{align}
    g(\mathbf{U}) &\bydef -\gamma\left(\frac{\gamma^\dagger(\lambda_3 U_1*U_1 + \lambda_4 U_1 - \nu_1 U_1 *U_2 - \nu_1 \lambda_1 U_2)}{\gamma^\dagger(e_0 + U_1 + \lambda_1e_0 + \nu_2(U_1 + \lambda_1)*(U_1 + \lambda_1))}+\gamma^{\dagger}(\lambda_6 U_1) + \gamma^{\dagger}(\lambda_7 U_2)\right) \\
    &= -\frac{\lambda_3 U_1^2 + \lambda_4 U_1 - \nu_1 U_1*U_2 - \nu_1 \lambda_1 U_2}{e_0 + U_1 + \lambda_1e_0 + \nu_2(U_1 + \lambda_1)^2} - \lambda_6 U_1 - \lambda_7 U_2
\end{align}}
where $(e_0)_n = 1$ if $n = 0$ and $(e_0)_n = 0$ for all $n \in \mathbb{N}$. Furthermore, using the definition of $L$, notice that \[\|\mathbf{U}\|_{\mathscr{h}} = \|L\mathbf{U}\|_{2}.\] Finally, we define $F(\mathbf{U}) \bydef L\mathbf{U} + G(\mathbf{U})$ and introduce  
\begin{equation}\label{eq : F(U)=0 in X^l_e}
    F(\mathbf{U}) =0 ~~ \text{ and } ~~ \mathbf{U} \in \mathscr{h}
\end{equation}
as the periodic equivalent on $\om$ of \eqref{eq : f(u)=0 on H^l_e}.
\subsection{Setting up the Computer-Assisted Approach for Localized Patterns}\label{sec : patterns}\
We now recall the primary theorem we apply for our computer assisted approach. This will be a Newton-Kantorovich type theorem for proving the existence of solutions. Its proof will be thanks to a fixed point argument. Then, we will need to construct the related objects to apply it and treat the nonpolynomial nonlinearity.
\subsubsection{Radii-Polynomial Theorem}\label{sec : radii poly theorem}
 Given $\overline{\mathbf{u}} \in \mathcal{H}_{e}$, an approximate solution to \eqref{eq : f(u)=0 on H^l_e}, and $\mathbb{A} : L^2_{e} \to \mathcal{H}_{e}$, an approximate inverse to $D\mathbb{F}(\overline{\mathbf{u}})$, we want to prove that there exists $r>0$ such that $\mathbb{T} : \overline{B_r(\overline{\mathbf{u}})} \to \overline{B_r(\overline{\mathbf{u}})}$ given by
\[
\mathbb{T}(\mathbf{u}) \bydef \mathbf{u} - \mathbb{A}\mathbb{F}(\mathbf{u})
\]
is well-defined and a contraction. In order to determine a possible value for $r>0$ that would provide the contraction, we wish to use a Radii-Polynomial theorem.  In particular, we  build $\mathbb{A} : L_{e}^2 \to \mathcal{H}_{e}$, $\mathcal{Y}_0, \mathcal{Z}_1 >0$ and  $\mathcal{Z}_2 : (0, \infty) \to [0,\infty)$ in such a way that the hypotheses of the following theorem are satisfied.
\begin{theorem}\label{th: radii polynomial}
Let $\mathbb{A} : L_{e}^2 \to \mathcal{H}_{e}$ be a bounded linear operator. Moreover, let $\mathcal{Y}_0, \mathcal{Z}_1$ be non-negative constants and let  $\mathcal{Z}_2 : (0, \infty) \to [0,\infty)$ be a non-negative function  such that for all $r>0$
  \begin{align}\label{eq: definition Y0 Z1 Z2}
    &\|\mathbb{A}\mathbb{F}(\overline{\mathbf{u}})\|_{\mathcal{H}} \leq \mathcal{Y}_0\\
    &\|I_d - \mathbb{A}D\mathbb{F}(\overline{\mathbf{u}})\|_{\mathcal{H}} \leq \mathcal{Z}_1\\
    &\|\mathbb{A}\left({D}\mathbb{F}(\mathbf{u}) - D\mathbb{F}(\overline{\mathbf{u}})\right)\|_{\mathcal{H}} \leq \mathcal{Z}_2(r)r, ~~ \text{for all } \mathbf{u} \in B_r(\overline{\mathbf{u}})
\end{align}  
If there exists $r_0>0$ such that
\begin{equation}\label{condition radii polynomial}
    \frac{1}{2}\mathcal{Z}_2(r_0)r_0^2 - (1-\mathcal{Z}_1)r_0 + \mathcal{Y}_0 <0, \ and \ \mathcal{Z}_1 + \mathcal{Z}_2(r_0)r_0 < 1 
 \end{equation}
then there exists a unique $\tilde{\mathbf{u}} \in \overline{B_{r_0}(\overline{\mathbf{u}})} \subset \mathcal{H}_{e}$ such that $\mathbb{F}(\tilde{\mathbf{u}})=0$, where $B_{r_0}(\overline{\mathbf{u}})$ is the open ball of radius $r_0$ in $\mathcal{H}_{e}$ and centered at $\overline{\mathbf{u}}$. 
\end{theorem}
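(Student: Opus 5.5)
We prove that $\mathbb{T}(\mathbf{u}) = \mathbf{u} - \mathbb{A}\mathbb{F}(\mathbf{u})$ maps $\overline{B_{r_0}(\overline{\mathbf{u}})}$ into itself and is a contraction there. The Banach fixed point theorem then gives a unique fixed point $\tilde{\mathbf{u}}$. We will also show that $\mathbb{A}$ is injective, so that $\mathbb{T}(\tilde{\mathbf{u}}) = \tilde{\mathbf{u}}$ forces $\mathbb{F}(\tilde{\mathbf{u}})=0$. Since $\mathcal{H}_e$ is a closed subspace of the Hilbert space $\mathcal{H}$ (evenness is preserved under $L^2$ limits), the closed ball is a complete metric space. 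Lemma \ref{corr : banach algebra} guarantees that $\mathbb{F} : \mathcal{H}_e \to L^2_e$ is $C^1$ (indeed smooth), so the mean value inequality in Banach spaces is available.

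\textbf{Contraction.} For $\mathbf{u} \in \overline{B_{r_0}(\overline{\mathbf{u}})}$ we have $D\mathbb{T}(\mathbf{u}) = I_d - \mathbb{A}D\mathbb{F}(\mathbf{u})$, which we split as
\[
D\mathbb{T}(\mathbf{u}) = \left(I_d - \mathbb{A}D\mathbb{F}(\overline{\mathbf{u}})\right) - \mathbb{A}\left(D\mathbb{F}(\mathbf{u}) - D\mathbb{F}(\overline{\mathbf{u}})\right).
\]
The hypotheses give $\|D\mathbb{T}(\mathbf{u})\|_{\mathcal{H}} \le \mathcal{Z}_1 + \mathcal{Z}_2(r_0)r_0 < 1$. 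The $\mathcal{Z}_2$ hypothesis is stated on the open ball. To pass to the closed ball, we either use continuity of $D\mathbb{F}$, or apply the hypothesis with $r$ slightly larger than $r_0$; in the latter case we assume $\mathcal{Z}_2$ is non-decreasing, as is standard, and note that strict inequalities persist for nearby $r$. The mean value inequality on the convex ball then shows that $\mathbb{T}$ is Lipschitz with constant $\mathcal{Z}_1 + \mathcal{Z}_2(r_0)r_0<1$.

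\textbf{Self-map.} This is the step requiring care, because the quadratic condition must come out of a sharper estimate than the contraction bound. For $\mathbf{u}$ in the ball, set $\mathbf{h} = \mathbf{u}-\overline{\mathbf{u}}$ and $\|\mathbf{h}\|_{\mathcal{H}} = s \le r_0$. We write
\[
\mathbb{T}(\mathbf{u}) - \overline{\mathbf{u}} = -\mathbb{A}\mathbb{F}(\overline{\mathbf{u}}) + \left(\mathbf{h} - \mathbb{A}D\mathbb{F}(\overline{\mathbf{u}})\mathbf{h}\right) - \int_0^1 \mathbb{A}\left(D\mathbb{F}(\overline{\mathbf{u}}+t\mathbf{h}) - D\mathbb{F}(\overline{\mathbf{u}})\right)\mathbf{h}\,dt.
\]
The point $\overline{\mathbf{u}}+t\mathbf{h}$ lies in the ball of radius $ts$, so applying the $\mathcal{Z}_2$ hypothesis with radius $ts$ bounds the integrand by $\mathcal{Z}_2(ts)\,ts\cdot s$. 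Assuming monotonicity of $\mathcal{Z}_2$, this is at most $\mathcal{Z}_2(r_0)\,t\,r_0^2$, and integrating yields $\frac12\mathcal{Z}_2(r_0)r_0^2$. Hence
\[
\|\mathbb{T}(\mathbf{u})-\overline{\mathbf{u}}\|_{\mathcal{H}} \le \mathcal{Y}_0 + \mathcal{Z}_1 r_0 + \tfrac12 \mathcal{Z}_2(r_0)r_0^2 < r_0
\]
by the first condition in \eqref{condition radii polynomial}. If monotonicity of $\mathcal{Z}_2$ is not available, we instead apply the hypothesis at the fixed radius $r_0$ with $\|\overline{\mathbf{u}}+t\mathbf{h}-\overline{\mathbf{u}}\|_{\mathcal{H}}\le tr_0$. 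For the paper's $\mathcal{Z}_2$, which arises from a second-derivative estimate, $\mathcal{Z}_2(r)$ is naturally non-decreasing in $r$, and we state this as the implicit convention.

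\textbf{Injectivity of $\mathbb{A}$.} Since $\|I_d - \mathbb{A}D\mathbb{F}(\overline{\mathbf{u}})\|_{\mathcal{H}} \le \mathcal{Z}_1 < 1$, a Neumann series shows that $\mathbb{A}D\mathbb{F}(\overline{\mathbf{u}})$ is invertible on $\mathcal{H}_e$. We now sketch why this makes $\mathbb{A}$ injective on $L^2_e$. The operator $D\mathbb{F}(\overline{\mathbf{u}}) = \mathbb{L} + D\mathbb{G}(\overline{\mathbf{u}})$ is a compact perturbation of the isometric isomorphism $\mathbb{L}$, so it is Fredholm of index zero. Because $\mathbb{A}D\mathbb{F}(\overline{\mathbf{u}})$ is injective, $D\mathbb{F}(\overline{\mathbf{u}})$ is injective, hence surjective onto $L^2_e$. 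Since $\mathbb{A}D\mathbb{F}(\overline{\mathbf{u}})$ is invertible and $D\mathbb{F}(\overline{\mathbf{u}})$ is onto, $\mathbb{A}$ must be injective. In practice the paper's construction of $\mathbb{A}$ (as in \cite{unbounded_domain_cadiot}) makes $\mathbb{A}$ injective directly.

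Combining the three steps, the fixed point $\tilde{\mathbf{u}}$ of $\mathbb{T}$ satisfies $\mathbb{A}\mathbb{F}(\tilde{\mathbf{u}})=0$, hence $\mathbb{F}(\tilde{\mathbf{u}})=0$. Uniqueness of zeros in the closed ball follows because any zero of $\mathbb{F}$ there is a fixed point of $\mathbb{T}$.
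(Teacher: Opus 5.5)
Your argument is the standard Banach fixed-point proof behind the result the paper imports from \cite{gs_cadiot_blanco}. The contraction step, the integral identity for $\mathbb{T}(\mathbf{u})-\overline{\mathbf{u}}$ and the uniqueness argument are fine.

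\textbf{The factor $\frac12$ needs monotonicity.} Your fallback for a non-monotone $\mathcal{Z}_2$ fails. Applying the hypothesis at the fixed radius $r_0$ bounds the integrand by $\mathcal{Z}_2(r_0)r_0\,s$ for every $t$, however close $\overline{\mathbf{u}}+t\mathbf{h}$ is to $\overline{\mathbf{u}}$. So you only get $\mathcal{Y}_0+\mathcal{Z}_1r_0+\mathcal{Z}_2(r_0)r_0^2$, which proves the theorem with the $\frac12$ removed, not as stated. Nothing can replace monotonicity using only the displayed hypotheses. On $\mathbb{R}$ take $\overline{\mathbf{u}}=0$, $\mathbb{A}=1$, $F(0)=-0.6$ and $F'(u)=1-\frac12\min(10|u|,1)$. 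The three displayed hypotheses hold with $\mathcal{Y}_0=0.6$, $\mathcal{Z}_1=0$ and the non-increasing function $\mathcal{Z}_2(r)=\frac12\min(10,1/r)$. At $r_0=1$ both conditions hold, since $\frac14-1+0.6<0$ and $\frac12<1$. Yet $F$ is increasing with $F(1)=-0.075$, so it has no zero in $[-1,1]$.

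Monotonicity of $\mathcal{Z}_2$ is therefore a hypothesis missing from the statement, not a convention. It does hold for the paper's $\mathcal{Z}_2$ in Lemma \ref{lem : Z2 patterns}, which is a polynomial in $r$ with non-negative coefficients. Under it your self-map estimate is correct: for $ts<r_0$ apply the hypothesis at any $\rho\in(ts,r_0]$ and let $\rho\downarrow ts$. This also handles the fact that $\overline{\mathbf{u}}+t\mathbf{h}$ lies on the boundary of $B_{ts}(\overline{\mathbf{u}})$.

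\textbf{Injectivity.} Your injectivity step is a genuine addition, since unlike Theorem \ref{th : radii polynomial theorem periodic} this statement does not assume $\mathbb{A}$ injective. The Fredholm argument is correct; it even shows that $\mathbb{A}=\bigl(\mathbb{A}D\mathbb{F}(\overline{\mathbf{u}})\bigr)D\mathbb{F}(\overline{\mathbf{u}})^{-1}$ is an isomorphism. What you leave unjustified is the compactness of $D\mathbb{G}(\overline{\mathbf{u}}):\mathcal{H}_e\to L^2_e$. It holds for these reasons:
\begin{itemize}
\item $D\mathbb{G}(\overline{\mathbf{u}})$ is multiplication by the bounded functions $v_1,v_2$ of \eqref{def : v1v2};
\item $v_1,v_2$ vanish outside $\overline{\om}$, because $\overline{\mathbf{u}}=0$ there and $D\mathbb{g}(0)=0$;
\item $\mathcal{H}_e$ embeds continuously into $H^2$, so Rellich--Kondrachov on $\om$ applies.
\end{itemize}

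For the paper's $\mathbb{A}=\mathbb{L}^{-1}\mathbb{B}$ there is a shortcut that avoids this. The operator $\mathbb{B}-I_d=\bGam^\dagger(B^N-\bpi^{\leq N})$ has finite rank, so $\mathbb{B}$ is Fredholm of index zero. Also $\mathbb{B}=\mathbb{L}\mathbb{A}$ is surjective because $\mathbb{A}D\mathbb{F}(\overline{\mathbf{u}})$ is invertible. Hence $\mathbb{B}$, and therefore $\mathbb{A}$, is injective. It is this use of $\mathcal{Z}_1<1$, not the construction alone, that makes $\mathbb{A}$ injective, since invertibility of the matrix $B^N$ is not automatic.
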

\begin{proof}
The proof can be found in \cite{gs_cadiot_blanco}.
\end{proof}
In order to apply Theorem \ref{th: radii polynomial}, we need to construct explicitly $\overline{\mathbf{u}} \in \mathcal{H}_e$ and $\mathbb{A} : \mathcal{H}_e \to L^2_e$. Following this, we need to discuss the nonpolynomial nonlinearity. These are the topics of the next sections.
\subsubsection{Construction of \texorpdfstring{$\overline{\mathbf{u}}$}{mbfu0}}\label{sec : u0}
In this section, we discuss the  construction of $\overline{\mathbf{u}}$, which is an approximate solution to \eqref{eq : f(u)=0 on H^l_e}. This is generally a challenging problem. In the specific case of \eqref{eq:thomas}, we rely on the approximate solutions outlined in \cite{thomas_1d}. Our approach relies on $\overline{\mathbf{u}}$ being constructed numerically on $\om = (-d,d)$ thanks to its Fourier coefficients representation. Fix $N \in \mathbb{N}$ to be the size of our numerical approximation for linear operators (i.e. matrices) and $N_0 \in \mathbb{N}$ to be the one of our Fourier coefficients approximations (i.e. vectors). Now, given $\mathcal{N} \in \mathbb{N}$, let us introduce the following projection operators 
 \begin{align}
 \nonumber
    (\Pi^{\leq\mathcal{N}}V)_n  =  \begin{cases}
          v_n,  & n \in I^{\mathcal{N}} \\
              0, &n \notin I^{\mathcal{N}}
    \end{cases} ~~ \text{ and } ~~
     (\Pi^{>\mathcal{N}}V)_n  =  \begin{cases}
          0,  & n \in I^{\mathcal{N}} \\
              v_n, &n \notin I^{\mathcal{N}}
    \end{cases}\label{def : piN and pisubN}
 \end{align}
where $I^{\mathcal{N}} \bydef \{n \in \mathbb{N}_0, ~ n \leq \mathcal{N}\}$ for all $V = (v_n)_{n \in  \mathbb{N}_0} \in \ell^2_e.$ Then, we define
\begin{align}
    (\bpi^{\leq\mathcal{N}}\mathbf{U})_n \bydef ((\Pi^{\leq\mathcal{N}}U_1)_n, (\Pi^{\leq\mathcal{N}}U_2)_n) ~~\text{and}~~(\bpi^{>\mathcal{N}}\mathbf{U})_n \bydef ((\Pi^{>\mathcal{N}}U_1)_n, (\Pi^{>\mathcal{N}}U_2)_n)
\end{align}
for all $\mathbf{U} = \mathbf{u}_{n \in \mathbb{N}_0} \in \ell^2_{e}$.
 To obtain a numerical approximation of a pattern, we looked at the numerical solutions found in \cite{thomas_1d} and tried to recover them as a Fourier series representation using guesses of the form:
 \begin{equation*}
    \begin{cases}
        u_{1,0}(x) =  \beta_1\mathrm{sech}^2(\zeta_1 x) + \lambda_1 \\
        u_{2,0}(x) = \beta_2\mathrm{sech}(\zeta_2 x)^2 + \lambda_2
    \end{cases}
\end{equation*}
Then we tuned each parameter to replicate the localized solutions. $\zeta_i, i = 1,2$ controls the stiffness of each spike, and $\beta_i$ their amplitude. Once we found a promising candidate, we applied Newton's method to obtain a better approximate solution of \eqref{eq : F(U)=0 in X^l_e}. Then we applied the change of variable introduced in Section~\ref{sec : introduction} to get a candidate solution of \eqref{eq : gen_model}. From here, we compute a cosine Fourier sequence approximation of $\mathbf{u}_0 = (\bar{u}_{1,0}, \bar{u}_{2,0})$, $\mbf{U}_0 = \gamma(\bar{\mbf{u}}_0) \in \ell^2_e$. We note that $\mbf{U}_0$ satisfies the identity $\mbf{U}_0 = \bpi^{\leq N_0}\mbf{U}_0$ as it is a Fourier sequence with finitely many coefficients. At this point, we now have a vector representation on $\Omega_0$, $\mbf{u}_0 = \mathbf{\gamma}^\dagger(\mbf{U}_0)$ that we extend by $0$ to have $\mbf{u}_0 \in L^2_e$; however, such a function is not necessarily in $\mathcal{H}_e$. To ensure $\mbf{u}_0 \in \mathcal{H}_e$, we need to impose that $\mbf{u}_0$ and some of its derivatives to vanish at $\pm d$. Note that $\mathcal{H}_e$ contains the even functions of $H^2(\R) \times H^2(\R)$; hence, in order to have $\mbf{u}_0 \in \mathcal{H}_e$, we need to enforce 
$\mbf{u}_0(\pm d) = 0$ only since $\partial_x \mbf{u}_0 = 0$ is automatically satisfied thanks to the symmetry. We do so using the approach of Section 3.2 of \cite{blanco_cadiot_fassler_saddle}.
That is, we define the operator $\mathcal{P}: \ell^2_e \rightarrow \ell_e^2$,
\begin{equation*}
    (\mathcal{P}V)_n = \begin{cases}
        \sum_{n=1}^{N_0} 2V_n(-1)^{n+1} & \text{if } n = 0 \\
        V_n & \text{if } n \neq 0
    \end{cases}, ~~\mathbfcal{P}\mathbf{V} = (\mathcal{P}V_1, \mathcal{P}V_2)\label{def : operator P}
\end{equation*}
This allows us to define 
$$\overline{\mathbf{U}} = \mathbfcal{P}\mathbf{U}_0 \text{ and } \overline{\mathbf{u}} = \gamma^\dagger(\overline{\mathbf{U}}).$$ By construction, we have $\overline{\mathbf{u}}(\pm d)=0$ and  $\overline{\mathbf{u}} \in \mathcal{H}_e$. Also note that we built this in such a way that $\overline{\mathbf{U}} = \Pi^{\leq N_0}\overline{\mathbf{U}}$ by construction. As a result, we define our approximate solution as
\begin{equation}
    \overline{\mathbf{u}} = \gamma^\dagger(\overline{\mathbf{U}}) \in \mathcal{H}_e, ~  \text{ and } \overline{\mathbf{U}} = \Pi^{\leq N_0}\overline{\mathbf{U}}.
\end{equation}
\subsubsection{The Operator \texorpdfstring{$\mathbb{A}$}{A}}\label{sec : A}
In this section, we focus our attention to the operator $\mbb{A}: L^2_e \rightarrow \mathcal{H}_e$ required by Theorem \ref{th: radii polynomial}. $\mbb{A}$ is an approximate inverse of $D\mbb{F}(\overline{\mathbf{u}})$. We now present its construction following a similar argument as in \cite{gs_cadiot_blanco}. First we observe that $\mbb{L}: \mathcal{H}_e \rightarrow L^2_e$ is an isometric isomorphism. Therefore building $\mbb{A}: L^2_e \rightarrow \mathcal{H}_e$ is equivalent to building $\mbb{B}: L^2_e \rightarrow L^2_e$ approximating the inverse of $D\mbb{F}(\overline{\mathbf{u}})\mbb{L}^{-1}$, and setting $\mbb{A} \bydef \mbb{L}^{-1}\mbb{B}$. We will then construct $\mbb{B}$ using Lemma \ref{lem : gamma and Gamma properties} and a bounded linear operator on Fourier coefficients. 
First we construct $B^N$ a numerical approximate inverse to $\bpi^{\leq N}DF(\overline{\mathbf{U}})L^{-1}\bpi^{\leq N}$. We choose $B^N$ satisfying $B^N = \bpi^{\leq N} B^N \bpi^{\leq N}$, hence it admits a matrix representation. We now describe further its construction. Due to the form of $G$, we obtain
{\footnotesize\begin{align}
    &DF(\overline{\mathbf{U}})L^{-1} = 
     \begin{bmatrix}
        I_d + DG_{11}(\overline{\mathbf{U}})L_{\text{den}}^{-1}L_{22} -DG_{12}(\overline{\mathbf{U}})L_{\text{den}}^{-1}L_{21} & -DG_{11}(\overline{\mathbf{U}})L_{\text{den}}^{-1}L_{12} + DG_{12}(\overline{\mathbf{U}})L_{\text{den}}^{-1}L_{11} \\ 0 & I_d
        \end{bmatrix}\label{eq: B^N matrix}
\end{align}}
where 
\begin{align} 
L_{\text{den}} \bydef  L_{11}L_{22} - L_{12}L_{21}.\label{def : l_den}
\end{align}  
Since \eqref{eq: B^N matrix} is upper triangular by block, $B^N$ is chosen to be upper triangular by block as well. As in \cite{gs_cadiot_blanco}, we choose
\begin{align} 
B^N = \begin{bmatrix}
    B_{11}^N & B_{12}^N \\  0 & \Pi^{\leq N}
\end{bmatrix},~
B = \bpi^{>N} + B^N \bydef \begin{bmatrix}
    B_{11} & B_{12} \\ 0 & I_d
\end{bmatrix}\label{def : B_finite_infinite}
\end{align}
Finally, we get $\mbb{B}: L_e^2 \rightarrow L_e^2$ and $\mbb{A}: L_e^2 \rightarrow \mathcal{H}_e$ as 
\begin{equation}\label{eq:def_A}
    \mbb{B}\bydef \begin{bmatrix}
        \mbb{1}_{\mathbb{R} \backslash \Omega_0} & 0 \\ 0 & \mbb{1}_{\mathbb{R} \backslash \Omega_0}
    \end{bmatrix} + \mbf{\Gamma}^{\dagger}(B)= \begin{bmatrix}
        \mbb{B}_{11} & \mbb{B}_{12} \\ 0 & I_d
    \end{bmatrix},~ \mbb{A}\bydef \mbb{L}^{-1}\mbb{B}.
\end{equation}
where $\mbb{B}_{1j} = \Gamma^{\dagger}(B_{1j})$ for $j =1,2$. We will justify later on such a choice for $\mathbb{A}$ by computing explicitly the defect $\|I - \mathbb{A}D\mathbb{f}(\bar{\mathbf{u}})\|_{\mathcal{H}}$. We now recall a result from \cite{unbounded_domain_cadiot}.
\begin{lemma}\label{lem:bound_B}
    Let $\mbb{A}: L^2_e \rightarrow \mathcal{H}_e$ be as in \eqref{eq:def_A}. Then,
    \begin{equation}\label{eq:B_bound}
        \|\mbb{A}\|_{2, \mathcal{H}} = \|\mbb{B}\|_2 = \max\{1, \|B^N\|_2\}.
    \end{equation}
\end{lemma}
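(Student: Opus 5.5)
We plan to prove the two equalities separately. The first, $\|\mathbb{A}\|_{2,\mathcal{H}} = \|\mathbb{B}\|_2$, should follow directly from the definition of the norm on $\mathcal{H}$. For any $\mathbf{v} \in L^2_e$ we have $\|\mathbb{A}\mathbf{v}\|_{\mathcal{H}} = \|\mathbb{L}\mathbb{L}^{-1}\mathbb{B}\mathbf{v}\|_2 = \|\mathbb{B}\mathbf{v}\|_2$. This holds because $\mathbb{L} : \mathcal{H}_e \to L^2_e$ is an isometric isomorphism (Assumption \ref{assumption : fourier} together with Plancherel). Taking the supremum over $\|\mathbf{v}\|_2 = 1$ gives the first equality.

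For the second equality, we decompose $L^2_e = L^2_{e,\om} \oplus L^2_e(\mathbb{R}\setminus\om)$. The two pieces are orthogonal and correspond to $\mathbf{v} = \cha \mathbf{v} + \mathbb{1}_{\mathbb{R}\setminus\om}\mathbf{v}$. By construction $\bGam^\dagger(B) = \cha \bGam^\dagger(B)\cha \in \mathcal{B}_\om(L^2_e)$, because $\bgam^\dagger$ produces functions supported on $\om$ and $\bgam$ only sees $\om$. Hence $\mathbb{B}$ is block diagonal with respect to this decomposition: it acts as the identity on the exterior part and as $\bGam^\dagger(B)$ on the interior part. By orthogonality,
\[
\|\mathbb{B}\mathbf{v}\|_2^2 = \|\mathbb{1}_{\mathbb{R}\setminus\om}\mathbf{v}\|_2^2 + \|\bGam^\dagger(B)\cha\mathbf{v}\|_2^2 .
\]
This yields $\|\mathbb{B}\|_2 = \max\{1, \|\bGam^\dagger(B)\|_2\}$. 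The factor $1$ is attained as soon as the exterior space is nontrivial, which it is because $\om$ is bounded.

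Next, Lemma \ref{lem : gamma and Gamma properties} gives $\|\bGam^\dagger(B)\|_2 = \|B\|_2$. It remains to compute $\|B\|_2$ with $B = \bpi^{>N} + B^N$ and $B^N = \bpi^{\le N}B^N\bpi^{\le N}$. Since $\ell^2_e$ splits orthogonally into the ranges of $\bpi^{\le N}$ and $\bpi^{>N}$, and $B$ is block diagonal with respect to this splitting, we obtain $\|B\|_2 = \max\{\|B^N\|_2, 1\}$. Combining the three steps gives $\max\{1,\max\{1,\|B^N\|_2\}\} = \max\{1,\|B^N\|_2\}$.

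The main point requiring care is the orthogonality and block-diagonal structure in the second step. In particular, we must verify that $\bGam^\dagger(B)$ indeed annihilates functions supported outside $\om$ and has range in $L^2_{e,\om}$. We must also check that the normalization constants $\sqrt{|\om|}$ in Lemma \ref{lem : gamma and Gamma properties} cancel in $\bGam^\dagger = \bgam^\dagger(\cdot)\bgam$, so that the operator norm is preserved exactly. This cancellation is precisely the content of that lemma, so we will invoke it rather than redo the computation.
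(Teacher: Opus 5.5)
Your proposal is correct and follows essentially the argument behind the paper's proof, which simply defers to \cite{unbounded_domain_cadiot}. There, $\|\cdot\|_{\mathcal{H}} = \|\mathbb{L}\,\cdot\,\|_2$ gives $\|\mathbb{A}\|_{2,\mathcal{H}} = \|\mathbb{B}\|_2$; the orthogonal splitting into parts supported in $\om$ and in $\mathbb{R}\setminus\om$, together with Lemma \ref{lem : gamma and Gamma properties}, gives $\|\mathbb{B}\|_2 = \max\{1,\|B\|_2\}$; and the splitting by $\bpi^{\leq N}$ and $\bpi^{>N}$ gives $\|B\|_2 = \max\{1,\|B^N\|_2\}$. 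You also correctly use block-diagonality with respect to the Fourier truncation rather than the components, so the off-diagonal block $B_{12}$ causes no difficulty.
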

\begin{proof}
The proof can be found in \cite{unbounded_domain_cadiot}.
\end{proof}
With $\mathbb{A}$ now built, we must now discuss the nonpolynomial nonlinearity, $\mathbb{G}(\mathbf{u})$ and how we can compute it. This will be the topic of the next section.
\subsubsection{Computing the nonlinearity}\label{sec : nonlinearity}
In this section, we handle the nonpolynomial nonlinearity and its derivatives. In particular, our existence proof will require us to compute $G(\overline{\mathbf{U}})$ and $DG(\overline{\mathbf{U}})$. The problematic is that the Fourier coefficients corresponding to $g(\overline{\mathbf{U}})$ cannot be determined exactly as there are infinitely many nonzero Fourier coefficients. Furthermore, even finitely many coefficients cannot be computed exactly due to the nonpolynomial nature of the nonlinearity. To compute the Fourier coefficients of $g(\overline{\mathbf{U}})$, we first define 
{\footnotesize\begin{align}
    \overline{\psi} \bydef \lambda_3 \overline{u}_1^2 + \lambda_4 \overline{u}_1 - \nu_1 \overline{u}_1 \overline{u}_2 - \nu_1 \lambda_1 \overline{u}_2, \overline{\Psi} \bydef \gamma(\overline{\psi}),~\overline{\phi} \bydef \mathbb{1}_{\om} + \overline{u}_1 + \lambda_1 \mathbb{1}_{\om} + \nu_2(\overline{u}_1 + \lambda_1 \mathbb{1}_{\om})^2, ~ \overline{\Phi} \bydef \gamma(\phi).\label{def : phi}
\end{align}}
so that
\begin{align}
    \mathbb{g}(\overline{\mathbf{u}}) \bydef -\frac{\overline{\psi}}{\overline{\phi}} - \lambda_6 \overline{u}_1 - \lambda_7 \overline{u}_2, ~ g(\overline{\mathbf{U}}) \bydef \gamma(\mathbb{g}(\overline{\mathbf{u}})- \lambda_6 \overline{u}_1 - \lambda_7 \overline{u}_2) = -\overline{\Psi} * \overline{\Phi}^{-1} - \lambda_6\overline{U}_1 - \lambda_7 \overline{U}_2.
\end{align}
Note that $\overline{\Psi} = \Pi^{\leq 2N_0} \overline{\Psi}$ and $\overline{\Phi} = \Pi^{\leq 2N_0} \overline{\Phi}$. Let us discuss $D_{u_1} \mathbb{g}(\overline{\mathbf{u}})$ and $D_{u_2} \mathbb{g}(\overline{\mathbf{u}})$. First, let 
\begin{align}
     \overline{\psi}_{1} \bydef \mathbb{1}_{\om} + 2\nu_2 (\overline{u}_1+\lambda_1), ~ \overline{\psi}_{2} \bydef -\nu_1 \overline{u}_1 - \nu_1\lambda_1 \mathbb{1}_{\om},~\overline{\psi}_{3} \bydef 2\lambda_3 \overline{u}_1 + \lambda_4 \mathbb{1}_{\om} - \nu_1\lambda_1 \overline{u}_2
\end{align}
We then introduce $v_1, v_2 \in L^\infty(\mbb{R}) \cap L^2(\mbb{R})$
\begin{align}\label{def : v1v2}
    v_1 \bydef -(\overline{\phi}\overline{\psi}_{3} - \overline{\psi} \overline{\psi}_1)\overline{\phi}^{-2} -\lambda_6\mathbb{1}_{\om} \text{ and }v_2 \bydef - \overline{\psi}_2\overline{\phi}^{-1} - \lambda_7\mathbb{1}_{\om}.
\end{align}
Therefore, we have $D_{u_j} \mathbb{g}(\overline{\mathbf{u}}) = v_j$ for $j = 1,2$. We then define
{\small\begin{align} 
\overline{\Psi}_j \bydef \gamma(\overline{\psi}_j),~V_1\bydef \gamma(v_1) = -(\overline{\Phi} * \overline{\Psi}_1 - \overline{\Psi}*\overline{\Psi}_3)*\overline{\Phi}^{-2} - \lambda_6 e_0, V_2\bydef \gamma(v_2) = -\overline{\Psi}_2 *\overline{\Phi}^{-1} - \lambda_7 e_0,\label{def : V1V2}
\end{align}}
where $\overline{\Phi}^{-2} = \overline{\Phi}^{-1} * \overline{\Phi}^{-1}$. Now, let $\overline{\Phi}_{\mathrm{inv}}$ be a numerical approximation of $\overline{\Phi}^{-1}$ where
\begin{align}
    \overline{\Phi}_{\mathrm{inv}} = \Pi^{\leq 2N_0} \overline{\Phi}_{\mathrm{inv}}.\label{def : phiinv}
\end{align}
Such an approximation can be computed using the Fast Fourier Transform (FFT), and we refer the interested reader to \cite{marco_thesis} for the details. Our goal will be to control the error between $\overline{\Phi}^{-1}$ and $\overline{\Phi}_{\mathrm{inv}}$. We do so using the following lemma from \cite{maxime_paper_continuation,olivier_kevin_paper}.
\begin{lemma}\label{lem : phi bound}
Let $\overline{\Phi},\overline{\Phi}_{\mathrm{inv}} $  be defined as in \eqref{def : phi} and \eqref{def : phiinv} respectively. Let $\ell^1_{\tau}(\mathbb{N}_0)$ for some $\tau \geq 1$ be the Banach space defined in \eqref{ell1Gnu}. If $\|1 - \overline{\Phi}*\overline{\Phi}_{\mathrm{inv}}\|_{1,\tau} < 1$, then 
\begin{align}
    \|\overline{\Phi}_{\mathrm{inv}} - \overline{\Phi}^{-1}\|_{1,\tau} \leq \frac{\|\overline{\Phi}_{\mathrm{inv}}*(1-\overline{\Phi}*\overline{\Phi}_{\mathrm{inv}})\|_{1,\tau}}{1-\|1-\overline{\Phi}*\overline{\Phi}_{\mathrm{inv}}\|_{1,\tau}}.
\end{align}
\end{lemma}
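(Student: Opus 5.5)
The plan is a Neumann series argument in the Banach algebra $\ell^1_{\tau}(\mathbb{N}_0)$ under the discrete convolution $*$, where $e_0$ (written $1$ in the statement) is the unit. First we check that $(\ell^1_\tau(\mathbb{N}_0),*)$ is a commutative Banach algebra for $\tau\geq 1$. The even convolution is the restriction of the full convolution on $\mathbb{Z}$, the weights $\alpha_n$ exactly reproduce the two-sided $\ell^1$ norm with weight $\tau^{|n|}$, and $\tau^{|n|}\leq \tau^{|n-k|}\tau^{|k|}$. This gives $\|U*V\|_{1,\tau}\leq \|U\|_{1,\tau}\|V\|_{1,\tau}$.

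Next, set $E \bydef e_0 - \overline{\Phi}*\overline{\Phi}_{\mathrm{inv}}$, so that $\|E\|_{1,\tau}<1$ by hypothesis. Then $\overline{\Phi}*\overline{\Phi}_{\mathrm{inv}} = e_0 - E$ is invertible in the algebra, with inverse $\sum_{k\geq 0}E^{*k}$, and
\[
\|(e_0-E)^{-1}\|_{1,\tau}\leq \frac{1}{1-\|E\|_{1,\tau}}.
\]
Because the algebra is commutative, $\overline{\Phi}$ is invertible with $\overline{\Phi}^{-1} = \overline{\Phi}_{\mathrm{inv}}*(e_0-E)^{-1}$. This should also agree with $\gamma(1/\overline{\phi})$: inverses in the algebra are unique, and the product of Fourier series corresponds to the product of functions on $\om$. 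This identification is the point I would be most careful about, since the paper uses $\overline{\Phi}^{-1}$ in both senses. I would note that an absolutely convergent cosine series $\overline{\Phi}$ that has a convolution inverse is nonvanishing, which is consistent with $\overline{\phi}\geq \frac{4\nu_2-1}{4\nu_2}>0$ on $\om$.

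Finally, compute the difference:
\[
\overline{\Phi}_{\mathrm{inv}} - \overline{\Phi}^{-1} = \overline{\Phi}_{\mathrm{inv}}*\bigl(e_0 - (e_0-E)^{-1}\bigr) = -\overline{\Phi}_{\mathrm{inv}}*E*(e_0-E)^{-1},
\]
using $e_0-(e_0-E)^{-1} = -E*(e_0-E)^{-1}$. Taking norms and applying the algebra property to the two factors $\overline{\Phi}_{\mathrm{inv}}*E$ and $(e_0-E)^{-1}$ gives
\[
\|\overline{\Phi}_{\mathrm{inv}} - \overline{\Phi}^{-1}\|_{1,\tau}\leq \frac{\|\overline{\Phi}_{\mathrm{inv}}*E\|_{1,\tau}}{1-\|E\|_{1,\tau}},
\]
which is exactly the stated bound. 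Note that $\overline{\Phi}_{\mathrm{inv}}*E$ is kept together rather than split. Since it is a finite sequence supported in $\Pi^{\leq 6N_0}$, its norm can be evaluated rigorously.

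The main obstacle is the careful verification of the weighted Banach algebra property for the even-restricted convolution with the $\alpha_n$ weights. The rest is a standard Neumann series argument.
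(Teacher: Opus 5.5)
Your proposal is correct and takes essentially the same approach as the paper. The paper does not prove this itself but defers to \cite{olivier_kevin_paper}, where the bound is obtained exactly as you do: invert $\overline{\Phi}*\overline{\Phi}_{\mathrm{inv}} = e_0 - E$ by a Neumann series in the commutative Banach algebra $\ell^1_\tau(\mathbb{N}_0)$, then write $\overline{\Phi}^{-1}-\overline{\Phi}_{\mathrm{inv}} = \overline{\Phi}_{\mathrm{inv}}*E*(e_0-E)^{-1}$. Your identification of the algebra inverse with $\gamma(1/\overline{\phi})$ is not needed for the lemma as stated, but it correctly justifies how the paper later uses $\overline{\Phi}^{-1}$.
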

\begin{proof}
The proof can be found in \cite{olivier_kevin_paper}.
\end{proof}
Lemma \ref{lem : phi bound} provides us with an estimate on the difference of the finite truncation $\overline{\Phi}_{\mathrm{inv}}$ and the full inverse element $\overline{\Phi}^{-1}$. Note that to control $\overline{\Phi}^{-2}$, we apply Lemma \ref{lem : phi bound} directly on the squared term.
\subsection{Computing the Bounds for Localized Patterns}\label{sec : Bounds of patterns}
In this section, we provide explicit formulas for the bounds $\mathcal{Y}_0,\mathcal{Z}_1,$ and $\mathcal{Z}_2$ in Theorem \ref{th: radii polynomial}.
Before we begin the computation of the bounds, we recall some preliminary notations from \cite{unbounded_domain_cadiot}. First, given $u \in L^\infty(\mathbb{R})$,  denote by 
\begin{align}\label{def : multiplication operator}
    \mathbb{u}  \colon L^2(\mathbb{R}) &\to L^2(\mathbb{R})\\
    v &\mapsto uv
\end{align}
 the multiplication operator associated to $u$. Similarly, given $U= (u_n)_{n \in \mathbb{N}_0} \in \ell^1(\mathbb{N}_0)$,  denote by
\begin{align}\label{def : discrete conv operator}
    \mathbb{U} : \ell^2(\mathbb{N}_0) &\to \ell^2(\mathbb{N}_0) \\
    V &\mapsto  U * V
\end{align}
 the discrete convolution operator associated to $U$. Now, we provide explicit formulas for the bounds of Theorem \ref{th: radii polynomial} in the following sections. In particular, each formula relies on finite dimensional computations (that is vector or matrix norms), which can be rigorously evaluated thanks to computer-assisted techniques (cf. \cite{julia_blanco_fassler}). This justifies how specific choices for the approximate objects $\bar{\mathbf{u}}$ and $\mathbb{A}$. We also introduce three useful notations
{\footnotesize\begin{align}\label{def : scr g V1 and V2}
    \mathscr{g}(\overline{\mathbf{U}}) \bydef -\overline{\Psi} * \overline{\Phi}_{\mathrm{inv}} - \lambda_6 \overline{U}_1 - \lambda_7 \overline{U}_2, ~ \mathscr{V}_1 \bydef -(\overline{\Phi}*\overline{\Psi}_3 - \overline{\Psi}*\overline{\Psi}_1)*\overline{\Phi}_{\mathrm{inv}}^2 - \lambda_6 e_0, ~ \mathscr{V}_2 \bydef -\overline{\Psi}_2 * \overline{\Phi}_{\mathrm{inv}} - \lambda_7 e_0.
\end{align}}
Essentially, \eqref{def : scr g V1 and V2} provides notation for the approximations of $g(\overline{\mathbf{U}}), V_1,$ and $V_2$ using $\overline{\Phi}_{\mathrm{inv}}$.
\subsubsection{The Bound \texorpdfstring{$\mathcal{Y}_0$}{Y0}}
In this section, we compute the bound $\mathcal{Y}_0$. Note that this computation is possibly thanks to the construction done in Section \ref{sec : nonlinearity} to handle the nonlinearity.
\begin{lemma}\label{lem : bound Y_0}
Let $\mathcal{Y}_0 > 0$ be defined as 
\begin{align}
    \mathcal{Y}_0 \bydef |\om|^{\frac{1}{2}} \left(\mathcal{Y}_{0,1} + \mathcal{Y}_{0,2}\right)
\end{align}
where 
\begin{align}
    &\mathcal{Y}_{0,1} \bydef \sqrt{\left\|B^N\left(L \overline{\mathbf{U}} + \begin{bmatrix}
        \mathscr{g}(\overline{\mathbf{U}}) \\ 0
    \end{bmatrix}\right)\right\|_{2}^2 + \left\|(\bpi^{\leq N_0}-\bpi^{\leq N})L \overline{\mathbf{U}} + \begin{bmatrix}
        (\Pi^{\leq 4N_0} - \Pi^{\leq N})(\overline{\Psi}*\overline{\Phi}_{\mathrm{inv}}) \\ 0
    \end{bmatrix}\right\|_{2}^2} \\
    &\mathcal{Y}_{0,2} \bydef  \max(1,\|B^N\|_{2}) \|
        \overline{\Psi}\|_{2}\frac{\|\overline{\Phi}_{\mathrm{inv}}*(1-\overline{\Phi}*\overline{\Phi}_{\mathrm{inv}})\|_{1}}{1-\|1-\overline{\Phi}*\overline{\Phi}_{\mathrm{inv}}\|_{1}}.
\end{align}
Then $\|\mathbb{A}\mathbb{F}(\overline{\mathbf{u}})\|_{\mathcal{H}} \leq \mathcal{Y}_0.$
\end{lemma}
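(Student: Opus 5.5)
We reduce everything to sequence space and split $\mathbb{F}(\overline{\mathbf{u}})$ into a computable part and an inversion error. Since $\mathbb{A} = \mathbb{L}^{-1}\mathbb{B}$ and $\mathbb{L}:\mathcal{H}_e\to L^2_e$ is an isometric isomorphism, we have $\|\mathbb{A}\mathbb{F}(\overline{\mathbf{u}})\|_{\mathcal{H}} = \|\mathbb{B}\mathbb{F}(\overline{\mathbf{u}})\|_2$.

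\textbf{Step 1: localizing to $\Omega_0$.} By construction, $\overline{\mathbf{u}}=\bgam^\dagger(\overline{\mathbf{U}})$ is supported in $\overline{\om}$ and lies in $\mathcal{H}_e$ (since $\overline{\mathbf{u}}(\pm d)=0$). The operator $\mathbb{L}$ is differential, and $\mathbb{g}$ vanishes at $0$. Hence $\mathbb{F}(\overline{\mathbf{u}})$ is supported in $\overline{\om}$, and $\bgam(\mathbb{F}(\overline{\mathbf{u}})) = L\overline{\mathbf{U}} + G(\overline{\mathbf{U}})$ with $G(\overline{\mathbf{U}}) = (g(\overline{\mathbf{U}}),0)$. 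Since the $\mathbb{1}_{\R\setminus\om}$ part of $\mathbb{B}$ then acts trivially, Lemma \ref{lem : gamma and Gamma properties} gives
\[
\|\mathbb{B}\mathbb{F}(\overline{\mathbf{u}})\|_2 = |\om|^{\frac12}\,\|B(L\overline{\mathbf{U}} + G(\overline{\mathbf{U}}))\|_2 .
\]

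\textbf{Step 2: splitting off the inversion error.} Write $G(\overline{\mathbf{U}}) = (\mathscr{g}(\overline{\mathbf{U}}),0) + (-\overline{\Psi}*(\overline{\Phi}^{-1}-\overline{\Phi}_{\mathrm{inv}}),0)$. By the triangle inequality, $\|B(\cdot)\|_2$ is bounded by a computable term plus an error term.

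For the error term, use $\|B\|_2 = \max(1,\|B^N\|_2)$, which follows from Lemma \ref{lem:bound_B} (or directly from the block form of $B$). Then apply Young's inequality \eqref{young_inequality}:
\[
\|\overline{\Psi}*(\overline{\Phi}^{-1}-\overline{\Phi}_{\mathrm{inv}})\|_2 \le \|\overline{\Psi}\|_2\,\|\overline{\Phi}^{-1}-\overline{\Phi}_{\mathrm{inv}}\|_1 .
\]
Lemma \ref{lem : phi bound} with $\tau=1$ bounds the last factor, giving exactly $\mathcal{Y}_{0,2}$. This requires the hypothesis $\|1-\overline{\Phi}*\overline{\Phi}_{\mathrm{inv}}\|_1<1$, which is checked rigorously on the computer, and the existence of $\overline{\Phi}^{-1}$ in $\ell^1$ that this hypothesis guarantees.

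\textbf{Step 3: the computable term.} The remaining term is $B\mathbf{W}$ with $\mathbf{W}\bydef L\overline{\mathbf{U}}+(\mathscr{g}(\overline{\mathbf{U}}),0)$. Because $B=\bpi^{>N}+B^N$ and $B^N=\bpi^{\le N}B^N\bpi^{\le N}$, the pieces $B^N\mathbf{W}$ and $\bpi^{>N}\mathbf{W}$ have disjoint supports in the index set. Therefore
\[
\|B\mathbf{W}\|_2^2 = \|B^N\mathbf{W}\|_2^2 + \|\bpi^{>N}\mathbf{W}\|_2^2 .
\]
We then identify $\bpi^{>N}\mathbf{W}$ with the second term of $\mathcal{Y}_{0,1}$ by finite support considerations. $L$ is diagonal and $\overline{\mathbf{U}}=\bpi^{\le N_0}\overline{\mathbf{U}}$, so $\bpi^{>N}L\overline{\mathbf{U}}=(\bpi^{\le N_0}-\bpi^{\le N})L\overline{\mathbf{U}}$ (assuming $N\le N_0$). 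The terms $\lambda_6\overline{U}_1,\lambda_7\overline{U}_2$ likewise vanish above $N_0$. Finally $\overline{\Psi}*\overline{\Phi}_{\mathrm{inv}}$ is supported in $I^{4N_0}$, since both factors are supported in $I^{2N_0}$.

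One bookkeeping point needs care. The $\lambda_6\overline{U}_1+\lambda_7\overline{U}_2$ contributions between $N$ and $N_0$ should be absorbed together with $L\overline{\mathbf{U}}$; effectively $L$ there includes $\lambda_6-1$ and $\lambda_7$, which cancel in the first component. I expect this matching of the stated formula to be the only delicate part. The rest is an assembly of the cited lemmas, which yields $\mathcal{Y}_0 = |\om|^{1/2}(\mathcal{Y}_{0,1}+\mathcal{Y}_{0,2})$.
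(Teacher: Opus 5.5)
Your proposal is correct and follows essentially the same route as the paper. The $\mathbb{L}$-isometry and Parseval reduce the problem to $|\om|^{1/2}\|B F(\overline{\mathbf{U}})\|_2$. The inversion error is split off and bounded via $\max(1,\|B^N\|_2)$, Young's inequality and Lemma \ref{lem : phi bound}. The computable part is then split orthogonally into its $\bpi^{\leq N}$ and $\bpi^{>N}$ pieces.

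The bookkeeping point you flag is genuine; the paper's proof simply asserts the identification. What the argument actually yields is $\|\bpi^{>N}(L\overline{\mathbf{U}} + (\mathscr{g}(\overline{\mathbf{U}}),0))\|_2$. Its first component is $(\Pi^{\leq N_0}-\Pi^{\leq N})(L_{11}-\lambda_6 I_d)\overline{U}_1 - (\Pi^{\leq 4N_0}-\Pi^{\leq N})(\overline{\Psi}*\overline{\Phi}_{\mathrm{inv}})$: the $\lambda_6,\lambda_7$ terms cancel, and the sign in front of $\overline{\Psi}*\overline{\Phi}_{\mathrm{inv}}$ is a minus, which you did not note. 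This is the quantity that should be evaluated in place of the displayed second term of $\mathcal{Y}_{0,1}$.
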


\begin{proof}
We follow the same steps as those of Lemma 4.3 in \cite{gs_cadiot_blanco}. In particular, we combine \eqref{def : definition of the norm and inner product Hl}, the operator $\mathbb{A}$, and the properties of $\bgam$. Additionally, since $\mathrm{supp}(\overline{\mathbf{u}}) \subset \overline{\om}$, it follows that $\mathrm{supp}(\mathbb{g}(\overline{\mathbf{u}})) \subset \overline{\om}$. Hence, we can apply Parseval's identity and get
\begin{align}
    \|\mathbb{A}\mathbb{F}(\overline{\mathbf{u}})\|_{\mathcal{H}} = \|\mathbb{B}\mathbb{F}(\overline{\mathbf{u}})\|_{2} &= |\om|^{\frac{1}{2}}\|BF(\overline{\mathbf{U}})\|_2.
    \end{align}
Now, we introduce $\overline{\Phi}_{\mathrm{inv}}$ as
{\small\begin{align}
    \|BF(\overline{\mathbf{U}})\|_{2} =  \|B(L\overline{\mathbf{U}} + G(\overline{\mathbf{U}}))\|_{2} 
    &=\left\|B\left(L\overline{\mathbf{U}} + \begin{bmatrix}
        g(\overline{\mathbf{U}}) \\
        0
    \end{bmatrix}\right)\right\|_{2} \\
    &\leq \left\|B\left(L \overline{\mathbf{U}} + \begin{bmatrix}
        \mathscr{g}(\overline{\mathbf{U}}) \\ 0
    \end{bmatrix}\right)\right\|_{2} + \|B_{11}\|_{2} \|
        \overline{\Psi}*(\overline{\Phi}^{-1} - \overline{\Phi}_{\mathrm{inv}})\|_{2} \\
        &\leq \left\|B\left(L \overline{\mathbf{U}} + \begin{bmatrix}
        \mathscr{g}(\overline{\mathbf{U}}) \\ 0
    \end{bmatrix}\right)\right\|_{2} + \max(1,\|B_{11}^N\|_{2}) \|
        \overline{\Psi}\|_{2}\|\overline{\Phi}^{-1} - \overline{\Phi}_{\mathrm{inv}}\|_{1}\label{return to for Y01 and Y02}
\end{align}}
where the last step followed from Young's inequality for convolution (see \eqref{young_inequality}). We now apply Lemma \ref{lem : phi bound} to obtain
\begin{align}
    \max(1,\|B_{11}^N\|_{2}) \|
        \overline{\Psi}\|_{2}\|\overline{\Phi}^{-1} - \overline{\Phi}_{\mathrm{inv}}\|_{1} \leq  \max(1,\|B^N\|_{2}) \|
        \overline{\Psi}\|_{2}\frac{\|\overline{\Phi}_{\mathrm{inv}}*(1-\overline{\Phi}*\overline{\Phi}_{\mathrm{inv}})\|_{1}}{1-\|1-\overline{\Phi}*\overline{\Phi}_{\mathrm{inv}}\|_{1}} \bydef \mathcal{Y}_{0,2}\label{phi estimate in Y0}
\end{align}
For the remaining term, we follow the steps of Lemma 4.3 of \cite{blanco_cadiot_fassler_saddle} on \eqref{return to for Y01 and Y02} to get
    {\scriptsize\begin{align}
      \left\|B\left(L \overline{\mathbf{U}} + \begin{bmatrix}
        \mathscr{g}(\overline{\mathbf{U}}) \\ 0
    \end{bmatrix}\right)\right\|_{2}^2
    &= \left\|\bpi^{\leq N}B\left(L \overline{\mathbf{U}} + \begin{bmatrix}
        \mathscr{g}(\overline{\mathbf{U}}) \\ 0
    \end{bmatrix}\right)\right\|_{2}^2 + \left\|\bpi^{> N}\left(L \overline{\mathbf{U}} + \begin{bmatrix}
        \mathscr{g}(\overline{\mathbf{U}}) \\ 0
    \end{bmatrix}\right)\right\|_{2}^2 \\
    &= \left\|B^N\left(L \overline{\mathbf{U}} + \begin{bmatrix}
        \mathscr{g}(\overline{\mathbf{U}}) \\ 0
    \end{bmatrix}\right)\right\|_{2}^2 + \left\|(\bpi^{\leq N_0}-\bpi^{\leq N})L \overline{\mathbf{U}} + \begin{bmatrix}
        (\Pi^{\leq 4N_0} - \Pi^{\leq N})(\overline{\Psi}*\overline{\Phi}_{\mathrm{inv}}) \\ 0
    \end{bmatrix}\right\|_{2}^2 \\
    &\bydef \mathcal{Y}_{0,1}^2.\label{parseval in Y0}
    \end{align}}
Combining \eqref{phi estimate in Y0} and \eqref{parseval in Y0} yields the result.
\end{proof}
\subsubsection{The Bound \texorpdfstring{$\mathcal{Z}_2$}{Z2}}
Let us now compute the $\mathcal{Z}_2$ bound. Before doing so, we provide a useful lemma.
\begin{lemma}\label{lem : extra kappa estimates}
Let $\mathbf{u} \bydef (u_1,u_2),\mathbf{v} = (v_1,v_2),\mathbf{s} \bydef (s_1,s_2),\mathbf{y} = (y_1,y_2),\mathbf{z} = (z_1,z_2) \in \mathcal{H}$. Let $\kappa_1$ be defined as in Lemma \ref{corr : banach algebra}. Then, for all $i,j,k,o,p \in \{1,2\}$, it follows that
\begin{align}
    \|u_i v_j\|_{2} \leq \kappa_1 \|\mathbf{u}\|_{\mathcal{H}}\|\mathbf{v}\|_{\mathcal{H}}, ~ \|u_i v_j s_k\|_{2} \leq \kappa_1 \|l^{-1}\|_{\mathcal{M}_2} \|\mathbf{u}\|_{\mathcal{H}}\|\mathbf{v}\|_{\mathcal{H}}\|\mathbf{s}\|_{\mathcal{H}},\label{lem kappa first two}
   \end{align}
   {\scriptsize\begin{align}
    &\|u_i v_j s_k y_o\|_{2} \leq \kappa_1 \|l^{-1}\|_{\mathcal{M}_2}^2 \|\mathbf{u}\|_{\mathcal{H}}\|\mathbf{v}\|_{\mathcal{H}}\|\mathbf{y}\|_{\mathcal{H}} \|\mathbf{z}\|_{\mathcal{H}},~\|u_i v_j s_k y_o z_p\|_{2} \leq \kappa_1 \|l^{-1}\|_{\mathcal{M}_2}^3 \|\mathbf{u}\|_{\mathcal{H}}\|\mathbf{v}\|_{\mathcal{H}}\|\mathbf{s}\|_{\mathcal{H}}\|\mathbf{y}\|_{\mathcal{H}} \|\mathbf{z}\|_{\mathcal{H}}.\label{lem kappa third fourth}
\end{align}}
\end{lemma}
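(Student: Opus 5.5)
The plan is to reduce every estimate to two elementary facts, both already used in the proof of Lemma \ref{corr : banach algebra}, which in turn rely on the steps of Lemma 2.1 of \cite{blanco_cadiot_fassler_saddle}. For any $\mathbf{w}\in\mathcal{H}$ and $j\in\{1,2\}$ we have
\begin{align}
\|w_j\|_2 \leq \|l^{-1}\|_{\mathcal{M}_1}\|\mathbf{w}\|_{\mathcal{H}} \quad\text{and}\quad \|w_j\|_\infty \leq \|l^{-1}\|_{\mathcal{M}_2}\|\mathbf{w}\|_{\mathcal{H}}.
\end{align}
The first follows from Plancherel: $\hat{\mathbf{w}} = l^{-1}\,\widehat{\mathbb{L}\mathbf{w}}$, so $|\hat{w}_j(\xi)| \leq |l^{-1}(\xi)\widehat{\mathbb{L}\mathbf{w}}(\xi)|_2$. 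The second follows from $\|w_j\|_\infty \leq \|\hat{w}_j\|_1$ together with Cauchy--Schwarz applied row by row to $l^{-1}\widehat{\mathbb{L}\mathbf{w}}$, which is exactly what produces the $\mathcal{M}_2$ norm.

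With these, each product is estimated by keeping exactly one factor in $L^2$ and putting all others in $L^\infty$. For the product of two factors,
\begin{align}
\|u_iv_j\|_2 \leq \|u_i\|_\infty\|v_j\|_2 \leq \|l^{-1}\|_{\mathcal{M}_2}\|l^{-1}\|_{\mathcal{M}_1}\|\mathbf{u}\|_{\mathcal{H}}\|\mathbf{v}\|_{\mathcal{H}} = \kappa_1\|\mathbf{u}\|_{\mathcal{H}}\|\mathbf{v}\|_{\mathcal{H}}.
\end{align}
For three, four or five factors, the estimate $\|u_i w\|_2\le\|u_i\|_\infty\|w\|_2$ is applied repeatedly. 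Each extra factor contributes one further $\|l^{-1}\|_{\mathcal{M}_2}\|\cdot\|_{\mathcal{H}}$, and the last remaining pair is handled by the two-factor bound. This gives the constants $\kappa_1\|l^{-1}\|_{\mathcal{M}_2}$, $\kappa_1\|l^{-1}\|_{\mathcal{M}_2}^2$ and $\kappa_1\|l^{-1}\|_{\mathcal{M}_2}^3$ respectively. An induction on the number of factors makes this uniform in $i,j,k,o,p$.

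There is no real obstacle. The only care needed is to record the two embedding inequalities precisely, in particular that the $L^\infty$ bound uses the $\mathcal{M}_2$ norm with the maximum over rows, so that it holds for each component $j$. One should also note a typo in the four-factor statement: its right-hand side should read $\|\mathbf{u}\|_{\mathcal{H}}\|\mathbf{v}\|_{\mathcal{H}}\|\mathbf{s}\|_{\mathcal{H}}\|\mathbf{y}\|_{\mathcal{H}}$, since the factors on the left are $u_i,v_j,s_k,y_o$. That corrected version is what the argument proves.
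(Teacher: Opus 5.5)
Your proposal is correct and is essentially the paper's proof: the paper also keeps a single factor in $L^2$, at cost $\|l^{-1}\|_{\mathcal{M}_1}$, and places every other factor in $L^\infty$, each at cost $\|l^{-1}\|_{\mathcal{M}_2}$, citing for these embeddings the same steps (Lemma 2.2 of the referenced work) that you derive directly via Plancherel and row-wise Cauchy--Schwarz. You are also right that the four-factor bound must have $\|\mathbf{s}\|_{\mathcal{H}}\|\mathbf{y}\|_{\mathcal{H}}$ in place of $\|\mathbf{y}\|_{\mathcal{H}}\|\mathbf{z}\|_{\mathcal{H}}$; as printed it is false, since taking $\mathbf{z}=0$ makes the right-hand side vanish while the left-hand side does not depend on $\mathbf{z}$.
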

\begin{proof}
The proof of \eqref{lem kappa first two} follows from that of Lemma 2.2 of \cite{blanco_cadiot_fassler_saddle}. For \eqref{lem kappa third fourth}, observe that
\begin{align}
    \nonumber \|u_i v_j s_k y_o\|_{2}  \leq  \|u_i \|_{\infty} \|s_k\|_{\infty} \|y_o\|_{\infty} \|v_j\|_{2}, ~ \|u_i v_j s_k y_o z_p\|_{2}  \leq  \|u_i \|_{\infty} \|s_k\|_{\infty} \|y_o\|_{\infty} \|z_p\|_{\infty}\|v_j\|_{2}.
\end{align}
Then, we can again use the steps of Lemma 2.2 from \cite{blanco_cadiot_fassler_saddle} to obtain the results. 
\end{proof}
We now state the lemma for the $\mathcal{Z}_2$ bound.
\begin{lemma}\label{lem : Z2 patterns}
    Let $\kappa_0,\kappa_1 > 0$ be defined as in Lemma \ref{corr : banach algebra} . Moreover, define $q_1, q_2 \in L^\infty(\mbb{R})\cap L^2(\mbb{R})$  as
    \begin{align}\label{def : qjs}
        &q_1 \bydef \lambda_1 \nu_1 \nu_2 \mathbb{1}_{\om} + \nu_1 \nu_2 \overline{u}_1,~
        q_2 \bydef (-\nu_1 + \lambda_1^2 \nu_1 \nu_2) \mathbb{1}_{\om} + 2\lambda_1 \nu_1 \nu_2 \overline{u}_1 + \nu_1 \nu_2 \overline{u}_1^2
    \end{align}
    and their associated Fourier Coefficients $Q_1 = \gamma(q_1), Q_2 = \gamma(q_2)$. Next, let 
    \begin{align}
        \mathcal{Q} \bydef \{(0,1),(1,0),(1,1),(2,0),(2,1),(3,0),(4,0)\}\label{def : set Q}.
    \end{align}
Then define $q_{i,j} \in L^\infty(\mbb{R})\cap L^2(\mbb{R})$ and $\upsilon_{0,0}^{i,j} \in \mathbb{R}$ for $(i,j) \in \mathcal{Q}$ as in \eqref{def : qijs} and \eqref{def : rho and upsilon}. Let $\mathcal{Z}_2 : [0, \infty) \rightarrow (0, \infty)$ be defined as
    $$\mathcal{Z}_2(r) \bydef \kappa_0^2 \kappa_1 \left(\mathcal{Z}_{2,1} + \|l^{-1}\|_{\mathcal{M}_2} \mathcal{Z}_{2,2} r + \kappa_0^2 \sum_{(i,j) \in \mathcal{Q}} \mathcal{Z}_{2,i,j} \|l^{-1}\|_{\mathcal{M}_2}^{i + j - 1} r^{i + j - 1}\right)$$
    for all $r\geq 0$ and where
    \begin{align}
        &\mathcal{Z}_{2,1} \bydef \max\left\{|\lambda_1| \nu_1 \nu_2,\left(\|B_{11}^N Q_1^2 (B_{11}^N)^*\|_{2} + \|Q_1\|_{1}^2\right)^{\frac{1}{2}}\right\}, \\
        &\mathcal{Z}_{2,2} \bydef \max\left\{|-\nu_1 + \lambda_1^2 \nu_1 \nu_2|,\left(\|B_{11}^N Q_2^2 (B_{11}^N)^*\|_{2} + \|Q_2\|_{1}^2\right)^{\frac{1}{2}}\right\}, \\
        &\mathcal{Z}_{2,i,j}\bydef \max\left\{|\upsilon^{i,j}_{0,0}|, \left(\|B_{11}^N Q_{i,j}^2 (B_{11}^N)^*\|_{2} + \|Q_{i,j}\|_{1}^2\right)^{\frac{1}{2}}\right\} ~ \text{for all} ~ (i,j) \in \mathcal{Q}.
    \end{align} 
Then $\|\mathbb{A}(D\mathbb{F}(\mathbf{u}) - D\mathbb{F}(\bar{\mathbf{u}})) \|_{\mathcal{H}} \leq \mathcal{Z}_2(r)r$ for all $r>0$ and $\mathbf{u} \in \overline{B_r(\overline{\mathbf{u}})} \subset \mathcal{H}_e$.
\end{lemma}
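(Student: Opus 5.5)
Since $\mathbb{A} = \mathbb{L}^{-1}\mathbb{B}$ and $\mathbb{L}:\mathcal{H}_e\to L^2_e$ is an isometric isomorphism, the plan is to reduce the claim to the estimate
\[
\|\mathbb{A}(D\mathbb{F}(\mathbf{u}) - D\mathbb{F}(\bar{\mathbf{u}}))\mathbf{h}\|_{\mathcal{H}} = \|\mathbb{B}(D\mathbb{G}(\mathbf{u}) - D\mathbb{G}(\bar{\mathbf{u}}))\mathbf{h}\|_2 .
\]
Only the first component of $\mathbb{G}$ is nonzero. The second row of $\mathbb{B}$ is $(0, I_d)$, so only $\mathbb{B}_{11}$ acts, and we must bound $\|\mathbb{B}_{11}(D\mathbb{g}(\mathbf{u}) - D\mathbb{g}(\bar{\mathbf{u}}))\mathbf{h}\|_2$. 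Write $\mathbf{u} = \bar{\mathbf{u}} + \mathbf{w}$ with $\|\mathbf{w}\|_{\mathcal{H}}\le r$. The linear terms $-\lambda_6u_1-\lambda_7u_2$ cancel in the difference.

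\textbf{Algebraic expansion.} Put the rational part of $D_{u_j}\mathbb{g}$ over the common denominator $\phi(\mathbf{u})^2\phi(\bar{\mathbf{u}})^2$, where $\phi(\mathbf{u}) = 1+u_1+\lambda_1+\nu_2(u_1+\lambda_1)^2$. The numerator of the difference is a polynomial in $(w_1,w_2)$ that vanishes at $\mathbf{w}=0$. Its coefficients are polynomials in $\bar u_1,\bar u_2$, multiplied by $\mathbb{1}_{\om}$ where constants appear.
\begin{itemize}
\item The terms of lowest order, divided only by $\phi(\bar{\mathbf{u}})\phi(\mathbf{u})$, are the ones with coefficients $q_1$ and $q_2$ in \eqref{def : qjs}. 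This gives the $\mathcal{Z}_{2,1}$ and $\mathcal{Z}_{2,2}r$ contributions, with one factor $\kappa_0^2$.
\item The remaining monomials $w_1^iw_2^j$, $(i,j)\in\mathcal{Q}$, carry coefficients $q_{i,j}$. These are the functions of \eqref{def : qijs}, with $\upsilon^{i,j}_{0,0}$ the constant value of $q_{i,j}$ outside $\om$. They are divided by the full fourth-degree denominator, which gives the factor $\kappa_0^4$.
\end{itemize}
I would carry out this expansion symbolically and match it to the set $\mathcal{Q}$; it is routine but long.

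\textbf{Denominators.} Each factor $1/\phi$ is bounded pointwise by $\kappa_0$, since $h(x)\ge (4\nu_2-1)/(4\nu_2)$ as in Lemma \ref{corr : banach algebra}. This bound holds for any real $\mathbf{u}$, so no smallness assumption on $r$ is needed. It is the key point that makes the nonpolynomial term tractable.

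\textbf{Bounding each term.} A typical term has the form $\mathbb{B}_{11}(q\, m\, h_k)$, where $m = w_1^iw_2^j/(\text{denominator})$. I bound it by
\[
\|\mathbb{B}_{11}\mathbb{q}\|_2\,\|m\, h_k\|_2 .
\]
\begin{itemize}
\item For $\|m\,h_k\|_2$, the denominators contribute powers of $\kappa_0$. Lemma \ref{lem : extra kappa estimates} gives $\|w^{i+j}h_k\|_2 \le \kappa_1\|l^{-1}\|_{\mathcal{M}_2}^{i+j-1} r^{i+j}\|\mathbf{h}\|_{\mathcal{H}}$. Factoring out one $r$ yields exactly the powers $r^{i+j-1}\|l^{-1}\|_{\mathcal{M}_2}^{i+j-1}$ in $\mathcal{Z}_2(r)$.
\item For $\|\mathbb{B}_{11}\mathbb{q}\|_2$, split $q = \upsilon\mathbb{1}_{\mathbb{R}\setminus\om} + q\mathbb{1}_{\om}$. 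On $\mathbb{R}\setminus\om$, $\mathbb{B}_{11}$ acts as the identity and $q$ is constant, which gives the $|\upsilon|$ entry of the max. On $\om$, use Lemma \ref{lem : gamma and Gamma properties} to pass to $B_{11}\mathbb{Q}$, and write $\|B_{11}\mathbb{Q}\|_2^2 = \|B_{11}\mathbb{Q}^2B_{11}^*\|_2$. With $B_{11} = B_{11}^N + \Pi^{>N}$, the cross terms are handled so that the tail contributes $\|\mathbb{Q}\|_2^2 \le \|Q\|_1^2$ by Young's inequality. This gives $\left(\|B_{11}^NQ^2(B_{11}^N)^*\|_2 + \|Q\|_1^2\right)^{1/2}$, as in the analogous $\mathcal{Z}_2$ bounds of \cite{gs_cadiot_blanco,blanco_cadiot_fassler_saddle}. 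The max over the two regions then follows from orthogonality of supports.
\end{itemize}

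\textbf{Main obstacle.} I expect the hardest step to be the bound on $\|B_{11}\mathbb{Q}\|_2$, specifically justifying the removal of the cross terms between $B_{11}^N$ and the tail. I would follow the corresponding argument of \cite{blanco_cadiot_fassler_saddle}. The other delicate point is tracking which powers of $\kappa_0$ multiply which terms in the expansion.
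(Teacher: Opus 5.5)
Your overall route is the paper's: reduction to $\mathbb{B}_{11}$, common denominators, expansion in the perturbation, Lemma~\ref{lem : extra kappa estimates}, and the support-splitting bound for $\|\mathbb{B}_{11}\mathbb{q}\|_2$. The gap is your assertion that the $q_1,q_2$ terms ``give the $\mathcal{Z}_{2,1}$ and $\mathcal{Z}_{2,2}r$ contributions''; this does not survive the expansion you defer. These terms come from the $D_{u_2}\mathbb{g}$ component, not from being of lowest order. With $s=\bar u_1+\lambda_1$, a direct computation gives
\[
D_{u_2}\mathbb{g}(\mathbf{u})-D_{u_2}\mathbb{g}(\bar{\mathbf{u}})=-\frac{\nu_1(\nu_2 s^2-1)\,w_1+\nu_1\nu_2 s\,w_1^2}{\phi(\mathbf{u})\,\phi(\bar{\mathbf{u}})}.
\]
So on $\om$ the numerator is $q_2w_1+q_1w_1^2$: $q_2$ multiplies the linear term and $q_1$ the quadratic one. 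Your bounds then yield $\kappa_0^2\kappa_1\big(\mathcal{Z}_{2,2}+\|l^{-1}\|_{\mathcal{M}_2}\mathcal{Z}_{2,1}r\big)r$ for this part. The stated $\kappa_0^2\kappa_1\big(\mathcal{Z}_{2,1}+\|l^{-1}\|_{\mathcal{M}_2}\mathcal{Z}_{2,2}r\big)r$ dominates that only when $(\mathcal{Z}_{2,1}-\mathcal{Z}_{2,2})(1-\|l^{-1}\|_{\mathcal{M}_2}r)\geq 0$, which nothing in the hypotheses guarantees.

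The paper's appendix makes exactly the same slip. It correctly writes $\mathbb{q}_1\mathbb{h}_1^2+\mathbb{q}_2\mathbb{h}_1$, then bounds it by $\|\mathbb{B}_{11}\mathbb{q}_1\|_2\|h_1v_2\|_2+\|\mathbb{B}_{11}\mathbb{q}_2\|_2\|h_1^2v_2\|_2$. So what your argument (and the paper's) actually establishes is the lemma with $\mathcal{Z}_{2,1}$ and $\mathcal{Z}_{2,2}$ interchanged. You need to carry out this part of the expansion rather than assert the matching.

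A related bookkeeping point concerns the $q_{i,j}$. As defined in \eqref{def : qijs}, they have no constant term; the full coefficient is $\upsilon^{i,j}_{0,0}+q_{i,j}$. In your splitting, the $\om$-entry must therefore be built from $\gamma\big((\upsilon^{i,j}_{0,0}+q_{i,j})\mathbb{1}_{\om}\big)=\upsilon^{i,j}_{0,0}e_0+Q_{i,j}$, not from $Q_{i,j}$ alone. Your reading of $q_{i,j}$ as the full coefficient handles this, but it is not the paper's definition.

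Where you do depart from the paper, your version is the sound one. The paper pulls the multiplier $1/(w_1(u_1)w_1(\bar u_1))$, and its square in the $D_{u_1}$ part, out in front of $\mathbb{B}_{11}$, bounding it by $\kappa_0^2$, resp.\ $\kappa_0^4$. This tacitly commutes $\mathbb{B}_{11}$ with a multiplication operator, which is not justified since $\mathbb{B}_{11}$ contains the nonlocal piece $\Gamma^\dagger(B_{11}^N)$. You instead keep the denominator to the right of $\mathbb{B}_{11}\mathbb{q}$ and use $|1/\phi|\leq\kappa_0$ pointwise inside $\|m\,h_k\|_2$. That only uses the fact that multiplication operators commute with each other, and it gives the same constants. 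Your $\phi$ is also correct on all of $\mathbb{R}$, whereas the paper's $w_1$ replaces the constants by $\mathbb{1}_{\om}$.

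Finally, the cross terms you flag as the main obstacle never appear if you split on the output side. Since $\Pi^{\leq N}B_{11}=B_{11}^N$ and $\Pi^{>N}B_{11}=\Pi^{>N}$, Pythagoras gives $\|B_{11}\mathbb{Q}X\|_2^2=\|B_{11}^N\mathbb{Q}X\|_2^2+\|\Pi^{>N}\mathbb{Q}X\|_2^2$. Hence $\|B_{11}\mathbb{Q}\|_2^2\leq\|B_{11}^N\mathbb{Q}^2(B_{11}^N)^*\|_2+\|Q\|_1^2$, using $\mathbb{Q}^*=\mathbb{Q}$ and \eqref{young_inequality}.
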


\begin{proof}
The proof can be found in Appendix \ref{apen Z2}.
\end{proof}
 \subsubsection{The Bound \texorpdfstring{$\mathcal{Z}_1$}{Z1}}
For now, we provided explicit formulas for the bounds $\mathcal{Y}_0$ and $\mathcal{Z}_2$, and it remains to  estimate $\|I - \mathbb{A}D\mathbb{F}(\bar{\mathbf{u}})\|_{\mathcal{H}} \leq \mathcal{Z}_1$. This bound requires a bit more analysis, and we present its treatment in this section. For $j \in \{1,2\}$, we first define
\begin{align}
     V_j^N\bydef \Pi^{\leq N} \mathscr{V}_j,~DG^N(\overline{\mathbf{U}})\bydef \begin{bmatrix}
    \mathbb{V}_1^N & \mathbb{V}_2^N \\
    0 & 0
\end{bmatrix},~D\mbb{G}^N(\overline{\mathbf{u}})\bydef \Gamma^\dag(DG^N(\overline{\mathbf{U}})),~ v_j^N\bydef \gamma^\dag(V_j^N) 
\end{align}
where $\mathscr{V}_1$ and $\mathscr{V}_2$ are defined as in \eqref{def : scr g V1 and V2}, and $ \mbb{V}_1^N, \mbb{V}_2^N$ are the discrete convolution operators associated to $V_1^N, V_2^N$. This notation allows us to decompose both $D\mbb{G}(\overline{\mathbf{u}})$ and $DG(\overline{\mathbf{U}})$ using a truncation of size $N$. Additionally, observe that $v_1,v_2 \to 0$ as $|x|\to\infty$ based on how we defined $\mathbb{G}(\mathbf{u})$. We now state a lemma for the $\mathcal{Z}_1$ bound.
\begin{lemma}\label{lem : Z_full_1}
Let $\left(\mathcal{Z}_{u,k,j}\right)_{k \in \{1,2\}, j \in \{1,2,3,4\}}$ be bounds satisfying
\begin{align}
&\mathcal{Z}_{u,1,1} \geq \|\mathbb{1}_{\mathbb{R}\setminus\om} \mathbb{L}_{22}\mathbb{L}_{den}^{-1} \mathbb{v}_1^N\|_{2},~\mathcal{Z}_{u,2,1} \geq \|\mathbb{1}_{\om} (\mathbb{L}_{22}\mathbb{L}_{den}^{-1} - \Gamma^\dagger(L_{22}L_{\mathrm{den}}^{-1}))\mathbb{v}_1^N\|_{2}\\
&\mathcal{Z}_{u,1,2} \geq \|\mathbb{1}_{\mathbb{R}\setminus\om}\mathbb{L}_{21}\mathbb{L}_{den}^{-1}\mathbb{v}_2^N\|_{2},~\mathcal{Z}_{u,2,2} \geq \|\mathbb{1}_{\om} (\mathbb{L}_{21}\mathbb{L}_{den}^{-1} - \Gamma^\dagger(L_{21}L_{\mathrm{den}}^{-1}))\mathbb{v}_2^N\|_{2}
    \\
    &\mathcal{Z}_{u,1,3} \geq \|\mathbb{1}_{\mathbb{R}\setminus\om}\mathbb{L}_{12}\mathbb{L}_{den}^{-1}\mathbb{v}_1^N\|_{2},~\mathcal{Z}_{u,2,3} \geq \|\mathbb{1}_{\om}(\mathbb{L}_{12}\mathbb{L}_{den}^{-1} - \Gamma^\dagger(L_{12}L_{\mathrm{den}}^{-1}))\mathbb{v}_1^N\|_{2}\\
    &\mathcal{Z}_{u,1,4} \geq \|\mathbb{1}_{\mathbb{R}\setminus\om}\mathbb{L}_{22}\mathbb{L}_{den}^{-1}\mathbb{v}_2^N\|_{2},~\mathcal{Z}_{u,2,4} \geq \|\mathbb{1}_{\om}(\mathbb{L}_{11}\mathbb{L}_{den}^{-1} - \Gamma^\dagger(L_{11}L_{\mathrm{den}}^{-1}))\mathbb{v}_2^N\|_{2}
\end{align} 
Moreover, given $k \in \{1,2\}$, define  $\mathcal{Z}_{u,k} \bydef \sqrt{(\mathcal{Z}_{u,k,1} + \mathcal{Z}_{u,k,2})^2 + (\mathcal{Z}_{u,k,3}+\mathcal{Z}_{u,k,4})^2}$.
Then, it follows that $\mathcal{Z}_{u,1}$ and $\mathcal{Z}_{u,2}$ satisfy
\begin{align} 
\mathcal{Z}_{u,1} \geq \|\mathbb{1}_{\mathbb{R} \setminus \om} D\mathbb{G}^N(\overline{\mathbf{u}}) \mathbb{L}^{-1}\|_{2},~~
    \mathcal{Z}_{u,2} \geq \|\mathbb{1}_{\om}D\mathbb{G}^N(\overline{\mathbf{u}})(\bGam^\dagger(L^{-1}) - \mathbb{L}^{-1})\|_{2}.\label{def : Zu1 and Zu2}
\end{align}
Also define  $\mathcal{Z}_u  \bydef \sqrt{\mathcal{Z}_{u,1}^2 + \mathcal{Z}_{u,2}^2} $. Furthermore, let $\mathcal{Z}_b$ be a non-negative constant satisfying
\begin{align}
\mathcal{Z}_b &\geq \|I_d - B(I_d + DG^N(\overline{\mathbf{U}})L^{-1})\|_{2}.
\end{align}
We also introduce 
\begin{align}
    &\mathcal{Z}_{\infty,1} \bydef \|V_1^N - \mathscr{V}_1\|_{1} + \|\overline{\Phi}* \overline{\Psi}_3 - \overline{\Psi}* \overline{\Psi}_1\|_{1}\frac{\|\overline{\Phi}_{\mathrm{inv}}^2*(1-\overline{\Phi}^2*\overline{\Phi}_{\mathrm{inv}}^2)\|_{1}}{1-\|1-\overline{\Phi}^2*\overline{\Phi}_{\mathrm{inv}}^2\|_{1}}, \\
    &\mathcal{Z}_{\infty,2} \bydef \|V_2^N - \mathscr{V}_2\|_{1} + \|\overline{\Psi}_2\|_{1}\frac{\|\overline{\Phi}_{\mathrm{inv}}*(1-\overline{\Phi}*\overline{\Phi}_{\mathrm{inv}})\|_{1}}{1-\|1-\overline{\Phi}*\overline{\Phi}_{\mathrm{inv}}\|_{1}}.
\end{align}
Then, we define $\mathcal{Z}_{\infty} \bydef \sqrt{\mathcal{Z}_{\infty,1}^2 + \mathcal{Z}_{\infty,2}^2}$.
Finally, defining $\mathcal{Z}_1>0$ as
\begin{equation}\label{eq : first definition Z1}
    \mathcal{Z}_1 \bydef \mathcal{Z}_b + \max\left\{1, \|B_{11}^N\|_{2}\right\} \left(\mathcal{Z}_{u} +  \|\mathbb{L}^{-1}\|_{2} \mathcal{Z}_{\infty}\right),
\end{equation}
it follows that $ \|I_d -{\mathbb{A}}D\mathbb{F}(\overline{\mathbf{u}})\|_{\mathcal{H}} \leq \mathcal{Z}_1.$
\end{lemma}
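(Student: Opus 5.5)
Since $\mathbb{L} : \mathcal{H}_e \to L^2_e$ is an isometric isomorphism and $\mathbb{A} = \mathbb{L}^{-1}\mathbb{B}$, we have
\begin{align}
\|I_d - \mathbb{A}D\mathbb{F}(\overline{\mathbf{u}})\|_{\mathcal{H}} = \|I_d - \mathbb{B}(I_d + D\mathbb{G}(\overline{\mathbf{u}})\mathbb{L}^{-1})\|_{2},
\end{align}
and I will bound the right-hand side, following the scheme of Lemma 4.5 in \cite{gs_cadiot_blanco} and its 1D adaptation in \cite{blanco_cadiot_fassler_saddle}.

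\textbf{Step 1: remove the non-polynomial error.} I split $D\mathbb{G}(\overline{\mathbf{u}}) = D\mathbb{G}^N(\overline{\mathbf{u}}) + (D\mathbb{G}(\overline{\mathbf{u}}) - D\mathbb{G}^N(\overline{\mathbf{u}}))$. The second piece is the multiplication operator by $(v_1 - v_1^N, v_2 - v_2^N)$ placed in the first row. Its contribution is bounded by $\|\mathbb{B}_{11}\|_2\,\|\mathbb{L}^{-1}\|_2$ times the operator norm of this multiplication. The block structure of $\mathbb{B}$ and Lemma \ref{lem:bound_B} give $\|\mathbb{B}_{11}\|_2 \le \max\{1,\|B^N_{11}\|_2\}$. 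For the multiplication I use $\|v_j - v_j^N\|_\infty \le \|V_j - V_j^N\|_1$, and the row-vector operator norm is at most $\sqrt{\|v_1-v_1^N\|_\infty^2 + \|v_2-v_2^N\|_\infty^2}$. Writing $V_j - V_j^N = (V_j - \mathscr{V}_j) + (\mathscr{V}_j - V_j^N)$, I estimate the first difference with Young's inequality and Lemma \ref{lem : phi bound}. For $j=1$ the lemma is applied to $\overline{\Phi}^2$ and $\overline{\Phi}_{\mathrm{inv}}^2$, as the paper indicates. This produces exactly $\mathcal{Z}_{\infty,1}$ and $\mathcal{Z}_{\infty,2}$, and hence the term $\max\{1,\|B_{11}^N\|_2\}\|\mathbb{L}^{-1}\|_2\mathcal{Z}_\infty$.

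\textbf{Step 2: split space and frequency.} For the truncated part, I insert $\bGam^\dagger(L^{-1})$ and split by $\mathbb{1}_{\om}$ and $\mathbb{1}_{\mathbb{R}\setminus\om}$. Since $v_j^N$ is supported on $\om$, we have $\mathbb{B}\,\mathbb{1}_{\mathbb{R}\setminus\om} = \mathbb{1}_{\mathbb{R}\setminus\om}$ on the first block. This gives
\begin{align}
I_d - \mathbb{B}(I_d + D\mathbb{G}^N\mathbb{L}^{-1}) = \left(I_d - \mathbb{B}(I_d + D\mathbb{G}^N\bGam^\dagger(L^{-1}))\right) - \mathbb{B}\,\mathbb{1}_{\om} D\mathbb{G}^N(\mathbb{L}^{-1} - \bGam^\dagger(L^{-1})) - \mathbb{B}\,\mathbb{1}_{\mathbb{R}\setminus\om} D\mathbb{G}^N\mathbb{L}^{-1}.
\end{align}
By Lemma \ref{lem : gamma and Gamma properties}, the first operator equals $\bGam^\dagger$ of $I_d - B(I_d + DG^N L^{-1})$ on $L^2_{\om}$ and vanishes outside, so its norm is at most $\mathcal{Z}_b$. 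The last two terms have orthogonal ranges after the action of $\mathbb{B}$, so their combined norm is at most $\|\mathbb{B}_{11}\|_2\sqrt{\mathcal{Z}_{u,1}^2+\mathcal{Z}_{u,2}^2}$, which is $\max\{1,\|B_{11}^N\|_2\}\,\mathcal{Z}_u$.

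\textbf{Step 3: justify $\mathcal{Z}_{u,k}$.} The inequalities \eqref{def : Zu1 and Zu2} follow from the explicit inverse $\mathbb{L}^{-1} = \mathbb{L}_{den}^{-1}\begin{bmatrix}\mathbb{L}_{22} & -\mathbb{L}_{12}\\ -\mathbb{L}_{21} & \mathbb{L}_{11}\end{bmatrix}$. Since $D\mathbb{G}^N$ has only a first row $(\mathbb{v}_1^N, \mathbb{v}_2^N)$, each product is a $1\times2$ row of operators. The first entry is a combination of $\mathbb{v}_1^N\mathbb{L}_{22}\mathbb{L}_{den}^{-1}$ and $\mathbb{v}_2^N\mathbb{L}_{21}\mathbb{L}_{den}^{-1}$, and similarly for the second entry. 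The operators commute up to adjoint: $\|\mathbb{1}\,\mathbb{v}\,\mathbb{M}\|_2 = \|\mathbb{M}^*\mathbb{v}\,\mathbb{1}\|_2$, and the Fourier multipliers are real and even. So the triangle inequality on each entry, followed by the row norm bound $\sqrt{a^2+b^2}$, gives $\mathcal{Z}_{u,k}$. The same reasoning applies with the differences $\mathbb{L}_{ij}\mathbb{L}_{den}^{-1} - \Gamma^\dagger(L_{ij}L_{den}^{-1})$ restricted to $\om$.

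\textbf{Main obstacle.} The main difficulty is the bookkeeping in Step 1, specifically controlling $V_1 - \mathscr{V}_1$, which involves $\overline{\Phi}^{-2}$. I need Lemma \ref{lem : phi bound} applied to the pair $(\overline{\Phi}^2, \overline{\Phi}_{\mathrm{inv}}^2)$, together with the identity $(\overline{\Phi}^2)^{-1} = \overline{\Phi}^{-2}$ in the Banach algebra $\ell^1$. I also need to check that multiplication by $\mathbb{1}_{\om}$ commutes correctly with the block structure, so that the Parseval splitting in Step 2 is valid.
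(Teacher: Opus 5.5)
Your Step 1 matches the part of the proof the paper actually writes out. The paper defers the $\mathcal{Z}_b$/$\mathcal{Z}_u$ structure to \cite{blanco_cadiot_fassler_saddle} and only proves $\|V_j-V_j^N\|_1\le\mathcal{Z}_{\infty,j}$. It does this by inserting $\mathscr{V}_j$, using Young's inequality, and applying Lemma~\ref{lem : phi bound} (to $\overline{\Phi}^2,\overline{\Phi}_{\mathrm{inv}}^2$ when $j=1$).

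\textbf{The gap is in Steps 2--3: the indicators sit on the wrong side of $D\mathbb{G}^N(\overline{\mathbf{u}})$.}
\begin{itemize}
\item Since $v_1^N,v_2^N$ vanish outside $\om$, we have $\mathbb{1}_{\mathbb{R}\setminus\om}D\mathbb{G}^N(\overline{\mathbf{u}})=0$, so your third term is identically zero.
\item Your second term is then the full operator $\mathbb{B}\,D\mathbb{G}^N(\mathbb{L}^{-1}-\bGam^\dagger(L^{-1}))$ on all of $L^2_e$. This includes inputs supported in $\mathbb{R}\setminus\om$, on which it equals $\mathbb{B}\,D\mathbb{G}^N\mathbb{L}^{-1}$.
\item Step 3 claims $\mathcal{Z}_{u,2}$ bounds this. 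But with $\mathbb{M}=\mathbb{L}_{ij}\mathbb{L}_{den}^{-1}-\Gamma^\dagger(L_{ij}L_{\mathrm{den}}^{-1})$ (self-adjoint), the hypotheses on $\mathcal{Z}_{u,2,j}$ only control $\|\mathbb{1}_{\om}\mathbb{M}\,\mathbb{v}_k^N\|_2=\|\mathbb{v}_k^N\mathbb{M}\,\mathbb{1}_{\om}\|_2$. That is the restriction to inputs in $\om$.
\item Your adjoint identity gives $\|\mathbb{1}_{\om}\mathbb{v}_k^N\mathbb{M}\|_2=\|\mathbb{M}\,\mathbb{v}_k^N\|_2\ge\|\mathbb{1}_{\om}\mathbb{M}\,\mathbb{v}_k^N\|_2$, which is the wrong direction.
\item What remains uncontrolled is $\mathbb{v}_k^N\mathbb{L}_{ij}\mathbb{L}_{den}^{-1}\mathbb{1}_{\mathbb{R}\setminus\om}$, the nonlocal tail of $\mathbb{L}^{-1}$ fed by data outside $\om$. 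This is exactly what $\mathcal{Z}_{u,1}$ is meant to bound, yet in your argument $\mathcal{Z}_{u,1}$ is attached to a zero operator.
\end{itemize}

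\textbf{The repair is to split on the input side.} Because $\bGam^\dagger(L^{-1})\mathbb{1}_{\mathbb{R}\setminus\om}=0$,
\[
D\mathbb{G}^N(\overline{\mathbf{u}})(\mathbb{L}^{-1}-\bGam^\dagger(L^{-1}))=D\mathbb{G}^N(\overline{\mathbf{u}})(\mathbb{L}^{-1}-\bGam^\dagger(L^{-1}))\mathbb{1}_{\om}+D\mathbb{G}^N(\overline{\mathbf{u}})\mathbb{L}^{-1}\mathbb{1}_{\mathbb{R}\setminus\om}.
\]
The two pieces act on the orthogonal subspaces $L^2(\om)$ and $L^2(\mathbb{R}\setminus\om)$, so the norm of the sum is at most the square root of the sum of the squared norms. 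Take adjoints of each entry of the row; the symbols are real and even and $v_k^N$ is real. This turns them into exactly the forms in the hypotheses:
\[
\mathbb{1}_{\mathbb{R}\setminus\om}\mathbb{L}_{ij}\mathbb{L}_{den}^{-1}\mathbb{v}_k^N
\quad\text{and}\quad
\mathbb{1}_{\om}(\mathbb{L}_{ij}\mathbb{L}_{den}^{-1}-\Gamma^\dagger(L_{ij}L_{\mathrm{den}}^{-1}))\mathbb{v}_k^N .
\]
Your triangle-inequality and row-norm argument then gives $\mathcal{Z}_{u,1}$ and $\mathcal{Z}_{u,2}$, and the final constant $\mathcal{Z}_1$ is unchanged.

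The statement itself misled you. Its displayed inequalities for $\mathcal{Z}_{u,1},\mathcal{Z}_{u,2}$ put the indicator on the output side, so the first one literally says $\mathcal{Z}_{u,1}\ge 0$. They should be read as $\|D\mathbb{G}^N(\overline{\mathbf{u}})\mathbb{L}^{-1}\mathbb{1}_{\mathbb{R}\setminus\om}\|_2$ and $\|D\mathbb{G}^N(\overline{\mathbf{u}})(\bGam^\dagger(L^{-1})-\mathbb{L}^{-1})\mathbb{1}_{\om}\|_2$. That reading is consistent with the $\mathcal{Z}_{u,k,j}$ and with \cite{unbounded_domain_cadiot}. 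Similarly, the second row entry is $-\mathbb{v}_1^N\mathbb{L}_{12}\mathbb{L}_{den}^{-1}+\mathbb{v}_2^N\mathbb{L}_{11}\mathbb{L}_{den}^{-1}$, so $\mathcal{Z}_{u,1,4}$ must involve $\mathbb{L}_{11}$, not $\mathbb{L}_{22}$.
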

\begin{proof}
The proof can be found in \cite{blanco_cadiot_fassler_saddle}. The difference between the formulas is that we do not have a $\sqrt{2}$ present in the definition of $\mathcal{Z}_{u,k}$ since we only have one nonlinear equation. Our main task will be to control the quantities $\|V_j^N - V_j\|_{1}$ for $j = 1,2$. For simplicity, let
    $\zeta \bydef -(\overline{\Phi}* \overline{\Psi}_3 - \overline{\Psi}* \overline{\Psi}_1)$.
Now, observe that
\begin{align}
    \|V_1^N - V_1\|_{1} &\leq \|V_1^N - \mathscr{V}_1\|_{1} + \|\mathscr{V}_1 - V_1\|_{1}\\
    &= \|V_1^N - \mathscr{V}_1\|_{1} + \|\zeta *\overline{\Phi}_{\mathrm{inv}}^2 - \zeta *\overline{\Phi}^{-2}\|_{1} \\
    &\leq \|V_1^N - \mathscr{V}_1\|_{1} + \|\zeta\|_{1}\| \overline{\Phi}_{\mathrm{inv}}^2 -  \overline{\Phi}^{-2}\|_{1} \\
    &\leq \|V_1^N - \mathscr{V}_1\|_{1} + \|\overline{\Phi}* \overline{\Psi}_3 - \overline{\Psi}* \overline{\Psi}_1\|_{1}\frac{\|\overline{\Phi}_{\mathrm{inv}}^2*(1-\overline{\Phi}^2*\overline{\Phi}_{\mathrm{inv}}^2)\|_{1}}{1-\|1-\overline{\Phi}^2*\overline{\Phi}_{\mathrm{inv}}^2\|_{1}}
\end{align}
which is the $\mathcal{Z}_{\infty,1}$ bound. Similarly, observe that
\begin{align}
    \|V_2^N - V_2\|_{1} &\leq \|V_2^N - \mathscr{V}_2\|_{1} + \|\mathscr{V}_2 - V_2\|_{1} \\
    &\leq \|V_2^N - \mathscr{V}_2\|_{1} + \|-\overline{\Psi}_2*\overline{\Phi}_{\mathrm{inv}} + \overline{\Psi}_2*\overline{\Phi}^{-1}\|_{1} \\
    &\leq \|V_2^N - \mathscr{V}_2\|_{1} + \|\overline{\Psi}_2\|_{1}\|\overline{\Phi}_{\mathrm{inv}} -\overline{\Phi}^{-1}\|_{1} \\
    &\leq \|V_2^N - \mathscr{V}_2\|_{1} + \|\overline{\Psi}_2\|_{1}\frac{\|\overline{\Phi}_{\mathrm{inv}}*(1-\overline{\Phi}*\overline{\Phi}_{\mathrm{inv}})\|_{1}}{1-\|1-\overline{\Phi}*\overline{\Phi}_{\mathrm{inv}}\|_{1}}
\end{align}
which is the $\mathcal{Z}_{\infty,2}$ bound. This concludes the proof. 
\end{proof}
Now, we must compute the bounds $\mathcal{Z}_b$ and $\mathcal{Z}_u$. We will begin with the bound $\mathcal{Z}_b$. This bound is usually referred to as the $Z_1$ bound for periodic solutions when performing CAPs (see Theorem \ref{th : radii polynomial theorem periodic} for instance). We have denoted it as $\mathcal{Z}_b$ here to avoid confusion with the $Z_1$ bound for periodic solutions in Section \ref{sec : periodic solutions}. The estimate for $\mathcal{Z}_b$ will be similar to that of Lemma 4.5 from \cite{blanco_cadiot_fassler_saddle}. We present it here.
\begin{lemma}\label{lem : Z1 periodic patterns}
    Let
    \begin{align}
        \mathcal{Z}_b \bydef \sqrt{\mathcal{Z}_{b,0}^2 + \mathcal{Z}_{b,1}^2 + \mathcal{Z}_{b,2}^2}
    \end{align}
    where
    \begin{align*}
        &\mathcal{Z}_{b,0} \bydef \|\bpi^{\leq2N} - \bpi^{\leq 3N}B(I_d + DG^N(\overline{\mbf{U}}) L^{-1})\bpi^{\leq2N}\|_{2}\\
        &\mathcal{Z}_{b,1} \bydef \left(\frac{l_{22}}{l_{\mathrm{den}}}\right)_{2N} \| V_1^N\|_{1} + \left(\frac{l_{21}}{l_{\mathrm{den}}}\right)_{2N} \|V_2^N\|_{1},~~ 
        \mathcal{Z}_{b,2} \bydef \left(\frac{l_{12}}{l_{\mathrm{den}}}\right)_{2N} \|V_1^N\|_{1} + \left(\frac{l_{11}}{l_{\mathrm{den}}}\right)_{2N} \|V_2^N\|_{1} 
    \end{align*}
    and $\left(\frac{l_{ij}}{l_{\mathrm{den}}}\right)_{2N} = \max_{n\in \mathbb{Z}\setminus I^{2N}}\frac{l_{ij} (\tilde n)}{l_{den}(\tilde{n})}$.
Then, it follows that $\|I_d - B(I_d + DG^N(\overline{\mathbf{U}}) L^{-1})\|_{2} \leq \mathcal{Z}_b$.
\end{lemma}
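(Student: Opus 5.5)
We bound the operator $M \bydef I_d - B(I_d + DG^N(\overline{\mathbf{U}})L^{-1})$ on $\ell^2_e$ by splitting its domain into the finite block $\bpi^{\leq 2N}$ and the tail $\bpi^{>2N}$. For any $\mathbf{V}\in\ell^2_e$ we write $\mathbf{V} = \bpi^{\leq 2N}\mathbf{V} + \bpi^{>2N}\mathbf{V}$ and bound $\|M\bpi^{\leq 2N}\|_2$ and $\|M\bpi^{>2N}\|_2$ separately. The final estimate then comes from $\|M\mathbf{V}\|_2^2 \le (\|M\bpi^{\leq 2N}\|_2\|\bpi^{\leq 2N}\mathbf{V}\|_2 + \|M\bpi^{>2N}\|_2\|\bpi^{>2N}\mathbf{V}\|_2)^2$ together with Cauchy--Schwarz, $(a x + b y)^2 \le (a^2+b^2)(x^2+y^2)$. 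This produces a square root of a sum of squares, which is the form of $\mathcal{Z}_b$. The tail part splits further into the two block rows, giving the two terms $\mathcal{Z}_{b,1}$ and $\mathcal{Z}_{b,2}$.

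\textbf{Finite part.} Since $V_j^N = \Pi^{\leq N}V_j^N$, the convolution operator $\mathbb{V}_j^N$ maps sequences supported in $I^{2N}$ to sequences supported in $I^{3N}$. The matrix $L^{-1}$ is diagonal, so it preserves supports. The operator $B$ acts as $B^N$ on indices $\le N$ and as the identity beyond. Therefore $B(I_d+DG^N L^{-1})\bpi^{\leq 2N}$ has range in $\bpi^{\leq 3N}$, and
\[
M\bpi^{\leq 2N} = \bpi^{\leq 2N} - \bpi^{\leq 3N}B(I_d+DG^N(\overline{\mathbf{U}})L^{-1})\bpi^{\leq 2N}.
\]
This is a finite matrix, and its norm is exactly $\mathcal{Z}_{b,0}$.

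\textbf{Tail part.} For $\mathbf{V}=\bpi^{>2N}\mathbf{V}$ we have $M\mathbf{V} = \mathbf{V} - B\mathbf{V} - BDG^N L^{-1}\mathbf{V}$. The identity pieces cancel, because $B\bpi^{>2N} = \bpi^{>2N}$ given that $B^N=\bpi^{\leq N}B^N\bpi^{\leq N}$. What remains is $-B\,DG^N L^{-1}\mathbf{V}$. The key observation is that $DG^N L^{-1}\mathbf{V}$ is supported on indices $> N$, since convolution with $V_j^N$ shifts supports by at most $N$. Hence $B$ acts as the identity on it, and
\[
\|M\bpi^{>2N}\|_2 = \|DG^N(\overline{\mathbf{U}})L^{-1}\bpi^{>2N}\|_2 .
\]
Writing $L^{-1} = L_{\mathrm{den}}^{-1}\begin{bmatrix} L_{22} & -L_{12}\\ -L_{21} & L_{11}\end{bmatrix}$, the nonzero row of $DG^N L^{-1}$ acting on $(V_1,V_2)$ is
\[
\mathbb{V}_1^N\Big(\tfrac{L_{22}}{L_{\mathrm{den}}}V_1 - \tfrac{L_{12}}{L_{\mathrm{den}}}V_2\Big) + \mathbb{V}_2^N\Big(-\tfrac{L_{21}}{L_{\mathrm{den}}}V_1 + \tfrac{L_{11}}{L_{\mathrm{den}}}V_2\Big).
\]
We apply Young's inequality \eqref{young_inequality} to each convolution and bound each diagonal multiplier on indices $>2N$ by its supremum $\big(\tfrac{l_{ij}}{l_{\mathrm{den}}}\big)_{2N}$. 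Grouping the $V_1$ terms gives $\mathcal{Z}_{b,1}\|V_1\|_2$, and grouping the $V_2$ terms gives $\mathcal{Z}_{b,2}\|V_2\|_2$. A further Cauchy--Schwarz step bounds the sum by $\sqrt{\mathcal{Z}_{b,1}^2+\mathcal{Z}_{b,2}^2}\,\|\mathbf{V}\|_2$.

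\textbf{Main obstacle.} The delicate step is the support bookkeeping in the tail part: showing that $B$ acts as the identity there, and that the cross terms between head and tail do not spoil the sum-of-squares form. Two points must be checked. First, $M\bpi^{\leq 2N}$ and $M\bpi^{>2N}$ need not have orthogonal ranges, which is why the argument goes through the triangle inequality followed by Cauchy--Schwarz rather than a direct Pythagorean split. Second, the suprema $\big(\tfrac{l_{ij}}{l_{\mathrm{den}}}\big)_{2N}$ must be read as suprema of absolute values. They are finite by Assumption~\ref{assumption : fourier}, and for large $n$ they are attained near $n=2N+1$ because of the monotone decay of $|l_{ij}/l_{\mathrm{den}}|$ in $|\tilde n|$, which we verify from the explicit symbol \eqref{def : definition of symbol l}.
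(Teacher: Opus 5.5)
Your proposal is correct and takes essentially the same route as the paper. Both arguments use the column split $\|M\|_2^2 \le \|M\bpi^{\leq 2N}\|_2^2 + \|M\bpi^{>2N}\|_2^2$, the support bookkeeping $B\bpi^{>N}=\bpi^{>N}$ and $DG^N(\overline{\mathbf{U}})\bpi^{>2N}=\bpi^{>N}DG^N(\overline{\mathbf{U}})\bpi^{>2N}$ to reduce the tail to $\|\bpi^{>N}DG^N(\overline{\mathbf{U}})L^{-1}\bpi^{>2N}\|_2$, and then a block-wise Cauchy--Schwarz with Young's inequality and the tail suprema of $l_{ij}/l_{\mathrm{den}}$; your reading of those suprema as suprema of absolute values is the correct (and necessary) interpretation of the paper's definition.
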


\begin{proof}
To begin, we introduce a truncation.
\begin{align}
    &\|I_d - B(I_d + DG^N(\overline{\mbf{U}}) L^{-1})\|_{2}^2 \\ &\leq \|\bpi^{\leq2N} - B(I_d + DG^N(\overline{\mbf{U}}) L^{-1})\bpi^{\leq2N}\|_{2}^2 +\|\bpi^{>2N} - B(I_d + DG^N(\overline{\mbf{U}}) L^{-1})\bpi^{>2N}\|_{2}^2 \\
    &=\|\bpi^{\leq2N} - \bpi^{\leq 3N}B(I_d + DG^N(\overline{\mbf{U}}) L^{-1})\bpi^{\leq2N}\|_{2}^2 + \|\bpi^{>2N} - B(I_d + DG^N(\overline{\mathbf{U}}) L^{-1})\bpi^{>2N}\|_{2}^2
\end{align}
where the last step followed from the definition of $B$. Also using the definition of $B$, we have $B \bpi^{>2N} = \bpi^{>2N}$ and $B\bpi^{>N} = \bpi^{>N}$. Additionally, since $V_1^N$ and $V_2^N$ are of size $N$, it follows that $DG^N(\overline{\mathbf{U}})\bpi^{>2N} = \bpi^{>N}DG^N(\overline{\mathbf{U}})\bpi^{>2N}$. Hence, we have
\begin{align}
    &\|\bpi^{>2N} - B \bpi^{>2N} - BDG^N(\overline{\mathbf{U}}) \bpi^{<2N} l^{-1}\|_{2} \\
    &= \|\bpi^{>2N} -  \bpi^{>2N} - B\bpi^{>N} DG^N(\overline{\mathbf{U}}) \bpi^{>2N} L^{-1}\|_{2} \\
    &= \|\bpi^{>N} DG^N(\overline{\mathbf{U}}) L^{-1}\bpi^{>2N}\|_{2} \\
    &= \left\|\begin{bmatrix}
        \Pi^{> N}(\mathbb{V}_1^N L_{22} L_{\mathrm{den}}^{-1} - \mathbb{V}_2^N L_{21} L_{\mathrm{den}}^{-1})\Pi^{> 2N}  & \Pi^{> N}(-\mathbb{V}_1^N  L_{12} L_{\mathrm{den}}^{-1} + \mathbb{V}_2^N L_{11} L_{\mathrm{den}}^{-1})\Pi^{>2N} \\0 & 0
    \end{bmatrix}\right\|_{2} \\
    &\leq\sqrt{\|\Pi^{> N}(\mathbb{V}_1^N L_{22} L_{\mathrm{den}}^{-1} - \mathbb{V}_2^N L_{21} L_{\mathrm{den}}^{-1})\Pi^{> 2N}\|_{2}^2 + \|\Pi^{> N}(-\mathbb{V}_1^N  L_{12} L_{\mathrm{den}}^{-1} + \mathbb{V}_2^N L_{11} L_{\mathrm{den}}^{-1})\Pi^{>2N}\|_{2}^2}
\end{align}
where we used the Cauchy-Schwarz inequality. We now examine each term
\begin{align}
    &\|\|\Pi^{> N}(\mathbb{V}_1^N L_{22} L_{\mathrm{den}}^{-1} - \mathbb{V}_2^N L_{21} L_{\mathrm{den}}^{-1})\Pi^{> 2N}\|_{2} \leq \left(\frac{l_{22}}{l_{\mathrm{den}}}\right)_{2N} \| V_1^N\|_{1} + \left(\frac{l_{21}}{l_{\mathrm{den}}}\right)_{2N} \|V_2^N\|_{1} \bydef \mathcal{Z}_{b,1}  \\
    &\|\Pi^{> N}(-\mathbb{V}_1^N  L_{12} L_{\mathrm{den}}^{-1} + \mathbb{V}_2^N L_{11} L_{\mathrm{den}}^{-1})\Pi^{>2N}\|_{2} \leq \left(\frac{l_{12}}{l_{\mathrm{den}}}\right)_{2N} \|V_1^N\|_{1} + \left(\frac{l_{11}}{l_{\mathrm{den}}}\right)_{2N} \|V_2^N\|_{1} \bydef \mathcal{Z}_{b,2}.
\end{align}
Therefore, we obtain the desired result.
\end{proof}
With $\mathcal{Z}_b$ now computed, we have the first part of the $\mathcal{Z}_1$ bound complete. We now turn our attention to the $\mathcal{Z}_u$ bound. Much of our results will be based on those of \cite{blanco_cadiot_fassler_saddle}. We recall some of the fundamental results.
\begin{lemma}\label{lem : ift_L_inverse}
    Assume that Assumption \ref{assumption : fourier} holds, then we have the following disjunction : 
    \begin{enumerate}
        \item If $(\nu\nu_3 - \lambda_6 + 1)^2 + 4\nu(\nu_3\lambda_6 - \nu_3 + \lambda_5 \lambda_7) < 0$, then  define $z_1, z_2$ as 
       
        \begin{align}\label{eq: def z1 z2 and y}
        z_1 \bydef 2\pi i y \text{ and } z_2 \bydef -2\pi i \bar{y}, \end{align}
    where
    \begin{align}
            y \bydef \frac{1}{2\pi} \left(\frac{\lambda_6 - 1 - \nu\nu_3+ i \sqrt{-(\nu\nu_3 - \lambda_6 + 1)^2 - 4\nu(\nu_3\lambda_6 - \nu_3 + \lambda_5 \lambda_7)}}{2\nu}\right)^{\frac{1}{2}}.
        \end{align}
        
        \normalsize
    \item If $(\nu\nu_3 - \lambda_6 + 1)^2 + 4\nu(\nu_3\lambda_6 - \nu_3 + \lambda_5 \lambda_7) \ge 0$ and \\ $\nu\nu_3 - \lambda_6 + 1 > \sqrt{(\nu\nu_3 - \lambda_6 + 1)^2 + 4\nu(\nu_3\lambda_6 - \nu_3 + \lambda_5 \lambda_7)}$, then define $z_1, z_2$ as 
  \begin{align}\label{eq : def z1 z2 second case}
    z_1 &\bydef  \left(\frac{ \nu\nu_3 - \lambda_6 + 1 + \sqrt{(\nu\nu_3 - \lambda_6 + 1)^2 + 4\nu(\nu_3\lambda_6 - \nu_3 + \lambda_5 \lambda_7)}}{2\nu}\right)^{\frac{1}{2}}\\
    z_2 &\bydef  \left(\frac{ \nu\nu_3 - \lambda_6 + 1 -  \sqrt{(\nu\nu_3 - \lambda_6 + 1)^2 + 4\nu(\nu_3\lambda_6 - \nu_3 + \lambda_5 \lambda_7)}}{2\nu}\right)^{\frac{1}{2}}.
\end{align}
\end{enumerate}
In both cases, for $d_1,d_2 \in \mathbb{R}$, we obtain that
    \begin{equation*}
\left|\mathcal{F}^{-1}\left(\frac{d_1|2\pi\xi|^2 + d_2}{l_{den}(\xi)}\right)(x)\right|\leq C_0e^{-a|x|},
    \end{equation*}
    with 
    \begin{align}
    a = \min\{\mathrm{Re}(z_1), \mathrm{Re}(z_2)\}\text{ and } C_0(d_1,d_2) = \frac{1}{|2\nu(z_1^2 - z_2^2)|} \left( |d_1 z_2|  +  \frac{|d_2|}{|z_2|} \right)\label{def : a and C0}
\end{align}
\end{lemma}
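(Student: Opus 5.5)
We factor $l_{den}$ in the variable $\zeta \bydef |2\pi\xi|^2$ and compute the inverse Fourier transform with residues. By the proof of Lemma \ref{lem : l_invertible}, $l_{den}(\xi) = \nu\zeta^2 + (\nu\nu_3 - \lambda_6 + 1)\zeta - \nu_3\lambda_6 + \nu_3 - \lambda_5\lambda_7$, a quadratic in $\zeta$. Its roots in $\zeta$ are $-z_1^2$ and $-z_2^2$. In the second case, $z_1^2, z_2^2$ are exactly the two quantities under the square roots in \eqref{eq : def z1 z2 second case}; both are positive by the hypothesis, so $z_1,z_2>0$. In the first case, $y$ is chosen so that $z_1^2 = -4\pi^2 y^2$ equals minus one of the complex-conjugate roots of the quadratic, and $z_2 = \overline{z_1}$ (up to sign) gives the other. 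Hence in both cases
\begin{align}
l_{den}(\xi) = \nu\,(\zeta + z_1^2)(\zeta + z_2^2), \qquad \zeta = |2\pi\xi|^2,
\end{align}
with $\mathrm{Re}(z_1),\mathrm{Re}(z_2)>0$. We fix the branch of the square root so that the real parts are positive; this makes $a>0$ under Assumption \ref{assumption : fourier}.

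Next we write the symbol in partial fractions:
\begin{align}
\frac{d_1\zeta + d_2}{\nu(\zeta+z_1^2)(\zeta+z_2^2)} = \frac{1}{\nu(z_2^2 - z_1^2)}\left(\frac{d_2 - d_1 z_1^2}{\zeta + z_1^2} - \frac{d_2 - d_1 z_2^2}{\zeta + z_2^2}\right).
\end{align}
We then use the classical identity $\mathcal{F}^{-1}\big((|2\pi\xi|^2 + z^2)^{-1}\big)(x) = \frac{e^{-z|x|}}{2z}$ for $\mathrm{Re}(z)>0$, which follows from the residue theorem or from the transform of $e^{-z|x|}$. This gives the explicit kernel
\begin{align}
\frac{1}{2\nu(z_2^2-z_1^2)}\left(\frac{(d_2 - d_1z_1^2)e^{-z_1|x|}}{z_1} - \frac{(d_2-d_1z_2^2)e^{-z_2|x|}}{z_2}\right).
\end{align}

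To reach the stated $C_0$, we regroup this expression as $d_1$ times a difference and $d_2$ times a difference:
\begin{align}
\frac{1}{2\nu(z_2^2-z_1^2)}\left( d_1\big(z_2e^{-z_2|x|} - z_1 e^{-z_1|x|}\big) + d_2\Big(\frac{e^{-z_1|x|}}{z_1} - \frac{e^{-z_2|x|}}{z_2}\Big)\right).
\end{align}
Each term is then bounded using $|e^{-z_j|x|}| = e^{-\mathrm{Re}(z_j)|x|} \le e^{-a|x|}$.

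The main obstacle is matching the constant exactly as written, $\frac{1}{|2\nu(z_1^2-z_2^2)|}\big(|d_1z_2| + |d_2|/|z_2|\big)$, which carries only $z_2$. Bounding each difference naively gives a factor of $2$ and terms in both $z_1$ and $z_2$. To remove this, we would use the ordering of the roots. In the second case $z_1 \ge z_2 >0$, so $e^{-z_1|x|}\le e^{-z_2|x|}$. For real positive values, the difference $\frac{e^{-z_1|x|}}{z_1}-\frac{e^{-z_2|x|}}{z_2}$ has absolute value at most $\frac{e^{-z_2|x|}}{z_2}$, because both terms share a sign. Similarly, $|z_2e^{-z_2|x|}-z_1e^{-z_1|x|}|$ should be controlled by a $z_2$-type term together with the decay rate $a=z_2$; this step needs careful monotonicity checks, and it is where the exact constant is decided. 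In the first case, $|z_1|=|z_2|$ and the two exponentials have equal modulus, so we would instead combine the conjugate terms into twice a real part and bound them directly. If an exact match is not possible, we would follow the corresponding computation in \cite{blanco_cadiot_fassler_saddle}, from which the constant is taken.
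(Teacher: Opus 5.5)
Your reduction is correct. With $\zeta=|2\pi\xi|^2$ one has $l_{den}=\nu(\zeta+z_1^2)(\zeta+z_2^2)$, and your partial fractions together with $\mathcal{F}^{-1}\big((|2\pi\xi|^2+z^2)^{-1}\big)=e^{-z|x|}/(2z)$ give the exact kernel $K$ you wrote. The paper itself only cites Lemma 4.6 of \cite{blanco_cadiot_fassler_saddle}, i.e.\ this computation. Your branch choice is also genuinely needed: with the principal root, $z_1=2\pi i y$ as printed has $\mathrm{Re}(z_1)=-\mathrm{Im}(2\pi y)<0$.

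The gap is the last step, which is the actual content of the lemma. It cannot be closed the way you suggest, because the constant $C_0$ as stated is too small.

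In case 2 with $d_2=0$, your kernel gives $K(0)=\frac{d_1}{2\nu(z_1+z_2)}$, while $C_0=\frac{|d_1|z_2}{2\nu(z_1^2-z_2^2)}$. So $|K(0)|\le C_0$ holds only if $z_1\le 2z_2$. For instance, $\nu=1$ and $l_{den}=(\zeta+9)(\zeta+1)$ give $K(0)=\frac18>\frac1{16}=C_0$. Hence no monotonicity argument can bound $|z_2e^{-z_2|x|}-z_1e^{-z_1|x|}|$ by $z_2e^{-a|x|}$.

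In case 1, $z_2=\overline{z_1}$ and $\beta\bydef\mathrm{Im}(z_1)\neq 0$. Combining the conjugate terms as you propose gives, for $d_1=0\neq d_2$,
\[
|K(x)|=\frac{2|d_2|\,e^{-a|x|}}{|2\nu(z_1^2-z_2^2)|\,|z_1|}\,\bigl|\sin(\beta|x|+\arg z_1)\bigr|.
\]
Therefore $\sup_x|K(x)|e^{a|x|}=2C_0(0,d_2)$ for every case-1 parameter set, and the stated inequality fails. For example, $\nu=1$ and $l_{den}=\zeta^2+1$ give $K(0)=\frac{1}{2\sqrt2}>\frac14=C_0(0,1)$. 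This is exactly the situation of $C_2=C_0(0,\lambda_7)$ and $C_3=C_0(0,\lambda_5)$ in Lemma \ref{lem : Zu old exact}.

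What your computation does prove, by the triangle inequality and $|e^{-z_j|x|}|\le e^{-a|x|}$, is the estimate with
\[
C_0'(d_1,d_2)=\frac{1}{|2\nu(z_1^2-z_2^2)|}\left(|d_1|\,(|z_1|+|z_2|)+|d_2|\Bigl(\frac{1}{|z_1|}+\frac{1}{|z_2|}\Bigr)\right).
\]
In case 1 this equals $2C_0$, and it is sharp when $d_1=0$. In case 2 it can be sharpened to $\frac{|d_1|z_1+|d_2|/z_2}{2\nu(z_1^2-z_2^2)}$ using $z_1\ge z_2>0$: your same-sign argument handles the $d_2$ term, but the $d_1$ term needs $z_1$, not $z_2$.

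So rather than deferring to the reference, you should state and prove the lemma with a corrected constant of this kind. You should also note that $C_1,\dots,C_4$ downstream must be enlarged accordingly.
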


\begin{proof}
The proof follows from Lemma 4.6 of \cite{blanco_cadiot_fassler_saddle} with the corresponding parameters replaced.
\end{proof}
With Lemma \ref{lem : ift_L_inverse} available, we can compute the $\mathcal{Z}_u$ bound. We state the result in the next lemma.
\begin{lemma}\label{lem : Zu old exact}
Let $a$ be defined as in \eqref{def : a and C0}. Moreover, let $E \in \ell^2$ and $C(d),C_1,C_2,C_3,C_4 > 0$ be defined as
\begin{align}
    &E \bydef \gamma(\mathbb{1}_{\om} \cosh(2ax)),~ C(d) \bydef 4d + \frac{4e^{-ad}}{a(1-e^{-\frac{3ad}{2}})} + \frac{2}{a(1-e^{-2ad})},\\
    &C_1 \bydef C_0(-1,-\nu_3),~ C_2 \bydef C_0(0,\lambda_7),~C_3 \bydef C_0(0,\lambda_5),~\text{and}~ C_4 \bydef C_0(-\nu,\lambda_6 - 1).\label{def : E Cd Cj}
\end{align}
Then, let $(\mathcal{Z}_{u,k,j})_{k \in \{1,2\},j \in \{1,2,3,4\}} > 0$ be defined as
\begin{align}
&\mathcal{Z}_{u,1,1}^2 \bydef |\om| \frac{C_{1}^2e^{-2ad}}{a} (V_1^N, V_1^N * E)_2,~\mathcal{Z}_{u,2,1}^2 \bydef \mathcal{Z}_{u,1,1}^2 + e^{-4ad} C(d) C_1^2 |\om|(V_1^N,V_1^N * E)_2 \\
    &\mathcal{Z}_{u,1,2}^2 \bydef |\om| \frac{C_2^2 e^{-2ad}}{a} (V_2^N, V_2^N * E)_2,~\mathcal{Z}_{u,2,2}^2 \bydef \mathcal{Z}_{u,1,2}^2 + e^{-4ad} C(d) C_2^2 |\om|(V_2^N,V_2^N * E)_2 \\
    &\mathcal{Z}_{u,1,3}^2 \bydef |\om| \frac{C_3^2 e^{-2ad}}{a} (V_1^N, V_1^N * E)_2,~\mathcal{Z}_{u,2,3}^2 \bydef \mathcal{Z}_{u,1,3}^2 + e^{-4ad} C(d) C_3^2 |\om|(V_1^N,V_1^N * E)_2 \\
    &\mathcal{Z}_{u,1,4}^2 \bydef |\om| \frac{C_4^2e^{-2ad}}{a} (V_2^N,V_2^N*E)_2,~\mathcal{Z}_{u,2,4}^2 \bydef \mathcal{Z}_{u,1,4}^2 + e^{-4ad} C(d) C_4^2 |\om|(V_2^N,V_2^N * E)_2.
\end{align}
If $\mathcal{Z}_{u,1},\mathcal{Z}_{u,2},$ and $\mathcal{Z}_u$ are defined as in Lemma \ref{lem : Z_full_1}, then $\|\mathbb{B}D\mathbb{G}^N(\overline{\mathbf{u}})(\bGam^\dagger(L^{-1}) - \mathbb{L}^{-1})\|_{2} \leq  \mathcal{Z}_u$.
\end{lemma}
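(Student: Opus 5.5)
We reduce the statement to the entrywise estimates listed in Lemma \ref{lem : Z_full_1} and then bound each entry with the exponential decay of Lemma \ref{lem : ift_L_inverse}.

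\textbf{Step 1: splitting off $\mathbb{B}$ and separating the two regions.} Since $\mathbb{B}$ equals the identity on $\mathbb{R}\setminus\om$ and equals $\bGam^\dagger(B)$ on $\om$, we have $\|\mathbb{B}\|_2 \leq \max\{1,\|B^N\|_2\}$ by Lemma \ref{lem:bound_B}. This factor is already placed in front of $\mathcal{Z}_u$ in \eqref{eq : first definition Z1}. So the real task is to bound $\|D\mathbb{G}^N(\overline{\mathbf{u}})(\bGam^\dagger(L^{-1}) - \mathbb{L}^{-1})\|_2$ by $\mathcal{Z}_u$.

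Split the output with $I = \mathbb{1}_{\om} + \mathbb{1}_{\mathbb{R}\setminus\om}$. The two pieces have disjoint supports, so
\begin{align}
\|D\mathbb{G}^N(\overline{\mathbf{u}})(\bGam^\dagger(L^{-1}) - \mathbb{L}^{-1})\|_2^2 \leq \|\mathbb{1}_{\mathbb{R}\setminus\om}(\cdots)\|_2^2 + \|\mathbb{1}_{\om}(\cdots)\|_2^2 .
\end{align}
The multiplier $D\mathbb{G}^N$ comes from $v_j^N$, and we apply its support cut on the right after taking adjoints. On $\mathbb{R}\setminus\om$ the term $\bGam^\dagger(L^{-1})$ vanishes, which leaves $\mathcal{Z}_{u,1}$. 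On $\om$ we get $\mathcal{Z}_{u,2}$.

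Expanding $\mathbb{L}^{-1}$ with the cofactor formula, the entries are $\mathbb{L}_{ij}\mathbb{L}_{den}^{-1}$. Lemma \ref{lem : Z_full_1} already reduces $\mathcal{Z}_{u,k}$ to the eight entry bounds $\mathcal{Z}_{u,k,j}$ through the triangle and Cauchy--Schwarz inequalities. It therefore suffices to check each $\mathcal{Z}_{u,k,j}$, using adjoints so that the operator becomes $\mathbb{v}_j^N$ composed with a convolution.

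\textbf{Step 2: the exterior terms $\mathcal{Z}_{u,1,j}$.} The relevant symbols are $l_{22}/l_{den}$, $l_{21}/l_{den}$, $l_{12}/l_{den}$ and $l_{11}/l_{den}$. Each has the form $(d_1|2\pi\xi|^2+d_2)/l_{den}$ with $(d_1,d_2)$ equal to $(-1,-\nu_3)$, $(0,\lambda_5)$ (or $(0,\lambda_7)$), and $(-\nu,\lambda_6-1)$. Lemma \ref{lem : ift_L_inverse} then gives convolution kernels $f_j$ with $|f_j(x)|\leq C_je^{-a|x|}$.

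For $u\in L^2$, write $(\mathbb{1}_{\mathbb{R}\setminus\om} f_j*(v_1^N u))(x)$ for $|x|>d$. Apply Cauchy--Schwarz against the weight $e^{-a|x-y|}$, and use that $v_j^N$ is supported in $\om$ with $|x-y|\geq |x|-|y|$. Integrating $e^{-2a|x|}$ over $|x|>d$ produces the factor $e^{-2ad}/a$. The remaining factor is $\int_{\om}|v_j^N(y)|^2\cosh(2ay)\,dy$ (up to the symmetric doubling), and by Parseval (Lemma \ref{lem : gamma and Gamma properties}) this equals $|\om|(V_j^N, V_j^N*E)_2$. This gives $\mathcal{Z}_{u,1,j}$. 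The computation follows Lemma 4.7 of \cite{blanco_cadiot_fassler_saddle} and \cite{unbounded_domain_cadiot}.

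\textbf{Step 3: the interior terms $\mathcal{Z}_{u,2,j}$ (the main obstacle).} On $\om$, the operator $\Gamma^\dagger(L_{ij}L_{den}^{-1})$ is convolution with the $2d$-periodization of $f_j$, restricted to $\om$. The difference with $\mathbb{L}_{ij}\mathbb{L}_{den}^{-1}$ is therefore convolution with $\sum_{n\neq0} f_j(\cdot+2dn)$, plus the tail of $f_j$ for $|x-y|>d$ that is seen inside $\om$.

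The bound proceeds as follows:
\begin{enumerate}
\item Apply Cauchy--Schwarz pointwise, as in Step 2.
\item Bound $|f_j(x-y+2dn)| \leq C_j e^{-a|x-y+2dn|}$.
\item Sum the geometric series over $n\neq 0$; this is where the terms $(1-e^{-3ad/2})^{-1}$ and $(1-e^{-2ad})^{-1}$ in $C(d)$ come from.
\item Integrate over $x\in\om$, which gives the $4d$ term.
\end{enumerate}
Collecting the $\cosh(2ay)$ weight produces $e^{-4ad}C(d)C_j^2|\om|(V_j^N,V_j^N*E)_2$. The extra copy of $\mathcal{Z}_{u,1,j}^2$ accounts for the non-periodic tail.

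The delicate part is the careful case split of $|x-y+2dn|$ for $x,y\in\om$, to extract $e^{-2ad}$ twice. We would take it over verbatim from Lemma 4.8 of \cite{blanco_cadiot_fassler_saddle}, since it depends only on $a$, $d$ and $C_j$. Summing the squares then yields $\mathcal{Z}_u$.
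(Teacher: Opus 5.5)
Your route is the paper's route. The paper's proof just invokes Lemma 6.5 of \cite{unbounded_domain_cadiot} (via \cite{blanco_cadiot_fassler_saddle}) entry by entry, with the kernel constants from Lemma \ref{lem : ift_L_inverse}. You are also right that what is really proved, and what \eqref{eq : first definition Z1} uses, is the bound without $\mathbb{B}$ in front; with $\mathbb{B}$ one only gets $\max\{1,\|B_{11}^N\|_2\}\mathcal{Z}_u$.

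Where you unpack that lemma, however, several steps do not give the stated constants.

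In Step 2, the inequality $|x-y|\geq|x|-|y|$ gives $\int_{|x|>d}e^{-2a|x-y|}dx\leq\frac{e^{-2ad}}{a}e^{2a|y|}$. Since $\cosh(2ay)\leq e^{2a|y|}\leq 2\cosh(2ay)$, this yields $\mathcal{Z}_{u,1,j}^2$ with $E$ replaced by $\gamma(\mathbb{1}_{\om}e^{2a|x|})$, i.e.\ up to twice the stated value, and no ``symmetric doubling'' repairs that. You must integrate exactly: for $|y|<d$,
$\int_{x>d}e^{-2a(x-y)}dx+\int_{x<-d}e^{-2a(y-x)}dx=\frac{e^{-2ad}}{a}\cosh(2ay)$.

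In Step 3, for $x,y\in\om$ the kernel of $\mathbb{1}_{\om}(\mathbb{L}_{ij}\mathbb{L}_{den}^{-1}-\Gamma^\dagger(L_{ij}L_{\mathrm{den}}^{-1}))\mathbb{1}_{\om}$ is exactly $-\sum_{n\neq0}f_j(x-y+2dn)$. The $n=0$ term cancels for all $|x-y|<2d$, so there is no extra ``non-periodic tail''. Nor can $e^{-2ad}$ be extracted pointwise: the term $n=\pm1$ is of order one near the corner $x=\mp d$, $y=\pm d$. The actual mechanism is to expand $\big(\sum_{n\neq0}|f_j(x-y+2dn)|\big)^2$ after Cauchy--Schwarz.
\begin{itemize}
\item \emph{Diagonal terms.} Because the translates $\om+2dn$, $n\neq0$, tile $\mathbb{R}\setminus\overline{\om}$, these integrate in $x$ to $\int_{\mathbb{R}\setminus\om}|f_j(x-y)|^2dx\leq C_j^2\frac{e^{-2ad}}{a}\cosh(2ay)$. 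This is precisely the copy of $\mathcal{Z}_{u,1,j}^2$ inside $\mathcal{Z}_{u,2,j}^2$.
\item \emph{Off-diagonal terms ($n\neq m$).} Only these are $O(e^{-4ad}\cosh(2ay))$. For instance, $(n,m)=(\pm1,\mp1)$ give $e^{-4ad}$ pointwise, hence the $4d$ in $C(d)$. The pairs $(\pm1,\pm k)$ and $(\pm k,\pm1)$, $k\geq2$, sum to at most $\frac{2e^{-4ad}\cosh(2ay)}{a(1-e^{-2ad})}$.
\end{itemize}

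Finally, do not leave the constants as ``$(0,\lambda_5)$ (or $(0,\lambda_7)$)''. In the $(1,1)$ entry of $D\mathbb{G}^N(\overline{\mathbf{u}})\mathbb{L}^{-1}$, $\mathbb{v}_2^N$ is paired with $l_{21}/l_{\mathrm{den}}=\lambda_5/l_{\mathrm{den}}$. In the $(1,2)$ entry, $\mathbb{v}_1^N$ is paired with $l_{12}/l_{\mathrm{den}}=\lambda_7/l_{\mathrm{den}}$. Your argument therefore yields $C_0(0,\lambda_5)$ in $\mathcal{Z}_{u,k,2}$ and $C_0(0,\lambda_7)$ in $\mathcal{Z}_{u,k,3}$. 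That is the reverse of the labels $C_2,C_3$ in the statement, which appear to be interchanged.
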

\begin{proof}
The proof can be found in \cite{blanco_cadiot_fassler_saddle}. In particular, it follows from Lemma 6.5 of \cite{unbounded_domain_cadiot}. Indeed, each $\mathcal{Z}_{u,k,j}$ can be computed using the results of the aforementioned lemma.
\end{proof}
\subsubsection{Constructive existence proofs of localized solutions}\label{sec : Proofs of patterns}
In this section, we apply the Newton-Kantorovich approach presented in Section \ref{sec : patterns} thanks to the explicit formulas derived in Section \ref{sec : Bounds of patterns}. In fact, the bounds are evaluated rigorously thanks to the \textit{Julia} packages \textit{IntervalArithmetic.jl} \cite{julia_interval} and \textit{RadiiPolynomial.jl} \cite{julia_olivier}. More specifically, the computer-assisted proofs details can be found on \cite{julia_blanco_fassler}. Note that existence of a unique solution is obtained in a ball around the approximate solution we use in the application of Theorem \ref{th: radii polynomial}. Hence, no conclusions regarding multiplicity are made. Now, we present our obtained constructive proofs of existence for localized patterns in \eqref{eq:thomas}. 

\begin{theorem}[\bf First Localized Solution in Thomas]\label{th : spike in thomas}
Let $\nu = 1.1664, \nu_1 = 8, \nu_2 = 1, \nu_3 = 0.28, \nu_4 = 39.1, \nu_5 = 150$. Moreover, let $r_0 \bydef 3 \times 10^{-10}$. Then there exists a unique solution $\tilde{\mbf{u}}$ to \eqref{eq : gen_model} in $\overline{B_{r_0}(\overline{\mbf{u}})} \subset \mathcal{H}_{e}$ and we have that $\|\tilde{\mbf{u}}-\overline{\mbf{u}}\|_{\mathcal{H}} \leq r_0$. 
\end{theorem}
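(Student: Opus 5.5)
The plan is to apply Theorem \ref{th: radii polynomial} with the explicit bounds of Section \ref{sec : Bounds of patterns}, evaluated rigorously with interval arithmetic.

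\textbf{Step 1: check the standing hypotheses.} For the given parameters $\nu = 1.1664$, $\nu_1 = 8$, $\nu_2 = 1$, $\nu_3 = 0.28$, $\nu_4 = 39.1$, $\nu_5 = 150$:
\begin{itemize}
\item enclose the relevant real root $\lambda_1$ of the cubic in a tight interval, via a rigorous one-dimensional Newton/interval test;
\item from it, obtain interval enclosures of $\lambda_2,\dots,\lambda_7$;
\item verify $\nu_2 > \frac{1}{4}$;
\item verify one of the two conditions of Lemma \ref{lem : l_invertible}, so that Assumption \ref{assumption : fourier} holds;
\item compute $z_1,z_2$, $a$ and the constants $C_0$ of Lemma \ref{lem : ift_L_inverse};
\item compute $\|l^{-1}\|_{\mathcal{M}_1}$ and $\|l^{-1}\|_{\mathcal{M}_2}$, which give $\kappa_0,\dots,\kappa_4$.
\end{itemize}

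\textbf{Step 2: build the numerical objects.}
\begin{itemize}
\item Choose $d$, $N_0$ and $N$.
\item Start from the $\mathrm{sech}^2$ ansatz of Section \ref{sec : u0}, refine it by Newton's method on the truncated problem \eqref{eq : F(U)=0 in X^l_e}, and apply $\mathbfcal{P}$ to obtain $\overline{\mathbf{U}}$ with $\overline{\mathbf{u}}(\pm d)=0$, so that $\overline{\mathbf{u}} \in \mathcal{H}_e$.
\item Compute $\overline{\Phi}_{\mathrm{inv}}$ by FFT.
\item Compute $B^N$ as a numerical inverse of the upper triangular block matrix \eqref{eq: B^N matrix} restricted to $\bpi^{\leq N}$.
\end{itemize}

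\textbf{Step 3: evaluate the bounds in interval arithmetic.}
\begin{itemize}
\item $\mathcal{Y}_0$ from Lemma \ref{lem : bound Y_0}.
\item $\mathcal{Z}_1$ from Lemmas \ref{lem : Z_full_1}, \ref{lem : Z1 periodic patterns} and \ref{lem : Zu old exact}. This requires the convolutions $(V_j^N, V_j^N*E)_2$ with $E=\gamma(\mathbb{1}_{\om}\cosh(2ax))$, and the tail quantities $(l_{ij}/l_{\mathrm{den}})_{2N}$.
\item $\mathcal{Z}_2(r)$ from Lemma \ref{lem : Z2 patterns}.
\end{itemize}
Every quantity involving $\overline{\Phi}^{-1}$ or $\overline{\Phi}^{-2}$ is controlled by Lemma \ref{lem : phi bound}. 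This requires first checking the smallness conditions
\[
\|1-\overline{\Phi}*\overline{\Phi}_{\mathrm{inv}}\|_{1}<1 \quad\text{and}\quad \|1-\overline{\Phi}^2*\overline{\Phi}_{\mathrm{inv}}^2\|_{1}<1 .
\]

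\textbf{Step 4: conclude.} Verify that $r_0 = 3\times 10^{-10}$ satisfies both inequalities of \eqref{condition radii polynomial}. Theorem \ref{th: radii polynomial} then yields a unique zero $\tilde{\mathbf{u}}$ of $\mathbb{F}$ in $\overline{B_{r_0}(\overline{\mathbf{u}})}$. Through the change of variables of Section \ref{sec : setup}, this zero is a solution of \eqref{eq : gen_model}.

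\textbf{Main obstacle.} I expect the hardest part to be making $\mathcal{Z}_1<1$ with a very small $\mathcal{Y}_0$, which is necessary for a radius as small as $r_0 = 3\times10^{-10}$. This requires three things at once:
\begin{itemize}
\item $d$ large enough that the exponentially small $\mathcal{Z}_u$ terms (proportional to $e^{-2ad}$) are negligible;
\item $N$ large enough that $\mathcal{Z}_{b,1},\mathcal{Z}_{b,2}$ and $\mathcal{Z}_\infty$ are small;
\item an accurate enough $\overline{\Phi}_{\mathrm{inv}}$ that the inverse-error terms do not dominate $\mathcal{Y}_0$.
\end{itemize}
If the bounds fail, I would first increase $d$ and $N$, and then refine $\overline{\Phi}_{\mathrm{inv}}$ by a Newton step for the inverse. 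The remaining verification is routine bookkeeping, which I would delegate to the computer-assisted code.
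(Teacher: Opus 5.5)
Your plan matches the paper's proof, which applies Theorem \ref{th: radii polynomial} with $N_0=500$, $N=300$, $d=30$ and the rigorously computed values $\mathcal{Y}_0 = 9.98\times 10^{-11}$, $\mathcal{Z}_1 = 0.02673$, $\mathcal{Z}_2(r_0) = 8.207\times 10^{7}$, and then checks the two inequalities of \eqref{condition radii polynomial} at $r_0 = 3\times 10^{-10}$. One minor correction: a zero of $\mathbb{F}$ is already a solution of \eqref{eq : gen_model}, because the terms $\lambda_6 u_1$ and $\lambda_7 u_2$ added to $\mathbb{L}$ are subtracted back in $\mathbb{G}$, so no change of variables is needed at that step; the change of variables only links \eqref{eq : gen_model} to \eqref{eq:thomas}.
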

\begin{proof}
Choose $N_0 = 500, N = 300, d = 30$. Then, we perform the full construction to build $\overline{\mathbf{u}} = \bgam^\dagger(\overline{\mathbf{U}})$. Then, we compute $ \overline{\mathbf{u}}$ as in Section \ref{sec : u0}. Next, we construct $B^N$, and use Lemma \ref{corr : banach algebra} to find
\begin{align}
    \|B_{11}^N\|_{2} \leq 20.885,~\kappa_0 \bydef 1.34, \kappa_1 \bydef 1.0234.
\end{align}
This allows us to compute the $\mathcal{Z}_2(r)$ bound defined in Section \ref{sec : Bounds of patterns}. 
Finally, using \cite{julia_blanco_fassler}, we choose $r_0 \bydef 3 \times 10^{-10}$ and define
\begin{align}
    \mathcal{Y}_0 \bydef 9.98 \times 10^{-11} \text{,}~\mathcal{Z}_{2}(r_0) \bydef 8.207\times 10^7    \text{,}~\mathcal{Z}_1 \bydef 0.02673 
    \end{align}
and prove that these values satisfy Theorem \ref{th: radii polynomial}. 
\end{proof}
\begin{figure}[H]
\centering
 \begin{minipage}{.5\linewidth}
  \centering\epsfig{figure=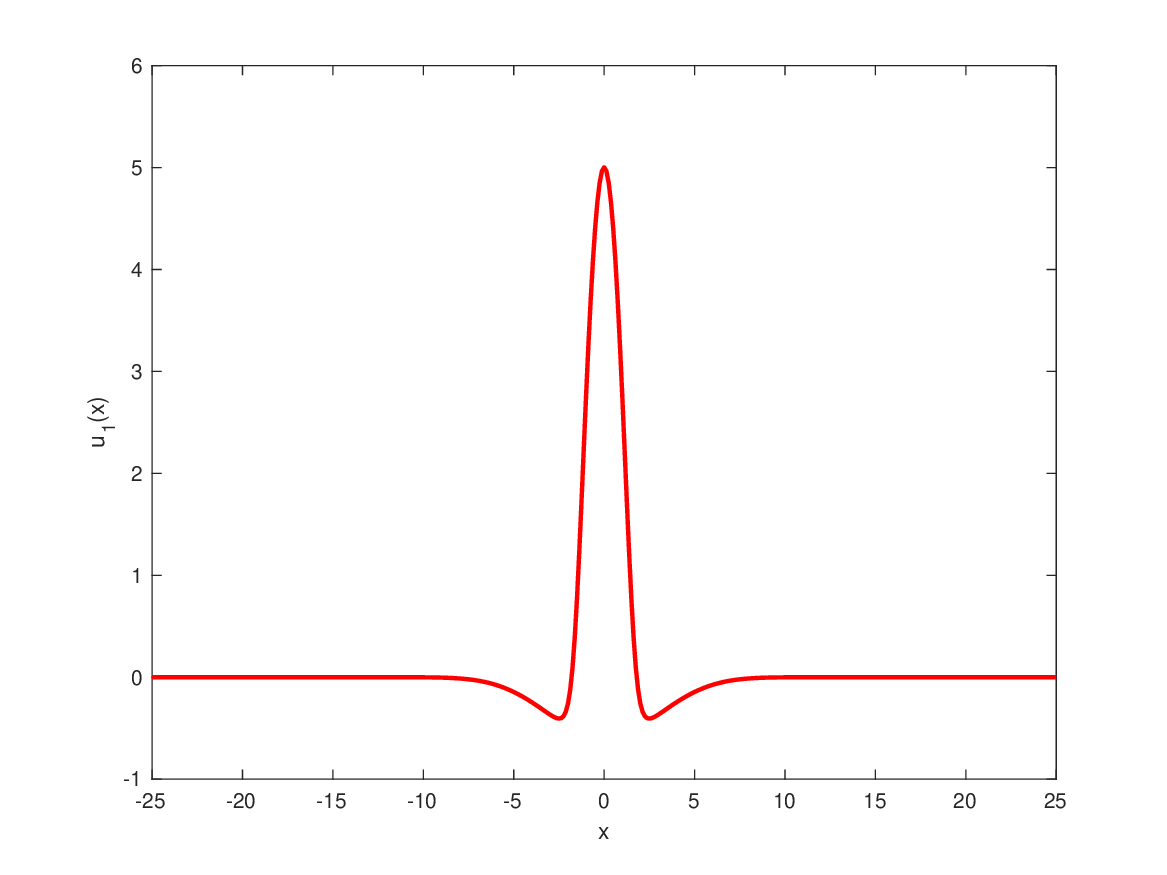,width=\linewidth}
  \end{minipage}%
 \begin{minipage}{.5\linewidth}
  \centering\epsfig{figure=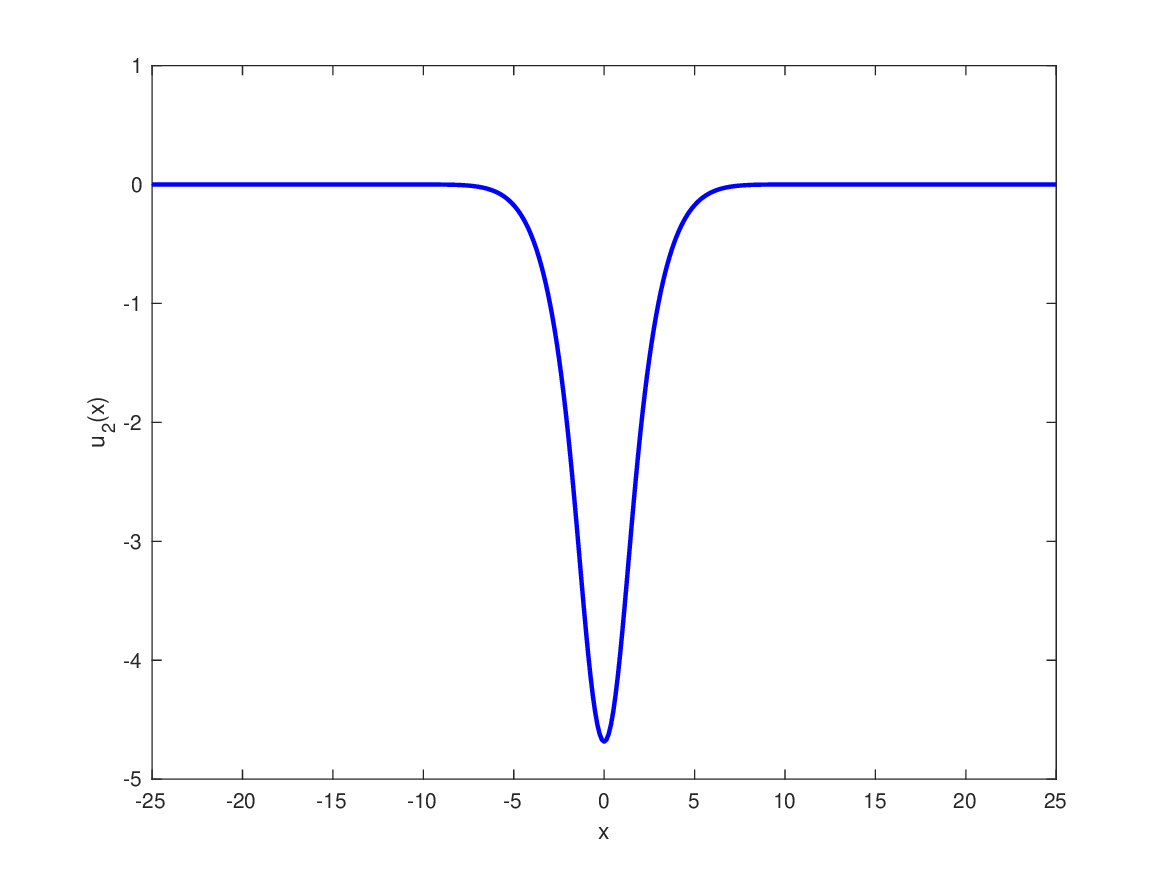,width=\linewidth}
 \end{minipage} 
 \caption{Plot of $ \overline{U}_1$ (L) and $\overline{U}_2$ (R) on $(-25,25)$ used in the proof of Theorem \ref{th : spike in thomas}.}\label{fig : th1}
 \end{figure}%
 \begin{theorem}[\bf Second Localized Solution in Thomas]\label{th : spike in thomas 2}
Let $\nu = 1.1664, \nu_1 = 8, \nu_2 = 1, \nu_3 = 0.28, \nu_4 = 39.10658, \nu_5 = 150$. Moreover, let $r_0 \bydef 2 \times 10^{-9}$. Then there exists a unique solution $\tilde{\mbf{u}}$ to \eqref{eq : gen_model} in $\overline{B_{r_0}(\overline{\mbf{u}})} \subset \mathcal{H}_{e}$ and we have that $\|\tilde{\mbf{u}}-\overline{\mbf{u}}\|_{\mathcal{H}} \leq r_0$. 
\end{theorem}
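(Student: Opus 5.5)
The plan is to repeat the proof of Theorem \ref{th : spike in thomas} with $\nu_4 = 39.10658$. We will apply Theorem \ref{th: radii polynomial} with the bounds of Section \ref{sec : Bounds of patterns}.

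\textbf{Steady state and parameters.} With the new $\nu_4$, we first recompute $\lambda_1$ as a real root of the cubic, enclosed rigorously with interval arithmetic. From it we obtain $\lambda_2,\dots,\lambda_7$. We then check with interval arithmetic that the hypotheses of Lemma \ref{lem : l_invertible} hold, so that Assumption \ref{assumption : fourier} is satisfied. This gives the decay rate $a$ and the constants $C_0$ of Lemma \ref{lem : ift_L_inverse}, together with $\kappa_0,\kappa_1$ from Lemma \ref{corr : banach algebra}.

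\textbf{Approximate solution.} Since $\nu_4$ is very close to the value in Theorem \ref{th : spike in thomas}, we will continue the previous numerical solution and apply Newton's method to $F(\mathbf{U})=0$. We project with $\mathbfcal{P}$ so that $\overline{\mathbf{u}}(\pm d)=0$ and $\overline{\mathbf{u}}\in\mathcal{H}_e$. We keep $d=30$ and the truncations $N_0=500$, $N=300$, enlarging them if the tail terms in $\mathcal{Y}_0$ or $\mathcal{Z}_{u}$ turn out too large.

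\textbf{Approximate inverse and the nonlinearity.} We compute $\overline{\Phi}_{\mathrm{inv}}$ by FFT and build $B^N$ as a numerical inverse of $\bpi^{\leq N}DF(\overline{\mathbf{U}})L^{-1}\bpi^{\leq N}$ in the block upper triangular form \eqref{def : B_finite_infinite}. We then check that $\|1-\overline{\Phi}*\overline{\Phi}_{\mathrm{inv}}\|_1<1$, and the same for the squared quantity, so that Lemma \ref{lem : phi bound} applies.

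\textbf{Bounds.} We evaluate rigorously:
\begin{itemize}
\item $\mathcal{Y}_0$ from Lemma \ref{lem : bound Y_0};
\item $\mathcal{Z}_2(r)$ from Lemma \ref{lem : Z2 patterns};
\item $\mathcal{Z}_1$ from Lemma \ref{lem : Z_full_1}, using $\mathcal{Z}_b$ from Lemma \ref{lem : Z1 periodic patterns} and $\mathcal{Z}_u$ from Lemma \ref{lem : Zu old exact}.
\end{itemize}
Finally, we verify both inequalities in \eqref{condition radii polynomial} at $r_0=2\times10^{-9}$.

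\textbf{Main obstacle.} The value of $\nu_4$ is presumably chosen near a fold, or in a different regime, of the localized branch. The linearization is then close to singular, so $\|B^N_{11}\|_2$ will be large. This inflates $\mathcal{Y}_0$ and $\mathcal{Z}_2(r_0)$, which is consistent with the larger radius $r_0$. The first thing to try is sharpening $\mathcal{Y}_0$ with extra Newton iterations in high precision. If $\mathcal{Z}_1$ is not below 1, we will increase $N$ and $d$ so that $\mathcal{Z}_b$ and the exponentially small $\mathcal{Z}_u$ terms shrink. The quadratic condition requires roughly $\mathcal{Y}_0\mathcal{Z}_2(r_0) < (1-\mathcal{Z}_1)^2/2$, and we expect this to be the tightest check.
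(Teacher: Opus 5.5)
Your plan matches the paper's proof, which reruns the argument of Theorem \ref{th : spike in thomas}: it evaluates $\mathcal{Y}_0$, $\mathcal{Z}_1$ (via $\mathcal{Z}_b$ and $\mathcal{Z}_u$) and $\mathcal{Z}_2(r_0)$ from Section \ref{sec : Bounds of patterns} and verifies \eqref{condition radii polynomial} at $r_0=2\times10^{-9}$. The only difference is the numerical setup: the paper takes $d=50$ and $N_0=750$ (keeping $N=300$) from the start, which is consistent with the enlargement you anticipated, and obtains $\|B_{11}^N\|_2\le 38.78$, $\mathcal{Y}_0=6.94\times10^{-10}$, $\mathcal{Z}_2(r_0)=9.0442\times10^{7}$ and $\mathcal{Z}_1=0.192$.
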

\begin{proof}
Choose $N_0 = 750, N = 300, d = 50$. The proof is obtained similarly to that of Theorem \ref{th : spike in thomas}.
In particular, we find
{\small\begin{align}
    \|B_{11}^N\|_{2} \leq 38.78,~\kappa_0 \bydef 0.9388,~ \kappa_1 \bydef 1.0163, ~\mathcal{Y}_0 \bydef  6.94 \times 10^{-10}\text{,}~\mathcal{Z}_{2}(r_0) \bydef 9.0442 \times 10^7     \text{,}~\mathcal{Z}_1 \bydef  0.192
    \end{align}}
and prove that these values satisfy Theorem \ref{th: radii polynomial}. 
\end{proof}
\begin{figure}[H]
\centering
 \begin{minipage}{.5\linewidth}
  \centering\epsfig{figure=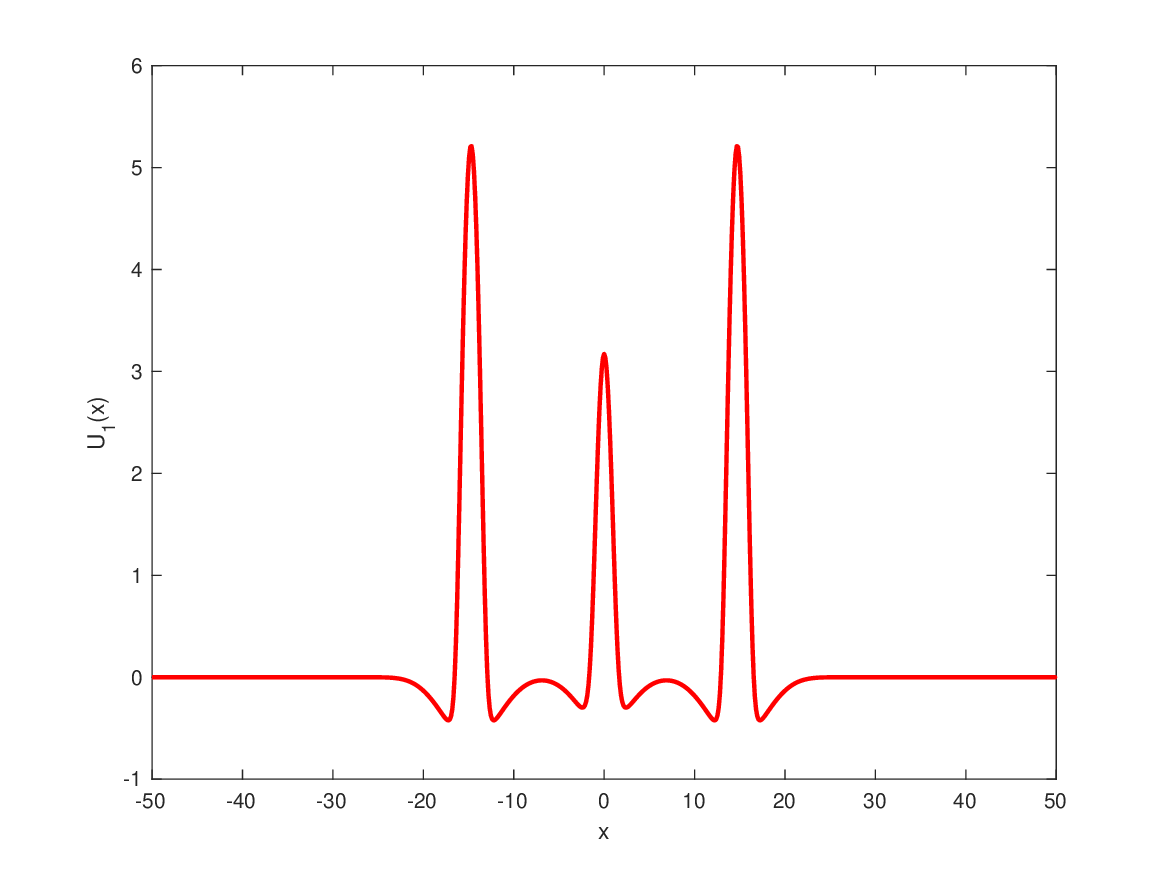,width=\linewidth}
  \end{minipage}%
 \begin{minipage}{.5\linewidth}
  \centering\epsfig{figure=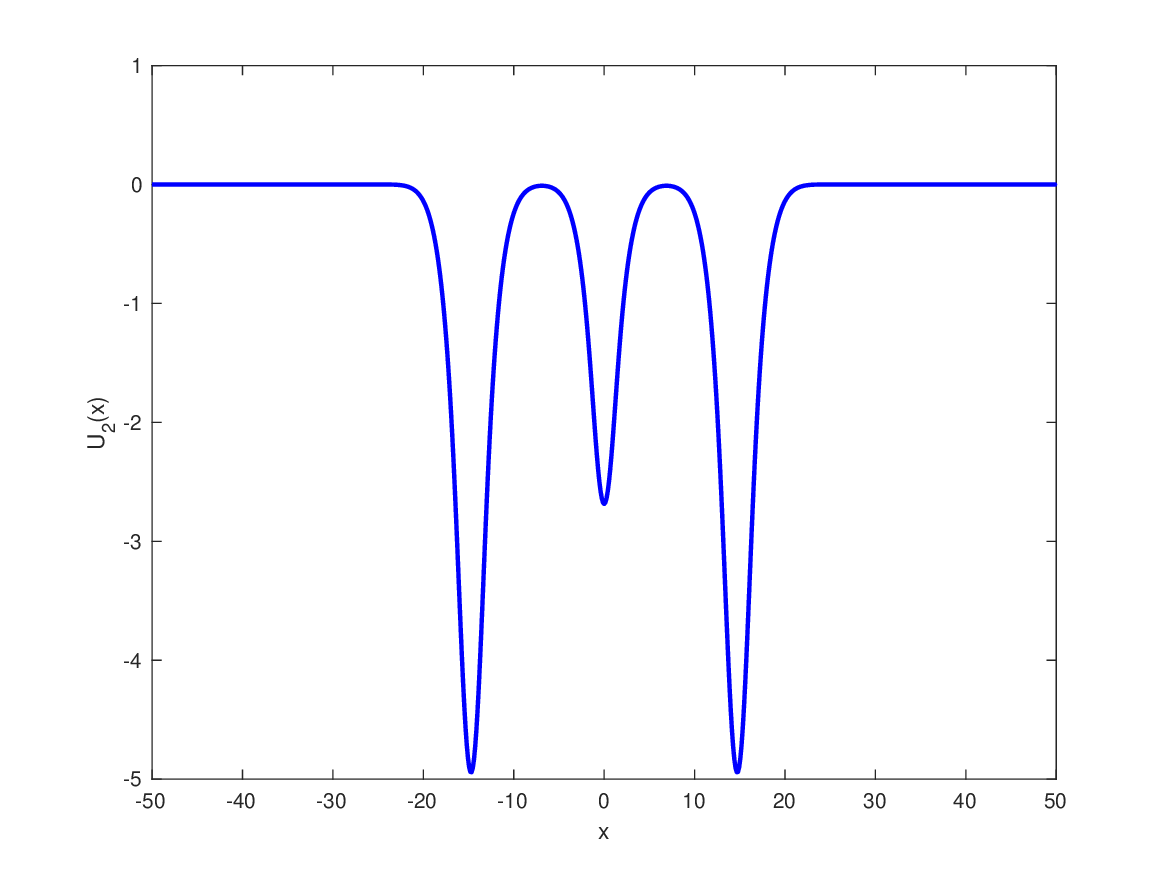,width=\linewidth}
 \end{minipage} 
 \caption{Plot of $ \overline{U}_1$ (L) and $\overline{U}_2$ (R) on $(-50,50)$ used in the proof of Theorem \ref{th : spike in thomas 2}.}\label{fig : th11}
 \end{figure}%
  \begin{theorem}[\bf Third Localized Solution in Thomas]\label{th : spike in thomas 3}
Let $\nu = 1.1664, \nu_1 = 8, \nu_2 = 1, \nu_3 = 0.28, \nu_4 = 39.10658, \nu_5 = 149.83$. Moreover, let $r_0 \bydef 4 \times 10^{-9}$. Then there exists a unique solution $\tilde{\mbf{u}}$ to \eqref{eq : gen_model} in $\overline{B_{r_0}(\overline{\mbf{u}})} \subset \mathcal{H}_{e}$ and we have that $\|\tilde{\mbf{u}}-\overline{\mbf{u}}\|_{\mathcal{H}} \leq r_0$. 
\end{theorem}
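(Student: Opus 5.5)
The proof will be a direct application of Theorem \ref{th: radii polynomial}, following the same pipeline as Theorems \ref{th : spike in thomas} and \ref{th : spike in thomas 2}. Only $\nu_5$ changes, to $149.83$, so the first step is to recompute the steady state $(\lambda_1,\lambda_2)$ from the cubic and then $\lambda_3,\dots,\lambda_7$. With these values I will rigorously verify, in interval arithmetic, one of the two conditions of Lemma \ref{lem : l_invertible}. This gives Assumption \ref{assumption : fourier}, so $\mathcal{H}_e$, $\mathbb{L}^{-1}$, and the constants $a$, $C_0$ of Lemma \ref{lem : ift_L_inverse} are well defined. I will also evaluate $\kappa_0,\kappa_1,\|l^{-1}\|_{\mathcal{M}_1},\|l^{-1}\|_{\mathcal{M}_2}$ from Lemma \ref{corr : banach algebra}.

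Next I construct the approximate solution. Starting from the candidate of Theorem \ref{th : spike in thomas 2}, I will use it as an initial guess (or continue it in $\nu_5$) and run Newton's method on \eqref{eq : F(U)=0 in X^l_e} with $d=50$ and $N_0$ of order $750$. If the spike is wider or has slower decay, I will increase $d$ and $N_0$. I then apply the projection $\mathbfcal{P}$ so that $\overline{\mathbf{u}}(\pm d)=0$ and $\overline{\mathbf{u}}\in\mathcal{H}_e$. After that I build $\overline{\Phi},\overline{\Psi},\overline{\Psi}_j$, the FFT approximate inverse $\overline{\Phi}_{\mathrm{inv}}$, and the matrix $B^N$ (with $N\approx 300$) as the numerical inverse of the upper block of \eqref{eq: B^N matrix}.

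The bounds are then evaluated on the computer.
\begin{itemize}
\item $\mathcal{Y}_0$ comes from Lemma \ref{lem : bound Y_0}. This requires first checking $\|1-\overline{\Phi}*\overline{\Phi}_{\mathrm{inv}}\|_1<1$, and likewise for the squared version, so that Lemma \ref{lem : phi bound} applies.
\item $\mathcal{Z}_2(r)$ comes from Lemma \ref{lem : Z2 patterns}.
\item $\mathcal{Z}_1$ comes from Lemma \ref{lem : Z_full_1}. Its ingredients are $\mathcal{Z}_b$ from Lemma \ref{lem : Z1 periodic patterns}, $\mathcal{Z}_u$ from Lemma \ref{lem : Zu old exact}, and $\mathcal{Z}_\infty$.
\end{itemize}
Finally I check \eqref{condition radii polynomial} at $r_0=4\times10^{-9}$. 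Since $\mathcal{Z}_2(r_0)$ should again be of order $10^8$, this requires $\mathcal{Y}_0$ of order $10^{-9}$ and $\mathcal{Z}_1$ comfortably below $1$. Uniqueness in $\overline{B_{r_0}(\overline{\mathbf{u}})}$ then follows from the theorem.

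The main obstacle I expect is $\mathcal{Z}_1$, and in particular $\mathcal{Z}_u$. The parameter change may push the system closer to the invertibility boundary of Lemma \ref{lem : l_invertible}, which makes the decay rate $a$ smaller. A smaller $a$ increases $\mathcal{Z}_{u}$ through the factors $e^{-2ad}$ and $(V_j^N,V_j^N*E)_2$, since $E$ grows like $\cosh(2ax)$. It also increases $\|B_{11}^N\|_2$; in Theorem \ref{th : spike in thomas 2} this was already $38.78$, and it multiplies $\mathcal{Z}_u$. My first remedy would be to enlarge $d$ (with $N_0$ scaled accordingly) so that $e^{-ad}$ is negligible. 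If $\mathcal{Z}_b$ is the limiting term instead, I would increase $N$ and improve the Newton accuracy of $\overline{\mathbf{U}}$ to lower $\mathcal{Y}_0$.
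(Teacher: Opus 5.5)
Your proposal is correct and takes essentially the paper's route: a direct computer-assisted application of Theorem~\ref{th: radii polynomial} with the same discretization you guessed ($N_0 = 750$, $N = 300$, $d = 50$), using $\mathcal{Y}_0$, $\mathcal{Z}_1$, $\mathcal{Z}_2$ from Lemmas~\ref{lem : bound Y_0}, \ref{lem : Z_full_1} and \ref{lem : Z2 patterns}. The paper reports $\|B_{11}^N\|_2 \leq 13.821$, $\kappa_0 = 1.34$, $\kappa_1 = 1.087$, $\mathcal{Y}_0 = 2.063\times 10^{-9}$, $\mathcal{Z}_2(r_0) = 1.0143\times 10^{8}$ and $\mathcal{Z}_1 = 0.17364$, which satisfy \eqref{condition radii polynomial} at $r_0 = 4\times 10^{-9}$, so the $\mathcal{Z}_u$ and $\|B_{11}^N\|_2$ difficulty you anticipated does not arise.
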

\begin{proof}
Choose $N_0 = 750, N = 300, d = 50$. The proof is obtained similarly to that of Theorem \ref{th : spike in thomas}.
In particular, we find
{\small\begin{align}
    \|B_{11}^N\|_{2} \leq 13.821,~\kappa_0 \bydef 1.34,~ \kappa_1 \bydef 1.087, ~\mathcal{Y}_0 \bydef  2.063 \times 10^{-9}\text{,}~\mathcal{Z}_{2}(r_0) \bydef 1.0143 \times 10^8     \text{,}~\mathcal{Z}_1 \bydef  0.17364
    \end{align}}
and prove that these values satisfy Theorem \ref{th: radii polynomial}. 
\end{proof}
\begin{figure}[H]
\centering
 \begin{minipage}{.5\linewidth}
  \centering\epsfig{figure=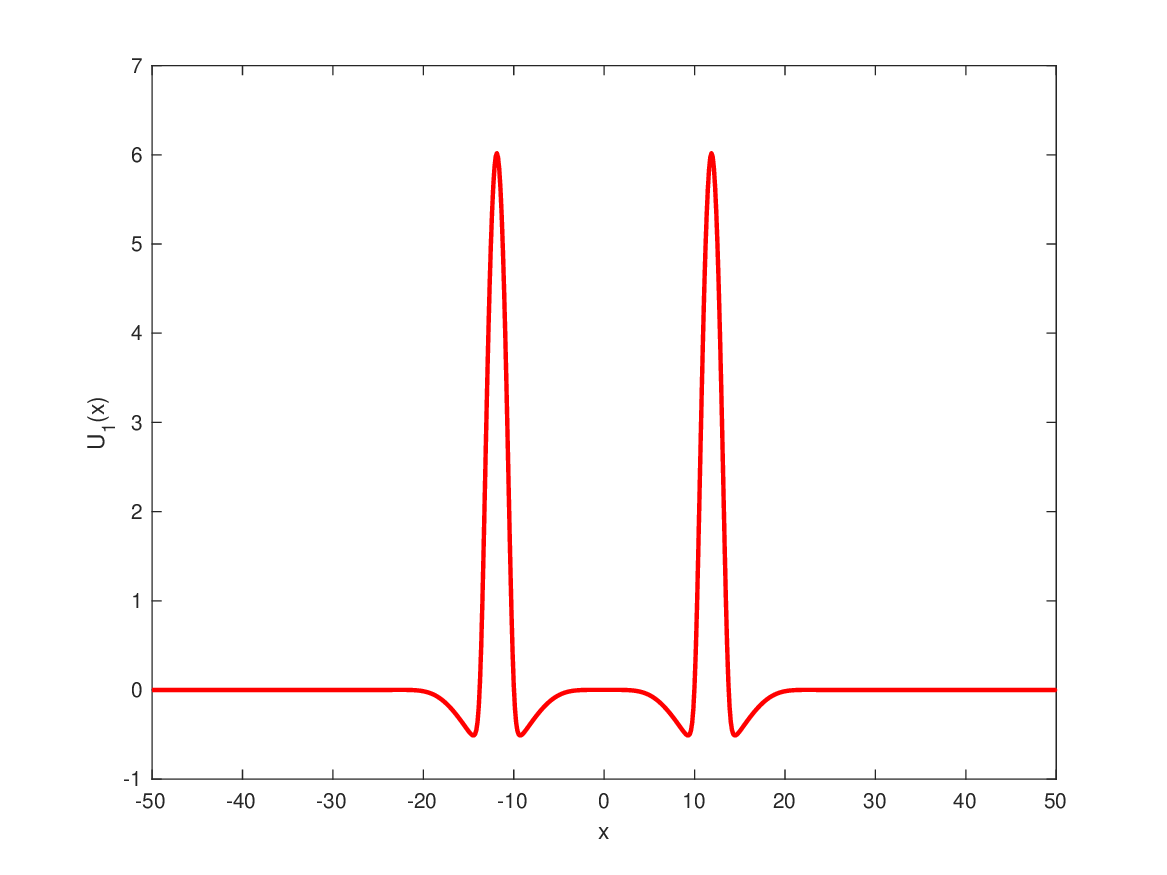,width=\linewidth}
  \end{minipage}%
 \begin{minipage}{.5\linewidth}
  \centering\epsfig{figure=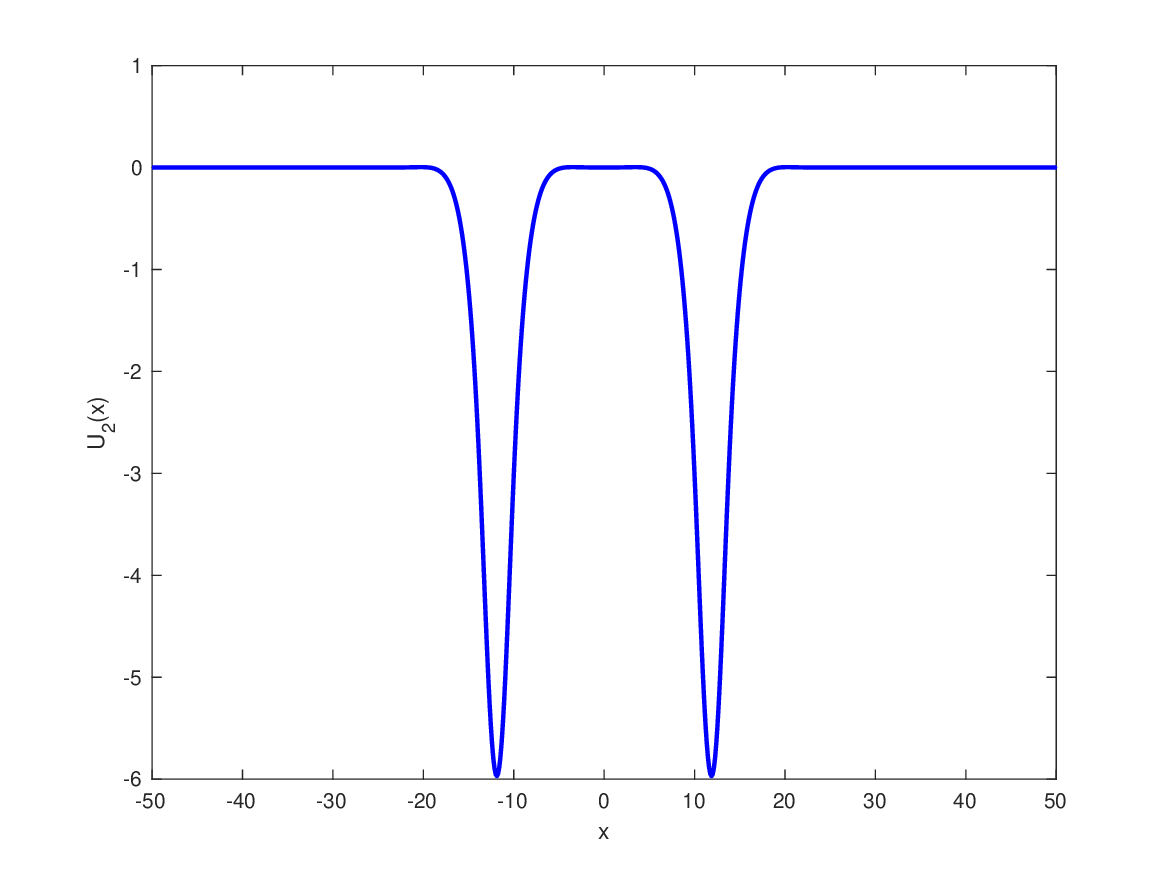,width=\linewidth}
 \end{minipage} 
 \caption{Plot of $ \overline{U}_1$ (L) and $\overline{U}_2$ (R) on $(-50,50)$ used in the proof of Theorem \ref{th : spike in thomas 3}.}\label{fig : th12}
 \end{figure}%
   \begin{theorem}[\bf Fourth Localized Solution in Thomas]\label{th : spike in thomas 4}
Let $\nu = 0.2116, \nu_1 = 8, \nu_2 = 1, \nu_3 = 0.28, \nu_4 = 21.3, \nu_5 = 64.5$. Moreover, let $r_0 \bydef 4 \times 10^{-11}$. Then there exists a unique solution $\tilde{\mbf{u}}$ to \eqref{eq : gen_model} in $\overline{B_{r_0}(\overline{\mbf{u}})} \subset \mathcal{H}_{e}$ and we have that $\|\tilde{\mbf{u}}-\overline{\mbf{u}}\|_{\mathcal{H}} \leq r_0$. 
\end{theorem}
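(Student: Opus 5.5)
The argument mirrors the proofs of Theorems \ref{th : spike in thomas}--\ref{th : spike in thomas 3}: a computer-assisted verification of the hypotheses of Theorem \ref{th: radii polynomial} at the new parameters $\nu = 0.2116$, $\nu_4 = 21.3$, $\nu_5 = 64.5$.

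\textbf{Setup and Assumption \ref{assumption : fourier}.} First I compute $\lambda_1$ by rigorously enclosing the appropriate real root of the cubic with interval arithmetic. From $\lambda_1$ I get $\lambda_2,\dots,\lambda_7$. I then check that one of the two conditions of Lemma \ref{lem : l_invertible} holds, so Assumption \ref{assumption : fourier} is satisfied. This also yields $z_1,z_2,a,C_0$ through Lemma \ref{lem : ift_L_inverse}, together with $\|l^{-1}\|_{\mathcal{M}_1}$, $\|l^{-1}\|_{\mathcal{M}_2}$ and hence $\kappa_0,\dots,\kappa_4$ from Lemma \ref{corr : banach algebra}.

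\textbf{Approximate solution and approximate inverse.} I take a $\mathrm{sech}^2$ initial guess imitating the corresponding profile of \cite{thomas_1d}, then refine it by Newton's method on \eqref{eq : F(U)=0 in X^l_e} with $d$ large enough (say $d=30$ to $50$) that the solution has decayed at $\pm d$. I apply $\mathbfcal{P}$ so that $\overline{\mathbf{u}}\in\mathcal{H}_e$, with $N_0$ of a few hundred. Because $\nu$ is small here, the spike is sharper, so $N_0$ may need to be larger. Next I compute $\overline{\Phi}_{\mathrm{inv}}$ by FFT and $B^N$ as a numerical inverse of the block-triangular truncation \eqref{eq: B^N matrix}, for example with $N=300$. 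I also verify $\|1-\overline{\Phi}*\overline{\Phi}_{\mathrm{inv}}\|_1<1$ and the corresponding condition for the squares, so that Lemma \ref{lem : phi bound} applies.

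\textbf{Bounds.} I evaluate $\mathcal{Y}_0$ via Lemma \ref{lem : bound Y_0}. For $\mathcal{Z}_1$ I combine $\mathcal{Z}_b$ (Lemma \ref{lem : Z1 periodic patterns}), $\mathcal{Z}_u$ (Lemma \ref{lem : Zu old exact}) and $\mathcal{Z}_\infty$ as in Lemma \ref{lem : Z_full_1}. $\mathcal{Z}_2(r)$ comes from Lemma \ref{lem : Z2 patterns}. All of these are evaluated in interval arithmetic. Finally I check that $r_0 = 4\times 10^{-11}$ satisfies \eqref{condition radii polynomial}, i.e.
\[
\tfrac12\mathcal{Z}_2(r_0)r_0^2-(1-\mathcal{Z}_1)r_0+\mathcal{Y}_0<0 \quad\text{and}\quad \mathcal{Z}_1+\mathcal{Z}_2(r_0)r_0<1,
\]
which gives existence and uniqueness of $\tilde{\mathbf{u}}$ in the ball.

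\textbf{Main obstacle.} I expect the difficulty to be making $\mathcal{Y}_0$ tiny, of order $10^{-11}$. With a large $\mathcal{Z}_2(r_0)\sim10^8$, the discriminant condition forces $\mathcal{Y}_0\lesssim (1-\mathcal{Z}_1)^2/(2\mathcal{Z}_2)$. Two sources control $\mathcal{Y}_0$:
\begin{itemize}
\item[(i)] the truncation and tail error of $\overline{\Psi}*\overline{\Phi}_{\mathrm{inv}}$, together with the aliasing in $\overline{\Phi}_{\mathrm{inv}}$;
\item[(ii)] the boundary defect introduced by $\mathbfcal{P}$.
\end{itemize}
I would handle these first by increasing $N_0$ and $d$ and by performing the Newton refinement in extended precision. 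I would also keep $\mathcal{Z}_u$ small, and hence $\mathcal{Z}_1<1$, by making $d$ large relative to $1/a$. The small value of $\nu$ may reduce $a$, so this check may require increasing $d$.
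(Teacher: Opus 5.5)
Your plan is essentially the paper's proof: a computer-assisted verification of the hypotheses of Theorem \ref{th: radii polynomial} at $r_0 = 4\times 10^{-11}$, using Lemmas \ref{lem : bound Y_0}, \ref{lem : Z_full_1}--\ref{lem : Zu old exact} and \ref{lem : Z2 patterns} after building $\overline{\mathbf{u}}$ and $B^N$. The only real difference is the discretization, which you anticipated would need to grow: the paper uses $N_0 = 1800$, $N = 650$, $d = 57$ and obtains $\mathcal{Y}_0 \approx 1.12\times 10^{-11}$, $\mathcal{Z}_1 \approx 0.268$ and $\mathcal{Z}_2(r_0) \approx 1.34\times 10^{9}$, so your guess of $\mathcal{Z}_2(r_0)\sim 10^8$ is an order of magnitude too low.
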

\begin{proof}
Choose $N_0 = 1800, N = 650, d = 57$. The proof is obtained similarly to that of Theorem \ref{th : spike in thomas}.
In particular, we find
{\small\begin{align}
    \|B_{11}^N\|_{2} \leq 15.354,~\kappa_0 \bydef 1.34,~ \kappa_1 \bydef 5.8682, ~\mathcal{Y}_0 \bydef 1.1182 \times 10^{-11} \text{,}~\mathcal{Z}_{2}(r_0) \bydef 1.341 \times 10^{9}     \text{,}~\mathcal{Z}_1 \bydef  0.268
    \end{align}}
and prove that these values satisfy Theorem \ref{th: radii polynomial}. 
\end{proof}
\begin{figure}[H]
\centering
 \begin{minipage}{.5\linewidth}
  \centering\epsfig{figure=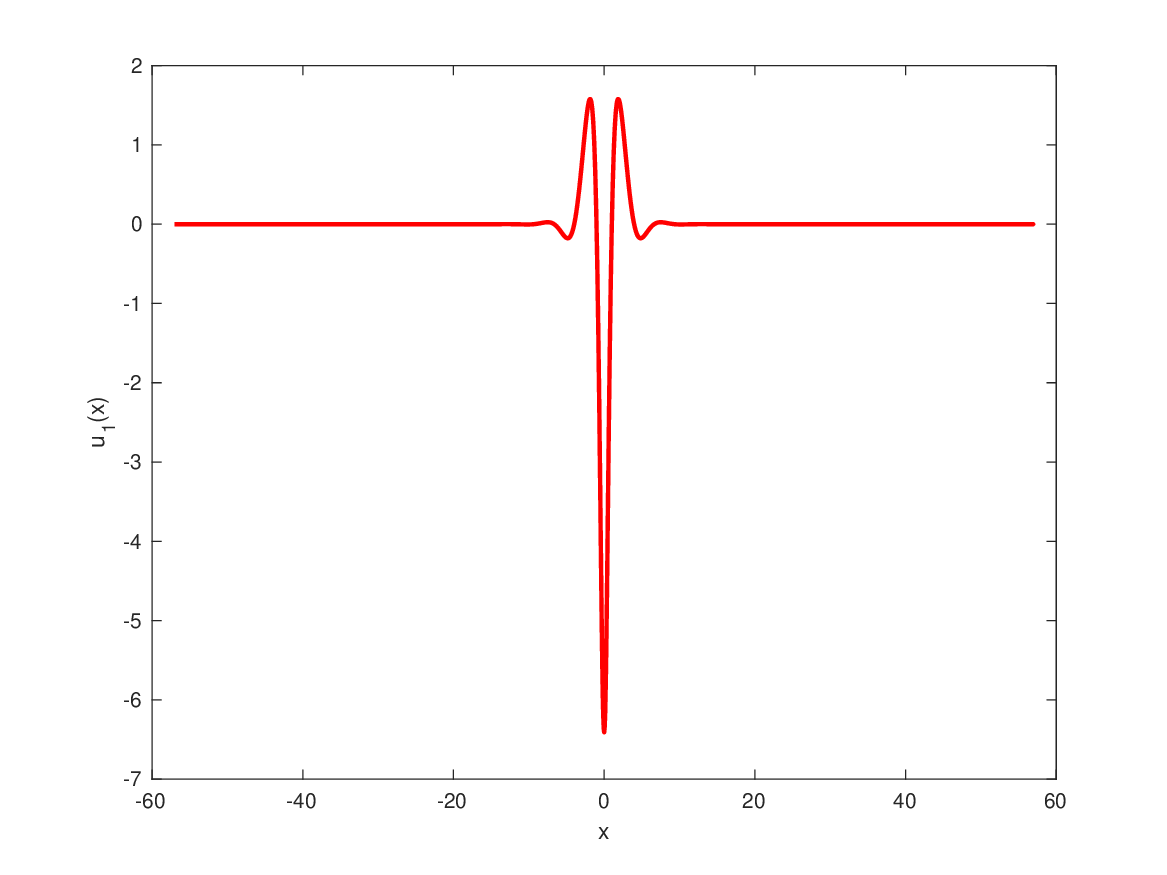,width=\linewidth}
  \end{minipage}%
 \begin{minipage}{.5\linewidth}
  \centering\epsfig{figure=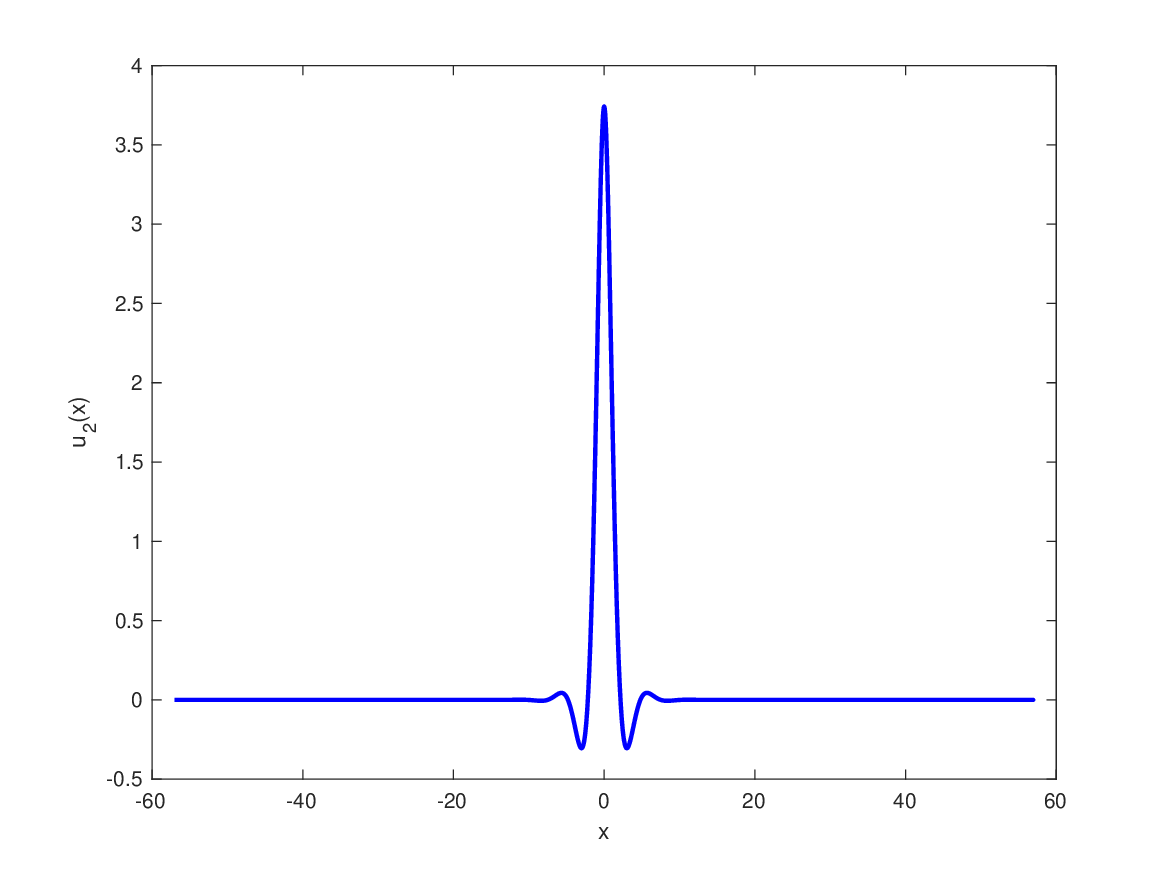,width=\linewidth}
 \end{minipage} 
 \caption{Plot of $ \overline{U}_1$ (L) and $\overline{U}_2$ (R) on $(-50,50)$ used in the proof of Theorem \ref{th : spike in thomas 4}.}\label{fig : th13}
 \end{figure}%
   \begin{theorem}[\bf Fifth Localized Solution in Thomas]\label{th : spike in thomas 5}
Let $\nu = 0.228, \nu_1 = 8, \nu_2 = 0.9498,$\\$ \nu_3 = 0.2799999, \nu_4 = 21, \nu_5 = 64.04$. Moreover, let $r_0 \bydef 4 \times 10^{-11}$. Then there exists a unique solution $\tilde{\mbf{u}}$ to \eqref{eq : gen_model} in $\overline{B_{r_0}(\overline{\mbf{u}})} \subset \mathcal{H}_{e}$ and we have that $\|\tilde{\mbf{u}}-\overline{\mbf{u}}\|_{\mathcal{H}} \leq r_0$. 
\end{theorem}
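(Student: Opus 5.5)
The plan is to follow the proof of Theorem \ref{th : spike in thomas} verbatim, with the parameters $\nu = 0.228$, $\nu_1 = 8$, $\nu_2 = 0.9498$, $\nu_3 = 0.2799999$, $\nu_4 = 21$, $\nu_5 = 64.04$.

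\textbf{Steady state and invertibility.} First I would compute a rigorous enclosure of the relevant real root $\lambda_1$ of the cubic. This can be done with an interval Newton step, which certifies existence and uniqueness of the root in a small interval. From it I would enclose $\lambda_2,\dots,\lambda_7$. Since $\nu_2 = 0.9498 > \frac14$, the constant $\kappa_0 = \frac{4\nu_2}{4\nu_2-1}$ is finite. I would then check, in interval arithmetic, one of the two conditions of Lemma \ref{lem : l_invertible}. This gives Assumption \ref{assumption : fourier}, and Lemma \ref{lem : ift_L_inverse} then yields the decay rate $a$ and the constants $C_0$ needed later.

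\textbf{Approximate solution and approximate inverse.} I would take $N_0$, $N$ and $d$ comparable to those of Theorem \ref{th : spike in thomas 4} (say $N_0 \approx 1800$, $N \approx 650$, $d \approx 57$), since $\nu$ is small and the spike is stiff. The approximate solution $\overline{\mathbf{U}} = \mathbfcal{P}\mathbf{U}_0$ comes from the sech$^2$ ansatz refined by Newton's method, as in Section \ref{sec : u0}. The approximate inverse $B^N$ is built as in \eqref{def : B_finite_infinite}.

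\textbf{Handling the nonlinearity.} I would compute $\overline{\Phi}_{\mathrm{inv}}$ by FFT and check that $\|1-\overline{\Phi}*\overline{\Phi}_{\mathrm{inv}}\|_1<1$, and likewise for the squared version. Lemma \ref{lem : phi bound} then applies.

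\textbf{Bounds and conclusion.} Next I would evaluate the bounds rigorously:
\begin{itemize}
\item $\mathcal{Y}_0$ via Lemma \ref{lem : bound Y_0};
\item $\mathcal{Z}_2(r)$ via Lemma \ref{lem : Z2 patterns}, using $\kappa_1$ from Lemma \ref{corr : banach algebra} and the norms $\|l^{-1}\|_{\mathcal{M}_1}$, $\|l^{-1}\|_{\mathcal{M}_2}$;
\item $\mathcal{Z}_1$ via Lemmas \ref{lem : Z_full_1}, \ref{lem : Z1 periodic patterns} and \ref{lem : Zu old exact}.
\end{itemize}
Finally, with $r_0 = 4\times 10^{-11}$, I would verify the two inequalities \eqref{condition radii polynomial} and conclude by Theorem \ref{th: radii polynomial}.

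\textbf{Main obstacle.} I expect $\mathcal{Z}_1$ to be the delicate part. Small $\nu$ makes $\kappa_1$ large (compare $\kappa_1 \approx 5.87$ in Theorem \ref{th : spike in thomas 4}), and $\mathcal{Z}_2(r_0)$ is of order $10^9$. So $\mathcal{Y}_0$ must be pushed to about $10^{-11}$ while keeping $\mathcal{Z}_1$ well below $1$.

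If $\mathcal{Z}_{b,0}$ is too large, I would first increase $N$. If $\mathcal{Z}_u$ dominates, I would enlarge $d$, since $\mathcal{Z}_u$ is controlled by factors $e^{-2ad}$ and $e^{-4ad}$. The parameter $a$ may be small here because $\nu_3$ sits just below $0.28$, close to the boundary of the invertibility region, and this is where I expect the most difficulty. If $\mathcal{Y}_0$ remains too large, I would increase $N_0$ and rerun Newton's method in extended precision before the rigorous evaluation.
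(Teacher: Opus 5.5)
Your plan is the same as the paper's: its proof is just the computer-assisted verification of the hypotheses of Theorem~\ref{th: radii polynomial}, carried out exactly as for Theorem~\ref{th : spike in thomas}. For reference, the paper needed $N_0 = N = 2100$ and $d = 62$, so your fallback of enlarging $N$ and $d$ is what was actually required. It obtained $\mathcal{Y}_0 = 3.03\times 10^{-11}$, $\mathcal{Z}_2(r_0) = 6.08\times 10^{9}$ and $\mathcal{Z}_1 = 0.042$, so in the end $\mathcal{Z}_1$ was not the bottleneck.
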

\begin{proof}
Choose $N_0 = 2100, N = 2100, d = 62$. The proof is obtained similarly to that of Theorem \ref{th : spike in thomas}.
In particular, we find
{\small\begin{align}
    \|B_{11}^N\|_{2} \leq 63.602,~\kappa_0 \bydef 1.3573,~ \kappa_1 \bydef 8.2362, ~\mathcal{Y}_0 \bydef 3.03 \times 10^{-11} \text{,}~\mathcal{Z}_{2}(r_0) \bydef 6.08 \times 10^{9}     \text{,}~\mathcal{Z}_1 \bydef  0.042
    \end{align}}
and prove that these values satisfy Theorem \ref{th: radii polynomial}. 
\end{proof}
\begin{figure}[H]
\centering
 \begin{minipage}{.5\linewidth}
  \centering\epsfig{figure=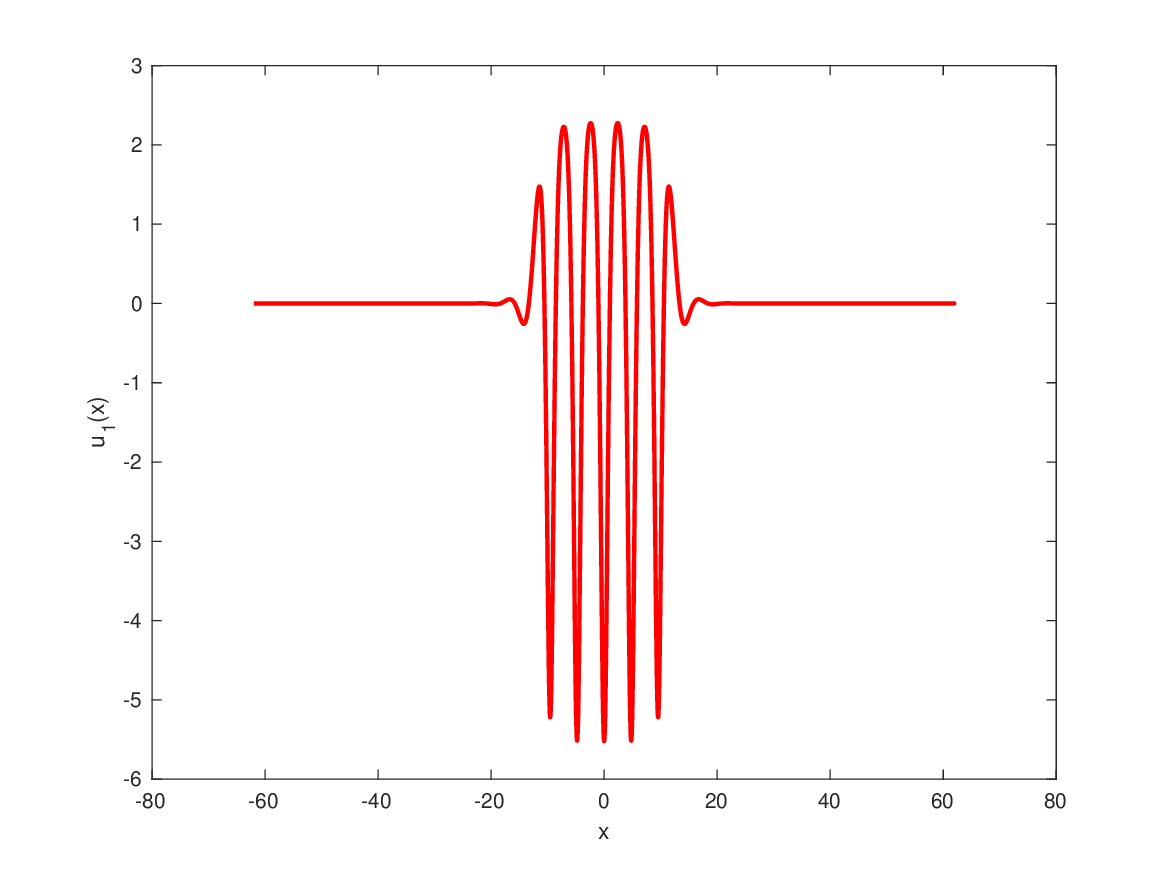,width=\linewidth}
  \end{minipage}%
 \begin{minipage}{.5\linewidth}
  \centering\epsfig{figure=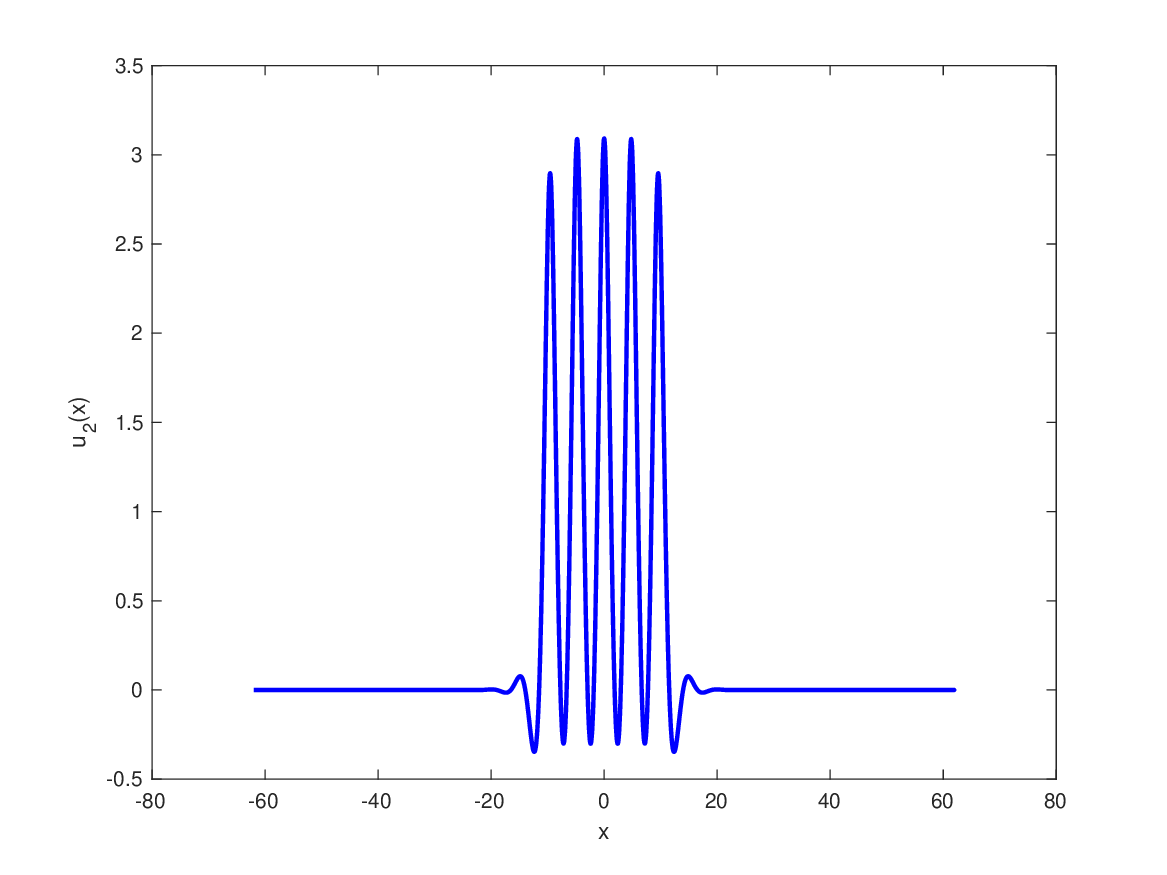,width=\linewidth}
 \end{minipage} 
 \caption{Plot of $ \overline{U}_1$ (L) and $\overline{U}_2$ (R) on $(-62,62)$ used in the proof of Theorem \ref{th : spike in thomas 5}.}\label{fig : th14}
 \end{figure}%
We have now completed our rigorous study of localized patterns in \eqref{eq:thomas}. We provided explicit estimates which allow for one to prove localized solutions in this model, which partially answers the question asked by the authors of \cite{thomas_1d}. Note that the approximate solutions we used in these proofs are chosen merely to demonstrate our approach. We did not choose them in an attempt to provide any additional insight on the dynamics or bifurcation structure to the Thomas model itself. The methodology to rigorously validate solutions by itself does not provide insight into the dynamics nor the bifurcation structure of the Thomas model. Finding an approximate solution for use in a rigorous proof whose result would have dynamical significance is outside the scope of this paper. To provide more rigorous results, we now move to periodic solutions. 
\section{Periodic Solutions}\label{sec : periodic solutions}
In \cite{thomas_1d}, the authors not only identified localized patterns. More specifically, in Figures 2.B and 2.E from the aforementioned paper, a branch of periodic solutions is presented. We wish to provide an approach for rigorously verifying such solutions in \eqref{eq:thomas}. As we are now proving periodic solutions, we no longer require the change of variable used to obtain \eqref{eq : gen_model}. Hence, we will introduce a more convenient zero finding problem with new notation. We let $U_1 = U$ and $U_2 \bydef \nu U - V$ to obtain
\begin{equation}
    \begin{split}
        &0 = \nu \Delta U_1 - U_1 + \nu_4 - \frac{\nu_1 \nu U_1^2 - \nu_1 U_1 *U_2}{1 + U_1 + \nu_2 U_1^2} \\
        &0 = \Delta U_2 - \nu_3 U_2 + (\nu_3 \nu-1) U_1 - \nu_3 \nu_5 + \nu_4.
    \end{split}
\end{equation}
We then define $\mathbf{U} \bydef (\mathbf{U}_1,\mathbf{U}_2)$ and introduce the \emph{periodic} zero finding problem as
\begin{align}\label{def : Fp}
    F_p(\mathbf{U}) \bydef L_p\mathbf{U} + G_p(\mathbf{U}), ~ L_p \bydef \begin{bmatrix}
        L_{p,11} & 0 \\
        L_{p,21} & L_{p,22}
    \end{bmatrix}, ~ G_p(\mathbf{U}) \bydef \begin{bmatrix}
        g_p(\mathbf{U}) \\
        -\nu_3 \nu_5 + \nu_4
    \end{bmatrix}
\end{align}
where
\begin{align}
    &L_{p,11} \bydef \nu \Delta - I_d, L_{p,21} \bydef (\nu_3 \nu-1) I_d ~L_{p,22} \bydef \Delta - \nu_3 I_d,~g_{p}(\mathbf{U}) \bydef \nu_4 - \frac{\nu_1 \nu U_1^2 - \nu_1 U_1 *U_2}{1 + U_1 + \nu_2 U_1^2}.
\end{align}
Recall that $\ell^1_{e,\tau} \bydef \ell^1_{\tau}(\mathbb{N}_0) \times \ell^1_{\tau}(\mathbb{N}_0)$ as in \eqref{def : norm on product space}. Given Fourier coefficients $U = (U_n)_{n \in \mathbb{Z}},  V= (V_n)_{n \in \mathbb{Z}}$ corresponding to the usual exponential Fourier series expansion, we define the discrete convolution 
\[
(U*V)_n = \sum_{k \in \mathbb{Z}} U_{n-k}V^*_k.
\]
 In the case of symmetric sequences $\ell^1_{\tau}(\mathbb{N}_0)$, we still denote $U*V$ the discrete convolution representing the product of two sequences for consistency. Now, let $S$ denote the Banach space that is the image of $F_p$ under $\ell^1_{e,\tau}$. That is, $F_p : \ell^1_{e,\tau} \to S$. Suppose we have $\overline{\mathbf{U}}$ such that $F_p(\overline{\mathbf{U}}) \approx 0$. Then, we state the following theorem whose proof can be found, for instance, in \cite{maxime_general}.
 \begin{theorem}\label{th : radii polynomial theorem periodic}
Let $R > 0 $. Let $A_p \in \mathcal{B}(S,\ell^1_{e,\tau})$ be injective. Moreover, let $Y_0, Z_1$ and  $Z_2 = Z_2(R)$ be non-negative constants such that
\begin{align}
    &\|A_pF_p(\overline{\mathbf{U}})\|_{1,\tau} \leq Y_0 \\
    &\|I_d - A_pDF_p(\overline{\mathbf{U}})\|_{\mathcal{B}(\ell^1_{e,\tau})} \leq Z_1  \\
    &\underset{\mathbf{U} \in B_R(\overline{\mathbf{U}})}{\sup}\|A_pDF^2_p(\mathbf{U})\|_{\mathcal{B}(\ell^1_{e,\tau},\mathcal{B}(\ell^1_{e,\tau}))} \leq Z_2.
\end{align}
If 
\begin{equation}\label{condition radii polynomial periodic}
    2Y_0 Z_2 \leq (1- Z_1)^2, ~\text{and}~ Z_1 < 1,
 \end{equation}
then, for any $r > 0$ such that
\begin{align}
    \frac{1- Z_1 - \sqrt{(1 - Z_1)^2 - 2Y_0 Z_2}}{Z_2} \leq r < \min\left(\frac{1-Z_1}{Z_2},R\right)
\end{align}
there exists a unique $\widetilde{\mathbf{U}} \in \overline{B_r(\overline{\mathbf{U}})} \subset \ell^1_{e,\tau}$ such that $F(\widetilde{\mathbf{U}})=0$, where $B_r(\overline{\mathbf{U}})$ is the open ball of $\ell^1_{e,\tau}$ centered at $\overline{\mathbf{U}}$ with radius $r$.
 \end{theorem}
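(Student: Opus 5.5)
We prove Theorem \ref{th : radii polynomial theorem periodic} by the contraction mapping principle applied to the Newton-like operator $T_p(\mathbf{U}) \bydef \mathbf{U} - A_pF_p(\mathbf{U})$ on the closed ball $\overline{B_r(\overline{\mathbf{U}})} \subset \ell^1_{e,\tau}$. Since $A_p \in \mathcal{B}(S,\ell^1_{e,\tau})$ and $F_p$ maps $\ell^1_{e,\tau}$ into $S$, the map $T_p$ sends $\ell^1_{e,\tau}$ into itself. Moreover, $F_p$ is smooth on the relevant ball, since it is a composition of the Banach algebra product in $\ell^1_{e,\tau}$ with inversion of an element bounded away from non-invertibility. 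Injectivity of $A_p$ gives the equivalence: fixed points of $T_p$ are exactly the zeros of $F_p$. It therefore suffices to show that $T_p$ is a contraction of $\overline{B_r(\overline{\mathbf{U}})}$ into itself, and then invoke the Banach fixed point theorem.

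First, the derivative bound. For $\mathbf{U} \in \overline{B_r(\overline{\mathbf{U}})}$ with $r < R$, write $DT_p(\mathbf{U}) = I_d - A_pDF_p(\overline{\mathbf{U}}) - A_p\big(DF_p(\mathbf{U}) - DF_p(\overline{\mathbf{U}})\big)$. By the mean value inequality applied along the segment joining $\overline{\mathbf{U}}$ and $\mathbf{U}$, which stays in the convex ball, the last term is bounded by $\sup\|A_pD^2F_p\|\,\|\mathbf{U}-\overline{\mathbf{U}}\|_{1,\tau} \leq Z_2 r$. Hence $\|DT_p(\mathbf{U})\|_{\mathcal{B}(\ell^1_{e,\tau})} \leq Z_1 + Z_2 r$. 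The upper condition $r < (1-Z_1)/Z_2$ makes this constant strictly less than $1$, so $T_p$ is a contraction on the ball, again by the mean value inequality.

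Second, invariance of the ball. For $\mathbf{U}$ in the ball, we write $T_p(\mathbf{U}) - \overline{\mathbf{U}} = \big(T_p(\mathbf{U}) - T_p(\overline{\mathbf{U}})\big) - A_pF_p(\overline{\mathbf{U}})$. A sharper estimate of the first piece, integrating $DT_p$ along the segment $\overline{\mathbf{U}} + t(\mathbf{U}-\overline{\mathbf{U}})$, gives
\begin{align}
\|T_p(\mathbf{U}) - \overline{\mathbf{U}}\|_{1,\tau} \leq Y_0 + \int_0^1 (Z_1 + Z_2 t r)\, r\, dt = Y_0 + Z_1 r + \tfrac{1}{2}Z_2 r^2 .
\end{align}
This is at most $r$ exactly when the radii polynomial $p(r) \bydef \tfrac12 Z_2 r^2 - (1-Z_1)r + Y_0$ is nonpositive. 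The hypothesis $2Y_0Z_2 \leq (1-Z_1)^2$ ensures that $p$ has real roots, and $p \leq 0$ between them. The smaller root is $\big(1-Z_1-\sqrt{(1-Z_1)^2-2Y_0Z_2}\big)/Z_2$, and the larger root is at least $(1-Z_1)/Z_2$. Therefore every $r$ in the stated interval satisfies $p(r) \leq 0$, and $T_p$ maps $\overline{B_r(\overline{\mathbf{U}})}$ into itself.

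The Banach fixed point theorem then yields a unique fixed point $\widetilde{\mathbf{U}}$ of $T_p$ in the ball, and by injectivity of $A_p$ it is the unique zero of $F_p$ there. The only delicate point is the regularity of $F_p$: the integral form of the mean value estimate requires $F_p$ to be $C^2$ on $B_R(\overline{\mathbf{U}})$, that is, the denominator $1+U_1+\nu_2U_1^2$ must stay invertible in $\ell^1_\tau$ on that ball. This is implicit in the finiteness of $Z_2$, and in practice it would be verified via Lemma \ref{lem : phi bound}. Everything else is the standard argument, as in \cite{maxime_general}.
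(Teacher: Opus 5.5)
Your proposal is correct and is the standard Newton--Kantorovich contraction argument (bounding $\|DT_p\|\le Z_1+Z_2r$ on the ball and $\|T_p(\mathbf{U})-\overline{\mathbf{U}}\|_{1,\tau}\le Y_0+Z_1r+\tfrac12Z_2r^2$). The paper does not reproduce this argument and instead defers to \cite{maxime_general}. One small correction to your closing remark: uniform invertibility of $1+U_1+\nu_2U_1^2$ over the whole ball is supplied by Lemma \ref{lem : ball phi bound}, whereas Lemma \ref{lem : phi bound} only controls the inverse at the center $\overline{\Phi}_p$.
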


 With Theorem \ref{th : radii polynomial theorem periodic} available, let us now discuss the numerical aspects of our approach.
\subsection{Numerical Aspects of Periodic Solutions}\label{sec : numerical aspects of periodic solutions}
In this section, we will compute $\overline{\mathbf{U}}, A_p,$ and the nonlinear terms. As far as the numerical aspects of this approach are concerned, we build $\overline{\mathbf{U}}$ in the same way as discussed in Section \ref{sec : u0}, the only difference is that we do not apply the operator $\mathcal{P}$ as it is not necessary for proving periodic solutions. We then perform numerical continuation on the solution obtained using the approach described in Section \ref{sec : u0} to obtain our candidate solutions. The construction of the approximate inverse $A_p$ is what changes the most. We will choose $A_p$ to have a finite part, which can be stored on the computer as a matrix, and an infinite tail which we will control theoretically. In particular, 
\begin{align}
    A_p^N \approx (\bpi^{\leq N} DF_p(\overline{\mathbf{U}})\bpi^{\leq N})^{-1}.
\end{align}
As $A_p^N$ is finite, we can store it on the computer as a matrix. Next, let $\mathbf{H} \in \ell^1_{e,\tau}$. For the tail, rather than considering the full $DF_p(\overline{\mathbf{U}})^{-1}$, we only choose the inverse of the linear part, $L_p$. Hence,
\begin{align}
    (A_p\mathbf{H})_n \bydef \begin{cases}
        (A_p^N \bpi^{\leq N} \mathbf{H})_{n} & n \in I^N \\
        \begin{bmatrix}
            \frac{(H_1)_n}{\nu \tilde{n}^2 + 1} \\
            \frac{(\nu_3 \nu-1)(H_1)_n}{(\nu\tilde{n}^2 + 1)(\tilde{n}^2+\nu_3)}+\frac{(H_2)_n}{\tilde{n}^2+\nu_3}
        \end{bmatrix} & n \in \mathbb{N}_0 \setminus I^N
    \end{cases}.
\end{align}
With this definition, observe that
\begin{align}
    \|\bpi^{\leq N} A_p\|_{\mathcal{B}(\ell^1_{e,\tau})} &\leq \max_{n \in \mathbb{Z}^2 \setminus I^N} \left(\frac{1}{|\nu \tilde{n}^2 + 1|}+\left|\frac{\nu_3 \nu-1}{(\nu \tilde{n}^2 + 1)(\tilde{n}^2 + \nu_3)}\right| + \frac{1}{|\tilde{n}^2+\nu_3|}\right)  \bydef \mathcal{L}_{\infty}.
\end{align}
We now discuss the nonlinearities. We follow a similar strategy to Section \ref{sec : nonlinearity} but defined directly on sequences as
\begin{align}
    \overline{\Psi}_p \bydef -(\nu_1 \nu \overline{U}_1^2 - \nu_1 \overline{U}_1* \overline{U}_2), ~ \overline{\Phi}_p \bydef 1 + \overline{U}_1 + \nu_2 \overline{U}_1^2
\end{align}
so that $g_p(\overline{\mathbf{U}}) = \nu_4 + \overline{\Psi}_p * \overline{\Phi}_p^{-1}$.
We then define
\begin{align}
    \overline{\Psi}_{p,1} \bydef -\nu_1(-\overline{U}_2 + 2\nu \overline{U}_1 + \nu \overline{U}_1^2 + \nu_2 \overline{U}_1^2*\overline{U}_2), ~ \overline{\Psi}_{p,2} \bydef \nu_1 \overline{U}_1
\end{align}
so that we can define $V_{p,1},V_{p,2} \in \ell^1_{e,\tau}$ as
\begin{align}
    V_{p,1} \bydef \overline{\Psi}_{p,1} * \overline{\Phi}_p^{-2}, ~ V_{p,2} \bydef  \overline{\Psi}_{p,2} * \overline{\Phi}_p^{-1}.
\end{align}
As before, $\overline{\Phi}_{p,\mathrm{inv}}$ is an approximation of $\overline{\Phi}_p^{-1}$ using the FFT (see \cite{marco_thesis} for the computational details). Additionally, we let
\begin{align}\label{def : scr gp V1p and V2p}
    \mathscr{g}_p(\overline{\mathbf{U}}) \bydef \nu_4 + \overline{\Psi}_p * \overline{\Phi}_{p,\mathrm{inv}}, ~ \mathscr{V}_{p,1} \bydef \overline{\Psi}_{p,1} * \overline{\Phi}_{p,\mathrm{inv}}^2, ~ \mathscr{V}_{p,2} \bydef \overline{\Psi}_{p,2} * \overline{\Phi}_{p,\mathrm{inv}}
\end{align}We now recall an additional lemma from \cite{maxime_paper_continuation,olivier_kevin_paper}, which we will now require.
\begin{lemma}\label{lem : ball phi bound}
Let $r > 0$ and $\overline{\Phi}_p, \overline{\Phi}_{p,\mathrm{inv}} \in \ell^1_{\tau}(\mathbb{N}_0)$. If $\|1 - \overline{\Phi}_p * \overline{\Phi}_{p,\mathrm{inv}}\|_{1,\tau} + r \|\overline{\Phi}_{p,\mathrm{inv}}\|_{1,\tau} < 1$, then
\begin{align}
    \sup_{\Phi \in B_r(\overline{\Phi}_p)} \|\Phi^{-1}\|_{1,\tau} \leq \frac{\|\overline{\Phi}_{p,\mathrm{inv}}\|_{1,\tau}}{1 - \|1-\overline{\Phi}_p*\overline{\Phi}_{p,\mathrm{inv}}\|_{1,\tau} - r \|\overline{\Phi}_{p,\mathrm{inv}}\|_{1,\tau}}.
\end{align}
\end{lemma}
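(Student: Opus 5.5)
The plan is to run a Neumann series argument in the Banach algebra $\ell^1_{\tau}(\mathbb{N}_0)$, using the approximate inverse $\overline{\Phi}_{p,\mathrm{inv}}$ as a preconditioner. The facts we need are that $\ell^1_\tau(\mathbb{N}_0)$ (with $\tau\ge 1$) is a commutative Banach algebra under $*$ with unit $e_0$ (written $1$), and that $\|U*V\|_{1,\tau}\le \|U\|_{1,\tau}\|V\|_{1,\tau}$. This is the standard weighted $\ell^1$ convolution algebra on even sequences, and $\|e_0\|_{1,\tau}=1$.

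Fix $\Phi\in B_r(\overline{\Phi}_p)$ and write $\Phi = \overline{\Phi}_p + H$ with $\|H\|_{1,\tau}<r$. First I will factor
\[
\Phi*\overline{\Phi}_{p,\mathrm{inv}} = 1 - \bigl(1-\overline{\Phi}_p*\overline{\Phi}_{p,\mathrm{inv}} - H*\overline{\Phi}_{p,\mathrm{inv}}\bigr) \bydef 1 - Q.
\]
By the triangle inequality and the algebra property,
\[
\|Q\|_{1,\tau}\le \|1-\overline{\Phi}_p*\overline{\Phi}_{p,\mathrm{inv}}\|_{1,\tau} + r\|\overline{\Phi}_{p,\mathrm{inv}}\|_{1,\tau}<1
\]
by the hypothesis. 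The Neumann series $\sum_{k\ge0}Q^{*k}$ therefore converges in $\ell^1_\tau$ to an inverse of $1-Q$, and its norm satisfies $\|(1-Q)^{-1}\|_{1,\tau}\le (1-\|Q\|_{1,\tau})^{-1}$.

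Next I will set $\Phi^{-1}\bydef \overline{\Phi}_{p,\mathrm{inv}}*(1-Q)^{-1}$. By commutativity and associativity of $*$ we get $\Phi*\Phi^{-1} = (1-Q)*(1-Q)^{-1}=1$, so $\Phi$ is invertible in $\ell^1_\tau$. Since inverses are unique in a commutative unital algebra, this agrees with the inverse appearing in $g_p$. Submultiplicativity then gives
\[
\|\Phi^{-1}\|_{1,\tau}\le \frac{\|\overline{\Phi}_{p,\mathrm{inv}}\|_{1,\tau}}{1-\|1-\overline{\Phi}_p*\overline{\Phi}_{p,\mathrm{inv}}\|_{1,\tau}-r\|\overline{\Phi}_{p,\mathrm{inv}}\|_{1,\tau}}.
\]
This bound does not depend on $\Phi$, so taking the supremum over the ball finishes the proof.

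The only point needing care is the Banach algebra structure of $\ell^1_\tau(\mathbb{N}_0)$ with the symmetric weights $\alpha_n$. I would justify it by identifying $U\in\ell^1_\tau(\mathbb{N}_0)$ with its even extension to $\mathbb{Z}$. The norm $\sum_n \alpha_n|u_n|\tau^n$ is then exactly $\sum_{n\in\mathbb{Z}}|u_n|\tau^{|n|}$, and submultiplicativity follows from $\tau^{|n|}\le\tau^{|n-k|}\tau^{|k|}$ when $\tau\ge1$. Everything else is routine.
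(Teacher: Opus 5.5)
Your Neumann-series argument is correct and is essentially the standard proof: the paper gives no argument of its own and defers to \cite{olivier_kevin_paper}, where the bound comes from the same step of writing $\Phi*\overline{\Phi}_{p,\mathrm{inv}} = 1-Q$ with $\|Q\|_{1,\tau}<1$ and inverting $1-Q$. Your additional checks fill in details the paper's statement leaves implicit: the Banach algebra structure of $\ell^1_\tau(\mathbb{N}_0)$ via even extension to $\mathbb{Z}$ with $\tau\geq 1$, the identity $\|e_0\|_{1,\tau}=1$, and the fact that the argument proves $\Phi$ is invertible rather than assuming it.
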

\begin{proof}
The proof can be found in \cite{olivier_kevin_paper}.
\end{proof}
The previous lemma allows us to obtain an upper bound for the inverse of an element, which will be useful when computing the $Z_2$ bound. Finally, we modify Lemma \ref{lem : phi bound} with a stronger variant. 
\begin{lemma}\label{lem : phi bound stronger}
Let $\overline{\Phi}_p,\overline{\Phi}_{p,\mathrm{inv}} \in \ell^1_{\tau}(\mathbb{N}_0) $. Let $\ell^1_{\tau}(\mathbb{N}_0)$ for some $\tau \geq 1$ be the Banach space defined in \eqref{ell1Gnu}.
Let $\Xi \in \ell^1_{\tau}(\mathbb{N}_0)$ and $\mathcal{A} \in \mathcal{B}(S,\ell^1_{e,\tau})$.
If $\|1 - \overline{\Phi}_p\overline{\Phi}_{p,\mathrm{inv}}\|_{1,\tau} < 1$, then 
\begin{align}
    \left\|\mathcal{A}\begin{bmatrix} \Xi*(\overline{\Phi}_{p,\mathrm{inv}} - \overline{\Phi}_p^{-1}) \\ 0\end{bmatrix}\right\|_{1,\tau} \leq \frac{\left\|\mathcal{A}\begin{bmatrix}\Xi*\overline{\Phi}_{p,\mathrm{inv}}*(1-\overline{\Phi}_p*\overline{\Phi}_{p,\mathrm{inv}}) \\ 0 \end{bmatrix}\right\|_{1,\tau}}{1-\|1-\overline{\Phi}_p*\overline{\Phi}_{p,\mathrm{inv}}\|_{1,\tau}}.
\end{align}
\end{lemma}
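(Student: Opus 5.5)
The plan is a Neumann-series argument in the Banach algebra $\ell^1_{\tau}(\mathbb{N}_0)$, written so that the operator $\mathcal{A}$ is applied to an explicit leading term. Set $E \bydef 1 - \overline{\Phi}_p*\overline{\Phi}_{p,\mathrm{inv}}$. We use that $\ell^1_{\tau}(\mathbb{N}_0)$ with $\tau \geq 1$ is a commutative Banach algebra under $*$, with unit $1 = e_0$ and $\|U*V\|_{1,\tau} \leq \|U\|_{1,\tau}\|V\|_{1,\tau}$. Since $\|E\|_{1,\tau}<1$, the element $\overline{\Phi}_p*\overline{\Phi}_{p,\mathrm{inv}} = 1-E$ is invertible, with $(1-E)^{-1} = \sum_{k\geq 0}E^k$. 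Hence $\overline{\Phi}_p$ is invertible and
\begin{align}
\overline{\Phi}_p^{-1} = \overline{\Phi}_{p,\mathrm{inv}}*(1-E)^{-1}.
\end{align}
This is the same starting point as in the proof of Lemma \ref{lem : phi bound}.

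Next we derive the key algebraic identity. We have
\begin{align}
\overline{\Phi}_{p,\mathrm{inv}} - \overline{\Phi}_p^{-1} = \overline{\Phi}_{p,\mathrm{inv}}*\big(1-(1-E)^{-1}\big) = -\overline{\Phi}_{p,\mathrm{inv}}*E*(1-E)^{-1}.
\end{align}
Rather than expanding the full series, we use a fixed-point form. Let $D \bydef \overline{\Phi}_{p,\mathrm{inv}} - \overline{\Phi}_p^{-1}$. Multiplying $(1-E)*D = -\overline{\Phi}_{p,\mathrm{inv}}*E$ out gives
\begin{align}
D = -\overline{\Phi}_{p,\mathrm{inv}}*E + E*D.
\end{align}
Convolving with $\Xi$, placing the result in the first component and applying the linear map $\mathcal{A}$ yields
\begin{align}
\mathcal{A}\begin{bmatrix}\Xi*D\\0\end{bmatrix} = -\mathcal{A}\begin{bmatrix}\Xi*\overline{\Phi}_{p,\mathrm{inv}}*E\\0\end{bmatrix} + \mathcal{A}\begin{bmatrix}E*(\Xi*D)\\0\end{bmatrix}.
\end{align}

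The main obstacle is the last term, because $\mathcal{A}$ need not commute with convolution by $E$. A naive bound would give $\|\mathcal{A}\|\,\|E\|_{1,\tau}\,\|\Xi*D\|_{1,\tau}$, which is not of the claimed form. To fix this, I would iterate the identity. Substituting $D = -\overline{\Phi}_{p,\mathrm{inv}}*E*\sum_{k\geq0}E^k$ directly gives
\begin{align}
\mathcal{A}\begin{bmatrix}\Xi*D\\0\end{bmatrix} = -\sum_{k\geq 0}\mathcal{A}\begin{bmatrix}E^k*(\Xi*\overline{\Phi}_{p,\mathrm{inv}}*E)\\0\end{bmatrix},
\end{align}
with convergence in $\ell^1_{e,\tau}$ because $\mathcal{A}$ is bounded and the series converges absolutely in $\ell^1_\tau$.

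To bound each term by $\|E\|_{1,\tau}^k\,\|\mathcal{A}[\Xi*\overline{\Phi}_{p,\mathrm{inv}}*E;0]\|_{1,\tau}$, one needs $\mathcal{A}$ to commute with convolution by $E$, or at least to be compatible with it. In full generality this fails for arbitrary $\mathcal{A}$. So I would first check whether the intended use, where $\mathcal{A}$ is $A_p$ or a block of it, permits this. If not, the honest statement would replace the numerator by $\|\mathcal{A}\|\,\|\Xi*\overline{\Phi}_{p,\mathrm{inv}}*E\|_{1,\tau}$, or would require $\mathcal{A}$ to act as a multiplication-type operator on the first component. I expect this commutation issue to be the crux. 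The remaining steps are summing the geometric series $\sum_k\|E\|_{1,\tau}^k = (1-\|E\|_{1,\tau})^{-1}$ and applying the triangle inequality, both of which are routine.
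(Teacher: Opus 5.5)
Your proposal stops short of a proof, and you were right to stop. The step you isolate cannot be carried out, because the lemma is false for a general bounded $\mathcal{A}$. The paper's one-line justification (``keep extra terms inside the estimate'') silently performs exactly the step you flagged: it pulls the factor $(1-E)^{-1}=\sum_k E^k$ out through $\mathcal{A}$.

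Keep your notation $E \bydef 1-\overline{\Phi}_p*\overline{\Phi}_{p,\mathrm{inv}}$, $D\bydef\overline{\Phi}_{p,\mathrm{inv}}-\overline{\Phi}_p^{-1}$, $X\bydef\Xi*\overline{\Phi}_{p,\mathrm{inv}}*E$, and write $\mathcal{A}_1Y \bydef \mathcal{A}\begin{bmatrix}Y\\0\end{bmatrix}$. Let $e_k$ denote the $k$-th unit sequence. Here is a counterexample for any $\tau\geq 1$.
\begin{itemize}
\item Take $\Xi=\overline{\Phi}_{p,\mathrm{inv}}=e_0$ and $\overline{\Phi}_p=e_0-\epsilon e_1$ with $0<\epsilon<\frac{1}{2\tau}$.
\item Then $E=\epsilon e_1$, $\|E\|_{1,\tau}=2\epsilon\tau<1$, and $X=\epsilon e_1$.
\item Let $\mathcal{A}$ apply $\Pi^{>1}$ to each component; this is bounded, e.g.\ for $S=\ell^1_{e,\tau}$. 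Then $\mathcal{A}_1X=0$, so the right-hand side vanishes.
\item On the other side, $\Xi*D=-\sum_{k\geq1}E^k$. Every $E^k$ has nonnegative coefficients, and $E^2=\epsilon^2(2e_0+e_2)$. So the index-$2$ coefficient of $\Xi*D$ has modulus at least $\epsilon^2$, and the left-hand side is at least $2\epsilon^2\tau^2>0$.
\end{itemize}
Injectivity does not rescue the claim. The injective operator $\Pi^{>1}+\delta\,\Pi^{\leq1}$ keeps the left-hand side bounded below, while the right-hand side becomes $O(\delta)$.

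For the intended choice $\mathcal{A}=A_p$ (a finite matrix plus a diagonal tail), $\mathcal{A}$ does not commute with convolution. So the use of this lemma in the periodic $Y_{0,2}$ bound is not justified by this argument.

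What does hold follows from your identity $\Xi*D=-X+E*(\Xi*D)$. Combine it with the scalar estimate $\|\Xi*D\|_{1,\tau}\leq\|X\|_{1,\tau}/(1-\|E\|_{1,\tau})$, which is the valid case $\mathcal{A}=I$. This gives
\[
\|\mathcal{A}_1(\Xi*D)\|_{1,\tau}\leq\|\mathcal{A}_1X\|_{1,\tau}+\|\mathcal{A}_1\|\,\frac{\|E\|_{1,\tau}\,\|X\|_{1,\tau}}{1-\|E\|_{1,\tau}},
\]
where $\|\mathcal{A}_1\|$ is the operator norm from $\ell^1_\tau(\mathbb{N}_0)$ to $\ell^1_{e,\tau}$. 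This keeps $\mathcal{A}$ inside the leading term, which is the point of the sharpening, and moves only the higher-order remainder outside. It is strictly better than your fallback of putting $\|\mathcal{A}_1\|$ on the whole numerator.

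The lemma exactly as stated is recovered only under an extra hypothesis, for instance $\|\mathcal{A}_1(E*Y)\|_{1,\tau}\leq\|E\|_{1,\tau}\|\mathcal{A}_1Y\|_{1,\tau}$ for all $Y$. That holds when $\mathcal{A}_1$ is given by convolutions. Under it, your geometric-series argument goes through verbatim.
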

\begin{proof}
The proof is similar to that of Lemma \ref{lem : phi bound}, we simply keep extra terms inside the estimate.
\end{proof}
With $\overline{\mathbf{U}}, g_p(\overline{\mathbf{U}}), V_{p,1}, V_{p,2},$ and $A_p$ now constructed, let us compute the needed bounds to prove periodic solutions.
 \subsection{Computing the Bounds for Periodic Solutions}\label{sec : Bounds of periodic solutions}
 In this section, we compute the bounds for periodic solutions. That is, we compute $Y_0, Z_1,$ and $Z_2(r)$ satisfying Theorem \ref{th : radii polynomial theorem periodic}. We now begin with $Y_0$.
 \begin{lemma}\label{lem : bound Y_0 periodic}
Let $Y_0 > 0$ be defined as 
\begin{align}
    Y_0 \bydef Y_{0,1} + Y_{0,2}
\end{align}
where
{\footnotesize\begin{align}
    &Y_{0,1} \bydef \left\| A_p^N \left(L_p \overline{\mathbf{U}} + \begin{bmatrix}
        \mathscr{g}_p(\overline{\mathbf{U}}) \\ -\nu_3 \nu_5 + \nu_4
    \end{bmatrix}\right)\right\|_{1,\tau} + \mathcal{L}_{\infty}\left\|(\bpi^{\leq N_0} - \bpi^{\leq N}) L_p\overline{\mathbf{U}} + \begin{bmatrix}
        (\Pi^{\leq 4N_0} - \Pi^{\leq N}) (\overline{\Psi}_p * \overline{\Phi}_{p,\mathrm{inv}}) \\ 0
    \end{bmatrix}\right\|_{1,\tau} \end{align}}
    {\footnotesize\begin{align}
    &Y_{0,2} \bydef   \frac{\left\|A_p^N \begin{bmatrix} \overline{\Psi}_p*\overline{\Phi}_{p,\mathrm{inv}}*(1-\overline{\Phi}_p*\overline{\Phi}_{p,\mathrm{inv}}) \\ 0 \end{bmatrix}\right\|_{1,\tau} + \mathcal{L}_{\infty} \|(\Pi^{\leq 8N_0} - \Pi^{ \leq N})(\overline{\Psi}_p*\overline{\Phi}_{p,\mathrm{inv}}*(1 - \overline{\Phi}_p*\overline{\Phi}_{p,\mathrm{inv}}))\|_{1,\tau}}{1-\|1-\overline{\Phi}_p*\overline{\Phi}_{p,\mathrm{inv}}\|_{1,\tau}}.
\end{align}}
Then, it follows that $\|A_p F_p(\overline{\mathbf{U}})\|_{1,\tau} \leq Y_0$.
 \end{lemma}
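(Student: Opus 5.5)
We write $F_p(\overline{\mathbf{U}}) = L_p\overline{\mathbf{U}} + G_p(\overline{\mathbf{U}})$ and use $g_p(\overline{\mathbf{U}}) = \nu_4 + \overline{\Psi}_p*\overline{\Phi}_p^{-1}$. Inserting the computable approximation $\mathscr{g}_p(\overline{\mathbf{U}})$ from \eqref{def : scr gp V1p and V2p}, we obtain the exact splitting
\begin{align}
A_pF_p(\overline{\mathbf{U}}) = A_p\left(L_p\overline{\mathbf{U}} + \begin{bmatrix} \mathscr{g}_p(\overline{\mathbf{U}}) \\ -\nu_3\nu_5+\nu_4\end{bmatrix}\right) - A_p\begin{bmatrix} \overline{\Psi}_p*(\overline{\Phi}_{p,\mathrm{inv}} - \overline{\Phi}_p^{-1}) \\ 0\end{bmatrix}.
\end{align}
By the triangle inequality it then suffices to show that the first term is bounded by $Y_{0,1}$ and the second by $Y_{0,2}$.

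For the first term, everything is a finite sequence. $\overline{\mathbf{U}} = \bpi^{\leq N_0}\overline{\mathbf{U}}$, so $\overline{\Psi}_p$ and $\overline{\Phi}_p$ are supported in $I^{2N_0}$. Since $\overline{\Phi}_{p,\mathrm{inv}}$ is also supported in $I^{2N_0}$, the product $\overline{\Psi}_p*\overline{\Phi}_{p,\mathrm{inv}}$ is supported in $I^{4N_0}$.

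We split the argument $\mathbf{H}$ of $A_p$ as $\bpi^{\leq N}\mathbf{H} + \bpi^{>N}\mathbf{H}$. On $I^N$, $A_p$ acts by $A_p^N$, which produces the first summand of $Y_{0,1}$. On the complement, $A_p$ is the diagonal tail, whose $\ell^1_{\tau}$ operator norm is bounded by $\mathcal{L}_\infty$ (this is the estimate stated just before the lemma, read as the bound on $\bpi^{>N}A_p$).

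The constants $\nu_4$ and $-\nu_3\nu_5+\nu_4$ live only in the mode $n=0\in I^N$. Hence $\bpi^{>N}\mathbf{H}$ equals exactly $(\bpi^{\leq N_0}-\bpi^{\leq N})L_p\overline{\mathbf{U}}$ plus $(\Pi^{\leq 4N_0}-\Pi^{\leq N})(\overline{\Psi}_p*\overline{\Phi}_{p,\mathrm{inv}})$ in the first component. This yields the second summand of $Y_{0,1}$. The $\ell^1_\tau$ norm of the full vector is the sum of the norms of the two disjointly supported pieces, so the two contributions add with no loss.

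For the second term, we apply Lemma \ref{lem : phi bound stronger} with $\Xi = \overline{\Psi}_p$ and $\mathcal{A}=A_p$. This requires the hypothesis $\|1-\overline{\Phi}_p*\overline{\Phi}_{p,\mathrm{inv}}\|_{1,\tau}<1$, which is checked rigorously on the computer and which also ensures that $\overline{\Phi}_p^{-1}$ exists in $\ell^1_\tau$. The lemma bounds the second term by
\begin{align}
\frac{\left\|A_p\begin{bmatrix}\overline{\Psi}_p*\overline{\Phi}_{p,\mathrm{inv}}*(1-\overline{\Phi}_p*\overline{\Phi}_{p,\mathrm{inv}})\\0\end{bmatrix}\right\|_{1,\tau}}{1-\|1-\overline{\Phi}_p*\overline{\Phi}_{p,\mathrm{inv}}\|_{1,\tau}}.
\end{align}
The vector inside is again finite. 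Indeed, $1-\overline{\Phi}_p*\overline{\Phi}_{p,\mathrm{inv}}$ is supported in $I^{4N_0}$, so the full triple product is supported in $I^{8N_0}$. We repeat the same finite/tail split: the finite part gives $\|A_p^N[\cdots]\|_{1,\tau}$, and the tail part is bounded by $\mathcal{L}_\infty\|(\Pi^{\leq 8N_0}-\Pi^{\leq N})(\cdots)\|_{1,\tau}$. This is exactly the numerator of $Y_{0,2}$.

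The only delicate points are bookkeeping. The first is the support claims, which guarantee that every norm appearing is a finite computation amenable to interval arithmetic. The second is confirming that the tail of $A_p$ is genuinely block lower triangular with $\ell^1_\tau$ norm at most $\mathcal{L}_\infty$, so that the tail contribution can be factored out with that single constant. The substantive analytic step is entirely contained in Lemma \ref{lem : phi bound stronger}, which we take as given.
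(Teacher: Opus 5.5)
Your proposal follows the paper's proof of this lemma essentially line by line. It uses the same splitting through $\mathscr{g}_p$, the same appeal to Lemma \ref{lem : phi bound stronger} with $\Xi=\overline{\Psi}_p$ and $\mathcal{A}=A_p$, and the same finite/tail split with $\mathcal{L}_\infty$. Your support bookkeeping ($2N_0$, $4N_0$, $8N_0$, constants sitting in mode $0$, given $N\le N_0$) is fine.

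\textbf{The gap.} The problem is the step you explicitly take as given, and the paper's proof has the same gap. Lemma \ref{lem : phi bound stronger} is false for a general bounded $\mathcal{A}$. Set $E\bydef 1-\overline{\Phi}_p*\overline{\Phi}_{p,\mathrm{inv}}$ and $X\bydef \Xi*\overline{\Phi}_{p,\mathrm{inv}}*E$, so that $\Xi*(\overline{\Phi}_{p,\mathrm{inv}}-\overline{\Phi}_p^{-1})=-X*(1-E)^{-1}$. Lemma \ref{lem : phi bound} only needs the Banach algebra estimate $\|X*(1-E)^{-1}\|_{1,\tau}\le \|X\|_{1,\tau}/(1-\|E\|_{1,\tau})$. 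Keeping $\mathcal{A}$ inside the norm would instead require $\|\mathcal{A}(X*Y,0)\|_{1,\tau}\le \|\mathcal{A}(X,0)\|_{1,\tau}\|Y\|_{1,\tau}$. That fails because $\mathcal{A}$ does not commute with convolution. For $A_p$ this is already visible in the tail: modes of $X$ in $I^N$ convolved with powers of $E$ land above $N$, so $\Pi^{>N}(X*Y)$ is not controlled by $\|\Pi^{>N}X\|_{1,\tau}$.

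\textbf{A concrete counterexample.} Take $\tau=1$, $\Xi=\overline{\Phi}_{p,\mathrm{inv}}=e_0$ and $\overline{\Phi}_p=e_0+\frac14 e_1$. Then $E=-\frac14 e_1$, $\|E\|_1=\frac12$ and $X=-\frac14 e_1$. Let $\mathcal{A}\mathbf{H}\bydef ((H_1)_2e_2,0)$.
\begin{itemize}
\item The right-hand side of the lemma is $0$, since $X$ has no mode-$2$ coefficient.
\item The left-hand side equals $2(\overline{\Phi}_p^{-1})_2\ge 2(E*E)_2=\frac18$. Only even powers of $E$ reach mode $2$, and each contributes positively.
\end{itemize}
The structure of $A_p$ does not rescue the lemma. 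Take an operator of exactly that form with $N=1$ and $A_p^N=\epsilon\,\bpi^{\le 1}$. The right-hand side becomes $\epsilon$, while the mode-$2$ tail contribution to the left-hand side does not depend on $\epsilon$.

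\textbf{The repair.} Write $(1-E)^{-1}=1+E*(1-E)^{-1}$ and keep only the leading Neumann term inside $A_p$. Bound the remainder using $\|A_p\|_{\mathcal{B}(\ell^1_{e,\tau})}\le\|A_p^N\|_{\mathcal{B}(\ell^1_{e,\tau})}+\mathcal{L}_\infty$. With $X=\overline{\Psi}_p*\overline{\Phi}_{p,\mathrm{inv}}*E$ this gives
\begin{align*}
\left\|A_p\begin{bmatrix}\overline{\Psi}_p*(\overline{\Phi}_{p,\mathrm{inv}}-\overline{\Phi}_p^{-1})\\ 0\end{bmatrix}\right\|_{1,\tau}\le\left\|A_p^N\begin{bmatrix}X\\ 0\end{bmatrix}\right\|_{1,\tau}+\mathcal{L}_\infty\|(\Pi^{\le 8N_0}-\Pi^{\le N})X\|_{1,\tau}+\left(\|A_p^N\|_{\mathcal{B}(\ell^1_{e,\tau})}+\mathcal{L}_\infty\right)\frac{\|X*E\|_{1,\tau}}{1-\|E\|_{1,\tau}}.
\end{align*}
Use this in place of $Y_{0,2}$; the rest of your argument then goes through verbatim. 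It differs from the paper's $Y_{0,2}$ only at order $\|E\|_{1,\tau}^2$, so the computed values should barely change. As written, however, neither your proposal nor the paper's proof establishes $\|A_pF_p(\overline{\mathbf{U}})\|_{1,\tau}\le Y_0$ with the stated $Y_{0,2}$.
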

 \begin{proof}
The proof follows the steps of Lemma \ref{lem : bound Y_0} after applying Parseval's identity. More specifically, since we are directly working with sequences, we can immediately introduce $\overline{\Phi}_{p,\mathrm{inv}}$. We also use Lemma \ref{lem : phi bound stronger} to get
{\small\begin{align}
    \|A_p F_p(\overline{\mathbf{U}})\|_{1,\tau} &= \left\| A_p \left(L_p \overline{\mathbf{U}} + \begin{bmatrix}
        g_p(\overline{\mathbf{U}}) \\ -\nu_3 \nu_5 + \nu_4
    \end{bmatrix}\right)\right\|_{1,\tau} \\
    &\leq \left\| A_p \left(L_p \overline{\mathbf{U}} + \begin{bmatrix}
        \mathscr{g}_p(\overline{\mathbf{U}}) \\ -\nu_3 \nu_5 + \nu_4
    \end{bmatrix}\right)\right\|_{1,\tau} +  \left\|A_p\begin{bmatrix} \overline{\Psi}_p*(\overline{\Phi}_p^{-1} -\overline{\Phi}_{p,\mathrm{inv}}) \\ 0 \end{bmatrix}\right\|_{1,\tau} \\
    &\leq \left\| A_p \left(L_p \overline{\mathbf{U}} + \begin{bmatrix}
        \mathscr{g}_p(\overline{\mathbf{U}}) \\ -\nu_3 \nu_5 + \nu_4
    \end{bmatrix}\right)\right\|_{1,\tau} +   \frac{\left\|A_p \begin{bmatrix} \overline{\Psi}_p*\overline{\Phi}_{p,\mathrm{inv}}*(1-\overline{\Phi}_p*\overline{\Phi}_{p,\mathrm{inv}}) \\ 0 \end{bmatrix}\right\|_{1,\tau}}{1-\|1-\overline{\Phi}_p*\overline{\Phi}_{p,\mathrm{inv}}\|_{1,\tau}}.
\end{align}}
We then perform truncations to obtain
{\footnotesize\begin{align}
    &\left\| A_p \left(L_p \overline{\mathbf{U}} + \begin{bmatrix}
        \mathscr{g}_p(\overline{\mathbf{U}}) \\ -\nu_3 \nu_5 + \nu_4
    \end{bmatrix}\right)\right\|_{1,\tau} \\
    &= \left\| A_p^N \left(L_p \overline{\mathbf{U}} + \begin{bmatrix}
        \mathscr{g}_p(\overline{\mathbf{U}}) \\ -\nu_3 \nu_5 + \nu_4
    \end{bmatrix}\right)\right\|_{1,\tau} + \left\| \bpi^{> N}A_p \left(L_p \overline{\mathbf{U}} + \begin{bmatrix}
        \mathscr{g}_p(\overline{\mathbf{U}}) \\ -\nu_3 \nu_5 + \nu_4
    \end{bmatrix}\right)\right\|_{1,\tau} \\
    &\leq \left\| A_p^N \left(L_p \overline{\mathbf{U}} + \begin{bmatrix}
        \mathscr{g}_p(\overline{\mathbf{U}}) \\ -\nu_3 \nu_5 + \nu_4
    \end{bmatrix}\right)\right\|_{1,\tau} + \mathcal{L}_{\infty}\left\|(\bpi^{\leq N_0} - \bpi^{\leq N}) L_p\overline{\mathbf{U}} + \begin{bmatrix}
        (\Pi^{\leq 4N_0} - \Pi^{\leq N}) (\overline{\Psi}_p * \overline{\Phi}_{p,\mathrm{inv}}) \\ 0
    \end{bmatrix}\right\|_{1,\tau}
\end{align}}
which is the $Y_{0,1}$ bound. Note that since $\overline{\mathbf{U}}$ has finitely many coefficients, it follows that $L_p \overline{\mathbf{U}} \in \ell^1_{e,\tau}$. Hence, we could perform the final estimate. We also truncate to get
{\footnotesize\begin{align}
    &\frac{\left\|A_p \begin{bmatrix} \overline{\Psi}_p*\overline{\Phi}_{p,\mathrm{inv}}*(1-\overline{\Phi}_p*\overline{\Phi}_{p,\mathrm{inv}}) \\ 0 \end{bmatrix}\right\|_{1,\tau}}{1-\|1-\overline{\Phi}_p*\overline{\Phi}_{p,\mathrm{inv}}\|_{1,\tau}} \\
    &\leq \frac{\left\|A_p^N \begin{bmatrix} \overline{\Psi}_p*\overline{\Phi}_{p,\mathrm{inv}}*(1-\overline{\Phi}_p*\overline{\Phi}_{p,\mathrm{inv}}) \\ 0 \end{bmatrix}\right\|_{1,\tau}}{1-\|1-\overline{\Phi}_p*\overline{\Phi}_{p,\mathrm{inv}}\|_{1,\tau}} + \frac{\left\|\bpi^{> N}A_p^N \begin{bmatrix} \overline{\Psi}_p*\overline{\Phi}_{p,\mathrm{inv}}*(1-\overline{\Phi}_p*\overline{\Phi}_{p,\mathrm{inv}}) \\ 0 \end{bmatrix}\right\|_{1,\tau}}{1-\|1-\overline{\Phi}_p*\overline{\Phi}_{p,\mathrm{inv}}\|_{1,\tau}} \\
    &\leq \frac{\left\|A_p^N \begin{bmatrix} \overline{\Psi}_p*\overline{\Phi}_{p,\mathrm{inv}}*(1-\overline{\Phi}_p*\overline{\Phi}_{p,\mathrm{inv}}) \\ 0 \end{bmatrix}\right\|_{1,\tau}}{1-\|1-\overline{\Phi}_p*\overline{\Phi}_{p,\mathrm{inv}}\|_{1,\tau}} + \frac{\|\bpi^{> N} A_p\|_{\mathcal{B}(\ell^1_{e,\tau})}\left\|\Pi^{> N} \overline{\Psi}_p*\overline{\Phi}_{p,\mathrm{inv}}*(1-\overline{\Phi}_p*\overline{\Phi}_{p,\mathrm{inv}})\right\|_{1,\tau}}{1-\|1-\overline{\Phi}_p*\overline{\Phi}_{p,\mathrm{inv}}\|_{1,\tau}} \\
    &\leq \frac{\left\|A_p^N \begin{bmatrix} \overline{\Psi}_p*\overline{\Phi}_{p,\mathrm{inv}}*(1-\overline{\Phi}_p*\overline{\Phi}_{p,\mathrm{inv}}) \\ 0 \end{bmatrix}\right\|_{1,\tau}}{1-\|1-\overline{\Phi}_p*\overline{\Phi}_{p,\mathrm{inv}}\|_{1,\tau}} + \frac{\mathcal{L}_{\infty}\left\|(\Pi^{\leq 8N_0} - \Pi^{\leq N}) (\overline{\Psi}_p*\overline{\Phi}_{p,\mathrm{inv}}*(1-\overline{\Phi}_p*\overline{\Phi}_{p,\mathrm{inv}}))\right\|_{1,\tau}}{1-\|1-\overline{\Phi}_p*\overline{\Phi}_{p,\mathrm{inv}}\|_{1,\tau}}
\end{align}}
which leads to the $Y_{0,2}$ bound.
 \end{proof}
 Next, we compute the $Z_2$ bound. Its computation will differ from Lemma \ref{lem : Z2 patterns} as we are estimating it in a fundamentally different way. 
 \begin{lemma}\label{lem : Z2 periodic}
Let $R > 0$. Define $Z_2 > 0$ as 
\begin{align}
    Z_2 \bydef 2\nu_1\left(\|A_p^N\|_{\mathcal{B}(\ell^1_{e,\tau})} + \mathcal{L}_{\infty}\right)(Z_{2,1}Z_{2,2} + Z_{2,3})
\end{align}
where
\begin{align}
    &Z_{2,1} \bydef \|\nu_2^2 \overline{U}_1^3 *\overline{U}_2 + \nu \nu_2 \overline{U}_1^3 + 3\nu \nu_2 \overline{U}_1^2 - 3\nu_2 \overline{U}_1 *\overline{U}_2 - \nu - \overline{U}_2\|_{1,\tau} \\
    &+(\|-1-3\nu_2 \overline{U}_1+\nu_2^2 \overline{U}_1^3\|_{1,\tau} + \|6\nu\nu_2 \overline{U}_1 + 3 \nu \nu_2 \overline{U}_1^2 - 3\nu_2 \overline{U}_2 + 3\nu_2^2 \overline{U}_1^2* \overline{U}_2\|_{1,\tau})R \\
    &+ (\|-3\nu_2 + 3\nu_2^2 \overline{U}_1^2\|_{1,\tau} + \|3\nu \nu_2 + 3\nu \nu_2 \overline{U}_1 + 3\nu_2^2 \overline{U}_1^2\|_{1,\tau}) R^2 \\
    &+ (\|3\nu_2^2 \overline{U}_1 \|_{1,\tau} + \|\nu \nu_2 + \nu_2^2 \overline{U}_2\|_{1,\tau})R^3 + \nu_2^2 R^4, \\
    &Z_{2,2} \bydef \frac{\|\overline{\Phi}_{p,\mathrm{inv}}\|_{1,\tau}^3}{\left(1- \|1-\overline{\Phi}_p*\overline{\Phi}_{p,\mathrm{inv}}\|_{1,\tau} - R\|\overline{\Phi}_{p,\mathrm{inv}}\|_{1,\tau}\right)^3}, \\
    &Z_{2,3} \bydef \frac{(\|1 - \nu_2 \overline{U}_1^2\|_{1,\tau} + 2\nu_2 \|\overline{U}_1\|_{1,\tau} R + \nu_2 R^2)\|\overline{\Phi}_{p,\mathrm{inv}}\|_{1,\tau}^2}{\left(1 - \|1-\overline{\Phi}_p*\overline{\Phi}_{p,\mathrm{inv}}\|_{1,\tau} - R \|\overline{\Phi}_{p,\mathrm{inv}}\|_{1,\tau}\right)^2}.
\end{align}
Then, it follows that $\underset{\mathbf{U} \in B_R(\overline{\mathbf{U}})}{\sup}\|A_pDF^2_p(\mathbf{U}) \|_{\mathcal{B}(\ell^1_{e,\tau},\mathcal{B}(\ell^1_{e,\tau}))} \leq Z_2.$
 \end{lemma}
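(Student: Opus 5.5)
Since $L_p$ is linear and the second component of $G_p$ is constant, $D^2F_p(\mathbf{U}) = D^2G_p(\mathbf{U})$, and it has only a first component, namely $D^2 g_p(\mathbf{U})$. For $\mathbf{H},\mathbf{K}\in\ell^1_{e,\tau}$ we therefore need $\|A_p[D^2g_p(\mathbf{U})(\mathbf{H},\mathbf{K}),0]^T\|_{1,\tau}$. We bound the operator norm of $A_p$ on vectors of the form $[W,0]^T$ by $\|A_p^N\|_{\mathcal{B}(\ell^1_{e,\tau})}+\mathcal{L}_\infty$: split into $\bpi^{\le N}$ and $\bpi^{>N}$ parts and use the definition of the tail of $A_p$. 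It then remains to show that, for $\mathbf{U}\in B_R(\overline{\mathbf{U}})$ and $\|\mathbf{H}\|_{1,\tau},\|\mathbf{K}\|_{1,\tau}\le 1$,
\[
\|D^2g_p(\mathbf{U})(\mathbf{H},\mathbf{K})\|_{1,\tau}\le 2\nu_1(Z_{2,1}Z_{2,2}+Z_{2,3}).
\]
Here we use that $\ell^1_\tau(\mathbb{N}_0)$ is a Banach algebra under $*$.

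\textbf{Step 1: the second derivative.} Write $g_p=\nu_4-\nu_1(\nu U_1^2-U_1U_2)\Phi^{-1}$ with $\Phi=1+U_1+\nu_2U_1^2$. Differentiating twice gives second partials $\partial_{11},\partial_{12},\partial_{22}$. The term $\partial_{22}$ vanishes because $g_p$ is affine in $U_2$. Each remaining partial is a rational function with denominator $\Phi^3$ or $\Phi^2$. Putting everything over common denominators, we expect
\[
\partial_{11}g_p=2\nu_1\,P(U_1,U_2)\,\Phi^{-3}, \qquad \partial_{12}g_p=\nu_1\,Q(U_1)\,\Phi^{-2},
\]
where
\[
P=\nu_2^2U_1^3U_2+\nu\nu_2U_1^3+3\nu\nu_2U_1^2-3\nu_2U_1U_2-\nu-U_2,\qquad Q=\pm(1-\nu_2U_1^2),
\]
matching the leading terms of $Z_{2,1}$ and $Z_{2,3}$. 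Then
\[
D^2g_p(\mathbf{H},\mathbf{K})=\partial_{11}H_1K_1+\partial_{12}(H_1K_2+H_2K_1),
\]
and with unit $\mathbf{H},\mathbf{K}$ this gives the bound $2\nu_1(\|P\Phi^{-3}\|+\|Q\Phi^{-2}\|)$. One has to check that the factor $2$ from the symmetric cross term is absorbed consistently, since $\|H_1K_2+H_2K_1\|\le 2$ when $\|\mathbf{H}\|_{1,\tau}=\|H_1\|_{1,\tau}+\|H_2\|_{1,\tau}$. This requires care, but it is not conceptually hard.

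\textbf{Step 2: the numerators.} Set $U_1=\overline{U}_1+\delta_1$ and $U_2=\overline{U}_2+\delta_2$ with $\|\delta_i\|_{1,\tau}\le R$. Expand $P$ as a polynomial in $(\delta_1,\delta_2)$ with coefficients given by convolution polynomials in $\overline{\mathbf{U}}$. Grouping by total degree in $\delta$ and applying the triangle inequality with the algebra property yields exactly the $R^0,\dots,R^4$ terms of $Z_{2,1}$. The degree-one coefficients are $\partial_{U_1}P$ and $\partial_{U_2}P$ evaluated at $\overline{\mathbf{U}}$, which are the two norms multiplying $R$, and so on, up to $\nu_2^2R^4$ from $\delta_1^3\delta_2$. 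In the same way, $\|Q\|\le\|1-\nu_2\overline{U}_1^2\|+2\nu_2\|\overline{U}_1\|R+\nu_2R^2$.

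\textbf{Step 3: the inverses.} $\Phi=1+U_1+\nu_2U_1^2$ lies within distance $\rho=\|U_1-\overline{U}_1\|(1+\ldots)$ of $\overline{\Phi}_p$. We then apply Lemma \ref{lem : ball phi bound} to bound $\|\Phi^{-1}\|_{1,\tau}$, and raise this bound to the third and second powers to obtain $Z_{2,2}$ and the denominator factor of $Z_{2,3}$.

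\textbf{Main obstacle.} The stated $Z_{2,2}$ uses radius $R$ in Lemma \ref{lem : ball phi bound}. However, $\|\Phi-\overline{\Phi}_p\|_{1,\tau}\le(1+2\nu_2\|\overline{U}_1\|_{1,\tau})R+\nu_2R^2$, which is generally larger than $R$. I would first try to justify the radius-$R$ application directly by rereading the proof of Lemma \ref{lem : ball phi bound}, which perturbs via $\overline{\Phi}_{p,\mathrm{inv}}*(\Phi-\overline{\Phi}_p)$. If that is not possible, I would replace $R$ in $Z_{2,2}$ and $Z_{2,3}$ by the enlarged radius. I regard this identification of the perturbation size as the delicate point; everything else is bookkeeping.
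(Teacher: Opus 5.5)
Your plan is the paper's proof in Appendix \ref{apen : Z2 periodic}. The paper also bounds $\|A_p\|$ by $\|A_p^N\|_{\mathcal{B}(\ell^1_{e,\tau})}+\mathcal{L}_\infty$ and writes $\partial_{11}g_p=2\nu_1P\,\Phi_p^{-3}$ and $\partial_{12}g_p=\nu_1(1-\nu_2U_1^2)\Phi_p^{-2}$; your $P$ is right and $Q=+(1-\nu_2U_1^2)$. It then bounds the two cross terms separately, which is where the factor $2$ on $Z_{2,3}$ comes from. That factor is slack, since $\|H_1*K_2+H_2*K_1\|_{1,\tau}\le\|\mathbf{H}\|_{1,\tau}\|\mathbf{K}\|_{1,\tau}$. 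Finally it expands the numerators in $\mathbf{H}$ and controls $\Phi_p^{-1}$ with Lemma \ref{lem : ball phi bound}. At exactly the point you single out, the paper only invokes Lemma \ref{lem : ball phi bound} with radius $R$, citing $\|\mathbf{H}\|_{1,\tau}\le R$. It never estimates $\|\Phi_p-\overline{\Phi}_p\|_{1,\tau}$.

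Your suspicion is correct, and your first option cannot succeed. The radius-$R$ form would need $\|(\Phi_p-\overline{\Phi}_p)*\overline{\Phi}_{p,\mathrm{inv}}\|_{1,\tau}\le R\|\overline{\Phi}_{p,\mathrm{inv}}\|_{1,\tau}$, whereas $\Phi_p-\overline{\Phi}_p=H_1*(1+2\nu_2\overline{U}_1+\nu_2H_1)$. Here is a concrete failure of that step:
\begin{itemize}
\item Take $\nu_2=1$, $\overline{U}_1=2e_0$, $\overline{\Phi}_{p,\mathrm{inv}}=\frac{1}{7}e_0$ and $R=\frac{13}{5}$.
\item The point $U_1=-\frac{1}{2}e_0$ lies in the ball, and $\|\Phi_p^{-1}\|_{1,\tau}=\frac{4}{3}$.
\item The radius-$R$ bound gives only $\frac{1}{7-R}=\frac{5}{22}$.
\end{itemize}
With complex coefficients it is worse. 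Take $U_1=(2+c)e_0$ with $2+c=e^{2\pi i/3}$, so $|c|=\sqrt{7}$. Then $\Phi_p=0$ inside $B_R(\overline{\mathbf{U}})$ for every $R\in(\sqrt{7},7)$, while $Z_2$ is finite. So the printed inequality itself fails.

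Your fallback is therefore necessary, not optional. In the denominators of $Z_{2,2}$ and $Z_{2,3}$, replace $R\|\overline{\Phi}_{p,\mathrm{inv}}\|_{1,\tau}$ by $\rho(R)\|\overline{\Phi}_{p,\mathrm{inv}}\|_{1,\tau}$ with $\rho(R)=(1+2\nu_2\|\overline{U}_1\|_{1,\tau})R+\nu_2R^2$. A sharper alternative is $R\|(1+2\nu_2\overline{U}_1)*\overline{\Phi}_{p,\mathrm{inv}}\|_{1,\tau}+\nu_2R^2\|\overline{\Phi}_{p,\mathrm{inv}}\|_{1,\tau}$. Either choice makes your argument a complete proof, but of a corrected lemma rather than the printed one. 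The change should be numerically negligible for the radii $R\le 10^{-6}$ used in the applications.

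One more correction for Step 2. The coefficient of $\delta_1^2$ in $P$ is $3\nu\nu_2+3\nu\nu_2\overline{U}_1+3\nu_2^2\overline{U}_1*\overline{U}_2$. The statement's $Z_{2,1}$ prints $3\nu\nu_2+3\nu\nu_2\overline{U}_1+3\nu_2^2\overline{U}_1^2$, and the appendix has yet another variant. So your expansion will not reproduce the stated $Z_{2,1}$ exactly; use the correct coefficient.
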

 \begin{proof}
The proof can be found in Appendix \ref{apen : Z2 periodic}
 \end{proof}
 Let us now compute the $Z_1$ bound. We first introduce 
 {\footnotesize\begin{align}
     V_{p,1}^N \bydef \Pi^{\leq N} (\overline{\Psi}_{p,1} * \overline{\Phi}_{p,\mathrm{inv}}^{2}), ~ V_{p,2}^N \bydef \Pi^{\leq N} (\overline{\Psi}_{p,2} * \overline{\Phi}_{p,\mathrm{inv}}), ~ DG_p^N(\overline{\mathbf{U}}) \bydef \begin{bmatrix}
         \mathbb{V}_{p,1}^N & \mathbb{V}_{p,2}^N \\
         0 & 0
     \end{bmatrix}, ~ DG_p(\overline{\mathbf{U}}) \bydef \begin{bmatrix}
         \mathbb{V}_{p,1} & \mathbb{V}_{p,2} \\
         0 & 0
     \end{bmatrix}.
 \end{align}}
 We now state the following lemma.
 \begin{lemma}\label{lem : Z1 periodic}
Let $Z_{\infty} > 0$ be defined as 
\begin{align}
&Z_{\infty} \bydef (\|A_p^N\|_{\mathcal{B}(\ell^1_{e,\tau})} + \mathcal{L}_{\infty})(Z_{\infty,1} + Z_{\infty,2}) 
\end{align}
where 
\begin{align}
&Z_{\infty,1} \bydef \|V_{p,1}^N - \mathscr{V}_{p,1}\|_{1,\tau} + \frac{\|\overline{\Psi}_{p,1}*\overline{\Phi}_{p,\mathrm{inv}}^2*(1-\overline{\Phi}_p^2*\overline{\Phi}_{p,\mathrm{inv}}^2)\|_{1,\tau}}{1-\|1-\overline{\Phi}_p^2*\overline{\Phi}_{p,\mathrm{inv}}^2\|_{1,\tau}} \\
&Z_{\infty,2} \bydef \|V_{p,2}^N - \mathscr{V}_{p,2}\|_{1,\tau} + \frac{\|\overline{\Psi}_{p,2}*\overline{\Phi}_{p,\mathrm{inv}}*(1-\overline{\Phi}_p*\overline{\Phi}_{p,\mathrm{inv}})\|_{1,\tau}}{1-\|1-\overline{\Phi}_p*\overline{\Phi}_{p,\mathrm{inv}}\|_{1,\tau}}
\end{align}
Then, define $Z_1 > 0$ as 
\begin{align}
    &Z_1 \bydef \|\bpi^{\leq N}(I_d - A_p(L_p + DG_p^N(\overline{\mathbf{U}})))\bpi^{\leq 2N}\|_{\mathcal{B}(\ell^1_{e,\tau})} + 2\mathcal{L}_{\infty}(\|V_{p,1}^N\|_{1,\tau} + \|V_{p,2}^N\|_{1,\tau}) + Z_{\infty}.
\end{align}
Then, it follows that $\|I_d - A_p DF_p(\overline{\mathbf{U}})\|_{\mathcal{B}(\ell^1_{e,\tau})} \leq Z_1$.
 \end{lemma}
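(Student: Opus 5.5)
We will split $DF_p(\overline{\mathbf{U}}) = L_p + DG_p^N(\overline{\mathbf{U}}) + (DG_p(\overline{\mathbf{U}}) - DG_p^N(\overline{\mathbf{U}}))$ and apply the triangle inequality:
\begin{align}
\|I_d - A_pDF_p(\overline{\mathbf{U}})\|_{\mathcal{B}(\ell^1_{e,\tau})} \leq \|I_d - A_p(L_p + DG_p^N(\overline{\mathbf{U}}))\|_{\mathcal{B}(\ell^1_{e,\tau})} + \|A_p(DG_p(\overline{\mathbf{U}}) - DG_p^N(\overline{\mathbf{U}}))\|_{\mathcal{B}(\ell^1_{e,\tau})}.
\end{align}
For the second term, we use $\|A_p\|_{\mathcal{B}(\ell^1_{e,\tau})} \leq \|A_p^N\|_{\mathcal{B}(\ell^1_{e,\tau})} + \mathcal{L}_\infty$, since $A_p$ is block diagonal in the splitting $\bpi^{\leq N}$, $\bpi^{>N}$. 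The operator $DG_p - DG_p^N$ only has a first block row, with entries the convolution operators by $V_{p,j} - V_{p,j}^N$. In $\ell^1_{e,\tau}$, the operator norm of such a row is at most $\|V_{p,1}-V_{p,1}^N\|_{1,\tau} + \|V_{p,2}-V_{p,2}^N\|_{1,\tau}$, because $\ell^1_\tau$ is a Banach algebra. We then insert $\mathscr{V}_{p,j}$ and use
\begin{align}
\|V_{p,j}^N - V_{p,j}\|_{1,\tau} \leq \|V_{p,j}^N - \mathscr{V}_{p,j}\|_{1,\tau} + \|\overline{\Psi}_{p,j}*(\overline{\Phi}_{\mathrm{inv}}^{k} - \overline{\Phi}_p^{-k})\|_{1,\tau}, \qquad k\in\{1,2\}.
\end{align}
We bound the last term by Lemma \ref{lem : phi bound stronger} with $\mathcal{A}$ the identity and $\Xi = \overline{\Psi}_{p,j}$. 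For $j=1$ we apply it to $\overline{\Phi}_p^2$ with approximate inverse $\overline{\Phi}_{p,\mathrm{inv}}^2$, as done for $\mathcal{Z}_{\infty,1}$ in the proof of Lemma \ref{lem : Z_full_1}. This gives exactly $Z_{\infty}$.

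For the first term, we split the domain using $\bpi^{\leq 2N}$ and $\bpi^{>2N}$. In $\ell^1$-type norms, the operator norm is a supremum over columns. Hence it is bounded by the maximum of the two restricted pieces, and in particular by their sum. On $\bpi^{\leq 2N}$, the operator $L_p + DG_p^N$ maps into $\bpi^{\leq 3N}$. The components with indices above $N$ are handled by the tail of $A_p$, which equals $L_p^{-1}$ there, and these components exactly cancel the identity apart from the $\bpi^{>N}A_p DG_p^N\bpi^{\leq 2N}$ contribution. We should check whether this contribution is included in the finite matrix norm. The stated formula only keeps $\bpi^{\leq N}(\cdots)\bpi^{\leq 2N}$, so I expect to bound the leftover $N<n\leq 3N$ part by $\mathcal{L}_\infty(\|V_{p,1}^N\|+\|V_{p,2}^N\|)$. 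This is one of the two copies in the factor $2\mathcal{L}_\infty$.

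On $\bpi^{>2N}$, $DG_p^N$ maps into $\bpi^{>N}$ because $V^N_{p,j}$ have support in $I^N$. Here $A_p = L_p^{-1}$ and $A_pL_p = I_d$. What remains is $\bpi^{>N}L_p^{-1}DG_p^N\bpi^{>2N}$, of norm at most $\mathcal{L}_\infty(\|V_{p,1}^N\|_{1,\tau}+\|V_{p,2}^N\|_{1,\tau})$. This supplies the second copy. The order of $L_p^{-1}$ and the convolution is fine, since $L_p^{-1}$ acts on the left.

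The main obstacle is the bookkeeping in the finite part. We must verify that $\bpi^{\leq N}A_p\bpi^{>N}=0$ and that $\bpi^{>N}(I_d - A_pL_p)=0$, so that the only remaining tail interactions are those bounded by $\mathcal{L}_\infty\|V^N_{p,j}\|$. We must also check that the $\ell^1_{e,\tau}$ norm of the $2\times2$ block operator is bounded by the sum over blocks of the convolution norms. This uses the product-space norm \eqref{def : norm on product space} with $p=1$.
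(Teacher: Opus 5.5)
Your proposal is correct and follows the paper's proof essentially step for step. It uses the same triangle-inequality split of $Z_\infty$, bounded via $\|A_p\|_{\mathcal{B}(\ell^1_{e,\tau})}\le\|A_p^N\|_{\mathcal{B}(\ell^1_{e,\tau})}+\mathcal{L}_\infty$ and Lemma \ref{lem : phi bound stronger} with $\mathcal{A}$ the identity (applied to $\overline{\Phi}_p^2$ with $\overline{\Phi}_{p,\mathrm{inv}}^2$ for $j=1$), and the same four-block decomposition with $\bpi^{\le N},\bpi^{>N}$ on the left and $\bpi^{\le 2N},\bpi^{>2N}$ on the right. In that decomposition the block $\bpi^{\le N}(\cdot)\bpi^{>2N}$ vanishes, and each of the two blocks $\bpi^{>N}A_pDG_p^N(\overline{\mathbf{U}})\bpi^{\le 2N}$ and $\bpi^{>N}A_pDG_p^N(\overline{\mathbf{U}})\bpi^{>2N}$ contributes one copy of $\mathcal{L}_\infty(\|V_{p,1}^N\|_{1,\tau}+\|V_{p,2}^N\|_{1,\tau})$. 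Grouping by column blocks first (a max, then relaxed to a sum) is only a cosmetic difference from the paper's direct four-term triangle inequality.
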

 \begin{proof}
To begin, we introduce $DG_p^N(\overline{\mathbf{U}})$.
{\small\begin{align}
    \|I_d - A_pDF_p(\overline{\mathbf{U}})\|_{\mathcal{B}(\ell^1_{e,\tau})} &= \|I_d - A_p(L_p + DG_p(\overline{\mathbf{U}}))\|_{\mathcal{B}(\ell^1_{e,\tau})}\\
    &\leq \|I_d - A_p(L_p + DG_p^N(\overline{\mathbf{U}}))\|_{\mathcal{B}(\ell^1_{e,\tau})} + \|A_p(DG_p^N(\overline{\mathbf{U}}) - DG_p(\overline{\mathbf{U}}))\|_{\mathcal{B}(\ell^1_{e,\tau})}.\label{before splitting Z_infinity}
    \end{align}}
We now examine the second term of \eqref{before splitting Z_infinity}.
{\footnotesize\begin{align}
    \|A_p(DG_p^N(\overline{\mathbf{U}}) - DG_p(\overline{\mathbf{U}}))\|_{\mathcal{B}(\ell^1_{e,\tau})}&\leq  \|A_p\|_{\mathcal{B}(\ell^1_{e,\tau})}\|DG_p^N(\overline{\mathbf{U}}) - DG_p(\overline{\mathbf{U}})\|_{\mathcal{B}(\ell^1_{e,\tau})} \\
    &\leq \left(\|A_p^N\|_{\mathcal{B}(\ell^1_{e,\tau})} + \mathcal{L}_{\infty}\right) (\|V_{p,1}^N - V_{p,1}\|_{1,\tau} + \|V_{p,2}^N - V_{p,2}\|_{1,\tau})\\
    &\leq \left(\|A_p^N\|_{\mathcal{B}(\ell^1_{e,\tau})} + \mathcal{L}_{\infty}\right) (Z_{\infty,1} + Z_{\infty,2}) \bydef Z_{\infty}\label{def : Zinfty}
\end{align}}
where we used similar steps to those done in \ref{lem : Z_full_1} to estimate $Z_{\infty}$ only differing in our use of \ref{lem : phi bound stronger}.
Let us now examine $I_d - A_p(L_p + DG_p^N(\overline{\mathbf{U}}))$. Let $M = L_p + DG_p^N(\overline{\mathbf{U}})$ We follow the approach used by the authors of \cite{miguel_soliton} and introduce projections
{\scriptsize\begin{align}
    I_d - A_pM&= \bpi^{\leq N} (I_d - A_pM)\bpi^{\leq 2N} + \bpi^{\leq N}(I_d - A_pM)\bpi^{>2N} + \bpi^{>N} (I_d - A_pM)\bpi^{\leq 2N} + \bpi^{> N}(I_d - A_pM)\bpi^{> 2N} \\
    &= \bpi^{\leq N}(I_d - A_pM)\bpi^{\leq 2N} - A_p^N M \bpi^{> 2N} + \bpi^{>N} \bpi^{\leq 2N} - \bpi^{>N} A_p M \bpi^{ \leq 2N} + \bpi^{>N} - \bpi^{>N} A_pM \bpi^{> 2N} \\
    &= \bpi^{\leq N}(I_d - A_pM)\bpi^{\leq 2N} - A_p^N M \bpi^{> 2N} + \bpi^{\leq 2N} - \bpi^{\leq N} - \bpi^{>N} A_p M \bpi^{\leq 2N} + \bpi^{>N} - \bpi^{>N} A_pM \bpi^{>2N}.
\end{align}}
The first term above is completed, and present in our definition of $Z_1$.
For the second term, since $\bpi^{\leq N} L_p \bpi^{>2N} = 0$, observe that
{\footnotesize\begin{align}
    \|-A_p^N M \bpi^{>2N}\|_{\mathcal{B}(\ell^1_{e,\tau})} = \|A_p^N (L_p + DG_p^N(\overline{\mathbf{U}}))\bpi^{>2N}\|_{\mathcal{B}(\ell^1_{e,\tau})}
    &= \|A_p^N DG_p^N(\overline{\mathbf{U}})\bpi^{>2N}\|_{\mathcal{B}(\ell^1_{e,\tau})} \\
    &\leq \|A_p^N\|_{\mathcal{B}(\ell^1_{e,\tau})} \|\bpi^{\leq N}DG_p^N(\overline{\mathbf{U}})\bpi^{>2N}\|_{\mathcal{B}(\ell^1_{e,\tau})}.
\end{align}}
Now, let $\mathbf{W} \bydef (W_1,W_2) \in \ell^1_{e,\tau}, \|\mathbf{W}\|_{1,\tau} = 1$. Then,
\begin{align}
    \|\bpi^{\leq N} DG_p^N(\overline{\mathbf{U}}) \bpi^{> 2N}\|_{\mathcal{B}(\ell^1_{e,\tau})} &= \|\Pi^{\leq N} \mathbb{V}_{p,1}^N \Pi^{>2N} W_1 \|_{1,\tau} + \|\Pi^{\leq N} \mathbb{V}_{p,2}^N \Pi^{>2N} W_2 \|_{1,\tau} \\
    &= \|\Pi^{\leq N} (V_{p,1}^N * \Pi^{>2N} W_1)\|_{1,\tau} + \|\Pi^{\leq N} (V_{p,2}^N * \Pi^{>2N} W_2)\|_{1,\tau} \\
    &= \|\Pi^{\leq N} \Pi^{>N} (V_{p,1}^N * W_1)\|_{1,\tau} + \|\Pi^{\leq N} \Pi^{>N} (V_{p,2}^N * W_2)\|_{1,\tau}= 0.\label{piN front pi2N back equals 0}
\end{align}
Hence, the term $\|-A_p^N M\bpi^{>2N}\|_{\mathcal{B}(\ell^1_{e,\tau})} = 0$. We now move to the third term where
{\scriptsize\begin{align}
    \|\bpi^{\leq 2N} - \bpi^{\leq N} - \bpi^{> N} A_p M\bpi^{\leq 2N}\|_{\mathcal{B}(\ell^1_{e,\tau})}
    &= \|\bpi^{\leq 2N} - \bpi^{\leq N} - \bpi^{>N} A_p(L_p + DG_p^N(\overline{\mathbf{U}}))\bpi^{\leq 2N}\|_{\mathcal{B}(\ell^1_{e,\tau})} \\
    &= \|\bpi^{\leq 2N} - \bpi^{\leq N} - \bpi^{> N}\bpi^{\leq 2N} - \bpi^{>N} A_pDG_p^N(\overline{\mathbf{U}})\bpi^{\leq 2N}\|_{\mathcal{B}(\ell^1_{e,\tau})} \\
    &= \|\bpi^{\leq 2N} - \bpi^{\leq N} - (\bpi^{\leq 2N} - \bpi^{\leq N}) - \bpi^{>N} A_pDG_p^N(\overline{\mathbf{U}})\bpi^{\leq 2N}\|_{\mathcal{B}(\ell^1_{e,\tau})}\\
    &= \|\bpi^{>N} A_pDG_p^N(\overline{\mathbf{U}})\bpi^{\leq 2N}\|_{\mathcal{B}(\ell^1_{e,\tau})} \end{align}}
We now perform further estimates to remove the $\bpi^{> N} A_p$. 
{\footnotesize\begin{align}
    \|\bpi^{>N} A_pDG_p^N(\overline{\mathbf{U}})\bpi^{\leq 2N}\|_{\mathcal{B}(\ell^1_{e,\tau})}&\leq \|\bpi^{> N} A_p\|_{\mathcal{B}(\ell^1_{e,\tau})} \|\bpi^{>N}DG_p^N(\overline{\mathbf{U}}) \bpi^{\leq 2N}\|_{\mathcal{B}(\ell^1_{e,\tau})}
    \\
    &\leq \mathcal{L}_{\infty} \|DG_p^N(\overline{\mathbf{U}})\|_{\mathcal{B}(\ell^1_{e,\tau})} \\
    &\leq \mathcal{L}_{\infty} (\|V_{p,1}^N\|_{1,\tau} + \|V_{p,2}^N\|_{1,\tau}).\label{first Z12N}
\end{align}}
We now move to the final term and estimate.
\begin{align}
    \|\bpi^{>N} - \bpi^{>N} A_p M \bpi^{> 2N} \|_{\mathcal{B}(\ell^1_{e,\tau})} &= \|\bpi^{>N} - \bpi^{>N} A_p (L_p + DG_p^N(\overline{\mathbf{U}})) \bpi^{>2N} \|_{\mathcal{B}(\ell^1_{e,\tau})} \\
    &=\|\bpi^{>N} - \bpi^{>N}\bpi^{>2N} - \bpi^{>N}A_p DG_p^N(\overline{\mathbf{U}}) \bpi^{>2N} \|_{\mathcal{B}(\ell^1_{e,\tau})} \\
    &= \|\bpi^{>N} A_p DG_p^N(\overline{\mathbf{U}})\bpi^{>2N}\|_{\mathcal{B}(\ell^1_{e,\tau})}.
\end{align}
We now perform similar steps as those used in \eqref{first Z12N}. That is,
{\footnotesize\begin{align}
    \|\bpi^{>N} A_p DG_p^N(\overline{\mathbf{U}})\bpi^{>2N}\|_{\mathcal{B}(\ell^1_{e,\tau})} &\leq \|\bpi^{>N} A_p \|_{\mathcal{B}(\ell^1_{e,\tau})} \|\bpi^{>N} DG_p^N(\overline{\mathbf{U}}) \bpi^{>2N}\|_{\mathcal{B}(\ell^1_{e,\tau})}
    \\
    &\leq \mathcal{L}_{\infty} \|DG_p^N(\overline{\mathbf{U}})\|_{\mathcal{B}(\ell^1_{e,\tau})} \\
    &\leq \mathcal{L}_{\infty} (\|V_{p,1}^N\|_{1,\tau} + \|V_{p,2}^N\|_{1,\tau}).\label{second Z12N}
\end{align}}
We now combine \eqref{def : Zinfty}, \eqref{first Z12N}, and \eqref{second Z12N} to obtain
the desired result.
\end{proof}
With $Y_0, Z_1,$ and $Z_2$ now computed, let us discuss how we can obtain our rigorous proofs.
\subsection{Constructive existence proofs of periodic solutions}\label{sec : Proofs of periodic solutions}
In this section, we present our proofs of existence (and local uniqueness) of periodic solutions to the 1D Thomas model. 
\begin{theorem}[\bf First Periodic Solution in Thomas]\label{th : periodic solution in thomas}
Let $\nu = 0.1764, \nu_1 = 8, \nu_2 = 1, \nu_3 = 0.28, \nu_4 = 21, \nu_5 = 67.46981860371494$. Moreover, let $R \bydef 5 \times 10^{-9}, \tau \bydef 1.01$. Then there exists a unique solution $\widetilde{\mbf{U}}$ to \eqref{def : Fp} in $\overline{B_{R}(\overline{\mbf{U}})} \subset \ell^1_{e,1.01}$ and we have that $\|\widetilde{\mbf{U}}-\overline{\mbf{U}}\|_{1,1.01} \leq R$. 
\end{theorem}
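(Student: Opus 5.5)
The plan is to apply Theorem \ref{th : radii polynomial theorem periodic} with the bounds of Section \ref{sec : Bounds of periodic solutions}, evaluated rigorously with interval arithmetic. First we construct the approximate solution $\overline{\mathbf{U}}$ as described in Section \ref{sec : numerical aspects of periodic solutions}. We start from a localized-type guess obtained as in Section \ref{sec : u0}, without the operator $\mathcal{P}$, and continue it numerically in $\nu_5$ to the stated value. We then refine it by Newton's method on the truncated system $\bpi^{\leq N_0}F_p(\bpi^{\leq N_0}\mathbf{U})=0$, so that $\overline{\mathbf{U}}=\bpi^{\leq N_0}\overline{\mathbf{U}}$. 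We fix a truncation $N\le N_0$ for the matrix part and a period parameter $d$ consistent with the branch of \cite{thomas_1d}.

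Next, we compute $\overline{\Phi}_p$ and $\overline{\Psi}_p$ exactly as finite convolutions. We obtain $\overline{\Phi}_{p,\mathrm{inv}}$ by FFT and truncate it. We then rigorously enclose $\|1-\overline{\Phi}_p*\overline{\Phi}_{p,\mathrm{inv}}\|_{1,\tau}$ with $\tau=1.01$ and check that it is $<1$; the same check is done for the squared version. This makes Lemmas \ref{lem : phi bound stronger} and \ref{lem : ball phi bound} applicable. We build $A_p^N$ as a numerical inverse of $\bpi^{\leq N}DF_p(\overline{\mathbf{U}})\bpi^{\leq N}$, using $\mathscr{V}_{p,1}$ and $\mathscr{V}_{p,2}$ in place of the exact derivative, and complete it with the diagonal tail inverting $L_p$. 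We compute $\mathcal{L}_\infty$ from its closed form. Since the tail terms are decreasing in $n$, it is attained at $n=N+1$.

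With these objects in hand, we evaluate $Y_0$ via Lemma \ref{lem : bound Y_0 periodic}, $Z_1$ via Lemma \ref{lem : Z1 periodic}, and $Z_2=Z_2(R)$ via Lemma \ref{lem : Z2 periodic}, taking $R=5\times10^{-9}$. Every term there is a finite vector or matrix computation: weighted $\ell^1$ norms, and the operator norm on $\ell^1_{e,\tau}$ of the finite matrix $\bpi^{\leq N}(I_d-A_p(L_p+DG_p^N))\bpi^{\leq 2N}$, computed as a weighted max column sum. We carry all of these out in interval arithmetic. We then verify $Z_1<1$ and $2Y_0Z_2\le(1-Z_1)^2$, together with
\[
\frac{1-Z_1-\sqrt{(1-Z_1)^2-2Y_0Z_2}}{Z_2}\le R<\frac{1-Z_1}{Z_2}.
\]
Taking $r=R$ in Theorem \ref{th : radii polynomial theorem periodic} then yields existence and uniqueness of $\widetilde{\mathbf{U}}$ in $\overline{B_R(\overline{\mathbf{U}})}\subset\ell^1_{e,1.01}$.

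The main obstacle I expect is making $Z_1$ small enough. The operator $A_p$ must capture $DF_p(\overline{\mathbf{U}})$ well, and the tail contribution $2\mathcal{L}_\infty(\|V_{p,1}^N\|_{1,\tau}+\|V_{p,2}^N\|_{1,\tau})$ decays only like $1/N^2$; with $\nu=0.1764$ small, the first-component tail symbol $1/(\nu\tilde n^2+1)$ is slow to decay. If the verification fails, my first step would be to increase $N$, and keep $\tau$ close to $1$ so the weighted norms of $V_{p,j}^N$ stay moderate. A secondary concern is keeping $\|1-\overline{\Phi}_p*\overline{\Phi}_{p,\mathrm{inv}}\|_{1,\tau}$ tiny, which requires taking $\overline{\Phi}_{p,\mathrm{inv}}$ long enough (up to $2N_0$ or more modes). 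That keeps $Y_{0,2}$ negligible and the denominators in $Z_{2,2}$ and $Z_{2,3}$ near one.
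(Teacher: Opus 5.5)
Your proposal is essentially the paper's proof. The paper likewise applies Theorem \ref{th : radii polynomial theorem periodic}, with $Y_0$, $Z_1$, $Z_2$ from Lemmas \ref{lem : bound Y_0 periodic}, \ref{lem : Z1 periodic} and \ref{lem : Z2 periodic} evaluated in interval arithmetic, using $N_0=27$, $N=12$, $d=5$; it obtains $\|A_p^N\|_{\mathcal{B}(\ell^1_{e,\tau})}\le 105.41$, $Y_0=1.374\times10^{-9}$, $Z_1=0.67$ and $Z_2=164.5$, so the smallest admissible radius is about $4.2\times10^{-9}<R$.

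One small point: the theorem as stated requires $r<R$ strictly, so your choice $r=R$ needs one extra remark. It is justified by noting that the $Z_2$ bound extends to $\overline{B_R(\overline{\mathbf{U}})}$ by continuity, or by evaluating $Z_2$ at a slightly larger radius. The paper's closed-ball conclusion implicitly relies on the same step.
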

\begin{proof}
Choose $N_0 = 27,N = 12, d = 5$. Then, find $\overline{\mathbf{U}}$ as discussed in Section \ref{sec : u0}. Next, we construct $A_p^N$ and find
$\|A_p^N\|_{\mathcal{B}(\ell^1_{e,\tau})} \leq 105.41$. Finally, using \cite{ThomasProofs.jl}, we choose $R \bydef 5 \times 10^{-9} $ and define
\begin{align}
    Y_0 \bydef 1.374 \times 10^{-9} \text{,}~Z_{2} \bydef 164.5     \text{,}~Z_1 \bydef 0.67, 
    \end{align}
and prove that these values satisfy Theorem \ref{th : radii polynomial theorem periodic}. 
\end{proof}
\begin{figure}[H]
\centering
 \begin{minipage}{.5\linewidth}
  \centering\epsfig{figure=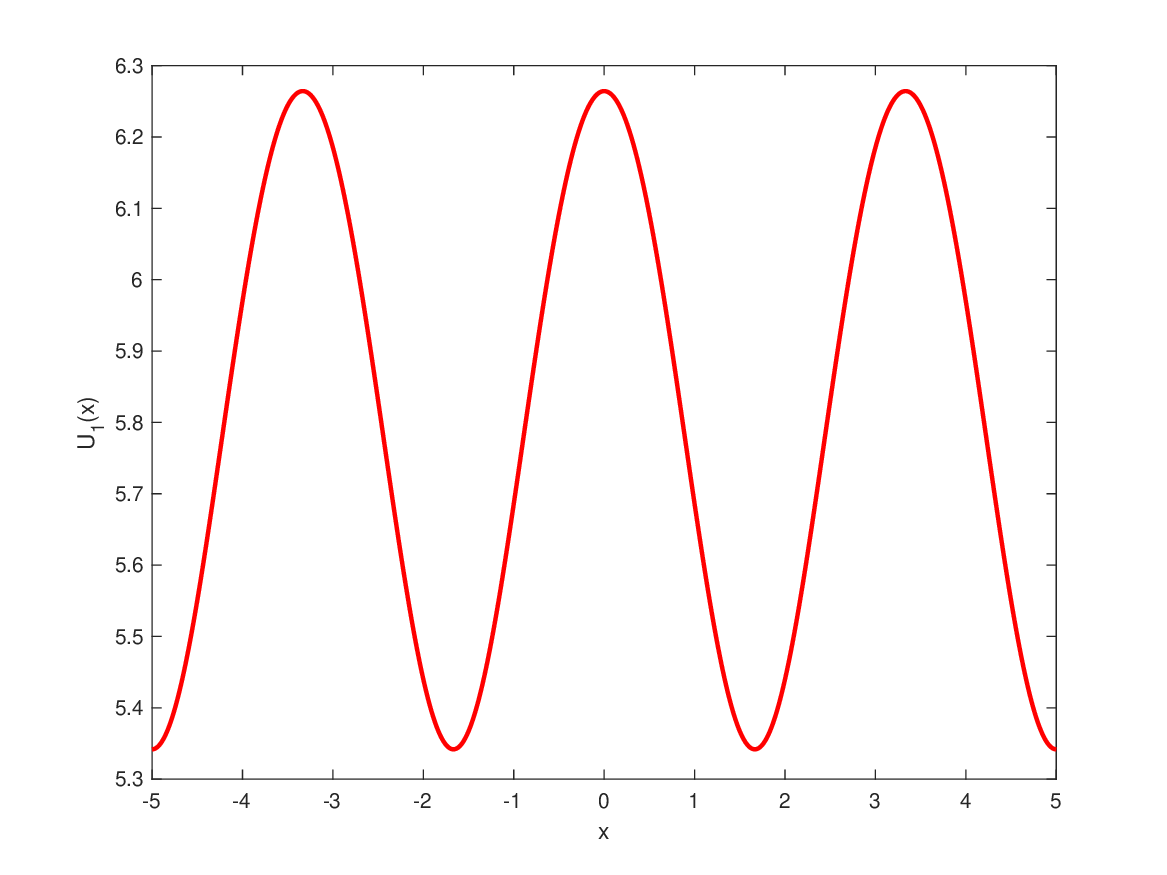,width=\linewidth}
  \end{minipage}%
 \begin{minipage}{.5\linewidth}
  \centering\epsfig{figure=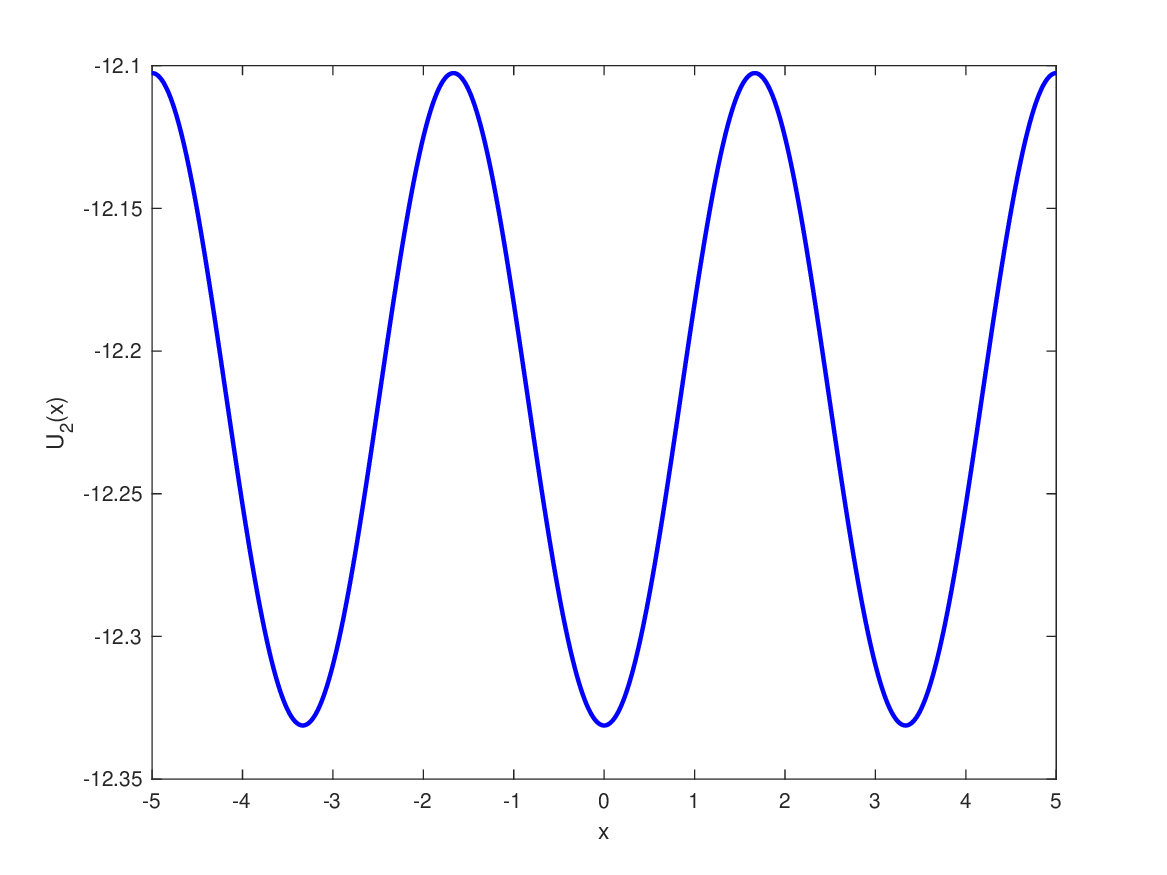,width=\linewidth}
 \end{minipage} 
 \caption{Plot of $ \overline{U}_1$ (L) and $\overline{U}_2$ (R) on $(-5,5)$ used in the proof of Theorem \ref{th : periodic solution in thomas}.}\label{fig : th2}
 \end{figure}%
 \begin{theorem}[\bf Second Periodic Solution in Thomas]\label{th : periodic solution in thomas 2}
Let $\nu = 1.1664, \nu_1 = 8, \nu_2 = 1, \nu_3 = 0.28, \nu_4 = 39.1, \nu_5 = 149.7672$. Moreover, let $R \bydef 5 \times 10^{-8}, \tau \bydef 1.02$. Then there exists a unique solution $\widetilde{\mbf{U}}$ to \eqref{def : Fp} in $\overline{B_{R}(\overline{\mbf{U}})} \subset \ell^1_{e,1.02}$ and we have that $\|\widetilde{\mbf{U}}-\overline{\mbf{U}}\|_{1,1.02} \leq R$. 
\end{theorem}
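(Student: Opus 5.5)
The plan is to apply Theorem \ref{th : radii polynomial theorem periodic} with the bounds $Y_0$, $Z_1$ and $Z_2$ given by Lemmas \ref{lem : bound Y_0 periodic}, \ref{lem : Z1 periodic} and \ref{lem : Z2 periodic}, in the same way as for Theorem \ref{th : periodic solution in thomas}.

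\textbf{Step 1: approximate solution.} We fix the parameters $\nu = 1.1664$, $\nu_1 = 8$, $\nu_2 = 1$, $\nu_3 = 0.28$, $\nu_4 = 39.1$, $\nu_5 = 149.7672$ and the weight $\tau = 1.02$. We start from an initial guess of the $\mathrm{sech}^2$ type described in Section \ref{sec : u0}. We do not apply the operator $\mathcal{P}$, since it is not needed in the periodic setting. Newton's method on the truncated system $\bpi^{\leq N_0}F_p(\mathbf{U}) = 0$, possibly combined with numerical continuation in $\nu_5$, then gives $\overline{\mathbf{U}} = \bpi^{\leq N_0}\overline{\mathbf{U}}$.

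The half-period $d$ and the sizes $N_0$ and $N$ have to be chosen. Because $\nu$ is larger here than in Theorem \ref{th : periodic solution in thomas}, the solution should be smoother, so moderate values should suffice. As a first attempt we take $d$ of the order of the observed wavelength and $N_0 \approx 30$–$60$, with $N \approx N_0/2$.

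\textbf{Step 2: the bounds.} We compute $\overline{\Phi}_{p,\mathrm{inv}}$ by FFT and check rigorously that $\|1-\overline{\Phi}_p*\overline{\Phi}_{p,\mathrm{inv}}\|_{1,\tau} < 1$. The same check is done for the squared version that enters $Z_{\infty,1}$. We then build $A_p^N$ as the numerical inverse of $\bpi^{\leq N}DF_p(\overline{\mathbf{U}})\bpi^{\leq N}$.

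With interval arithmetic we evaluate:
\begin{enumerate}
\item $\|A_p^N\|_{\mathcal{B}(\ell^1_{e,\tau})}$ and $\mathcal{L}_\infty$;
\item $Y_0$ from Lemma \ref{lem : bound Y_0 periodic};
\item $Z_1$ from Lemma \ref{lem : Z1 periodic}. This is the finite matrix norm term, plus the tail term $2\mathcal{L}_\infty(\|V_{p,1}^N\|_{1,\tau}+\|V_{p,2}^N\|_{1,\tau})$, plus $Z_\infty$;
\item $Z_2 = Z_2(R)$ from Lemma \ref{lem : Z2 periodic} with $R = 5\times10^{-8}$. Here Lemma \ref{lem : ball phi bound} additionally requires $\|1-\overline{\Phi}_p*\overline{\Phi}_{p,\mathrm{inv}}\|_{1,\tau} + R\|\overline{\Phi}_{p,\mathrm{inv}}\|_{1,\tau} < 1$, which we check.
\end{enumerate}

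\textbf{Step 3: conclusion.} We verify that $Z_1<1$ and $2Y_0Z_2 \le (1-Z_1)^2$. We also verify that the lower radius $\bigl(1-Z_1-\sqrt{(1-Z_1)^2-2Y_0Z_2}\bigr)/Z_2$ is at most $R$, and that $R < (1-Z_1)/Z_2$. Theorem \ref{th : radii polynomial theorem periodic} applied with $r=R$ then gives existence and uniqueness of $\widetilde{\mathbf{U}}$ in $\overline{B_R(\overline{\mathbf{U}})}$.

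\textbf{Main obstacle.} The hardest part should be keeping $Z_1$ below $1$. The term $2\mathcal{L}_\infty(\|V_{p,1}^N\|_{1,\tau}+\|V_{p,2}^N\|_{1,\tau})$ is a crude estimate. Near $\nu_5\approx149.77$ the amplitude of $\overline{\mathbf{U}}$ is large (compare the localized solutions at similar parameters), so $\|V_{p,j}^N\|_{1,\tau}$ is sizable.

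To control this we would first increase $N$, which decreases $\mathcal{L}_\infty \sim 1/(\nu\tilde N^2)$ and also improves the matrix part. If that is not enough, we would decrease $\tau$ toward $1$. As a last resort we would replace the crude tail estimate by the sharper split $\mathcal{L}_\infty$-weighted bound used for $\mathcal{Z}_b$ in Lemma \ref{lem : Z1 periodic patterns}.

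A secondary concern is that $\overline{\Phi}_p$ stays well away from zero, so that $\overline{\Phi}_{p,\mathrm{inv}}$ is accurate. This holds because $\nu_2 = 1 > 1/4$.
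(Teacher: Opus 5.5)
Your proposal is correct and is essentially the paper's argument. The paper takes $N_0=30$, $N=17$, $d=\frac{10}{3}$ and evaluates the bounds of Lemmas \ref{lem : bound Y_0 periodic}, \ref{lem : Z1 periodic} and \ref{lem : Z2 periodic} rigorously, obtaining $\|A_p^N\|_{\mathcal{B}(\ell^1_{e,1.02})}\leq 372.94$, $Y_0 = 9.051\times 10^{-9}$, $Z_2 = 1.583\times 10^{6}$ and $Z_1 = 0.7544$; these satisfy Theorem \ref{th : radii polynomial theorem periodic}, so the crude tail term in $Z_1$ already suffices and none of your fallbacks are needed. One small point: Theorem \ref{th : radii polynomial theorem periodic} as written requires $r<\min\bigl(\frac{1-Z_1}{Z_2},R\bigr)$ strictly, so taking $r=R$ relies on the harmless fact that the $Z_2$ estimate of Lemma \ref{lem : Z2 periodic} also holds for $\|\mathbf{H}\|_{1,\tau}\le R$ (here $r_{\min}\approx 4.27\times10^{-8}<R<\frac{1-Z_1}{Z_2}\approx 1.55\times 10^{-7}$), and the paper passes over this in the same way.
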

\begin{proof}
Choose $N_0 = 30,N = 17, d = \frac{10}{3}$. The proof follows similarly to that of Theorem \ref{th : periodic solution in thomas}. In particular, we find
\begin{align}
    \|A_p^N\|_{\mathcal{B}(\ell^1_{e,1.02})} \leq 372.94,~Y_0 \bydef 9.051 \times 10^{-9} \text{,}~Z_{2} \bydef 1.583 \times 10^{6}     \text{,}~Z_1 \bydef 0.7544, 
    \end{align}
and prove that these values satisfy Theorem \ref{th : radii polynomial theorem periodic}. 
\end{proof}
\begin{figure}[H]
\centering
 \begin{minipage}{.5\linewidth}
  \centering\epsfig{figure=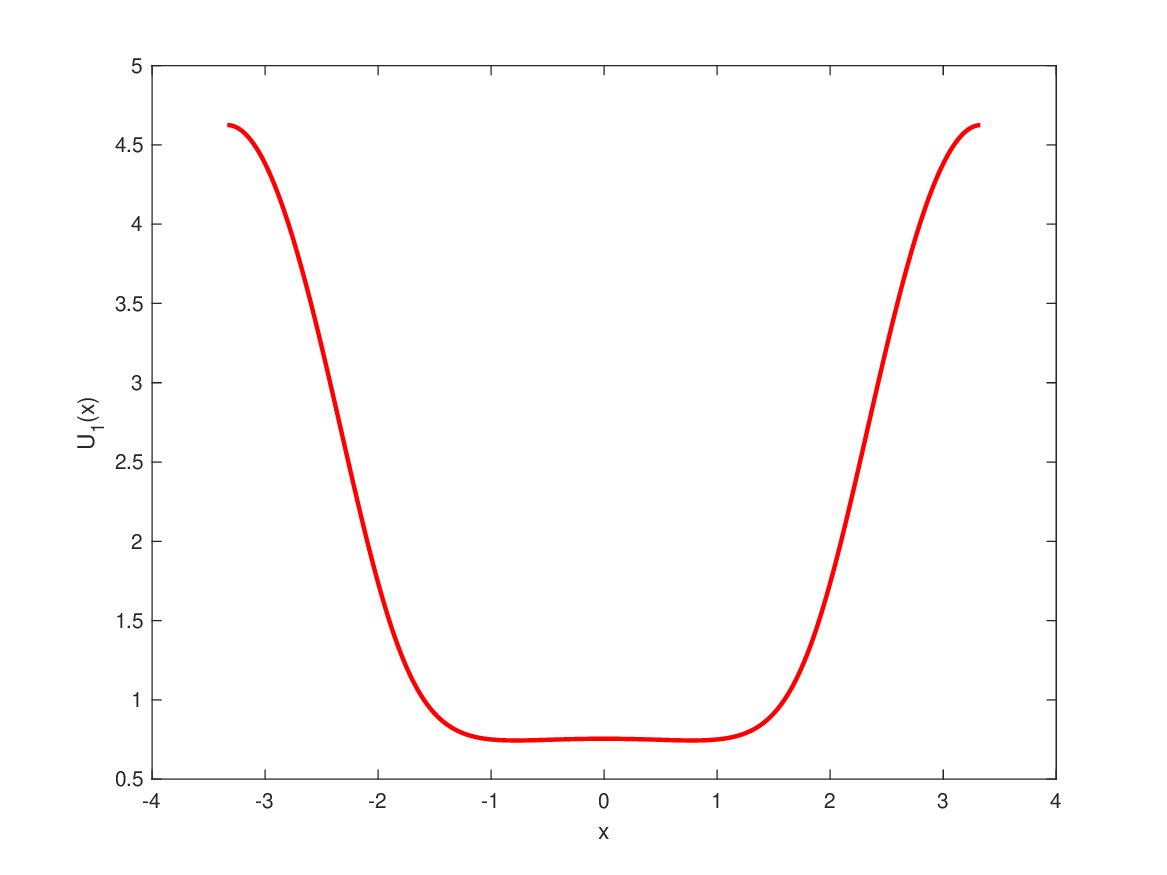,width=\linewidth}
  \end{minipage}%
 \begin{minipage}{.5\linewidth}
  \centering\epsfig{figure=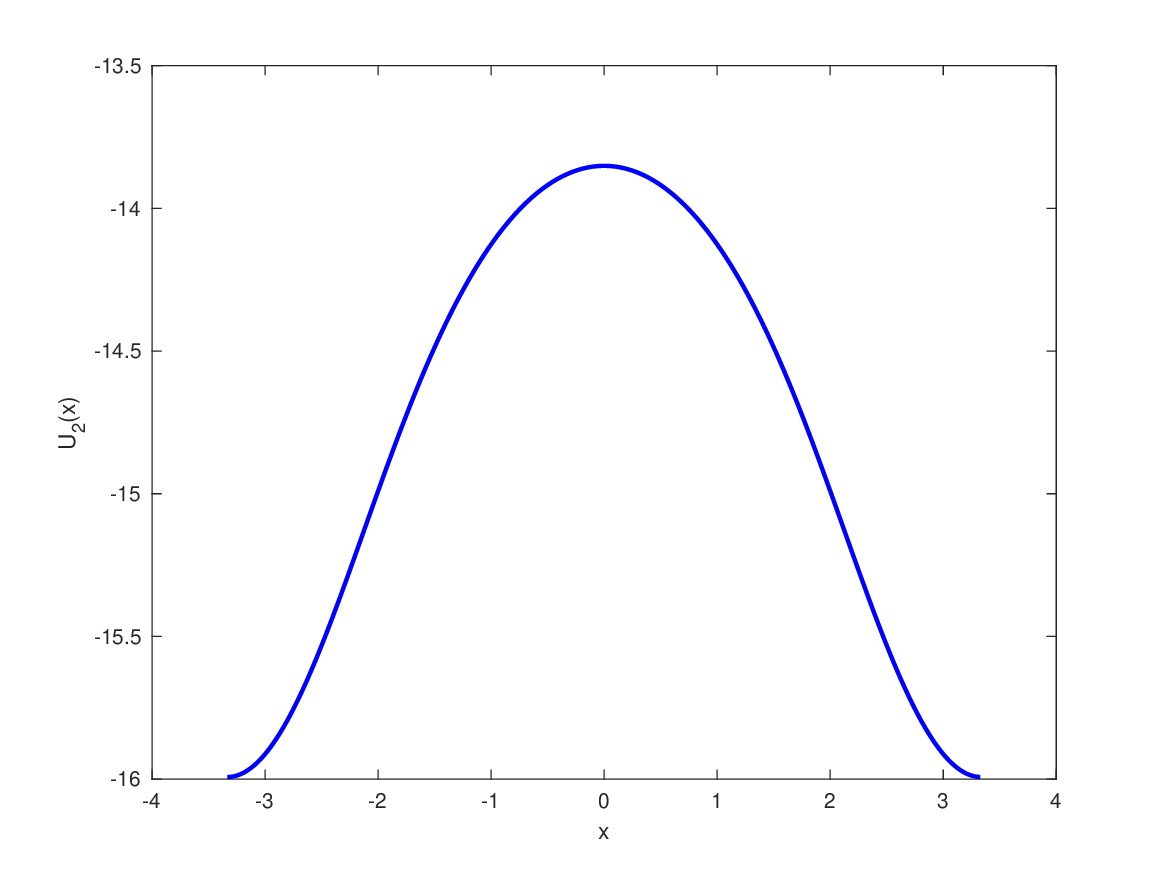,width=\linewidth}
 \end{minipage} 
 \caption{Plot of $ \overline{U}_1$ (L) and $\overline{U}_2$ (R) on $\left(-\frac{10}{3},\frac{10}{3}\right)$ used in the proof of Theorem \ref{th : periodic solution in thomas 2}.}\label{fig : th22}
 \end{figure}%
 \begin{theorem}[\bf Third Periodic Solution in Thomas]\label{th : periodic solution in thomas 3}
Let $\nu = 0.1764, \nu_1 = 8, \nu_2 = 1, \nu_3 = 0.28, \nu_4 = 21, \nu_5 = 65$. Moreover, let $R \bydef 3 \times 10^{-7}, \tau \bydef 1.02$. Then there exists a unique solution $\widetilde{\mbf{U}}$ to \eqref{def : Fp} in $\overline{B_{R}(\overline{\mbf{U}})} \subset \ell^1_{e,1.02}$ and we have that $\|\widetilde{\mbf{U}}-\overline{\mbf{U}}\|_{1,1.02} \leq R$. 
\end{theorem}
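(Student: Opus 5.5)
The argument is a computer-assisted application of Theorem \ref{th : radii polynomial theorem periodic}, run exactly like the proofs of Theorems \ref{th : periodic solution in thomas} and \ref{th : periodic solution in thomas 2}. First, at the parameters $\nu = 0.1764$, $\nu_1=8$, $\nu_2=1$, $\nu_3=0.28$, $\nu_4=21$, $\nu_5=65$, I would construct a numerical candidate $\overline{\mathbf{U}} = \bpi^{\leq N_0}\overline{\mathbf{U}}$. I expect a choice such as $N_0 \approx 30$, $N \approx 15$ and half-period $d=5$ to suffice. Because $\nu_5=65$ is near the parameter of Theorem \ref{th : periodic solution in thomas}, I would obtain $\overline{\mathbf{U}}$ by numerical continuation in $\nu_5$ starting from the candidate used there. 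I would then refine it with Newton's method on the truncated system $\bpi^{\leq N_0}F_p$, without applying the operator $\mathcal{P}$. With $\overline{\mathbf{U}}$ fixed, I would compute $\overline{\Phi}_p$, $\overline{\Psi}_p$, $\overline{\Psi}_{p,1}$, $\overline{\Psi}_{p,2}$, and an FFT-based approximation $\overline{\Phi}_{p,\mathrm{inv}}$ of $\overline{\Phi}_p^{-1}$. I would also form $A_p^N$ as a numerical inverse of $\bpi^{\leq N}DF_p(\overline{\mathbf{U}})\bpi^{\leq N}$, using the diagonal tail from Section \ref{sec : numerical aspects of periodic solutions}.

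The bounds would then be evaluated with interval arithmetic in $\ell^1_{e,\tau}$ with $\tau = 1.02$.
\begin{itemize}
\item \emph{Preliminary check.} Verify rigorously that $\|1-\overline{\Phi}_p*\overline{\Phi}_{p,\mathrm{inv}}\|_{1,\tau}<1$, together with the analogous condition for the squared terms. These conditions make Lemmas \ref{lem : phi bound stronger} and \ref{lem : ball phi bound} applicable.
\item \emph{$Y_0$.} Compute it from Lemma \ref{lem : bound Y_0 periodic}. All convolutions there are finite, of length at most $8N_0$.
\item \emph{$Z_1$.} Compute it from Lemma \ref{lem : Z1 periodic}. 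This requires the matrix norm $\|\bpi^{\leq N}(I_d - A_p(L_p+DG_p^N(\overline{\mathbf{U}})))\bpi^{\leq 2N}\|_{\mathcal{B}(\ell^1_{e,\tau})}$, the tail constant $\mathcal{L}_\infty$, and $Z_\infty$.
\item \emph{$Z_2$.} Compute it from Lemma \ref{lem : Z2 periodic} with $R = 3\times 10^{-7}$. This also requires checking the extra smallness condition $\|1-\overline{\Phi}_p*\overline{\Phi}_{p,\mathrm{inv}}\|_{1,\tau} + R\|\overline{\Phi}_{p,\mathrm{inv}}\|_{1,\tau}<1$.
\end{itemize}

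Finally, I would verify that $Z_1<1$ and $2Y_0Z_2 \leq (1-Z_1)^2$. I would then check that the interval $\left[\frac{1-Z_1-\sqrt{(1-Z_1)^2-2Y_0Z_2}}{Z_2}, \min\left(\frac{1-Z_1}{Z_2},R\right)\right)$ contains $R$, so that the radius $r = R$ may be taken. Theorem \ref{th : radii polynomial theorem periodic} then gives a unique zero $\widetilde{\mathbf{U}}$ in $\overline{B_R(\overline{\mathbf{U}})}\subset\ell^1_{e,1.02}$.

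The main obstacle I expect is getting $Z_1$ small enough. In the earlier periodic proofs $Z_1$ was already $0.67$–$0.75$, and $2Y_0Z_2 \le (1-Z_1)^2$ requires $Y_0Z_2$ to be correspondingly small. The terms that dominate $Z_1$ are $2\mathcal{L}_\infty(\|V_{p,1}^N\|_{1,\tau}+\|V_{p,2}^N\|_{1,\tau})$ and the factor $\|A_p^N\|_{\mathcal{B}(\ell^1_{e,\tau})}$ multiplying $Z_\infty$. If $Z_1$ is too large, I would first increase $N$ to shrink $\mathcal{L}_\infty$, and then lower $\tau$ toward $1$. I would also increase $N_0$ and the FFT length for $\overline{\Phi}_{p,\mathrm{inv}}$, which reduces both $Z_\infty$ and $Y_0$. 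Only after these adjustments would I tune $R$, keeping $R$ below $(1-Z_1)/Z_2$.
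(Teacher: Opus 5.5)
Your verification scheme matches the paper's proof. It applies Theorem~\ref{th : radii polynomial theorem periodic} with $Y_0$, $Z_1$, $Z_2$ from Lemmas~\ref{lem : bound Y_0 periodic}, \ref{lem : Z1 periodic} and \ref{lem : Z2 periodic}, evaluated in interval arithmetic, and reports $\|A_p^N\|_{\mathcal{B}(\ell^1_{e,1.02})}\le 3197.45$, $Y_0=2.08\times10^{-7}$, $Z_2=509283.1$ and $Z_1=0.123$.

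\textbf{The candidate and the half-period.} This is the substantive problem. The paper uses $d=10$ with $N_0=N=100$. The half-period $d$ is not a discretization parameter you can guess, because it enters $F_p$ through the frequencies $\tilde n=n/(2d)$ in $L_p$. It therefore fixes which equation, and which solution, is being proved. Here the solution is $20$-periodic; the paper's candidate is plotted on $(-10,10)$. Continuing the $d=5$ candidate of Theorem~\ref{th : periodic solution in thomas} from $\nu_5\approx 67.47$ down to $65$ targets a $10$-periodic solution instead. Nothing in the paper suggests that this family reaches $\nu_5=65$: the $d=5$ branch verified in Theorem~\ref{th : branch of periodic solution in thomas} never goes below $\nu_5\approx 67.524$. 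Since changing $d$ is not among your remedies, this step has no fallback. The $Z_1$ obstacle you anticipate is handled in the paper by $N=100$ (with $d=10$), which gives $Z_1=0.123$ even though $\|A_p^N\|$ is of order $3\times10^3$.

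\textbf{Fixed data in the statement.} Two of your fallbacks are not admissible, because $\tau=1.02$ and $R=3\times10^{-7}$ are part of the statement. Lowering $\tau$ only gives an enclosure in the weaker norm $\|\cdot\|_{1,\tau'}$, and lowering $R$ weakens the uniqueness claim.

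\textbf{Taking $r=R$.} $R$ can never lie in $\bigl[r_{\min},\min\bigl(\tfrac{1-Z_1}{Z_2},R\bigr)\bigr)$, because its right endpoint is strict. The correct check is $r_{\min}\le R<\tfrac{1-Z_1}{Z_2}$. The paper's values satisfy it, with $r_{\min}\approx 2.56\times10^{-7}$ and $\tfrac{1-Z_1}{Z_2}\approx 1.72\times10^{-6}$. You then need one extra remark to reach the closed ball $\overline{B_R(\overline{\mathbf{U}})}$. Since $\mathbf{U}\mapsto A_pD^2F_p(\mathbf{U})$ is continuous, the $Z_2$ bound on $B_R$ persists on its closure, so the contraction argument runs at $r=R$.

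\textbf{Radius in Lemma~\ref{lem : ball phi bound}.} This point is shared with the paper's proof of Lemma~\ref{lem : Z2 periodic}. When you apply Lemma~\ref{lem : ball phi bound}, $\Phi_p=1+U_1+\nu_2U_1^2$ ranges over a ball of radius $R(1+2\nu_2\|\overline{U}_1\|_{1,\tau}+\nu_2R)$ around $\overline{\Phi}_p$, not of radius $R$. This is numerically negligible at $R=3\times10^{-7}$, but it is the radius your smallness check should use.
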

\begin{proof}
Choose $N_0 = 100, N = 100, d = 10$. The proof follows similarly to that of Theorem \ref{th : periodic solution in thomas}. In particular, we find
\begin{align}
    \|A_p^N\|_{\mathcal{B}(\ell^1_{e,1.02})} \leq 3197.45,~Y_0 \bydef 2.08 \times 10^{-7} \text{,}~Z_{2} \bydef 509283.1     \text{,}~Z_1 \bydef 0.123, 
    \end{align}
and prove that these values satisfy Theorem \ref{th : radii polynomial theorem periodic}. 
\end{proof}
\begin{figure}[H]
\centering
 \begin{minipage}{.5\linewidth}
  \centering\epsfig{figure=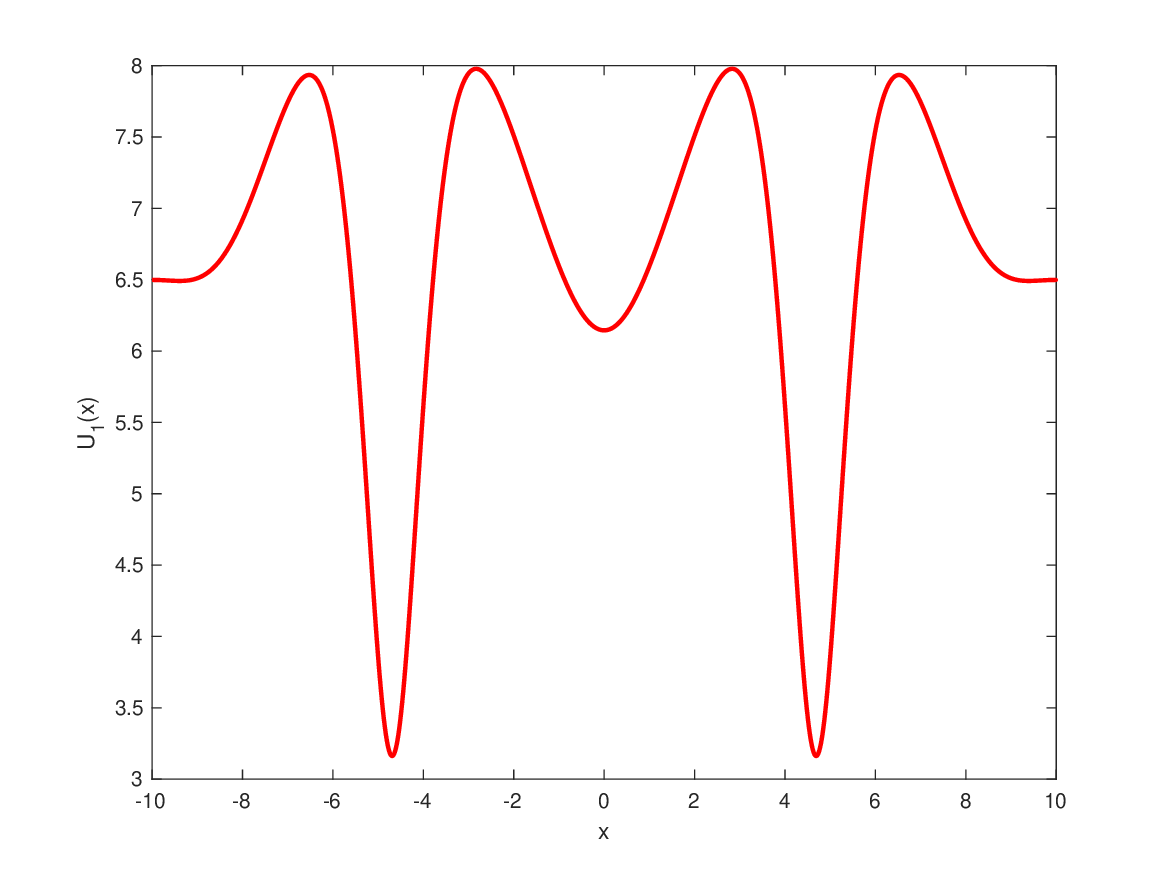,width=\linewidth}
  \end{minipage}%
 \begin{minipage}{.5\linewidth}
  \centering\epsfig{figure=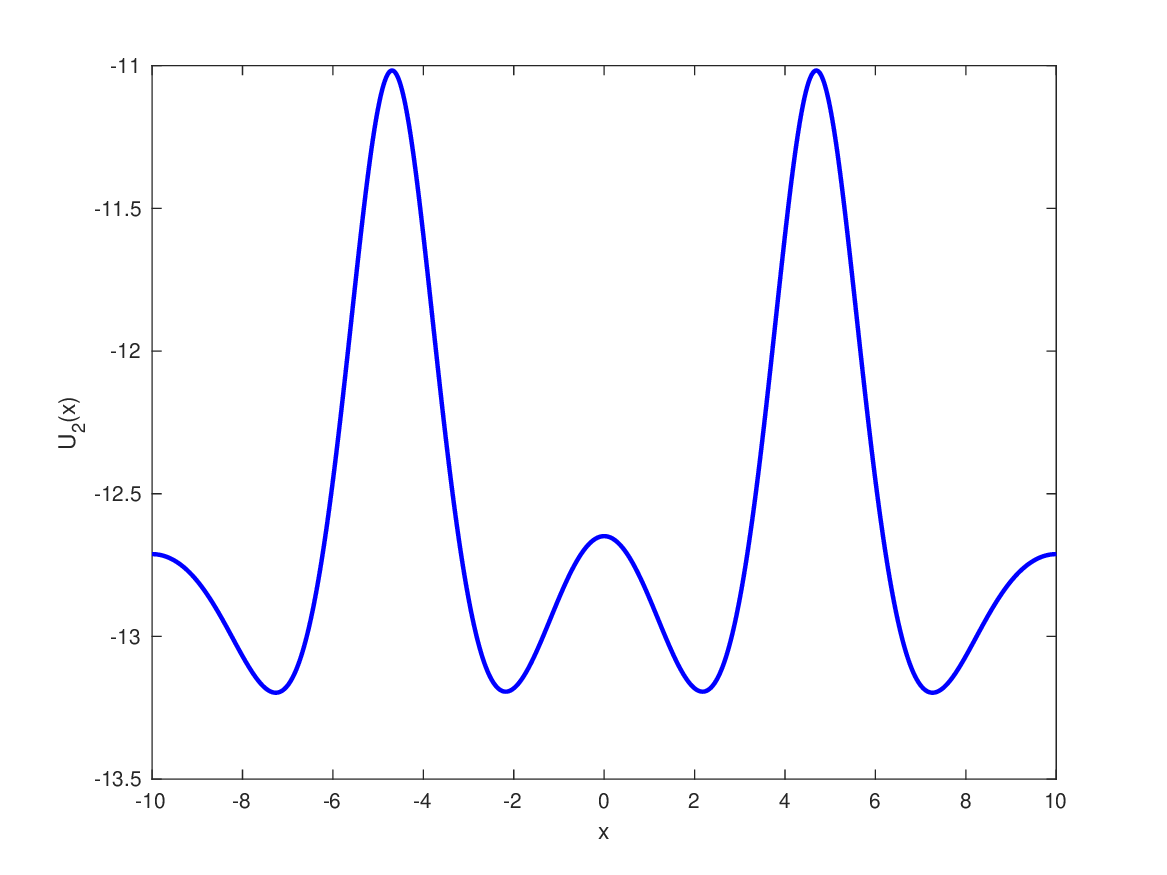,width=\linewidth}
 \end{minipage} 
 \caption{Plot of $ \overline{U}_1$ (L) and $\overline{U}_2$ (R) on $(-10,10)$ used in the proof of Theorem \ref{th : periodic solution in thomas 3}.}\label{fig : th23}
 \end{figure}%
Now that we have provided an approach to prove a specific periodic solution, we are able to rigorously verify such solutions in \eqref{eq:thomas}. This partially answers more of the questions asked by the authors of \cite{thomas_1d}. Now, let us provide a stronger approach to prove a branch of periodic solutions. This will allow us to verify not only the periodic solutions obtainable via this section's approach, but an entire branch as well.
 \section{Branches of Periodic Solutions}\label{sec : branches}
As mentioned at the start of Section \ref{sec : periodic solutions}, the authors of \cite{thomas_1d} not only identified approximate periodic solutions in \eqref{eq:thomas}, but also an approximate branch of periodic solutions. In this section, we present a methodology for rigorously verifying such branches. We will use a similar approach to that presented in Section 4 of \cite{dominic_sh_periodic}.
\par For our purposes, we will be performing continuation in $\nu_5$, which is what the authors of \cite{thomas_1d} did. We will use pseudo-arclength continuation to perform our rigorous proof. The benefit of pseudo-arclength continuation is that we can pass through saddle node (or fold) bifurcations. We will require this as the authors of \cite{thomas_1d} conjectured a fold bifurcation. To begin, we define $X_{e,\tau} \bydef \mathbb{R} \times \ell^1_{e,\tau}$ and write $\mathbf{W} \bydef (\nu_5,\mathbf{U}) \in X_{e,\tau}$. Now, we will expand $\nu_5$ and $\mathbf{U}$ as Chebyshev series dependent on the pseudo-arclength, $s$. That is, we write
\begin{align}
    \nu_5(s) \bydef (\nu_5)_0 + 2\sum_{n = 1}^{N_c} (\nu_5)_n T_n(s), ~~ \text{and} ~~ U_j(s) \bydef (U_j)_0 + 2\sum_{n = 1}^{N_c} (U_j)_0 T_n(s), ~ s \in [-1,1]
\end{align}
for $j = 1,2$ and where $T_n [-1,1] \to \mathbb{R}$ are the Chebyshev polynomials of the first kind. Additionally, $(\nu_5)_n \in \mathbb{R}, (U_j)_n \in \ell^1_{e,\tau}$ for all $n \in \{0,\dots,N_c\}$. We then write $\mathbf{U}(s) \bydef (U_1(s),U_2(s))$ and $\mathbf{W}(s) \bydef (\nu_5(s),\mathbf{U}(s)) \in \mathbb{R}_{\mathrm{con}} \times \ell^1_{e,\tau,\mathrm{con}} \bydef X_{e,\tau,\mathrm{con}}$ where $\mathbb{R}_{\mathrm{con}},\ell^1_{e,\tau,\mathrm{con}} \bydef \ell^1_{\tau,\mathrm{con}}(\mathbb{N}_0)) \times \ell^1_{\tau,\mathrm{con}}(\mathbb{N}_0)),$ and $X_{e,\tau,\mathrm{con}}$ are Banach spaces with the norms
\begin{align}
    &\|\nu_5(s)\|_{\mathbb{R}_{\mathrm{con}}} \bydef |(\nu_5)_0| + 2\sum_{n = 1}^{N_c} |(\nu_5)_n|, ~ \|U_j(s)\|_{1,\tau,\mathrm{con}} \bydef \|(U_j)_0\|_{1,\tau} + 2\sum_{n = 1}^{N_c} \|(U_j)_n\|_{1,\tau} \\
    &\|\mathbf{U}(s)\|_{1,\tau,\mathrm{con}} \bydef \|U_1(s)\|_{1,\tau,\mathrm{con}} + \|U_2(s)\|_{1,\tau,\mathrm{con}}, ~ \|\mathbf{W}(s)\|_{X_{\tau,\mathrm{con}}} \bydef \|\nu_5(s)\|_{\mathbb{R}_{\mathrm{con}}} + \|\mathbf{U}(s)\|_{1,\tau,\mathrm{con}}.
\end{align}
We then define the map $F_{c} : X_{e,\tau,\mathrm{con}} \to \mathcal{S}$ for $\mathcal{S}$ another Banach space as
\begin{align}
    F_{c}(\mathbf{W}(s)) \bydef \begin{bmatrix}
        (\mathbf{U}(s) - \overline{\mathbf{U}}(s),\dot{\mathbf{U}}(s))_2 \\
        F_p(\nu_5(s),\mathbf{U}(s))
    \end{bmatrix}\label{eq : psuedo arclength system}
\end{align}
where $\dot{\mathbf{U}}(s)$ is the second component of the \emph{tangent vector}, $\dot{\mathbf{W}}(s)$.
We do the same for the approximate inverse, which we will denote by $A_{c}(s) : \mathcal{S} \to X_{e,\tau,\mathrm{con}}$ 
\begin{align}
    A_{c}(s) = (A_{c})_0 + 2 \sum_{n = 1}^{N_{c}} (A_{c})_n T_n(s).
\end{align}
Note that in this case $A_{c}(s)$ is an approximate inverse to $DF_{c}(\mathbf{W}(s))$. 
Then, the following theorem makes use of a uniform contraction argument to allow us to rigorously prove a branch of solutions. 
\begin{theorem}[Newton-Kantorovich Theorem for Branches]
\label{th: radii polynomial continuation}
Let $R > 0$. Also let $A_{c}(s) \in \mathcal{B}(\mathcal{S},X_{e,\tau,\mathrm{con}})$ be injective. Moreover, let $Y_0^{[s]} Z_1^{[s]}, Z_2^{[s]} \bydef Z_2^{[s]}(R)$ be non-negative constants such that
  \begin{align}\label{eq: definition Y0 Z1 Z2 continuation}
    &\sup_{s \in [-1,1]}\|A_{c}(s)F_{c}(\overline{\mathbf{W}}(s))\|_{X_{\tau,\mathrm{con}}}  \le Y_0^{[s]}\\
    &\sup_{s \in [-1,1]}\|I_d - A_{c}(s)DF_{c}(\overline{\mathbf{W}}(s))\|_{\mathcal{B}(X_{e,\tau,\mathrm{con}})} \le Z_1^{[s]}\\
    &\sup_{s \in [-1,1]} \sup_{\mathbf{W}(s) \in B_R(\overline{\mathbf{W}}(s))}\|A_{c}(s)D^2F_{c}(\mathbf{W}(s))\|_{\mathcal{B}(X_{e,\tau,\mathrm{con}},\mathcal{B}(X_{\tau,\mathrm{con}}))}  \le Z_2^{[s]}.
\end{align} 
If 
\begin{align}
    2Y_0^{[s]} Z_2^{[s]} \leq (1- Z_1^{[s]})^2, ~\text{and} ~ Z_1^{[s]} < 1,
\end{align}
is satisfied, then for every $s \in [-1,1]$ and any $r > 0 $ such that 
\begin{equation}\label{condition radii polynomial continuation}
    \frac{1 - Z_1^{[s]} - \sqrt{(1-Z_1^{[s]})^2 - 2Y_0^{[s]}Z_2^{[s]}}}{Z_2} \leq r < \min\left(\frac{1-Z_1^{[s]}}{Z_2^{[s]}},R\right),
 \end{equation}
 there exists a unique $\widetilde{\mathbf{W}}(s) \in \overline{B_{r}(\overline{\mathbf{W}}(s))} \subset X_{e,\tau,\mathrm{con}}$ such that $F_{c}(\widetilde{\mathbf{W}}(s))=0$. Moreover, the function $s \to \widetilde{\mathbf{W}}(s)$ is of class $C^{\infty}.$
\end{theorem}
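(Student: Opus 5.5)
The plan is to deduce the theorem from the single-point Newton--Kantorovich result, Theorem \ref{th : radii polynomial theorem periodic} (in the form proved in \cite{maxime_general}), applied at each fixed $s \in [-1,1]$, and then to recover smoothness in $s$ from the uniform contraction principle. First fix $s\in[-1,1]$ and set $T_s(\mathbf{W}) \bydef \mathbf{W} - A_c(s)F_c(\mathbf{W})$ on $X_{e,\tau,\mathrm{con}}$. Since the three hypotheses hold as suprema over $s$, they hold at each $s$ with the $s$-independent constants $Y_0^{[s]},Z_1^{[s]},Z_2^{[s]}$.

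The core estimate uses the mean value inequality and the identity $I_d - A_c(s)DF_c(\mathbf{W}) = (I_d - A_c(s)DF_c(\overline{\mathbf{W}}(s))) - A_c(s)(DF_c(\mathbf{W}) - DF_c(\overline{\mathbf{W}}(s)))$. For $\mathbf{W}\in \overline{B_r(\overline{\mathbf{W}}(s))}$ with $r<R$ this gives
\begin{align}
\|DT_s(\mathbf{W})\| \le Z_1^{[s]} + Z_2^{[s]} r, \qquad \|T_s(\mathbf{W}) - \overline{\mathbf{W}}(s)\| \le Y_0^{[s]} + Z_1^{[s]} r + \tfrac12 Z_2^{[s]} r^2 .
\end{align}
The second bound follows from a Taylor expansion of $F_c$ around $\overline{\mathbf{W}}(s)$ with integral remainder. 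The lower bound on $r$ in \eqref{condition radii polynomial continuation} is exactly the condition $\frac12 Z_2 r^2 - (1-Z_1)r + Y_0 \le 0$, so $T_s$ maps the closed ball into itself. The upper bound $r<(1-Z_1)/Z_2$ makes $T_s$ a contraction, and the admissible interval for $r$ is nonempty because $2Y_0Z_2\le (1-Z_1)^2$. The Banach fixed point theorem then yields a unique fixed point $\widetilde{\mathbf{W}}(s)$ of $T_s$ in the closed ball. Injectivity of $A_c(s)$ turns this fixed point into a zero of $F_c$, and shows that any zero of $F_c$ in the ball is a fixed point of $T_s$, which gives uniqueness. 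This step is routine and mirrors the periodic case verbatim.

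The part I expect to be the main obstacle is the $C^\infty$ regularity of $s\mapsto \widetilde{\mathbf{W}}(s)$, because the ball's center moves with $s$. To handle this I would change variables $\mathbf{W} = \overline{\mathbf{W}}(s) + \mathbf{H}$, so the maps $\mathbf{H}\mapsto T_s(\overline{\mathbf{W}}(s)+\mathbf{H}) - \overline{\mathbf{W}}(s)$ act on the fixed ball $\overline{B_r(0)}$ and are contractions with the same constant $Z_1+Z_2r<1$ uniformly in $s$. Since $\overline{\mathbf{W}}(s)$, $\dot{\mathbf{W}}(s)$ and $A_c(s)$ are finite Chebyshev sums, they are polynomial (hence smooth) in $s$. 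Moreover $F_c$ is $C^\infty$ in $\mathbf{W}$ on the ball: $\ell^1_{e,\tau}$ is a Banach algebra, and $\Phi\mapsto\Phi^{-1}$ is analytic on the region where Lemma \ref{lem : ball phi bound} guarantees invertibility. The family of maps is therefore jointly $C^\infty$ in $(s,\mathbf{H})$, and the uniform contraction principle gives $C^\infty$ dependence of the fixed point on $s$.

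Equivalently, since $I_d - DT_s$ is invertible at the fixed point (as $\|DT_s\|<1$), one may invoke the implicit function theorem on $(s,\mathbf{H}) \mapsto \mathbf{H} - (T_s(\overline{\mathbf{W}}(s)+\mathbf{H}) - \overline{\mathbf{W}}(s))$. Uniqueness in the ball then makes the local implicit-function branches coincide with $\widetilde{\mathbf{W}}(s)$ globally on $[-1,1]$. The care needed is to interpret $A_c(s)$ and $F_c$ at a fixed $s$ (evaluating the Chebyshev series), and to verify the smoothness of $F_c$ jointly in $s$ through $\overline{\mathbf{U}}(s)$ and $\dot{\mathbf{U}}(s)$ in the phase condition.
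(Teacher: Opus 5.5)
Your proposal is correct and matches the argument the paper relies on. The paper gives no proof of its own; it defers to the uniform-contraction framework of Day--Lessard--Mischaikow, Breden--Lessard--Vanicat and van den Berg--Breden--Lessard--Murray, which is exactly your scheme: a pointwise-in-$s$ Newton--Kantorovich contraction with $s$-uniform bounds, then the uniform contraction principle on a recentered fixed ball to get smoothness of $s\mapsto\widetilde{\mathbf{W}}(s)$.

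One harmless inaccuracy: $2Y_0^{[s]}Z_2^{[s]}\le(1-Z_1^{[s]})^2$ alone does not make the admissible interval for $r$ nonempty. That needs strict inequality and a lower endpoint below $R$. The statement only quantifies over admissible $r$, so your argument is unaffected.
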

\begin{proof}
The proof can be found in \cite{continuation_1}, \cite{continuation_2}, and \cite{continuation_3} for instance.
\end{proof}
Let us now apply the aforementioned theorem to prove branches.
\subsection{Computing the Bounds for Branches of Periodic Solutions}\label{sec : bounds branches}
In order to apply Theorem \ref{th: radii polynomial continuation}, we need to construct $\overline{\mathbf{W}}(s)$ and $A_{c}(s)$. We first introduce some notation. We define $\mathscr{P}^{\leq N}$ and $\mathscr{P}^{>N},$ as
\begin{align}
    \mathscr{P}^{\leq N} \bydef \begin{bmatrix}
        1 & 0 \\
        0 & \bpi^{\leq N}
    \end{bmatrix}, ~ \mathscr{P}^{ > N} \bydef \begin{bmatrix}
        0 & 0 \\
        0 & \bpi^{> N}
    \end{bmatrix}.
\end{align}
We then will choose $A_{c}(s) = (A_{c}(s))_{n \in N_c}$ where each $(A_{c})_n$ is constructed as in Section \ref{sec : A}. We also define 
\begin{align}
    A_{c}^N(s) \approx (\mathscr{P}^{\leq N} DF_{c}(\overline{\mathbf{W}}(s))\mathscr{P}^{\leq N})^{-1}.
\end{align}
Unlike in \cite{dominic_sh_periodic}, the linear part of $F_{c}$ does not depend on $\nu_5$. As a result, we can use uniform estimate $\|\mathscr{P}^{> N}(A_{c})_n\|_{\mathcal{B}(X_{e,\tau})} \leq \mathcal{L}_{\infty}$ for each $n = 0,\dots,N_c$. We let $\mathbf{V}(s) \in X_{e,\tau,\mathrm{con}}, \|\mathbf{V}(s)\|_{X_{\tau,\mathrm{con}}} = 1$. Then,
\begin{align}
    \|\mathscr{P}^{> N} A_{c}(s)\|_{\mathcal{B}(X_{e,\tau,\mathrm{con}})} &= \|\mathscr{P}^{> N} A_{c}(s) \mathbf{V}\|_{X_{\tau,\mathrm{con}}} \\
    &= \|\mathscr{P}^{> N} (A_{c})_0\mathbf{V}_0\|_{X_{\tau}} + 2\sum_{n = 1}^{N_c} \|\mathscr{P}^{> N} (A_{c})_n\mathbf{V}_n\|_{X_{\tau}} \\ 
    &\leq \|\mathscr{P}^{> N}(A_{c})_0\|_{\mathcal{B}(X_{e,\tau})}\|\mathbf{V}_0\|_{X_{\tau}} + 2\sum_{n = 1}^{N_c} \|\mathscr{P}^{> N}(A_{c})_0\|_{\mathcal{B}(X_{e,\tau})} \|\mathbf{V}_n\|_{X_{\tau}}
    \\
    &\leq \mathcal{L}_{\infty} \|\mathbf{V}_0\|_{X_{\tau}} + 2\mathcal{L}_{\infty}\sum_{n = 1}^{N_c} \| \mathbf{V}_n\|_{X_{\tau}} \\
    &= \mathcal{L}_{\infty} \|\mathbf{V}(s)\|_{X_{\tau,\mathrm{con}}} = \mathcal{L}_{\infty}.
\end{align}
The constructions of both $\overline{\mathbf{W}}(s)$ and $A_{c}(s)$ objects can be found in Section 4.1 of \cite{dominic_sh_periodic} with only minor changes as we have a system. Due to the similarity, we omit the details and refer the interested reader to the aforementioned paper. We also define $\mathcal{F}_{\mathcal{N}}\{\cdot\}$ and $\mathcal{F}_{\mathcal{N}}^{-1}\{\cdot\}$ to be the Fourier transform and inverse Fourier transform respectively of a Chebyshev sequence with $\mathcal{N}$ coefficients. Finally, we denote the product of two Chebyshev sequences by $\underset{\mathcal{N}}{*}$ defined as
{\scriptsize\begin{align}
    &\mathscr{v}_1(s)\underset{\mathcal{N}}{*} \mathscr{v}_2(s) \bydef \mathcal{F}_{\mathcal{N}}^{-1}\left\{ \mathcal{F}_{\mathcal{N}}\{\mathscr{v}_1(s)\}.*\mathcal{F}_{\mathcal{N}}\{\mathscr{v}_2(s)\}\right\}, ~\text{where}~ \mathscr{v}_1(s), \mathscr{v}_2(s) \in \left\{\mathbb{R}_{\mathrm{con}},\ell^1_{\tau,\mathrm{con}}(\mathbb{N}_0),\ell^1_{e,\tau,\mathrm{con}},X_{e,\tau,\mathrm{con}}\right\},\label{eq : chebproduct}
\end{align}}
where $.*$ denotes pointwise multiplication between the Chebyshev coefficients.
We also define
\begin{align}
    \mathscr{L}_{p} \bydef \begin{bmatrix}
        0 & 0 \\ 
        0 & L_p
    \end{bmatrix},~ \mathscr{G}_p(\overline{\mathbf{W}}(s)) \bydef \begin{bmatrix}
        0 \\\mathscr{g}_p(\overline{\mathbf{U}}(s)) \\
        -\nu_3 \overline{\nu_5}(s) + \nu_4
    \end{bmatrix}.
\end{align}
We are now ready to compute the bounds. As shown in \cite{dominic_sh_periodic}, the bounds for periodic solutions can be (essentially) re-used. Our primary task to assign the correct size of products for Chebyshev sequences. That is, we choose the correct $\mathcal{N}$ to compute $\mathscr{v}_1(s)\mathscr{v}_2(s)$ using \eqref{eq : chebproduct}. We now compute the $Y_0^{[s]}$ bound.
\begin{lemma}\label{lem : bound Y_0 branches}
Let $Y_0^{[s]} > 0$ be defined as 
\begin{align}
    Y_0^{[s]} \bydef Y_{0,1}^{[s]} + \mathcal{L}_{\infty}Y_{0,2}^{[s]} + Y_{0,3}^{[s]} + \mathcal{L}_{\infty} Y_{0,4}^{[s]}
\end{align}
where
{\footnotesize\begin{align}
    &Y_{0,1}^{[s]} \bydef \left\| A_{c}^N(s)\underset{5Nc}{*} \left(\mathscr{L}_p\overline{\mathbf{W}}(s) + \mathscr{G}_p(\overline{\mathbf{W}})\right)  \right\|_{X_{\tau,\mathrm{con}}} \\
    &Y_{0,2}^{[s]} \bydef \left\|(\bpi^{\leq N_0} - \bpi^{\leq N}) L_p\overline{\mathbf{U}}(s) + \begin{bmatrix}
        (\Pi^{\leq 4N_0} - \Pi^{\leq N}) \left(\overline{\Psi}_p(s)  \underset{4Nc}{*}\overline{\Phi}_{p,\mathrm{inv}}(s)\right) \\ 0
    \end{bmatrix}\right\|_{1,\tau,\mathrm{con}} \\
    &Y_{0,3}^{[s]} \bydef \frac{\left\|A_{c}^N(s) \underset{9Nc}{*}\begin{bmatrix} 0 \\
    \overline{\Psi}_p(s)\underset{9Nc}{*}\overline{\Phi}_{p,\mathrm{inv}}(s)\underset{9Nc}{*}\left(1-\overline{\Phi}_p(s)\underset{9Nc}{*}\overline{\Phi}_{p,\mathrm{inv}}(s)\right) \\ 0 \end{bmatrix}\right\|_{X_{\tau,\mathrm{con}}}}{1-\|1-\overline{\Phi}_p(s)\underset{4Nc}{*}\overline{\Phi}_{p,\mathrm{inv}}(s)\|_{1,\tau,\mathrm{con}}} \\
    &Y_{0,4}^{[s]} \bydef    \frac{\left\|(\Pi^{\leq 8N_0} - \Pi^{ \leq N})\left(\overline{\Psi}_p(s)\underset{8Nc}{*}\overline{\Phi}_{p,\mathrm{inv}}(s)\underset{8Nc}{*}\left(1 - \overline{\Phi}_p(s)\underset{8Nc}{*}\overline{\Phi}_{p,\mathrm{inv}}(s)\right)\right)\right\|_{1,\tau,\mathrm{con}}}{1-\|1-\overline{\Phi}_p(s)\underset{4Nc}{*}\overline{\Phi}_{p,\mathrm{inv}}(s)\|_{1,\tau,\mathrm{con}}}.
\end{align}}
Then, it follows that $\underset{s \in [-1,1]}{\sup}\|A_{c}(s) F_{c}(\overline{\mathbf{W}}(s))\|_{X_{\tau,\mathrm{con}}} \leq Y_0^{[s]}$.
 \end{lemma}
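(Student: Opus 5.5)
The plan is to follow the proof of Lemma \ref{lem : bound Y_0 periodic} pointwise in $s$, and then transfer each estimate to the Chebyshev setting by checking that every product is computed exactly with enough Chebyshev coefficients. Fix $s \in [-1,1]$. The first component of $F_c(\overline{\mathbf{W}}(s))$ is $(\overline{\mathbf{U}}(s)-\overline{\mathbf{U}}(s),\dot{\mathbf{U}}(s))_2 = 0$. Hence
\[
F_c(\overline{\mathbf{W}}(s)) = \mathscr{L}_p\overline{\mathbf{W}}(s) + \mathscr{G}_p(\overline{\mathbf{W}}(s)) + \begin{bmatrix} 0 \\ \overline{\Psi}_p(s)*(\overline{\Phi}_p(s)^{-1}-\overline{\Phi}_{p,\mathrm{inv}}(s)) \\ 0\end{bmatrix}.
\]
By the triangle inequality, $\|A_c(s)F_c(\overline{\mathbf{W}}(s))\|_{X_{\tau,\mathrm{con}}}$ is then bounded by a term involving $\mathscr{g}_p$ and an inversion-error term.

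For the first term, split $A_c(s) = A_c^N(s) + \mathscr{P}^{>N}A_c(s)$. The finite part gives $Y_{0,1}^{[s]}$. The Chebyshev degree of $\mathscr{L}_p\overline{\mathbf{W}} + \mathscr{G}_p$ is at most $4N_c$, since $\overline{\Psi}_p$ is quadratic and $\overline{\Phi}_{p,\mathrm{inv}}$ has degree $N_c$ in $s$. Multiplying by $A_c^N$, which has degree $N_c$, therefore requires $5N_c$ coefficients, and with that choice the FFT-based product $\underset{5N_c}{*}$ is exact. For the tail, apply the uniform estimate $\|\mathscr{P}^{>N}A_c(s)\|_{\mathcal{B}(X_{e,\tau,\mathrm{con}})}\le \mathcal{L}_\infty$ derived just above. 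This leaves the norm of the tail modes of $L_p\overline{\mathbf{U}}(s) + \mathscr{g}_p$. As in Lemma \ref{lem : bound Y_0 periodic}, constants live only on mode $0 \le N$, and Fourier support considerations give $\bpi^{\leq N_0}-\bpi^{\leq N}$ and $\Pi^{\leq 4N_0}-\Pi^{\leq N}$, so we obtain $\mathcal{L}_\infty Y_{0,2}^{[s]}$. Finally, since the Chebyshev norm dominates the sup over $s$ (using $|T_n|\le 1$), pointwise bounds follow from the norms of the coefficient sequences.

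For the inversion-error term, use Lemma \ref{lem : phi bound stronger} with $\mathcal{A} = A_c(s)$ and $\Xi = \overline{\Psi}_p(s)$, working in the Banach algebra $\ell^1_{\tau,\mathrm{con}}$. The Neumann series argument behind Lemma \ref{lem : phi bound} applies verbatim in any Banach algebra, because the Chebyshev norm is submultiplicative under the product of functions of $s$. This yields the quotient with denominator $1-\|1-\overline{\Phi}_p\underset{4N_c}{*}\overline{\Phi}_{p,\mathrm{inv}}\|_{1,\tau,\mathrm{con}}$, where degree $2N_c+N_c\le 4N_c$ makes the product exact. The numerator is then split again into $A_c^N$ and tail parts, giving $Y_{0,3}^{[s]}$ and $\mathcal{L}_\infty Y_{0,4}^{[s]}$. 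The Chebyshev degrees are $2N_c+N_c+(3N_c)+\dots\le 8N_c$ for the scalar product, plus $N_c$ for $A_c^N$, giving $9N_c$; the Fourier support is $8N_0$.

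The main obstacle is bookkeeping. We must verify that each $\underset{\mathcal{N}}{*}$ product is computed with enough Chebyshev nodes that no aliasing occurs, so that it equals the true product. We must also verify that the Banach-algebra inverse lemma is legitimate in the parametrized space, so that the inverse of $\overline{\Phi}_p(s)$ exists uniformly in $s$. I would check the degree count for each factor explicitly and rely on submultiplicativity of $\|\cdot\|_{1,\tau,\mathrm{con}}$. Summing the four pieces gives the stated $Y_0^{[s]}$.
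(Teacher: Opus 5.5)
Your outline matches the paper's proof, which reruns Lemma~\ref{lem : bound Y_0 periodic} with Chebyshev products. The arclength component of $F_c(\overline{\mathbf{W}}(s))$ vanishes, $A_c(s)$ is split into $A_c^N(s)$ and a tail bounded by $\mathcal{L}_{\infty}$, the inversion error goes through Lemma~\ref{lem : phi bound stronger}, and the product sizes come from degree counting. (The paper takes $\overline{\Phi}_{p,\mathrm{inv}}(s)$ of degree $2N_c$, not $N_c$, which is why the sizes $4N_c$, $5N_c$, $8N_c$, $9N_c$ are attained exactly. Your inequalities hold either way.)

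There is a gap at the inversion-error step, shared with the paper: it is inherited from Lemma~\ref{lem : bound Y_0 periodic} through Lemma~\ref{lem : phi bound stronger}. The Neumann series argument you cite proves Lemma~\ref{lem : phi bound}, where no operator acts on the error. Set $\mathcal{E}\bydef 1-\overline{\Phi}_p*\overline{\Phi}_{p,\mathrm{inv}}$, so that $\overline{\Phi}_p^{-1}-\overline{\Phi}_{p,\mathrm{inv}}=\sum_{k\geq 1}\overline{\Phi}_{p,\mathrm{inv}}*\mathcal{E}^k$. In Lemma~\ref{lem : phi bound stronger}, getting the factor $1/(1-\|\mathcal{E}\|)$ requires bounding the norm of $\mathcal{A}$ applied to $\Xi*\overline{\Phi}_{p,\mathrm{inv}}*\mathcal{E}^{k}$ by $\|\mathcal{E}\|^{k-1}$ times the norm of $\mathcal{A}$ applied to $\Xi*\overline{\Phi}_{p,\mathrm{inv}}*\mathcal{E}$. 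That needs $\mathcal{A}$ to commute with convolution by $\mathcal{E}$, and $A_p$ and $A_c(s)$ do not.

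For a concrete counterexample, take $\Xi=\overline{\Phi}_{p,\mathrm{inv}}=1$, $\overline{\Phi}_p=1-\mathcal{E}$ with $\mathcal{E}$ a small multiple of the sum of the cosine modes $2$ and $3$, and $\mathcal{A}$ the operator keeping only the mode-$1$ coefficient of the first component. The right-hand side of the lemma is then $0$. The left-hand side is of order $\|\mathcal{E}\|^2\neq 0$, because $\mathcal{E}*\mathcal{E}$ has a nonzero mode $1$.

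The repair is cheap. Write $\overline{\Phi}_p^{-1}-\overline{\Phi}_{p,\mathrm{inv}}=\overline{\Phi}_{p,\mathrm{inv}}*\mathcal{E}+\overline{\Phi}_{p,\mathrm{inv}}*\mathcal{E}*\mathcal{E}*(1-\mathcal{E})^{-1}$ and set $\Theta\bydef\overline{\Psi}_p*\overline{\Phi}_{p,\mathrm{inv}}*\mathcal{E}$. Then
\[
\left\|A_c(s)\begin{bmatrix}0\\ \overline{\Psi}_p*(\overline{\Phi}_p^{-1}-\overline{\Phi}_{p,\mathrm{inv}})\\ 0\end{bmatrix}\right\|_{X_{\tau,\mathrm{con}}}
\leq \left\|A_c(s)\begin{bmatrix}0\\ \Theta\\ 0\end{bmatrix}\right\|_{X_{\tau,\mathrm{con}}}
+\left(\|A_c^N(s)\|_{\mathcal{B}(X_{e,\tau,\mathrm{con}})}+\mathcal{L}_{\infty}\right)\frac{\|\Theta\|_{1,\tau,\mathrm{con}}\,\|\mathcal{E}\|_{1,\tau,\mathrm{con}}}{1-\|\mathcal{E}\|_{1,\tau,\mathrm{con}}}.
\]
Your finite/tail split of the first term gives exactly the numerators of $Y_{0,3}^{[s]}$ and $\mathcal{L}_{\infty}Y_{0,4}^{[s]}$. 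So the lemma becomes rigorous once those denominators are replaced by the additive second-order term above. That term is numerically negligible, but without it the stated $Y_0^{[s]}$ is not shown to be an upper bound.
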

 \begin{proof}
The proof follows the steps of Lemma \ref{lem : bound Y_0 periodic} except that we introduce the corresponding Chebyshev product. In particular, note that $A^N_{c}(s)$ is a polynomial of order $Nc$ with respect to $s$, and $\mathscr{G}_p(\overline{\mathbf{W}})$ is that of order $4Nc$. Hence, we use a product of size $5Nc$. Noting that $\overline{\Psi}_p(s), \overline{\Phi}_p(s),$ and $\overline{\Phi}_{p,\mathrm{inv}}(s)$ are of order $2N_c$ yields the reamining product sizes.
 \end{proof}
 Let us now compute $Z_2^{[s]}$.
 \begin{lemma}\label{lem : Z2 branches}
Let $R > 0$. Define $Z_2^{[s]} > 0$ as 
\begin{align}
    Z_2^{[s]} \bydef 2\nu_1\left(\|A_{c}^N\|_{\mathcal{B}(X_{e,\tau,\mathrm{con}})} + \mathcal{L}_{\infty}\right)(Z_{2,1}^{[s]}Z_{2,2}^{[s]} + Z_{2,3}^{[s]})
\end{align}
where
{\small\begin{align}
    &Z_{2,1}^{[s]} \bydef \biggl\|\nu_2^2 \overline{U}_1(s) \underset{4Nc}{*}\overline{U}_1(s)\underset{4Nc}{*}\overline{U}_1(s)\underset{4Nc}{*} \overline{U}_2(s) + \nu \nu_2 \overline{U}_1(s)\underset{4Nc}{*}\overline{U}_1(s)\underset{4Nc}{*}\overline{U}_1(s) \\
    &+ 3\nu \nu_2 \overline{U}_1(s)\underset{4Nc}{*}\overline{U}_1(s) - 3\nu_2 \overline{U}_1(s)\underset{4Nc}{*} \overline{U}_2(s) - \nu - \overline{U}_2(s)\biggr\|_{1,\tau,\mathrm{con}} \\
    &+\biggl(\left\|-1-3\nu_2 \overline{U}_1(s)+\nu_2^2 \overline{U}_1(s)\underset{3Nc}{*}\overline{U}_1(s)\underset{3Nc}{*}\overline{U}_1(s)\right\|_{1,\tau,\mathrm{con}} \\
    &+ \left\|6\nu\nu_2 \overline{U}_1(s) + 3 \nu \nu_2 \overline{U}_1(s)\underset{3Nc}{*}\overline{U}_1(s) - 3\nu_2 \overline{U}_2(s) + 3\nu_2^2 \overline{U}_1(s)\underset{3Nc}{*}\overline{U}_1(s)\underset{3Nc}{*} \overline{U}_2(s)\right\|_{1,\tau,\mathrm{con}}\biggr)R \\
    &+ \left(\|-3\nu_2 + 3\nu_2^2 \overline{U}_1(s)\underset{2Nc}{*}\overline{U}_1(s)\|_{1,\tau,\mathrm{con}} + \left\|3\nu \nu_2 + 3\nu \nu_2 \overline{U}_1(s) + 3\nu_2^2 \overline{U}_1(s)\underset{2Nc}{*}\overline{U}_1(s)\right\|_{1,\tau,\mathrm{con}}\right) R^2 \\
    &+ (\left\|3\nu_2^2 \overline{U}_1(s) \right\|_{1,\tau,\mathrm{con}} + \left\|\nu \nu_2 + \nu_2^2 \overline{U}_2(s)\right\|_{1,\tau,\mathrm{con}})R^3 + \nu_2^2 R^4,
    \end{align}}
    \begin{align}
    &Z_{2,2}^{[s]} \bydef \frac{\|\overline{\Phi}_{p,\mathrm{inv}}(s)\|_{1,\tau,\mathrm{con}}^3}{\left(1- \|1-\overline{\Phi}_p(s)\underset{4Nc}{*}\overline{\Phi}_{p,\mathrm{inv}}(s)\|_{1,\tau,\mathrm{con}} - R\|\overline{\Phi}_{p,\mathrm{inv}}(s)\|_{1,\tau,\mathrm{con}}\right)^3}, \\
    &Z_{2,3}^{[s]} \bydef \frac{(\|1 - \nu_2 \overline{U}_1(s)\underset{2Nc}{*}\overline{U}_1(s)\|_{1,\tau,\mathrm{con}} + 2\nu_2 \|\overline{U}_1(s)\|_{1,\tau,\mathrm{con}} R + \nu_2 R^2)\|\overline{\Phi}_{p,\mathrm{inv}}(s)\|_{1,\tau,\mathrm{con}}^2}{\left(1 - \|1-\overline{\Phi}_p(s)\underset{4Nc}{*}\overline{\Phi}_{p,\mathrm{inv}}(s)\|_{1,\tau,\mathrm{con}} - R \|\overline{\Phi}_{p,\mathrm{inv}}(s)\|_{1,\tau,\mathrm{con}}\right)^2}.
\end{align}
Then, it follows that $\underset{s \in [-1,1]}{\sup}\underset{\mathbf{U} \in B_R(\overline{\mathbf{U}})}{\sup}\|A_{c}(s)D^2F_{c}(\mathbf{W}(s)) \|_{\mathcal{B}(X_{e,\tau,\mathrm{con}},\mathcal{B}(X_{e,\tau,\mathrm{con}}))} \leq Z_2^{[s]}.$
 \end{lemma}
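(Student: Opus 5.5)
The proof reduces to Lemma \ref{lem : Z2 periodic}, carried out coefficient-wise in the Chebyshev variable $s$, using that $X_{e,\tau,\mathrm{con}}$ is a Banach algebra under the Chebyshev product. The only nonlinear component of $F_c$ is the $g_p$-entry of $F_p$. The phase condition $(\mathbf{U}(s)-\overline{\mathbf{U}}(s),\dot{\mathbf{U}}(s))_2$ is linear in $\mathbf{W}$. The second component of $G_p$, namely $-\nu_3\nu_5+\nu_4$, is affine in $\nu_5$. The nonlinearity $g_p$ does not depend on $\nu_5$. Hence $D^2F_c(\mathbf{W}(s))$ has a single nonzero block, equal to $D^2 g_p(\mathbf{U}(s))$ placed in the first Fourier component. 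Consequently
\[
\|A_c(s)D^2F_c(\mathbf{W}(s))\|\le \|A_c(s)\|_{\mathcal{B}(X_{e,\tau,\mathrm{con}})}\,\|D^2 g_p(\mathbf{U}(s))\|.
\]
It therefore suffices to bound each of these two factors uniformly in $s\in[-1,1]$ and over $\mathbf{U}(s)\in B_R(\overline{\mathbf{U}}(s))$.

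\emph{The factor $\|A_c(s)\|$.} I split it as $\mathscr{P}^{\le N}A_c(s)+\mathscr{P}^{>N}A_c(s)$. The first piece is $A_c^N(s)$, whose norm is computed directly. The second piece is bounded by $\mathcal{L}_\infty$ by the estimate derived just before the lemma. This gives the prefactor $\|A_c^N\|_{\mathcal{B}(X_{e,\tau,\mathrm{con}})}+\mathcal{L}_\infty$.

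\emph{The factor $\|D^2 g_p\|$.} I reuse the algebraic computation from Appendix \ref{apen : Z2 periodic}. Differentiating $g_p=\nu_4-\nu_1(\nu U_1^2-U_1U_2)\Phi^{-1}$ twice produces terms of the form $(\text{polynomial in }U_1,U_2)\cdot\Phi^{-3}$ and $(1-\nu_2U_1^2)\cdot\Phi^{-2}$, with an overall factor $2\nu_1$. I write $\mathbf{U}(s)=\overline{\mathbf{U}}(s)+\mathbf{H}(s)$ with $\|\mathbf{H}\|\le R$ and expand each polynomial numerator in powers of $\mathbf{H}$. The triangle inequality together with the Banach algebra property then gives $Z_{2,1}^{[s]}$ and the numerator of $Z_{2,3}^{[s]}$. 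Products of $\overline{U}_j(s)$ are Chebyshev polynomials of degree $kN_c$ for a $k$-fold product, so they are computed exactly by $\underset{kN_c}{*}$; this explains the sizes $4N_c,3N_c,2N_c$.

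\emph{The inverse powers.} For $\Phi^{-3}$ and $\Phi^{-2}$ I apply Lemma \ref{lem : ball phi bound} in the algebra $\ell^1_{\tau,\mathrm{con}}$. I need to check that its proof, a Neumann series argument, uses only submultiplicativity and the unit $1$, both of which hold for the Chebyshev-weighted norm. Then
\[
\|\Phi^{-1}\|\le \frac{\|\overline{\Phi}_{p,\mathrm{inv}}\|}{1-\|1-\overline{\Phi}_p\underset{4N_c}{*}\overline{\Phi}_{p,\mathrm{inv}}\|-R\|\overline{\Phi}_{p,\mathrm{inv}}\|}.
\]
Raising this to the powers $3$ and $2$ gives $Z_{2,2}^{[s]}$ and the denominator of $Z_{2,3}^{[s]}$. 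Here $\overline{\Phi}_p\underset{4N_c}{*}\overline{\Phi}_{p,\mathrm{inv}}$ is exact because both factors have degree $2N_c$.

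\emph{The main obstacle.} I expect the delicate step to be justifying that the $\mathrm{con}$-norm, which is the Chebyshev $\ell^1$ norm of the $\ell^1_\tau$-coefficients, controls $\sup_{s}\|\cdot\|_{1,\tau}$. It also has to be submultiplicative under the true (untruncated) product of Chebyshev series, so that the Neumann series argument and the Banach algebra estimates transfer verbatim. This follows from $|T_n|\le1$ and from $2T_mT_n=T_{m+n}+T_{|m-n|}$. Once this is in place, assembling the two factors yields $Z_2^{[s]}$.
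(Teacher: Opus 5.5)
Your overall architecture matches the paper's: factor out $\|A_c(s)\|\le\|A_c^N\|_{\mathcal{B}(X_{e,\tau,\mathrm{con}})}+\mathcal{L}_\infty$, reduce $D^2F_c$ to $D^2F_p$ (indeed to $D^2g_p$), and rerun Appendix \ref{apen : Z2 periodic} with Chebyshev products of the right sizes. Your check that the Chebyshev-weighted norm is a Banach algebra dominating $\sup_s\|\cdot\|_{1,\tau}$ is a point the paper leaves implicit.

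\textbf{The inverse-power step fails.} You apply Lemma \ref{lem : ball phi bound} with radius $R$, but that lemma bounds $\Phi^{-1}$ for $\Phi$ in a ball of radius $r$ about $\overline{\Phi}_p$. For $\mathbf{U}=\overline{\mathbf{U}}+\mathbf{H}$ with $\|\mathbf{H}\|\le R$ you only have $\Phi_p(\mathbf{U})-\overline{\Phi}_p=H_1+2\nu_2\overline{U}_1*H_1+\nu_2H_1*H_1$. The relevant radius is therefore $R_\Phi\bydef(1+2\nu_2\|\overline{U}_1(s)\|_{1,\tau,\mathrm{con}})R+\nu_2R^2$, which exceeds $R$ by a factor that is not close to $1$ here, since $\overline{U}_1$ is not small.

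Your displayed bound on $\|\Phi^{-1}\|$ is false in general. Take constant sequences $\overline{U}_1=c\,e_0$ with $c>0$, $\phi(x)=1+x+\nu_2x^2$, and $\overline{\Phi}_{p,\mathrm{inv}}=\phi(c)^{-1}e_0$. Then your bound reads $\|\Phi^{-1}\|\le 1/(\phi(c)-R)$. On the other hand, $H_1=-h\,e_0$ with $h\uparrow R$ gives $\|\Phi^{-1}\|\to 1/\phi(c-R)$. Since $\phi(c-R)=\phi(c)-(1+2\nu_2c)R+\nu_2R^2<\phi(c)-R$ for any $0<R<\min\{2c,\phi(c)\}$, the bound is violated.

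The paper's appendix proof of Lemma \ref{lem : Z2 periodic}, which the branch lemma inherits, makes exactly the same identification of radii, so following the paper does not repair this step. What is actually provable is the estimate with $R$ replaced by $R_\Phi$ in the denominators of $Z_{2,2}^{[s]}$ and $Z_{2,3}^{[s]}$. This requires the hypothesis $\|1-\overline{\Phi}_p*\overline{\Phi}_{p,\mathrm{inv}}\|_{1,\tau,\mathrm{con}}+R_\Phi\|\overline{\Phi}_{p,\mathrm{inv}}\|_{1,\tau,\mathrm{con}}<1$.

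\textbf{A second, smaller gap.} You assert that expanding the numerator in powers of $\mathbf{H}$ ``gives $Z_{2,1}^{[s]}$'', but it does not. The $H_1*H_1$ terms of $\nu_2^2U_1^3U_2+\nu\nu_2U_1^3+3\nu\nu_2U_1^2$ have coefficient $3\nu\nu_2+3\nu\nu_2\overline{U}_1+3\nu_2^2\,\overline{U}_1*\overline{U}_2$. The statement instead has $3\nu_2^2\,\overline{U}_1\underset{2N_c}{*}\overline{U}_1$ in the last slot, and the appendix has $3\nu_2^2\overline{U}_1$. Since the stated norm need not dominate the correct one, that entry must be corrected; the product size $2N_c$ is unaffected.

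With these two corrections, the rest of your argument goes through: the Banach-algebra expansions, $\|\Phi^{-3}\|\le\|\Phi^{-1}\|^3$, and the $2\nu_1$ bookkeeping for $\partial_{u_1}^2 g_p$ and the two mixed terms.
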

 \begin{proof}
 To begin, observe that
 {\scriptsize\begin{align}
     \|A_{c}(s)D^2F_{c}(\mathbf{W}(s)) \|_{\mathcal{B}(X_{e,\tau,\mathrm{con}},\mathcal{B}(X_{e,\tau,\mathrm{con}}))} &\leq \|A_{c}(s)\|_{\mathcal{B}(X_{e,\tau,\mathrm{con}})}\left\| D^2F_{c}(\mathbf{W}(s))\right\|_{\mathcal{B}(X_{e,\tau,\mathrm{con}},\mathcal{B}(X_{e,\tau,\mathrm{con}}))} \\
    &\leq (\|A^N_{c}(s)\|_{\mathcal{B}(X_{e,\tau,\mathrm{con}})} + \mathcal{L}_{\infty})\left\| D^2F_{c}(\mathbf{W}(s))\right\|_{\mathcal{B}(X_{e,\tau,\mathrm{con}},\mathcal{B}(X_{e,\tau,\mathrm{con}}))}.
 \end{align}}
 Now, by definition, observe that
 \begin{align}
     \left\| D^2F_{c}(\mathbf{W}(s))\right\|_{\mathcal{B}(X_{e,\tau,\mathrm{con}},\mathcal{B}(X_{e,\tau,\mathrm{con}}))} &= \left\| \begin{bmatrix}
         0 & 0 \\
         0 & D^2F_p(\nu_5(s),\mathbf{U}(s))
     \end{bmatrix}\right\|_{\mathcal{B}(X_{e,\tau,\mathrm{con}},\mathcal{B}(X_{e,\tau,\mathrm{con}}))} \\
     &= \|D^2F_p(\nu_5(s),\mathbf{U}(s))\|_{\mathcal{B}(\ell^1_{e,\tau,\mathrm{con}},\mathcal{B}(\ell^1_{e,\tau,\mathrm{con}}))}
 \end{align}
 From this point on, the proof follows from Lemma \ref{lem : Z2 periodic}. Our only task is to introduce the correct Chebyshev product sizes, which follows from the definition of the involved terms.
 \end{proof}
  Let us now compute the $Z_1^{[s]}$ bound. To begin, given $U \in \ell^2$, define $U(s)^{\star}$ as the dual in $\ell^2$ of $U$. More specifically, we have 
\begin{align}
    U^{\star}(V) \bydef (U,V)_2 ~~~~ \text{ for all } V \in \ell^2.
    \label{dual definition}
\end{align}
Using \eqref{dual definition}, we introduce 
 {\small\begin{align}
     &V_{p,1}^N(s) \bydef \Pi^{\leq N} \left(\overline{\Psi}_{p,1}(s) \underset{7Nc}{*} \overline{\Phi}_{p,\mathrm{inv}}(s)\underset{7Nc}{*}\overline{\Phi}_{p,\mathrm{inv}}(s)\right), ~ V_{p,2}^N(s) \bydef \Pi^{\leq N} \left(\overline{\Psi}_{p,2} \underset{3Nc}{*} \overline{\Phi}_{p,\mathrm{inv}}(s)\right), ~ \\
     &DG_{c}^N(\overline{\mathbf{W}}(s)) \bydef \begin{bmatrix}
         0 & (\Pi^{\leq N}\dot{U}_1(s))^* & (\Pi^{\leq N}\dot{U}_2(s))^* \\
         0 &\mathbb{V}_{p,1}^N(s) & \mathbb{V}_{p,2}^N(s) \\
         -\nu_3 & 0 & 0
     \end{bmatrix}, ~ DG_{c}(\overline{\mathbf{W}}(s)) \bydef \begin{bmatrix}
         0 & \dot{U}_1(s)^* & \dot{U}_2(s)^* \\
         0 & \mathbb{V}_{p,1}(s) & \mathbb{V}_{p,2}(s) \\
         -\nu_3 & 
         0 & 0
     \end{bmatrix}.
 \end{align}}
 We now state the following lemma.
 \begin{lemma}\label{lem : Z1 branches}
Let $Z_{\infty}^{[s]} > 0$ be defined as 
\begin{align}
&Z_{\infty}^{[s]} \bydef (\|A_{c}^N(s)\|_{\mathcal{B}(X_{e,\tau,\mathrm{con}})} + \mathcal{L}_{\infty})(Z_{\infty,1}^{[s]} + Z_{\infty,2}^{[s]} + Z_{\infty,3}^{[s]} + Z_{\infty,4}^{[s]}) 
\end{align}
where 
{\footnotesize\begin{align}
&Z_{\infty,1}^{[s]} \bydef \|\dot{U}_1(s) - \Pi^{\leq N}\dot{U}_1(s)\|_{1,\tau,\mathrm{con}} + \|\dot{U}_2(s) - \Pi^{\leq N}\dot{U}_2(s)\|_{1,\tau,\mathrm{con}} \\
&Z_{\infty,2}^{[s]} \bydef \|V_{p,1}^N(s) - \mathscr{V}_{p,1}(s)\|_{1,\tau,\mathrm{con}} \\
&Z_{\infty,3}^{[s]} \bydef  \frac{\left\|\overline{\Psi}_{p,1}(s)\underset{15N_c}{*}\overline{\Phi}_{p,\mathrm{inv}(s)}\underset{15N_c}{*}\overline{\Phi}_{p,\mathrm{inv}(s)}\underset{15N_c}{*}\left(1-\overline{\Phi}_p(s)\underset{15N_c}{*}\overline{\Phi}_p(s)\underset{15N_c}{*}\overline{\Phi}_{p,\mathrm{inv}}(s)\underset{15N_c}{*}\overline{\Phi}_{p,\mathrm{inv}}(s)\right)\right\|_{1,\tau,\mathrm{con}}}{1-\left\|\left(1-\overline{\Phi}_p(s)\underset{8N_c}{*}\overline{\Phi}_p(s)\underset{8N_c}{*}\overline{\Phi}_{p,\mathrm{inv}}(s)\underset{8N_c}{*}\overline{\Phi}_{p,\mathrm{inv}}(s)\right)\right\|_{1,\tau,\mathrm{con}}} \\
&Z_{\infty,4}^{[s]} \bydef \|V_{p,2}^N - \mathscr{V}_{p,2}\|_{1,\tau} +\frac{\left\|\overline{\Psi}_{p,2}(s)\underset{7N_c}{*}\overline{\Phi}_{p,\mathrm{inv}}(s)\underset{7N_c}{*}\left(1-\overline{\Phi}_p(s)\underset{7N_c}{*}\overline{\Phi}_{p,\mathrm{inv}}(s)\right)\right\|_{1,\tau,\mathrm{con}}}{1-\left\|1-\overline{\Phi}_p(s)\underset{4N_c}{*}\overline{\Phi}_{p,\mathrm{inv}}(s)\right\|_{1,\tau,\mathrm{con}}}
\end{align}}
Then, let $Z_1^{[s]} > 0$ be defined as
\begin{align}
    Z_1^{[s]} \bydef Z_{1,1}^{[s]} + Z_{1,2}^{[s]} + Z_{\infty}^{[s]}
\end{align}
where
\begin{align}
    &Z_{1,1}^{[s]} \bydef \left\|\mathscr{P}^{\leq N}\left(I_d - A_{c}(s)\underset{8N_c}{*}\left(\mathscr{L}_p + DG_{c}^N(\overline{\mathbf{W}}(s))\right)\right)\mathscr{P}^{\leq 2N}\right\|_{\mathcal{B}(X_{e,\tau,\mathrm{con}})} \\
    &Z_{1,2}^{[s]} \bydef  2\mathcal{L}_{\infty}(\|V_{p,1}^N(s)\|_{1,\tau,\mathrm{con}} + \|V_{p,2}^N(s)\|_{1,\tau,\mathrm{con}}).
\end{align}
Then, it follows that $\|I_d - A_{c}(s) DF_{c}(\overline{\mathbf{W}}(s))\|_{\mathcal{B}(X_{\tau,\mathrm{con}})} \leq Z_1^{[s]}$.
 \end{lemma}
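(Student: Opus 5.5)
The plan is to mirror the proof of Lemma \ref{lem : Z1 periodic} almost line by line, with three changes: the Banach space becomes $X_{e,\tau,\mathrm{con}}$, the projections $\bpi^{\leq N},\bpi^{>N}$ are replaced by $\mathscr{P}^{\leq N},\mathscr{P}^{>N}$, and every product in $s$ is computed with a Chebyshev product $\underset{\mathcal{N}}{*}$ whose size is large enough to be exact.

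First write $DF_{c}(\overline{\mathbf{W}}(s)) = \mathscr{L}_p + DG_{c}(\overline{\mathbf{W}}(s))$ and insert the truncated operator $DG_{c}^N(\overline{\mathbf{W}}(s))$ via the triangle inequality:
\begin{align}
\|I_d - A_{c}(s)DF_{c}(\overline{\mathbf{W}}(s))\| \leq \|I_d - A_{c}(s)(\mathscr{L}_p + DG_{c}^N(\overline{\mathbf{W}}(s)))\| + \|A_{c}(s)\|\,\|DG_{c}^N(\overline{\mathbf{W}}(s)) - DG_{c}(\overline{\mathbf{W}}(s))\|.
\end{align}
The operator-norm estimate $\|\mathscr{P}^{>N}A_{c}(s)\|\leq \mathcal{L}_\infty$ proved just before the lemma gives $\|A_c(s)\|\leq \|A^N_c(s)\|+\mathcal{L}_\infty$.

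The difference $DG_{c}^N - DG_{c}$ has three nonzero entries, which I bound in the $X_{e,\tau,\mathrm{con}}$ operator norm by the sum of their norms.
\begin{itemize}
\item The first row contains the dual functionals $(\Pi^{>N}\dot U_j(s))^\star$. On $\ell^1_\tau$, such a functional is bounded by the $\ell^1_{\tau}$-type norm of $\Pi^{>N}\dot U_j$, since $\tau\ge1$ and the $\alpha_n$ weights are handled as in the duality pairing. This gives $Z^{[s]}_{\infty,1}$.
\item The entries $\mathbb{V}^N_{p,j}(s)-\mathbb{V}_{p,j}(s)$ are convolution operators, so Young's inequality (the Banach algebra property of $\ell^1_{\tau}$, extended to the Chebyshev-weighted norm, which is itself an algebra) bounds them by $\|V^N_{p,j}(s)-V_{p,j}(s)\|_{1,\tau,\mathrm{con}}$.
\end{itemize}
For the second bullet, split $V^N_{p,j}-V_{p,j} = (V^N_{p,j}-\mathscr{V}_{p,j}) + (\mathscr{V}_{p,j}-V_{p,j})$ and control the second piece by Lemma \ref{lem : phi bound stronger} (with $\mathcal{A}$ the identity), applied in the Chebyshev algebra $\ell^1_{\tau,\mathrm{con}}$. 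For $j=1$, apply it to $\overline{\Phi}_p^{2}$ and $\overline{\Phi}_{p,\mathrm{inv}}^2$, as in Lemma \ref{lem : Z_full_1}, with $\Xi=\overline{\Psi}_{p,1}(s)$. This produces $Z^{[s]}_{\infty,2},Z^{[s]}_{\infty,3}$ and $Z^{[s]}_{\infty,4}$.

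The truncated term $I_d - A_c(s)M$, with $M=\mathscr{L}_p+DG^N_c$, is handled by the same four-block decomposition in $\mathscr{P}^{\leq N},\mathscr{P}^{>N},\mathscr{P}^{\leq 2N},\mathscr{P}^{>2N}$ as in Lemma \ref{lem : Z1 periodic}.
\begin{itemize}
\item The block $\mathscr{P}^{\leq N}(\cdot)\mathscr{P}^{\leq 2N}$ is kept as $Z_{1,1}^{[s]}$, a finite computation.
\item The block $\mathscr{P}^{\leq N}A_c M\mathscr{P}^{>2N}$ vanishes. The reason is that $\mathscr{L}_p$ is diagonal, the convolutions by $V^N_{p,j}$ (of size $N$) map modes $>2N$ to modes $>N$ exactly as in \eqref{piN front pi2N back equals 0}, and the new first row and column only involve $\Pi^{\leq N}\dot U_j$ and the scalar component. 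The scalar component sits in $\mathscr{P}^{\leq N}$, so it is untouched by $\mathscr{P}^{>2N}$.
\item The two blocks beginning with $\mathscr{P}^{>N}$ collapse to $\mathscr{P}^{>N}A_c(s)DG^N_c\mathscr{P}^{(\cdot)}$. Each is bounded by $\mathcal{L}_\infty(\|V^N_{p,1}(s)\|+\|V^N_{p,2}(s)\|)$, since the tail of $A_c$ only sees the convolution rows; the $\dot U$ row and the $-\nu_3$ entry land in $\mathscr{P}^{\leq N}$. Together these give $Z^{[s]}_{1,2}$.
\end{itemize}

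The main obstacle is the bookkeeping of the Chebyshev product sizes: $Z_{1,1}^{[s]}$ and the inverse-error terms must be evaluated exactly as polynomials in $s$. To justify this, I count degrees: $\overline{\Phi}_p,\overline{\Psi}_{p,j},\overline{\Phi}_{p,\mathrm{inv}}$ have degree at most $2N_c$ or $3N_c$, and $A_c^N$ has degree $N_c$. The products $\underset{7N_c}{*},\underset{8N_c}{*},\underset{15N_c}{*}$ therefore contain no aliasing, so the $\sup_s$ of the true operator norm is dominated by the stated norms in $X_{e,\tau,\mathrm{con}}$. This uses that the $\mathrm{con}$-norm bounds the supremum over $s\in[-1,1]$ and is submultiplicative. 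A second, lesser obstacle is verifying the hypothesis $\|1-\overline{\Phi}_p^2\overline{\Phi}_{p,\mathrm{inv}}^2\|<1$ uniformly in $s$; this is assumed implicitly by the positivity of the denominators in the definition of $Z^{[s]}_{\infty,3}$.
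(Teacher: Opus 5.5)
Your proposal is correct and follows essentially the same route as the paper. You insert $DG_c^N(\overline{\mathbf{W}}(s))$, bound $\|A_c(s)\|$ by $\|A_c^N(s)\|+\mathcal{L}_\infty$, control the tangent-tail functionals and the convolution differences (via $\mathscr{V}_{p,j}$ and Lemma \ref{lem : phi bound stronger} with $\mathcal{A}$ the identity), and reuse the four-block splitting of Lemma \ref{lem : Z1 periodic} with Chebyshev product sizes fixed by degree counting; you also make explicit a point the paper leaves implicit, namely that the $\dot U$ row and the $-\nu_3$ entry of $DG_c^N$ land in $\mathscr{P}^{\leq N}$ and so do not contribute to $Z_{1,2}^{[s]}$.
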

 \begin{proof}
 After beginning in a similar way to Lemma \ref{lem : Z1 periodic}, we analyze the $Z_{\infty}$ bound. 
 \begin{align}
    &\left\|DG_{c}(\overline{\mathbf{W}}(s)) - DG^N_{c}(\overline{\mathbf{W}}(s))\right\|_{\mathcal{B}(X_{e,\tau,\mathrm{con}})}\\
    &= \left\|\begin{bmatrix}
         0 & \dot{U}_1(s)^* - (\Pi^{\leq N}\dot{U}_1(s))^* & \dot{U}_2(s)^* - (\Pi^{\leq N}\dot{U}_2(s))^* \\
         0 & \mathbb{V}_{p,1}(s) - \mathbb{V}_{p,1}^N(s) & \mathbb{V}_{p,2}(s) - \mathbb{V}_{p,2}^N(s) \\
         0 & 
         0 & 0
     \end{bmatrix} \right\|_{\mathcal{B}(X_{e,\tau,\mathrm{con}})} \\
     &\leq \|\dot{U}_1(s) - \Pi^{\leq N}\dot{U}_1(s)\|_{1,\tau,\mathrm{con}} + \|\dot{U}_2(s) - \Pi^{\leq N}\dot{U}_2(s)\|_{1,\tau,\mathrm{con}} + Z_{\infty,2}^{[s]} + Z_{\infty,3}^{[s]} + Z_{\infty,4}^{[s]}
 \end{align}
 where the last step followed from the estimate performed in lemma \ref{lem : Z1 periodic}. The first two terms can be bounded directly. This leads to the $Z_{\infty,1}^{[s]}$ bound.
 Note that $A_{c}(s)$ is a polynomial of order $N_c$ with respect to $s$. Also, $DG_{c}^N(\overline{W}(s))$ is a polynomial of order $7N_c$ with respect to $s$, so we use that many coefficients to represent the product. The rest of the proof follows from Lemma \ref{lem : Z1 periodic}.
 \end{proof}
 \subsection{Constructive existence proofs of branches of periodic solutions}
 In this section, we present our proofs of existence (and local uniqueness) of branches of periodic solutions to the 1D Thomas model. 
 \begin{theorem}[\bf Branch of Periodic Solutions in Thomas]\label{th : branch of periodic solution in thomas}
Let $\nu = 0.1764, \nu_1 = 8, \nu_2 = 1, \nu_3 = 0.28, \nu_4 = 21$. Also choose $\tau = 1.0$ and $d = 5$. Then there exists a unique solution to \eqref{eq : psuedo arclength system}. Moreover, this solution corresponds to a branch of periodic solutions in \eqref{eq:thomas}.
\end{theorem}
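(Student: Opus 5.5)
The plan is to apply Theorem \ref{th: radii polynomial continuation} to the augmented pseudo-arclength map $F_c$ in \eqref{eq : psuedo arclength system}, following the construction of Section 4.1 of \cite{dominic_sh_periodic}.

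\textbf{Approximate branch and inverse.} Starting from the periodic approximation used in Theorem \ref{th : periodic solution in thomas}, I run numerical pseudo-arclength continuation in $\nu_5$ past the conjectured fold of \cite{thomas_1d}. I sample the approximate zeros of $F_c$ at the $N_c+1$ Chebyshev nodes of $[-1,1]$ and interpolate them to obtain $\overline{\mathbf{W}}(s)=(\overline{\nu_5}(s),\overline{\mathbf{U}}(s))$ and the tangent $\dot{\mathbf{W}}(s)$. From these I form $\overline{\Psi}_p(s)$ and $\overline{\Phi}_p(s)$, together with an FFT-based approximate inverse $\overline{\Phi}_{p,\mathrm{inv}}(s)$ computed at the nodes. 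At each node I also compute the matrix $A_c^N\approx(\mathscr{P}^{\leq N}DF_c(\overline{\mathbf{W}})\mathscr{P}^{\leq N})^{-1}$ and interpolate it into a Chebyshev polynomial $A_c^N(s)$. The tail of $A_c(s)$ is $L_p^{-1}$ on the modes $n>N$, which is uniformly bounded by $\mathcal{L}_\infty$, as shown above.

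\textbf{Bounds.} I use interval arithmetic (via \cite{ThomasProofs.jl}) to evaluate $Y_0^{[s]}$ from Lemma \ref{lem : bound Y_0 branches}, $Z_1^{[s]}$ from Lemma \ref{lem : Z1 branches}, and $Z_2^{[s]}(R)$ from Lemma \ref{lem : Z2 branches}, at a chosen $R$. Every Chebyshev product has to be computed with the size stated in those lemmas ($5N_c,9N_c,15N_c,\dots$), so that it is exact and no aliasing occurs. Before these bounds are valid I must check two hypotheses rigorously:
\begin{itemize}
\item $\|1-\overline{\Phi}_p\overline{\Phi}_{p,\mathrm{inv}}\|_{1,\tau,\mathrm{con}}<1$ and the corresponding condition for the squared term, needed for Lemma \ref{lem : phi bound stronger};
\item $\|1-\overline{\Phi}_p\overline{\Phi}_{p,\mathrm{inv}}\|+R\|\overline{\Phi}_{p,\mathrm{inv}}\|<1$, needed for Lemma \ref{lem : ball phi bound}.
\end{itemize}
These also guarantee that the denominator $1+u+\nu_2u^2$ does not vanish along the branch, which is already automatic here since $\nu_2=1>1/4$. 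I then verify $Z_1^{[s]}<1$ and $2Y_0^{[s]}Z_2^{[s]}\le(1-Z_1^{[s]})^2$, and pick $r$ in the admissible interval with $r<R$. Theorem \ref{th: radii polynomial continuation} then gives a unique smooth $s\mapsto\widetilde{\mathbf{W}}(s)$ with $F_c(\widetilde{\mathbf{W}}(s))=0$.

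\textbf{Interpretation.} The second component of $F_c$ vanishing means $F_p(\widetilde{\nu_5}(s),\widetilde{\mathbf{U}}(s))=0$. Since $\tau=1$ and $\widetilde{\mathbf{U}}(s)\in\ell^1_e$, the cosine series converges to a continuous even $2d$-periodic function. Applying $L_p^{-1}$ to the equation and bootstrapping shows this function is smooth and a classical stationary solution. Undoing the change of variables $U=U_1$, $V=\nu U_1-U_2$ then yields a $C^\infty$ branch of periodic solutions of \eqref{eq:thomas}, parameterized by $\widetilde{\nu_5}(s)$, that passes through the fold.

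\textbf{Main obstacle.} I expect the hardest part to be making $Z_1^{[s]}<1$ uniformly in $s$ near the fold. There the near-singularity of $DF_p$ inflates $\|A_c^N(s)\|$, and the Chebyshev interpolation error of the inverse adds to $Z_{1,1}^{[s]}$. With $\tau=1$ and the $Z_{\infty}^{[s]}$ terms, the losses also compound. My first remedy is to increase $N_c$ and $N$. If that is not enough, I will split $[-1,1]$ into subintervals, each with its own Chebyshev expansion, and glue the pieces using local uniqueness on the overlaps.
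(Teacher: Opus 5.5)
Your overall strategy matches the paper's: validate \eqref{eq : psuedo arclength system} with Theorem \ref{th: radii polynomial continuation}, using Lemmas \ref{lem : bound Y_0 branches}, \ref{lem : Z1 branches} and \ref{lem : Z2 branches} evaluated in interval arithmetic. The weak point is the step you treat as a fallback, which in the paper is the actual proof. The branch is validated as ten separate Chebyshev segments $s_1,\dots,s_{10}$, each with its own $N_0$, $N$, $N_c\in\{16,31,63,127\}$ and $R$. Consecutive segments are connected by the explicit, checkable condition \eqref{continuity condition}, namely $\|\overline{W}_i(-1)-\overline{W}_{i+1}(1)\|_{X_\tau}+r^{[i]}_{\min}\le r^{[i+1]}_{\max}$. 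This gives $\overline{B_{r^{[i]}_{\min}}(\overline{W}_i(-1))}\subset\overline{B_{r^{[i+1]}_{\max}}(\overline{W}_{i+1}(1))}$ and hence, by uniqueness, $\widetilde{W}_i(-1)=\widetilde{W}_{i+1}(1)$.

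Your ``local uniqueness on the overlaps'' is not yet such an argument, and as phrased it can fail. Uniqueness in Theorem \ref{th: radii polynomial continuation} concerns zeros of the augmented map at a fixed $s$. The phase condition in \eqref{eq : psuedo arclength system} is built from each segment's own $\overline{\mathbf{U}}$ and $\dot{\mathbf{U}}$. So two overlapping segments compared at interior points solve different augmented equations, and can legitimately have different (nearby) zeros of $F_p$ in the same ball; uniqueness does not identify them or place them on one curve. You should compare at junction points, build consecutive segments so that the augmented problems agree there (same base point and tangent), and then verify an inequality of the form \eqref{continuity condition}.

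Two smaller corrections. First, the fold is not what endangers $Z_1^{[s]}<1$. The pseudo-arclength augmentation exists precisely so that $DF_c$ stays invertible at a regular fold even though $DF_p$ does not, so $\|A_c^N(s)\|$ need not blow up there. Indeed, the paper's conjectured fold lies in segment $s_2$, which has $Z_1^{[s]}\approx 0.142$. What makes splitting necessary in practice is presumably the length of the branch ($\nu_5$ ranges over roughly $[67.52,77.2]$) relative to $N_c$, together with the cost of the $15N_c$-size Chebyshev products.

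Second, the theorem yields a smooth curve parameterized by $s$, not by $\widetilde{\nu_5}$, and does not by itself show that the branch passes through a fold. The paper explicitly leaves the saddle-node unverified. Even a turning point of $\widetilde{\nu_5}$ needs an extra check, e.g.\ that on $s_2$ the dip of $\overline{\nu_5}$ below its endpoint values exceeds $2r$.

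Your regularity paragraph (bootstrapping through $L_p^{-1}$ and undoing $V=\nu U_1-U_2$) is a correct step that the paper leaves implicit.
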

\begin{proof}
We perform the proof of the full branch in various segments. We denote each segment by $s_i$ for $i \in \{1,\dots,10\}$. In order to show that the branch is one continuous segment made up of the smaller segments, we use the argument from \cite{whitham_cadiot}. Suppose we prove segments $s_i$ and $s_{i+1}$ for all $r \in [r^{[i]}_{min},r^{[i]}_{max}]$. As $s \in [-1,1]$, we must show that
\begin{align}
    \overline{B_{r^{[i]}_{min}}(\overline{W}_i(-1))} \subset \overline{B_{r^{[i+1]}_{max}}(\overline{W}_{i+1}(1))}.
\end{align}
To do so, we derive an explicit condition. Observe that any element in $\overline{B_{r^{[i]}_{min}}(\overline{W}_i(-1))}$ can be written as $\overline{W}_i(-1) + \mathbf{H}$ for some $\mathbf{H} \in \overline{B_{r^{[i]}_{min}}(0)}$. So, notice that
{\small\begin{align}
    \|\overline{W}_i(-1) + \mathbf{H} - \overline{W}_{i+1}(1)\|_{X_{\tau}} \leq \|\overline{W}_i(-1) - \overline{W}_{i+1}(1)\|_{X_{\tau}} + \|\mathbf{H}\|_{X_{\tau}} 
    &\leq \|\overline{W}_i(-1) - \overline{W}_{i+1}(1)\|_{X_{\tau}} + r^{[i]}_{min}.
\end{align}}
Hence, if we can show 
\begin{align}
    \|\overline{W}_i(-1) - \overline{W}_{i+1}(1)\|_{X_{\tau}} + r^{[i]}_{min} \leq r^{[i+1]}_{max},\label{continuity condition}
\end{align}
then we have by uniqueness that we have continuity on the branch. We present the results in the following table. Note that values of $\nu_5$ start and $\nu_5$ end are approximate values.
\begin{table}[h!]
\centering
\small
\begin{tabular}{|c|c|c|c|c|c|c|c|c|c|}
\hline
~ & $\nu_5$ Start & $\nu_5$ End & $N_0$ & $N$ & $N_c$ & $R$ & $Y_0^{[s]}$ & $Z_2^{[s]}$ & $Z_1^{[s]}$ \\ \hline
$s_1$  & 67.59& 67.5272& 90& 80& 127& $9\times10^{-7}$&$1.31\times10^{-7}$ &$1.11\times10^6$ &0.2561\\ \hline
$s_2$  & 67.5272& 67.5247&100 &100 &63 &$1\times10^{-6}$ &$4.63\times10^{-8}$ &$9.333\times10^5$ &0.14221\\ \hline
$s_3$  &67.5247 &67.713 &120 &120 &63 & $7\times10^{-7}$&$2.51\times10^{-7}$ &$1.55\times10^6$ &0.1063\\ \hline
$s_4$  & 67.713& 68.0124& 110& 110& 127& $7\times10^{-7}$& $8.27\times10^{-8}$& $1.292\times10^6$& 0.13184\\ \hline
$s_5$  &68.0124 & 69.5133& 100& 90& 63& $7\times10^{-7}$& $4.555\times10^{-8}$& $2.902\times10^6$& 0.23503\\ \hline
$s_6$  &69.5133 &73.011 &160 & 160& 63& $7\times10^{-7}$&$5.93003\times10^{-8}$& $7.006\times10^6$& 0.0876\\ \hline
$s_7$  &73.011 &75.2063 & 150& 150& 63& $7\times10^{-7}$& $6.33\times10^{-8}$& $6.08\times10^6$&0.1092\\ \hline
$s_8$ &75.2063 &76.4& 110& 100& 31& $4\times10^{-7}$ &$8.47\times10^{-8}$& $3.2933\times10^6$ & 0.2236\\ \hline
$s_9$  & 76.4& 77.0071& 90& 90& 63& $5\times10^{-7}$&$6.74\times10^{-8}$ &$2.4575\times10^6$& 0.23357\\ \hline
$s_{10}$ & 77.0071& 77.196& 90& 90& 16& $5\times10^{-7}$& $1.13\times 10^{-7}$& $2.259503\times10^6$& 0.16872\\ \hline
\end{tabular}
\caption{Values of each segment's proof.}
\end{table}
We prove that each of these these values satisfy Theorem \ref{th: radii polynomial continuation} and the continuity condition \eqref{continuity condition}. 
\end{proof}
 \begin{figure}[H]
    \centering
    \begin{subfigure}[b]{0.3\textwidth}
        \centering
        \epsfig{figure=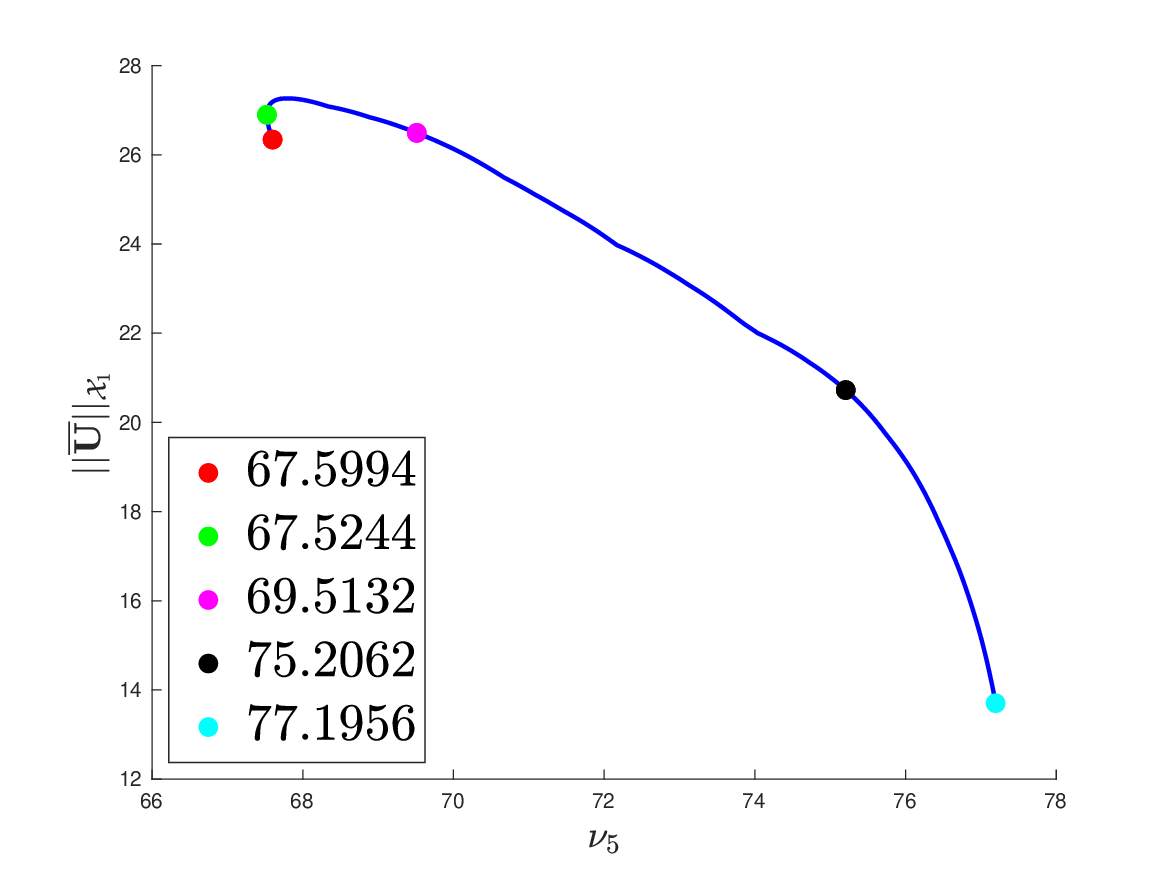, width=\textwidth}
        \caption{Approximation of a branch of periodic solutions in the Thomas model.}\label{fig : branch thomas}
    \end{subfigure}
    \hfill
    \begin{subfigure}[b]{0.3\textwidth}
    \centering
        \epsfig{figure=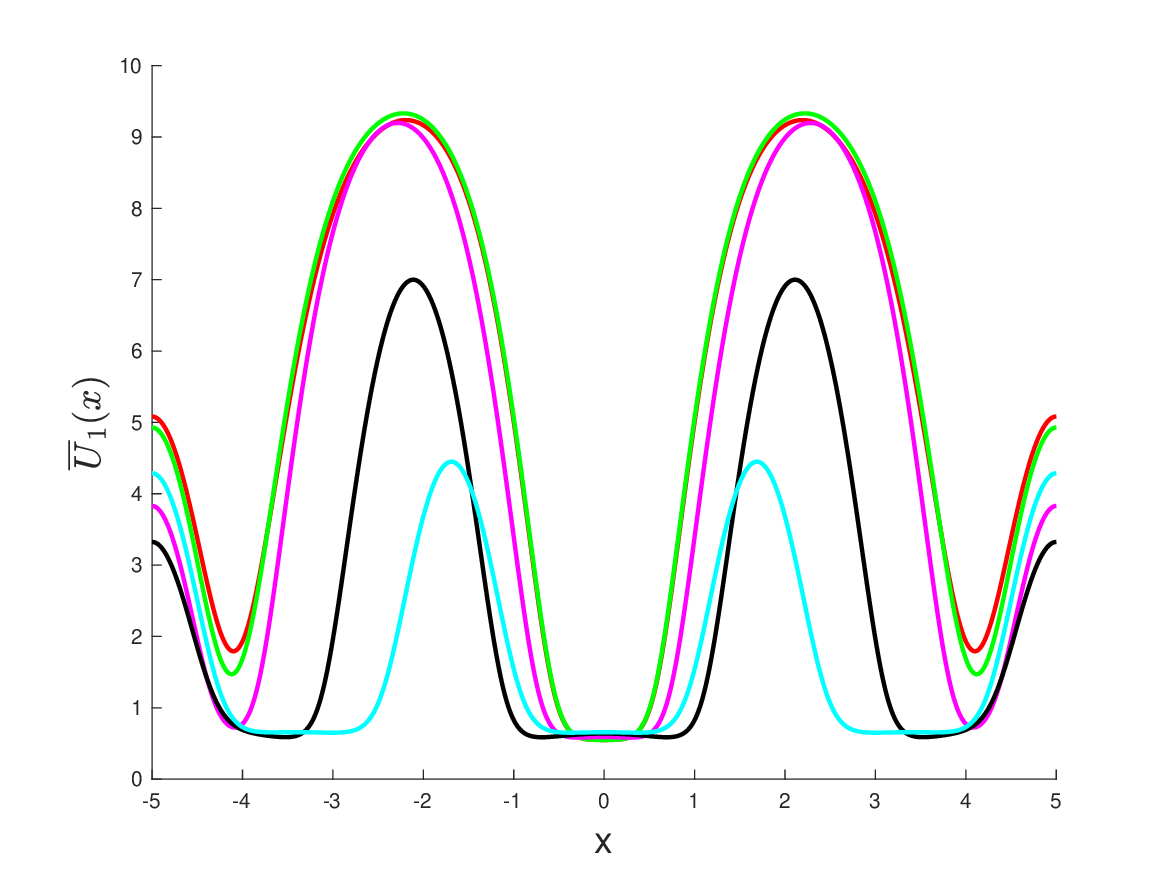, width=\textwidth}
        \caption{The value of the $\overline{U}_1$ plotted on $(-5,5)$ at the indicated points in Figure \ref{fig : branch thomas}}\label{fig : U1 thomas branch}
    \end{subfigure}
    \hfill
    \begin{subfigure}[b]{0.3\textwidth}
    \centering
        \epsfig{figure=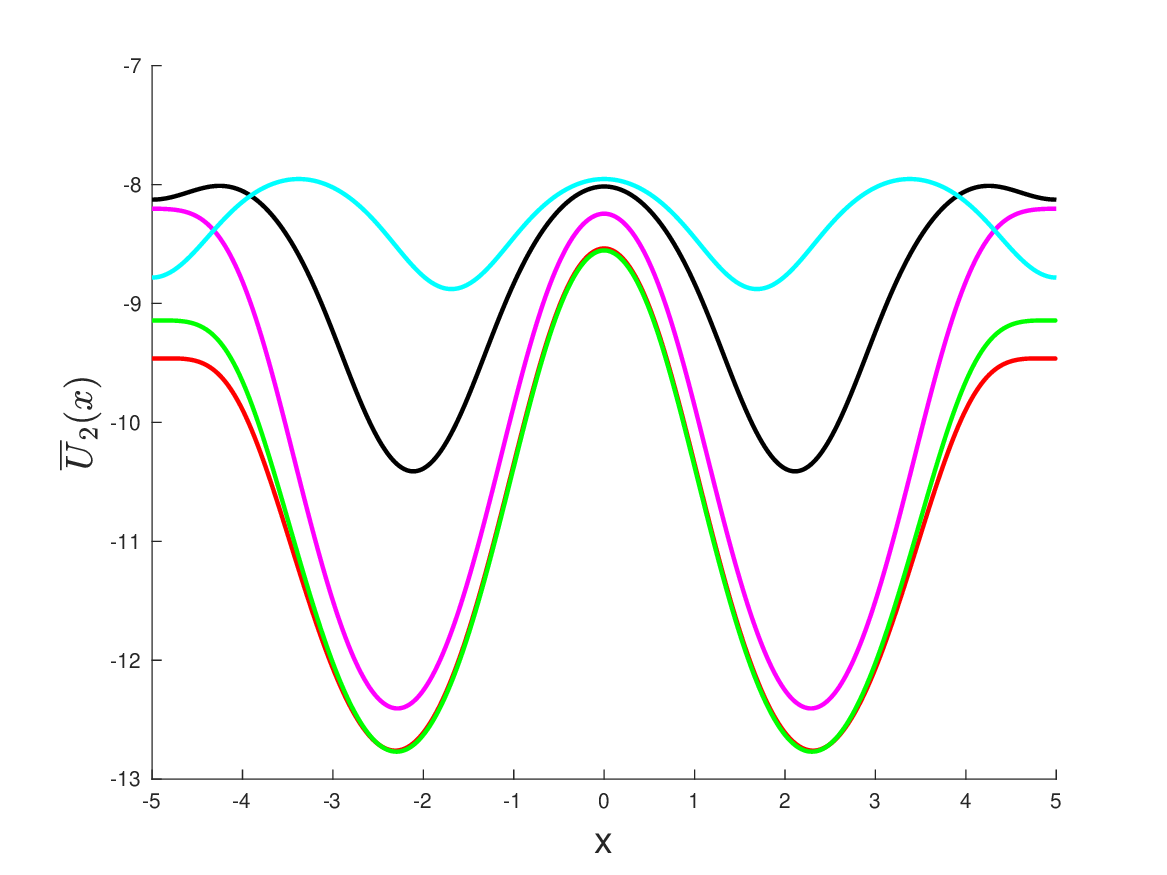, width=\textwidth}
        \caption{The value of $\overline{U}_2$ plotted on $(-5,5)$ at the indicated points in Figure \ref{fig : branch thomas}.}\label{fig : U2 Thomas branch}
    \end{subfigure}
\end{figure}
At the conclusion of the branch proven in Theorem \ref{th : branch of periodic solution in thomas}, we performed further continuation numerically. We conjecture that the branch terminates at the constant steady state solution, which is similar to what the authors of \cite{thomas_1d} obtained in their branch of periodic solutions. We also provide some additional information regarding the branch. The branch began at $\nu_5 \approx 67.59$ and ended at $\nu_5 \approx 77.196$. On $s_2$, we noticed the value of $\nu_5$ get as small as approximately $67.524$. This provides an approximate parameter range for $\nu_5$, which is the parameter we solved for. Note that all other parameters are held fixed, so our branch is valid for those specific choices.
We also conjecture that there is a saddle-node bifurcation occurring in the $s_2$ segment of the branch, but we do not verify this.
If one was interested in rigorously verifying the saddle-node bifurcation, similar approaches to those of \cite{jp_saddle_node1,jp_saddle_node,blanco_cadiot_fassler_saddle} could be used. If we are able to verify that our proof of the saddle-node is on the branch using some kind of continuity condition similar to \eqref{continuity condition}, then we could obtain a rigorous enclosure of a saddle-node bifurcation on the branch. Additionally, since saddle-nodes provide us with a relative extrema, we could verify that this extrema is a relative minimum. Hence, we could also identity a rigorous parameter range. We consider it a possible future project. Another potential future project would be to compute the (arc)length of the branch proven in Theorem \ref{th : branch of periodic solution in thomas}. Computing such a quantity is highly non-trivial. It would require us to compute 
\begin{align}
    \mathscr{L} \bydef \int_{-1}^1 \| \tilde{\mathbf{W}}'(s)\|_2^2 ds,\label{arclength}
\end{align}
where $'$ denotes the derivative with respect to $s$. The problematic is that one does not have access to $\tilde{\mathbf{W}}'(s)$. As a result, obtaining an exact arclength is difficult if not impossible. If one attempted to find an upper bound for the arclength, one approach could be to write
\begin{align}
    \int_{-1}^1 \| \tilde{\mathbf{W}}'(s)\|_2^2 ds &= \int_{-1}^1 \| \tilde{\mathbf{W}}'(s) - \overline{\mathbf{W}}'(s) + \overline{\mathbf{W}}'(s)\|_2^2 ds\\
    &\leq \int_{-1}^1 \|\overline{\mathbf{W}}'(s)\|_{2}^2 ds + \int_{-1}^1 \|\tilde{\mathbf{W}}'(s) - \overline{\mathbf{W}}'(s)\|_{2}^2 ds \\
    &\bydef \overline{\mathscr{L}} + \int_{-1}^1 \|\tilde{\mathbf{W}}'(s) - \overline{\mathbf{W}}'(s)\|_{2}^2 ds .
\end{align}
The term $\overline{\mathscr{L}}$ is now finite, and computable via rigorous numerics. What remains is control $\|\tilde{\mathbf{W}}'(s) - \overline{\mathbf{W}}'(s)\|_{2}$. The problematic is that we do not have access to such a relation with the derivatives. One would have to perform some additional estimates and attempt to find such a relation that would allow us to estimate \eqref{arclength}. Such a relation could be difficult to find. One possibility could be to change the norm on the Chebyshev coefficients in order to obtain a more convenient estimate. We consider this another future project.
\section{Conclusion}\label{sec : conclusion}
In this paper, we provided the necessary tools to rigorously compute localized patterns, periodic solutions, and branches of periodic solutions in the 1D Thomas model. For localized patterns, we adapted the methodology developed by the authors of \cite{unbounded_domain_cadiot} to the Thomas model to perform rigorous proofs. This required the estimates introduced by the authors of \cite{maxime_paper_continuation,olivier_kevin_paper} to handle the nonpolynomial nonlinearity. Furthermore, we improved one of these estimates in Lemma \ref{lem : phi bound stronger} which is a general adaptation that could be used beyond the Thomas model. We then performed the computer-assisted analysis to estimate the rigorous bounds, specific to the 1D Thomas model. We then moved to periodic solutions, where we again adapted previously developed works to the Thomas model in this case. This again required the estimates of \cite{maxime_paper_continuation,olivier_kevin_paper}, which were useful in handling the nonpolynomial nonlinearity directly. Finally, we used our work on periodic solutions along with the approach developed by the author of \cite{polynomial_chaos} to obtain a rigorous proof of a branch of periodic solutions. With all three of these cases taken care of, one has an approach for rigorously verifying some of the results obtained in \cite{thomas_1d}. This partially answers one of the questions asked the authors of \cite{thomas_1d}. Moreover, our approach is general in the sense that it goes beyond being applicable to the results of \cite{thomas_1d}. It could be applied generally to the Thomas model given a numerical approximation of a localized pattern, periodic solution, and branch of periodic solutions.
\par There are many aspects of this model that remain open for exploration. Firstly, our analysis in this paper was closely related to that of \cite{gs_cadiot_blanco} which studied the Gray-Scott model. In Section 5 of \cite{gs_cadiot_blanco}, the authors consider the case where the system can be reduced to a scalar PDE. A similar reduction exists for the Thomas model. Indeed, if $\lambda_5 = 
\nu_3 \nu - 1 = 0$, then we can reduce the system to a scalar PDE and consider this simpler case. While we briefly investigated this numerically, we were unable to obtain any approximate steady states (periodic or localized). Our Newton method would either diverge or converge to a constant solution. From the perspective of rigorous numerics, reducing to a scalar PDE would significantly reduce the computational complexity, and make any future studies in higher dimensions much more feasible. If solutions with $\lambda_5 = \nu \nu_3 - 1 = 0$ are conjectured numerically, and are of interest, it could be a future work to apply our techniques to them.
\par The authors of \cite{thomas_1d} also presented several bifurcations in the Thomas model. Furthermore, as discussed following Theorem \ref{th : branch of periodic solution in thomas}, we observed what we believe to be a saddle-node bifurcation along the branch proven in Theorem \ref{th : periodic solution in thomas}. While we did not attempt to prove saddle-node bifurcations, we refer to the work of \cite{jp_saddle_node1,jp_saddle_node}. We could have directly applied this approach to attempt to prove the existence of saddle-node bifurcations of periodic solutions. Additionally, finding a way to verify that this saddle-node bifurcation lies on the branch of periodic solutions proven would be of greater interest. It would further strengthen the result of Theorem \ref{th : branch of periodic solution in thomas} and reveal additional bifurcation structure of the branch itself. To take this a step further, in \cite{blanco_cadiot_fassler_saddle}, the authors presented a methodology for proving saddle-node bifurcations attached to localized patterns. This approach could be applied to \eqref{eq : gen_model} to obtain saddle-node bifurcations on unbounded domains. We consider it a future work to investigate the rigorous existence of these bifurcations. \par Proving other types of bifurcations, such as Hopf and cusp bifurcations are considered future work. The cusp bifurcation is of particular interest as, through the use of rigorous numerics via Newton-Kantorovich, it has yet to be applied to an infinite dimensional problem. Indeed, the authors of \cite{jp_cusps} provide the \emph{cusp map} compatible with a Newton-Kantorovich approach, but only perform a finite dimensional example. Applying such a map here would be the first time it has been done to a PDE and on unbounded domains. We consider this a future work of particular interest to the Thomas model. 
\par Finally, we make the same conclusion done by the authors of \cite{thomas_1d}, which is to consider a 2D investigation of localized patterns in the Thomas model. The approach of \cite{unbounded_domain_cadiot} has been applied in 2D in various works (cf. \cite{gs_cadiot_blanco,sh_cadiot,matthieu_lindsay}). We performed a brief numerical investigation in 2D using the approach discussed in Section \ref{sec : u0}, but were unsuccessful in identifying any numerical approximations for localized patterns. We would be interested in investigating the rigorous existence of 2D patterns in the Thomas model should they be found to exist, possiblity through collaboration with those currently studying this problem numerically.
\appendix
\renewcommand{\theequation}{A.\arabic{equation}}
\setcounter{equation}{0}
\section{Computation of \texorpdfstring{$\mathcal{Z}_2$}{Z2} and \texorpdfstring{$Z_2$}{Z2}}
In this appendix, we prove Lemmas \ref{lem : Z2 patterns} and \ref{lem : Z2 periodic}. We will begin with the case for localized patterns.
\subsection{Computation for Localized Solutions}\label{apen Z2}
In this appendix, we prove Lemma \ref{lem : Z2 patterns}. We first state some more formal definitions.
\begin{align}\label{def : qijs}
    &q_{0,1} \bydef \upsilon^{0,1}_{1,0} \overline{u}_1 + \upsilon^{0,1}_{2,0} \overline{u}_1^2 + \upsilon^{0,1}_{3,0} \overline{u}_1^3 + \upsilon^{0,1}_{4,0} \overline{u}_1^4 + \upsilon^{0,1}_{5,0}\overline{u}_1^5 + \upsilon^{0,1}_{6,0} \overline{u}_1^6, \\
    &q_{1,1} \bydef  \upsilon^{1,1}_{1,0}\overline{u}_1 + \upsilon^{1,1}_{2,0} \overline{u}_2^2 + \upsilon^{1,1}_{3,0} \overline{u}_1^3 + \upsilon^{1,1}_{4,0} \overline{u}_1^4 + \upsilon^{1,1}_{5,0} \overline{u}_1^5, \\
    &q_{1,0} \bydef \upsilon^{1,0}_{0,1}\overline{u}_2 + \upsilon^{1,0}_{1,0} \overline{u}_{1} + \upsilon^{1,0}_{1,1} \overline{u}_1 \overline{u}_2 + \upsilon^{1,0}_{2,0} \overline{u}_1^2 + \upsilon^{1,0}_{2,1} \overline{u}_1^2 \overline{u}_2 + \upsilon^{1,0}_{3,0} \overline{u}_1^3 + \upsilon^{1,0}_{3,1} \overline{u}_1^3\overline{u}_2\\
    &+\upsilon^{1,0}_{4,0}\overline{u}_1^4 + \upsilon^{1,0}_{4,1} \overline{u}_1^4 \overline{u}_2 + \upsilon^{1,0}_{5,0} \overline{u}_1^5 + \upsilon^{1,0}_{5,1} \overline{u}_1^5 \overline{u}_2, \\
    &q_{2,1} \bydef \upsilon^{2,1}_{1,0} \overline{u}_1 + \upsilon^{2,1}_{2,0} \overline{u}_1^2 + \upsilon^{2,1}_{3,0}\overline{u}_1^3 + \upsilon^{2,1}_{4,0} \overline{u}_1^4, \\
    &q_{2,0} \bydef \upsilon^{2,0}_{0,1} \overline{u}_2 + \upsilon^{2,0}_{1,0} \overline{u}_1 + \upsilon^{2,0}_{1,1} \overline{u}_1 \overline{u}_2 + \upsilon^{2,0}_{2,0} \overline{u}_1^2 + \upsilon^{2,0}_{2,1} \overline{u}_1^2\overline{u}_2 + \upsilon^{2,0}_{3,0} \overline{u}_1^3 + \upsilon^{2,0}_{3,1} \overline{u}_1^3 \overline{u}_2+\upsilon^{2,0}_{4,0} \overline{u}_1^4 + \upsilon^{2,0}_{4,1} \overline{u}_1^4 \overline{u}_2, \\
    &q_{3,0} \bydef \upsilon^{3,0}_{0,1} \overline{u}_2 + \upsilon^{3,0}_{1,0} \overline{u}_1 + \upsilon^{3,0}_{1,1} \overline{u}_1 \overline{u}_2 + \upsilon^{3,0}_{2,0} \overline{u}_1^2 + \upsilon^{3,0}_{2,1} \overline{u}_1^2 \overline{u}_2 + \upsilon^{3,0}_{3,0} \overline{u}_1^3 + \upsilon^{3,0}_{3,1} \overline{u}_1^3 \overline{u}_2, \\
    &q_{4,0} \bydef \upsilon^{4,0}_{1,0} \overline{u}_1 + \upsilon^{4,0}_{2,0} \overline{u}_1^2 + \upsilon^{4,0}_{0,1} \overline{u}_2 + \upsilon^{4,0}_{1,1} \overline{u}_1 \overline{u}_2 + \upsilon^{4,0}_{2,1} \overline{u}_1^2 \overline{u}_2
\end{align}
where
\begin{align}\label{def : rho and upsilon}
    &\mu_1 \bydef \nu_1 \nu_2, ~ \mu_2 \bydef \lambda_3 + 2\lambda_1 \lambda_3 \nu_2 - \lambda_4 \nu_2, ~ \mu_3 \bydef 2\lambda_1 \nu_1 \nu_2, ~ \mu_4 \bydef 2\lambda_1 \lambda_3 + 2\lambda_3 + 2\lambda_1^2 \lambda_3 \nu_2, \\
    &\mu_5 \bydef \lambda_1^2 \nu_1 \nu_2 - \nu_1, ~ \mu_6 \bydef \lambda_1 \lambda_4 + \lambda_4 + \lambda_1^2 \lambda_4 \nu_2
\end{align}
\begin{align}
    &\upsilon^{0,1}_{0,0} \bydef \mu_5 + 2\lambda_1 \mu_5 + \lambda_1^2 \mu_5 + 2\lambda_1^2 \nu_2 \mu_5 + 2\lambda_1^3 \nu_2 \mu_5 + \lambda_1^4 \nu_2^2 \mu_5 \\
    &\upsilon^{0,1}_{1,0} \bydef \mu_3 + 2\lambda_1 \mu_3 + \lambda_1^2 \mu_3 + 2\lambda_1^2 \nu_2 \mu_3 + 2\lambda_1^3 \nu_2 \mu_3 + \lambda_1^4 \nu_2^2 \mu_3 + 2\mu_5 + 2\lambda_1 \mu_5 + 4\lambda_1 \nu_2 \mu_5 \\
    &+ 6\lambda_1^2 \nu_2 \mu_5 + 4\lambda_1^3 \nu_2^2 \mu_5 \\
    &\upsilon^{0,1}_{2,0} \bydef \mu_1 + 2\lambda_1 \mu_1 + \lambda_1^2 \mu_1 + 2\lambda_1^2 \nu_2 \mu_1 + 2\lambda_1^3 \nu_2 \mu_1 + 2\mu_3 + 2\lambda_1 \mu_3 + 4\lambda_1 \nu_2 \mu_3 + 6\lambda_1^2 \nu_2 \mu_3 \\
    &+ 4\lambda_1^3 \nu_2^2 \mu_3 + \mu_5 + 2\nu_2 \mu_5 + 6\lambda_1 \nu_2 \mu_5 + 6\lambda_1^2 \nu_2^2 \mu_5 \\
    &\upsilon^{0,1}_{3,0} \bydef 2\mu_1 + 2\lambda_1 \mu_1 + 4\lambda_1 \nu_2 \mu_1 + 6 \lambda_1^2 \nu_2 \mu_1 + 4\lambda_1^3 \nu_2^2 \mu_1 + \mu_3 + 2\nu_2 \mu_3 + 6\lambda_1 \nu_2 \mu_3 + 6\lambda_1^2 \nu_2^2 \mu_3 \\
    &+ 2\nu_2 \mu_5 + 4\lambda_1 \nu_2^2 \mu_5 \\
    &\upsilon^{0,1}_{4,0} \bydef \mu_1 + 2\nu_2 \mu_1 + 6\lambda_1 \nu_2 \mu_1 + 6\lambda_1^2 \nu_2^2 \mu_1 + 2\nu_2 \mu_3 + 4\lambda_1 \nu_2^2 \mu_3 + \nu_2^2 \mu_5 \\
    &\upsilon^{0,1}_{5,0} \bydef 2\nu_2 \mu_1 + 4\lambda_1 \nu_2^2 \mu_1 + \nu_2^2 \mu_3 \\
    &\upsilon^{0,1}_{6,0} \bydef \nu_2^2 \mu_1
\end{align}
for the $\mathbb{h}_2$ term, 
\begin{align}
    &\upsilon^{1,1}_{0,0} \bydef \mu_3 + 2\lambda_1 \mu_3 + \lambda_1^2 \mu_3 + 2\lambda_1^2 \nu_2 \mu_3 + 2\lambda_1^2 \nu_2 \mu_3 + 2\lambda_1^3 \nu_2 \mu_3 + \lambda_1^4 \nu_2^2 \mu_3 \\
    &\upsilon^{1,1}_{1,0} \bydef 2\mu_1 + 4\lambda_1 \mu_1 + 2\lambda_1^2 \mu_1 + 4\lambda_1^2 \nu_2 \mu_1 + 4\lambda_1^2 \nu_2 \mu_1 + 4\lambda_1^3 \nu_2 \mu_1 + 2\lambda_1^4 \nu_2^2 \mu_1 + 2\mu_3 + 2\lambda_1 \mu_3 + 4\lambda_1 \nu_2 \mu_3 \\
    &+ 6\lambda_1^2 \nu_2 \mu_3 + 4\lambda_1^3 \nu_2^2 \mu_3 \\
    &\upsilon^{1,1}_{2,0} \bydef 4\mu_1 + 4\lambda_1 \mu_1 + 8\lambda_1 \nu_2 \mu_1 + 12\lambda_1^2 \nu_2 \mu_1 + 8\lambda_1^3 \nu_2^2 \mu_1 + \mu_3 + 2\nu_2 \mu_3 + 6\lambda_1 \nu_2 \mu_3 + 6\lambda_1^2 \nu_2^2 \mu_3 \\
    &\upsilon^{1,1}_{3,0} \bydef 2\mu_1 + 4\nu_2 \mu_1 + 12\lambda_1 \nu_2 \mu_1 + 12\lambda_1^2 \nu_2^2 \mu_1 + 2\nu_2 \mu_3 + 4\lambda_1 \nu_2^2 \mu_3 \\
    &\upsilon^{1,1}_{4,0} \bydef 4\nu_2 \mu_1 + 8\lambda_1 \nu_2^2 \mu_1 + \nu_2^2 \mu_3 \\
    &\upsilon^{1,1}_{5,0} \bydef 2\nu_2^2 \mu_1
\end{align}
for the $\mathbb{h}_1\mathbb{h}_2$ term,
{\footnotesize\begin{align}
    &\upsilon^{1,0}_{0,0} \bydef \mu_4 + 2\lambda_1 \mu_4 + \lambda_1^2 \mu_4 + 2\lambda_1^2 \nu_2 \mu_4 + 2\lambda_1^3 \nu_2 \mu_4 + \lambda_1^4 \nu_2^2 \mu_4 - 2\mu_6 - 2\lambda_1 \mu_6 - 4\lambda_1 \nu_2 \mu_6 - 6\lambda_1^2 \nu_2 \mu_6 - 4\lambda_1^3 \nu_2^2 \mu_6 \\
    &\upsilon^{1,0}_{0,1} \bydef 2\lambda_1 \mu_3 + \lambda_1^2 \mu_3 + 2\lambda_1^2 \nu_2 \mu_3 + 2\lambda_1^3 \nu_2 \mu_3 + \lambda_1^4 \nu_2^2 \mu_3 - 2\mu_5 - 2\lambda_1 \mu_5 - 4\lambda_1 \nu_2 \mu_5 - 6\lambda_1^2 \nu_2 \mu_5 - 4\lambda_1^3 \nu_2^2 \mu_5 \\
    &\upsilon^{1,0}_{1,0} \bydef 2\mu_2 + 4\lambda_1 \mu_2 + 2\lambda_1^2 \mu_2 + 4\lambda_1^2 \nu_2 \mu_2 + 4\lambda_1^3 \nu_2 \mu_2 + 2\lambda_1^4 \nu_2^2 \mu_2 - 2\mu_6 - 4\nu_2 \mu_6 - 12\lambda_1 \nu_2 \mu_6 - 12\lambda_1^2 \nu_2^2 \mu_6 \\
    &\upsilon^{1,0}_{1,1} \bydef 2\mu_1 + 4\lambda_1 \mu_1 + 2\lambda_1^2 \mu_1 + 4\lambda_1^2 \nu_2 \mu_1 + 4\lambda_1^3 \nu_2 \mu_1 + 2\lambda_1^4 \nu_2^2 \mu_1 - 2\mu_5 - 4\nu_2 \mu_5 - 12\lambda_1 \nu_2 \mu_5 - 12\lambda_1^2 \nu_2^2 \mu_5 \\
    &\upsilon^{1,0}_{2,0} \bydef 2\mu_2 + 2\lambda_1 \mu_2 + 4\lambda_1 \nu_2 \mu_2 + 6\lambda_1^2 \nu_2 \mu_2 + 4\lambda_1^3 \nu_2^2 \mu_2 - \mu_4 - 2\nu_2 \mu_4 - 6\lambda_1 \nu_2 \mu_4 - 6\lambda_1^2 \nu_2^2 \mu_4 - 6\nu_2 \mu_6 - 12\lambda_1 \nu_2^2 \mu_6 \\
    &\upsilon^{1,0}_{2,1} \bydef 2\mu_1 + 2\lambda_1 \mu_1 + 4\lambda_1 \nu_2 \mu_1 + 6\lambda_1^2 \nu_2 \mu_1 + 4\lambda_1^3 \nu_2^2 \mu_1 - \mu_3 - 2\nu_2 \mu_3 - 6\lambda_1 \nu_2 \mu_3 - 6\lambda_1^2 \nu_2^2 \mu_3 - 6\nu_2 \mu_5 - 12\lambda_1 \nu_2^2 \mu_5 \\
    &\upsilon^{1,0}_{3,0} \bydef -4\nu_2 \mu_4 - 8\lambda_1 \nu_2^2 \mu_4 - 4\nu_2^2 \mu_6 \\
    &\upsilon^{1,0}_{3,1} \bydef -4\nu_2 \mu_3 - 8\lambda_1 \nu_2^2 \mu_3 - 4\nu_2^2 \mu_5 \\
    &\upsilon^{1,0}_{4,0} \bydef -2\nu_2 \mu_2 - 4\lambda_1 \nu_2^2 \mu_2 - 3\nu_2^2 \mu_4 \\
    &\upsilon^{1,0}_{4,1} \bydef -2\nu_2 \mu_1 - 4\lambda_1 \nu_2^2 \mu_1 - 3\nu_2^2 \mu_3 \\
    &\upsilon^{1,0}_{5,0} \bydef =2\nu_2^2 \mu_2 \\
    &\upsilon^{1,0}_{5,1} \bydef -2\nu_2^2 \mu_1
\end{align}}
for the $\mathbb{h}_1$ term,
\begin{align}
    &\upsilon^{2,1}_{0,0} \bydef \mu_1 + 2\lambda_1 \mu_1 + \lambda_1^2 \mu_1 + 2\lambda_1^2 \nu_2 \mu_1 + 2\lambda_1^3 \nu_2 \mu_1 + \lambda_1^4 \nu_2^2 \mu_1 \\
    &\upsilon^{2,1}_{1,0} \bydef 2\mu_1 + 2\lambda_1 \mu_1 + 4\lambda_1 \nu_2 \mu_1 + 6\lambda_1^2 \nu_2 \mu_1 + 4\lambda_1^3 \nu_2^2 \mu_1 \\
    &\upsilon^{2,1}_{2,0} \bydef \mu_1 2\nu_2 \mu_1 + 6\lambda_1 \nu_2 \mu_1 + 6\lambda_1^2 \nu_2^2 \mu_1 \\
    &\upsilon^{2,1}_{3,0} \bydef 2\nu_2 \mu_1 + 4\lambda_1 \nu_2^2 \mu_1 \\
    &\upsilon^{2,1}_{4,0} \bydef \nu_2^2 \mu_1 
\end{align}
for the $\mathbb{h}_1^2\mathbb{h}_2$ term,
\begin{align}
    &\upsilon^{2,0}_{0,0} \bydef \mu_2 + 2\lambda_1 \mu_2 + \lambda_1^2 \mu_2 + 2\lambda_1^2 \nu_2 \mu_2 + 2\lambda_1^3 \nu_2 \mu_2 + \lambda_1^4 \nu_2^2 \mu_2 - \mu_6 - 2\nu_2 \mu_6 - 6\lambda_1 \nu_2 \mu_6 - 6\lambda_1^2 \nu_2^2 \mu_6 \\
    &\upsilon^{2,0}_{0,1} \bydef \mu_1 + 2\lambda_1 \mu_1 + \lambda_1^2 \mu_1 + 2\lambda_1^2 \nu_2 \mu_1 + 2\lambda_1^3 \nu_2 \mu_1 + \lambda_1^4 \nu_2^2\mu_1 - \mu_5 - 2\nu_2 \mu_5 - 6\lambda_1 \nu_2 \mu_5 - 6\lambda_1^2 \nu_2^2 \mu_5 \\
    &\upsilon^{2,0}_{1,0} \bydef 2\mu_2 + 2\lambda_1 \mu_2 + 4\lambda_1 \nu_2 \mu_2 + 6\lambda_1^2 \nu_2 \mu_2 + 4\lambda_1^3 \nu_2^2 \mu_2 -\mu_4 - 2\nu_2 \mu_5 - 6\lambda_1 \nu_2 \mu_4 \\
    &- 6\lambda_1^2 \nu_2^2 \mu_4 - 6\nu_2 \mu_6 - 12\lambda_1 \nu_2^2 \mu_6 \\
    &\upsilon^{2,0}_{1,1} \bydef 2\mu_1 + 2\lambda_1 \mu_1 + 4\lambda_1 \nu_2 \mu_1 + 6\lambda_1^2 \nu_2 \mu_1 + 4\lambda_1^3 \nu_2^2 \mu_1 - \mu_3 - 2\nu_2 \mu_3 - 6\lambda_1 \nu_2 \mu_3 \\
    &- 6\lambda_1^2 \nu_2^2 \mu_3 - 6\nu_2 \mu_5 - 12\lambda_1 \nu_2^2 \mu_5 \\
    &\upsilon^{2,0}_{2,0} \bydef -6\nu_2 \mu_4 - 12\lambda_1 \nu_2^2 \mu_4 - 6\nu_2^2 \mu_6 \\
    &\upsilon^{2,0}_{2,1} \bydef -6\nu_2 \mu_3 - 12\lambda_1 \nu_2^2 \mu_3 - 6\nu_2^2 \mu_5 \\
    &\upsilon^{2,0}_{3,0} \bydef -4\nu_2 \mu_2 -8\lambda_1 \nu_2^2 \mu_2 - 6\nu_2^2 \mu_4 \\
    &\upsilon^{2,0}_{3,1} \bydef -4\nu_2 \mu_1 - 8\lambda_1 \nu_2^2 \mu_1 -6\nu_2^2 \mu_3 \\
    &\upsilon^{2,0}_{4,0} \bydef -5\nu_2^2 \mu_2 \\
    &\upsilon^{2,0}_{4,1} \bydef -5\nu_2^2 \mu_1
\end{align}
for the $\mathbb{h}_1^2$ term,
\begin{align}
    &\upsilon^{3,0}_{0,0} \bydef -2\nu_2 \mu_6 - 4\lambda_1 \nu_2^2 \mu_6 - 2\nu_2 \\
    &\upsilon^{3,0}_{0,1} \bydef -2\nu_2 \mu_5 - 4\lambda_1 \nu_2^2 \mu_5 \\
    &\upsilon^{3,0}_{1,0} \bydef -2\nu_2 \mu_4 -4\lambda_1 \nu_2^2 \mu_4 - 4\nu_2^2 \mu_6 \\
    &\upsilon^{3,0}_{1,1} \bydef -2\nu_2 \mu_3 - 4\lambda_1 \nu_2^2 \mu_3 - 4\nu_2^2 \mu_5 \\
    &\upsilon^{3,0}_{2,0} \bydef -2\nu_2 \mu_2 - 4\lambda_1 \nu_2^2 \mu_2 - 4\nu_2^2 \mu_4 \\
    &\upsilon^{3,0}_{2,1} \bydef -2\nu_2 \mu_1 - 4\lambda_1 \nu_2^2 \mu_1 - 4\nu_2^2 \mu_3 \\
    &\upsilon^{3,0}_{3,0} \bydef -4\nu_2^2 \mu_2 \\
    &\upsilon^{3,0}_{3,1} \bydef -4\nu_2^2 \mu_1
\end{align}
for the $\mathbb{h}_1^3$ term, 
\begin{align}
    &\upsilon^{4,0}_{0,0} \bydef -\nu_2^2 \mu_6 \\
    &\upsilon^{4,0}_{1,0} \bydef - \nu_2^2 \mu_4 \\
    &\upsilon^{4,0}_{2,0} \bydef -\nu_2^2 \mu_2 \\
    &\upsilon^{4,0}_{0,1} \bydef -\nu_2^2 \mu_5 \\
    &\upsilon^{4,0}_{1,1} \bydef -\nu_2^2 \mu_3 \\
    &\upsilon^{4,0}_{2,1} \bydef -\nu_2^2 \mu_1
\end{align}
for the $\mathbb{h}_1^4$ term.
\begin{proof}
 First, we introduce some notation. 
\begin{align}\label{def : wjs}
&w_1(u_1) \bydef \mathbb{1}_{\om} + u_1 + \lambda_1 \mathbb{1}_{\om} + \nu_2 (u_1 + \lambda_1 \mathbb{1}_{\om})^2 \\
&D_{u_1} w_1(u_1) = \mathbb{1}_{\om} + 2\nu_2 (u_1+\lambda_1) \\
&w_2(\mathbf{u}) \bydef \lambda_3 u_1^2 + \lambda_4 u_1 - \nu_1 u_1 u_2 - \nu_1\lambda_1 u_2, \\
& D_{u_1} w_2(\mathbf{u}) = 2\lambda_3 u_1 + \lambda_4 \mathbb{1}_{\om} - \nu_1\lambda_1 u_2, \\
&D_{u_2} w_2(\mathbf{u}) \bydef -\nu_1 u_1 - \nu_1\lambda_1 \mathbb{1}_{\om}.
\end{align}
Then, observe that 
\begin{align}
    &\mathbb{g}(\mathbf{u}) \bydef -\frac{w_2(\mathbf{u})}{w_1(u_1)} - \lambda_6 u_1 - \lambda_7 u_2, \\
    &D_{u_1}\mathbb{g}(\mathbf{u}) \bydef -\frac{\mathbb{w}_1(u_1)D_{u_1} \mathbb{w}_2(\mathbf{u}) - \mathbb{w}_2(\mathbf{u}) D_{u_1} \mathbb{w}_1(u_1)}{\mathbb{w}_1(u_1)^2} - \lambda_6 \mathbb{1}_{\om}, \\
    &D_{u_2} \mathbb{g}(\mathbf{u}) \bydef -\frac{D_{u_2}\mathbb{w}_2(\mathbf{u})}{\mathbb{w}_1(u_1)} - \lambda_7 \mathbb{1}_{\om}
\end{align}
where $D_{u_j} \mathbb{w}_j$ and $\mathbb{w}_j$ are the multiplication operators associated to $D_{u_j} w_j$ and $w_j$ respectively for each $j \in \{1,2\}$. We begin by expanding the desired expression.
\begin{align}
    \|\mathbb{A}(D\mathbb{F}(\mathbf{u}) - D\mathbb{F}(\overline{\mathbf{u}}))\|_{\mathcal{H}} &= \|\mathbb{B}(D\mathbb{F}(\mathbf{u}) - D\mathbb{F}(\overline{\mathbf{u}}))\|_{\mathcal{H},2} \\
    &= \left\| \mathbb{B} \begin{bmatrix}
        D_{u_1}\mathbb{g}(\mathbf{u}) - D_{u_1}\mathbb{g}(\overline{\mathbf{u}}) & D\mathbb{g}_{u_2}(\mathbf{u}) - D\mathbb{g}_{u_2}(\overline{\mathbf{u}}) \\
        0 & 0 
    \end{bmatrix}\right\|_{\mathcal{H},2} \\
    &=  \left\| \mathbb{B}_{11}\begin{bmatrix}
        D_{u_1}\mathbb{g}(\mathbf{u}) - D_{u_1}\mathbb{g}(\overline{\mathbf{u}}) & D\mathbb{g}_{u_2}(\mathbf{u}) - D\mathbb{g}_{u_2}(\overline{\mathbf{u}})
    \end{bmatrix}\right\|_{\mathcal{H},2}.
\end{align}
Next, let $v \bydef (v_1,v_2) \in \mathcal{H}, \|v\|_{\mathcal{H}} = 1$. Then, we have
{\small\begin{align}
    \|\mathbb{A}(D\mathbb{F}(\mathbf{u}) - D\mathbb{F}(\overline{\mathbf{u}}))\|_{\mathcal{H}} &= \left\| \mathbb{B}_{11}(D_{u_1}\mathbb{g}(\mathbf{u}) - D_{u_1}\mathbb{g}(\overline{\mathbf{u}}))v_1 + (D\mathbb{g}_{u_2}\mathbb{B}_{11}(\mathbf{u}) - D\mathbb{g}_{u_2}(\overline{\mathbf{u}}))v_2\right\|_{2} \\
    &\leq  \left( \|\mathbb{B}_{11}(D_{u_1}\mathbb{g}(\mathbf{u}) - D_{u_1}\mathbb{g}(\overline{\mathbf{u}}))v_1\|_{2} + \|\mathbb{B}_{11}(D\mathbb{g}_{u_2}(\mathbf{u}) - D\mathbb{g}_{u_2}(\overline{\mathbf{u}}))v_2\|_{2}\right).
\end{align}}
Let us now examine each component. We begin with the second.
{\footnotesize\begin{align}
    \left\|\mathbb{B}_{11}(D\mathbb{g}_{u_2}(\mathbf{u}) - D\mathbb{g}_{u_2}(\overline{\mathbf{u}}))v_2\right\|_{2} &= 
        \left\|-\mathbb{B}_{11}\left(\frac{D_{u_2} \mathbb{w}_2(\mathbf{u})}{\mathbb{w}_1(u_1)} - \frac{D_{u_2} \mathbb{w}_2(\overline{\mathbf{u}})}{\mathbb{w}_1(\overline{u}_1)}\right) v_2\right\|_{2} \\
        &= \left\|\mathbb{B}_{11} \frac{\mathbb{w}_1(\overline{u}_1) D_{u_2}\mathbb{w}_2( \mathbf{u}) - \mathbb{w}_1(u_1) D_{u_2} \mathbb{w}_2(\overline{\mathbf{u}})}{\mathbb{w}_1(u_1)\mathbb{w}_1(\overline{u}_1)}v_2\right\|_{2} \\
        &\leq \left\| \frac{\mathbb{1}_{\om}}{\mathbb{w}_1(u_1)\mathbb{w}_1(\overline{u}_1)}\right\|_{2} \|\mathbb{B}_{11}\left(\mathbb{w}_1(\overline{u}_1) D_{u_2}\mathbb{w}_2( \mathbf{u}) - \mathbb{w}_1(u_1) D_{u_2} \mathbb{w}_2(\overline{\mathbf{u}})\right) v_2\|_{2}\label{before introducing q in Z2}
        \end{align}}
Now, let $z \in L^2_e, \|z\|_{2} = 1$. Then, 
\begin{align}
    \left\| \frac{\mathbb{1}_{\om}}{\mathbb{w}_1(u_1)\mathbb{w}_1(\overline{u}_1)}\right\|_{2} &= \left\| \frac{z}{w_1(u_1)w_1(\overline{u}_1)}\right\|_{2} \leq \left\| \frac{1}{w_1(u_1)}\right\|_{\infty}\left\|\frac{1}{w_1(\overline{u}_1)}\right\|_{\infty} \|z\|_{2} \leq \kappa_0^2.\label{kappa0 in second Z2 component}
\end{align}
Now, let $\mathbf{u} = \overline{\mathbf{u}} + \mathbf{h}$ where $\mbf{h} = (h_1,h_2) \in \overline{B_r(0)}\subset \mathcal{H}_e$. Returning to \eqref{before introducing q in Z2}, we estimate
\begin{align}
    &\|\mathbb{B}_{11}\left(\mathbb{w}_1(\overline{u}_1) D_{u_2}\mathbb{w}_2(\mathbf{u}) - \mathbb{w}_1(u_1) D_{u_2} \mathbb{w}_2(\overline{\mathbf{u}})\right) v_2\|_{2} \\
    &=  \|\mathbb{B}_{11}\left(\mathbb{w}_1(\overline{u}_1) D_{u_2}\mathbb{w}_2(\overline{\mathbf{u}} + \mathbf{h}) - \mathbb{w}_1(\overline{u}_1+h_1) D_{u_2} \mathbb{w}_2(\overline{\mathbf{u}})\right) v_2\|_{2}\\
    &= \|\mathbb{B}_{11}\left( (\lambda_1 \nu_1 \nu_2 \mathbb{1}_{\om}+ \nu_1 \nu_2 \overline{\mathbb{u}}_1) \mathbb{h}_1^2 + ((-\nu_1 + \lambda_1^2 \nu_1 \nu_2)\mathbb{1}_{\om} + 2\lambda_1 \nu_1 \nu_2 \overline{\mathbb{u}}_1 + \nu_1 \nu_2 \overline{\mathbb{u}}_1^2) \mathbb{h}_1\right) v_2\|_{2} \\
    &= \|\mathbb{B}_{11}\left( \mathbb{q}_1 \mathbb{h}_1^2 + \mathbb{q}_2 \mathbb{h}_1\right) v_2\|_{2}
\end{align}
where $\mathbb{q}_1$ and $\mathbb{q}_2$ are the multiplication operators associated to $q_1$ and $q_2$ defined in \eqref{def : qjs}. Then, observe that
\begin{align}
    \|\mathbb{B}_{11}\left( \mathbb{q}_1 \mathbb{h}_1^2 + \mathbb{q}_2 \mathbb{h}_1\right) v_2\|_{2} &\leq \|\mathbb{B}_{11} \mathbb{q}_1 \mathbb{h}_1^2 v_2\|_{2} + \|\mathbb{B}_{11} \mathbb{q}_2 \mathbb{h}_1 v_2\|_{2} \\
    &\leq \|\mathbb{B}_{11} \mathbb{q}_1 \|_{2}\|h_1 v_2\|_{2} + \|\mathbb{B}_{11} \mathbb{q}_2\|_{2} \|h_1^2v_2\|_{2} \\
    &\leq \kappa_1 \|\mathbb{B}_{11} \mathbb{q}_1 \|_{2}r + \kappa_1 \|l^{-1}\|_{\mathcal{M}_2}\|\mathbb{B}_{11} \mathbb{q}_2\|_{2} r^2\label{before Z21 and Z22}
\end{align}
where we used Lemma \ref{lem : extra kappa estimates}. Then, we use similar steps to those done in Lemma 3.4 of \cite{sh_cadiot} to obtain
\begin{align}
    &\|\mathbb{B}_{11} \mathbb{q}_1\|_{2} \leq \max\{\lambda_1 \nu_1 \nu_2, \left(\|B_{11}^N Q_1^2 (B_{11}^N)^*\|_{2} + \|Q_1\|_{1}^2\right)^{\frac{1}{2}}\} \bydef \mathcal{Z}_{2,2} \\
    &\|\mathbb{B}_{11} \mathbb{q}_2\|_{2} \leq \max\{|-\nu_1 + \lambda_1^2 \nu_1 \nu_2|, \left(\|B_{11}^N Q_2^2 (B_{11}^N)^*\|_{2} + \|Q_2\|_{1}^2\right)^{\frac{1}{2}}\} \bydef \mathcal{Z}_{2,2}
\end{align}
We then return to \eqref{before Z21 and Z22} and obtain
\begin{align}
    \kappa_1 \|\mathbb{B}_{11} \mathbb{q}_1 \|_{2}r + \kappa_1 \|l^{-1}\|_{\mathcal{M}_2}\|\mathbb{B}_{11} \mathbb{q}_2\|_{2} r^2 &\leq \kappa_1 (\mathcal{Z}^{[1]}_{2} + \|l^{-1}\|_{\mathcal{M}_2} \mathcal{Z}^{[2]}_{2} r)r.\label{final second term to combine}
\end{align}
Finally, we combine \eqref{kappa0 in second Z2 component} and \eqref{final second term to combine} to estimate
\begin{align}
    \left\|\mathbb{B}_{11}(D\mathbb{g}_{u_2}(\mathbf{U}) - D\mathbb{g}_{u_2}(\overline{\mathbf{u}}))v_2\right\|_{2} &\leq \kappa_0^2 \kappa_1 (\mathcal{Z}^{[1]}_{2} + \|l^{-1}\|_{\mathcal{M}_2} \mathcal{Z}^{[2]}_{2} r)r.\label{second component approximated Z2}
\end{align}
We now turn our attention to the first component. 
{\footnotesize\begin{align}
    &\|\mathbb{B}_{11}(D_{u_1}\mathbb{g}(\mathbf{u}) - D_{u_1}\mathbb{g}(\overline{\mathbf{u}}))v_1\|_{2}\label{step with kappa04} \\
    &= \left\|-\mathbb{B}_{11}\left( \frac{\mathbb{w}_1(u_1)D_{u_1} \mathbb{w}_2(\mathbf{u}) - \mathbb{w}_2(\mathbf{u}) D_{u_1} \mathbb{w}_1(u_1)}{\mathbb{w}_1(u_1)^2} - \frac{\mathbb{w}_1(\overline{u}_1)D_{u_1} \mathbb{w}_2(\overline{\mathbf{u}}) - \mathbb{w}_2(\overline{\mathbf{u}}) D_{u_1} \mathbb{w}_1(\overline{u}_1)}{\mathbb{w}_1(\overline{u}_1)^2}\right)v_1\right\|_{2} 
    \\
    &=  \left\|\mathbb{B}_{11}\frac{\mathbb{w}_1(\overline{u}_1)^2(\mathbb{w}_1(u_1)D_{u_1} \mathbb{w}_2(\mathbf{u}) - \mathbb{w}_2(\mathbf{u}) D_{u_1} \mathbb{w}_1(u_1)) - \mathbb{w}_1(u_1)^2(\mathbb{w}_1(\overline{u}_1)D_{u_1} \mathbb{w}_2(\overline{\mathbf{u}}) - \mathbb{w}_2(\overline{\mathbf{u}}) D_{u_1} \mathbb{w}_1(\overline{u}_1))}{\mathbb{w}_1(u_1)^2\mathbb{w}_1(\overline{u}_1)^2}v_1\right\|_{2} \\
    &\leq \kappa_0^4 \|\mathbb{B}_{11}(\mathbb{w}_1(\overline{u}_1)^2(\mathbb{w}_1(u_1)D_{u_1} \mathbb{w}_2(\mathbf{u}) - \mathbb{w}_2(\mathbf{u}) D_{u_1} \mathbb{w}_1(u_1)) - \mathbb{w}_1(u_1)^2(\mathbb{w}_1(\overline{u}_1)D_{u_1} \mathbb{w}_2(\overline{\mathbf{u}}) - \mathbb{w}_2(\overline{\mathbf{u}}) D_{u_1} \mathbb{w}_1(\overline{u}_1))v_1\|_{2}
    \end{align}}
where we used similar steps to those done in \eqref{kappa0 in second Z2 component}. 
Now, observe that
\begin{align}
    &D_{u_1} \mathbb{g}(\mathbf{u}) = -\frac{\mu_1 u_1^2 u_2 + \mu_2 u_1^2 + \mu_3 u_1 u_2 + \mu_4 u_1 + \mu_5 u_2 + \mu_6}{(1 + u_1 + \lambda_1 + \nu_2 (u_1 + \lambda_1)^2)^2}.
\end{align}
Recalling again that $\mathbf{u} = \overline{\mathbf{u}} + \mathbf{h}$, we then obtain
\begin{align}
    &\mathbb{w}_1(\overline{u}_1)^2(\mathbb{w}_1(u_1)D_{u_1} \mathbb{w}_2(\mathbf{u}) - \mathbb{w}_2(\mathbf{u}) D_{u_1} \mathbb{w}_1(u_1)) - \mathbb{w}_1(u_1)^2(\mathbb{w}_1(\overline{u}_1)D_{u_1} \mathbb{w}_2(\overline{\mathbf{u}}) - \mathbb{w}_2(\overline{\mathbf{u}}) D_{u_1} \mathbb{w}_1(\overline{u}_1)) \\
    &= (\upsilon^{0,1}_{0,0} + \mathbb{q}_{0,1}) \mathbb{h}_2 + (\upsilon^{1,1}_{0,0} + \mathbb{q}_{1,1})\mathbb{h}_1 \mathbb{h}_2 + (\upsilon^{1,0}_{0,0} + \mathbb{q}_{1,0})\mathbb{h}_1 + (\upsilon^{2,1}_{0,0} + \mathbb{q}_{2,1})\mathbb{h}_1^2\mathbb{h}_2 + (\upsilon^{2,0}_{0,0} + \mathbb{q}_{2,0})\mathbb{h}_1^2 \\
    &+ (\upsilon^{3,0}_{0,0} + \mathbb{q}_{3,0})\mathbb{h}_1^3 + (\upsilon^{4,0}_{0,0} + \mathbb{q}_{4,0})\mathbb{h}_1^4
\end{align}
where $\mathbb{q}_{i,j}$ are defined as in \eqref{def : qijs} and $\upsilon^{i,j}_{0,0}$ are defined as in \eqref{def : rho and upsilon} for each $i,j \in \mathcal{Q}$ defined as in \eqref{def : set Q}. Then, we have
\begin{align}
    &\left\|\mathbb{B}_{11}\sum_{(i,j) \in \mathcal{Q}}(\upsilon^{i,j}_{0,0} + \mathbb{q}_{i,j}) \mathbb{h}_1^i\mathbb{h}_2^j\right\| \leq \kappa_1 \sum_{(i,j) \in \mathcal{Q}} \|\mathbb{B}_{11}(\upsilon^{i,j}_{0,0} + \mathbb{q}_{i,j})\|_{2}\|l^{-1}\|_{\mathcal{M}_2}^{i + j - 1}r^{i + j}
\end{align}
where we used Lemma \ref{lem : extra kappa estimates} to obtain the previous estimate. Then, again using Lemma 3.4 from \cite{sh_cadiot}, we obtain
\begin{align}
    \|\mathbb{B}_{11}(\upsilon^{i,j}_{0,0} + \mathbb{q}_{i,j})\|_{2} \leq \max\left\{|\upsilon^{i,j}_{0,0}|, \left(\|B_{11}^N Q_{i,j}^2 (B_{11}^N)^*\|_{2} + \|Q_{i,j}\|_{1}^2\right)^{\frac{1}{2}}\right\} \bydef \mathcal{Z}_{2,i,j}.\label{def : Z2ijs}
\end{align}
Hence, using \eqref{def : Z2ijs}, we return to \eqref{step with kappa04} and obtain
\begin{align}
   \|\mathbb{B}_{11}(D_{u_1}\mathbb{g}(\mathbf{U}) - D_{u_1}\mathbb{g}(\overline{\mathbf{u}}))v_1\|_{2} \leq \left(\kappa_0^4 \kappa_1\sum_{(i,j) \in \mathcal{Q}} \mathcal{Z}_{2,i,j} \|l^{-1}\|_{\mathcal{M}_2}^{i + j - 1} r^{i + j - 1}\right) r.\label{first component approximated Z2}
\end{align}
We finally combine \eqref{second component approximated Z2} and \eqref{first component approximated Z2} to obtain the final result
{\footnotesize\begin{align}
    \|\mathbb{A}(D\mathbb{F}(\mathbf{h}+\overline{\mathbf{u}}) - D\mathbb{F}(\overline{\mathbf{u}}))\|_{\mathcal{H}} &\leq \kappa_0^2 \kappa_1 \left(\mathcal{Z}_{2,1} + \|l^{-1}\|_{\mathcal{M}_2} \mathcal{Z}_{2,2} r + \kappa_0^2 \sum_{(i,j) \in \mathcal{Q}} \mathcal{Z}_{2,i,j} \|l^{-1}\|_{\mathcal{M}_2}^{i + j - 1} r^{i + j - 1}\right) r \bydef \mathcal{Z}_2(r) r
\end{align}}
as desired.
\end{proof}
\subsection{Computation for Periodic Solutions}\label{apen : Z2 periodic}
In this appendix, we prove Lemma \ref{lem : Z2 periodic}. 
\begin{proof}
 Firstly, notice that
 \begin{align}
     \|A_pD^2F_p(\mathbf{U})\|_{\mathcal{B}(\ell^1_{e,\tau},\mathcal{B}(\ell^1_{e,\tau}))} &\leq \|A_p\|_{\mathcal{B}(\ell^1_{e,\tau})} \|D^2F_p(\mathbf{U})\|_{\mathcal{B}(\ell^1_{e,\tau},\mathcal{B}(\ell^1_{e,\tau}))} \\
     &\leq (\|A_p^N\|_{\mathcal{B}(\ell^1_{e,\tau})} + \mathcal{L}_{\infty}) \|D^2F_p(\mathbf{U})\|_{\mathcal{B}(\ell^1_{e,\tau},\mathcal{B}(\ell^1_{e,\tau}))}.
 \end{align}
Now, let $\mathbf{W} \bydef (W_1,W_2), \mathbf{V} \bydef (V_1,V_2) \in \ell^1_{e,\tau}$ such that $\|\mathbf{W}\|_{1,\tau} = \|\mathbf{V}\|_{1,\tau} = 1$. Then,
{\scriptsize\begin{align}
    &\|D^2F_p(\mathbf{U})\|_{\mathcal{B}(\ell^1_{e,\tau},\mathcal{B}(\ell^1_{e,\tau}))}
    \\
    &\leq  2\nu_1 \left\|\frac{\nu_2^2 U_1^3 *U_2 + \nu \nu_2 U_1^3 + 3\nu \nu_2 U_1^2 - 3\nu_2 U_1 *U_2 - \nu - U_2}{(1 + U_1 + \nu_2 U_1^2)^3} *W_1*V_1\right\|_{1,\tau} + \nu_1 \left\|\frac{1 - \nu_2 U_1^2}{(1 + U_1 + \nu_2 U_1^2)^2} *W_2*V_1\right\|_{1,\tau} \\
    &+ \nu_1 \left\| \frac{1 - \nu_2 U_1^2}{(1 + U_1 + \nu_2 U_1^2)^2} *W_1*V_2\right\|_{1,\tau}.\label{split Z2 second deriv}
\end{align}}
Let us now focus on the second and third terms in \eqref{split Z2 second deriv}. Firstly, we define $\Phi_p \bydef 1 + U_1 + \nu_2 U_1^2$. Then, using the fact that $\ell^1_{e,\tau}$ is a Banach algebra, we get (for $(k,j) = (2,1), (1,2)$)
{\footnotesize\begin{align}
    \left\| \frac{1 - \nu_2 U_1^2}{(1 + U_1 + \nu_2 U_1^2)^2} *W_k*V_j\right\|_{1,\tau} = \|(1 - \nu_2 U_1^2) *\Phi_p^{-2} *H_k *V_j\|_{1,\tau} &\leq \|1 - \nu_2 U_1^2\|_{1,\tau} \|\Phi_p^{-2}\|_{1,\tau}\|W_k\|_{1,\tau} \|V_j\|_{1,\tau}  \\
    &\leq \|1 - \nu_2 U_1^2\|_{1,\tau} \|\Phi_p^{-1}\|_{1,\tau}^2.
\end{align}}
We now use Lemma \ref{lem : ball phi bound} and the fact that $\mathbf{U} \bydef \overline{\mathbf{U}} + \mathbf{H}$ for some $\mathbf{H} \bydef (H_1,H_2) \in B_R(\overline{U})$ (hence $\|\mathbf{H}\|_{1,\tau} \leq R$) to get
{\scriptsize\begin{align}
    \underset{\mathbf{U} \in B_R(\overline{\mathbf{U}})}{\sup} \|1 - \nu_2 U_1^2\|_{1,\tau} \|\Phi_p^{-1}\|_{1,\tau}^2 &\leq \|1 - \nu_2 (\overline{U}_1 + H_1)^2\|_{1,\tau} \left(\frac{\|\overline{\Phi}_{p,\mathrm{inv}}\|_{1,\tau}}{1 - \|1-\overline{\Phi}_p*\overline{\Phi}_{p,\mathrm{inv}}\|_{1,\tau} - R \|\overline{\Phi}_{p,\mathrm{inv}}\|_{1,\tau}}\right)^2 \\
    &= \|1 - \nu_2 \overline{U}_1^2 - 2\nu_2 \overline{U}_1 *H_1 - \nu_2 H_1^2\|_{1,\tau}\frac{\|\overline{\Phi}_{p,\mathrm{inv}}\|_{1,\tau}^2}{\left(1 - \|1-\overline{\Phi}_p*\overline{\Phi}_{p,\mathrm{inv}}\|_{1,\tau} - R \|\overline{\Phi}_{p,\mathrm{inv}}\|_{1,\tau}\right)^2} \\
    &\leq \frac{(\|1 - \nu_2 \overline{U}_1^2\|_{1,\tau} + 2\nu_2 \|\overline{U}_1\|_{1,\tau} R + \nu_2 R^2)\|\overline{\Phi}_{p,\mathrm{inv}}\|_{1,\tau}^2}{\left(1 - \|1-\overline{\Phi}_p*\overline{\Phi}_{p,\mathrm{inv}}\|_{1,\tau} - R \|\overline{\Phi}_{p,\mathrm{inv}}\|_{1,\tau}\right)^2} \bydef Z_{2,3}.
\end{align}}
For the first term in \eqref{split Z2 second deriv}, observe that
\begin{align}
&\left\|\frac{\nu_2^2 U_1^3 *U_2 + \nu \nu_2 U_1^3 + 3\nu \nu_2 U_1^2 - 3\nu_2 U_1 *U_2 - \nu - U_2}{(1 + U_1 + \nu_2 U_1^2)^3} *W_1*V_1\right\|_{1,\tau} \\
&= \|(\nu_2^2 U_1^3 *U_2 + \nu \nu_2 U_1^3 + 3\nu \nu_2 U_1^2 - 3\nu_2 U_1 *U_2 - \nu - U_2) \Phi_p^{-3} *W_1 *V_1\|_{1,\tau} \\
&\leq \|\nu_2^2 U_1^3 *U_2 + \nu \nu_2 U_1^3 + 3\nu \nu_2 U_1^2 - 3\nu_2 U_1 *U_2 - \nu - U_2\|_{1,\tau} \|\Phi_p^{-3}\|_{1,\tau} \|W_1\|_{1,\tau} \|V_1\|_{1,\tau}\\
&\leq \|\nu_2^2 U_1^3 *U_2 + \nu \nu_2 U_1^3 + 3\nu \nu_2 U_1^2 - 3\nu_2 U_1 *U_2 - \nu - U_2\|_{1,\tau} \|\Phi_p^{-1}\|_{1,\tau}^3\\
&\leq  \frac{\|\nu_2^2 U_1^3 *U_2 + \nu \nu_2 U_1^3 + 3\nu \nu_2 U_1^2 - 3\nu_2 U_1 *U_2 - \nu - U_2\|_{1,\tau}\|\overline{\Phi}_{p,\mathrm{inv}}\|_{1,\tau}^3}{\left(1- \|1-\overline{\Phi}_p*\overline{\Phi}_{p,\mathrm{inv}}\|_{1,\tau} - R\|\overline{\Phi}_{p,\mathrm{inv}}\|_{1,\tau}\right)^3} \\
&\bydef \|\nu_2^2 U_1^3 *U_2 + \nu \nu_2 U_1^3 + 3\nu \nu_2 U_1^2 - 3\nu_2 U_1 *U_2 - \nu - U_2\|_{1,\tau} Z_{2,2}\label{introduce Z211}
\end{align}
where we again used Lemma \ref{lem : ball phi bound}. Now, we again expand in terms of $\overline{\mathbf{U}}$ and $\mathbf{H}$ to estimate
{\scriptsize\begin{align}
    &\|\nu_2^2 U_1^3 *U_2 + \nu \nu_2 U_1^3 + 3\nu \nu_2 U_1^2 - 3\nu_2 U_1* U_2 - \nu - U_2\|_{1,\tau} \\
    &=\|\nu_2^2 (\overline{U}_1+H_1)^3 *(\overline{U}_2+H_2) + \nu \nu_2 (\overline{U}_1+H_1)^3 + 3\nu \nu_2 (\overline{U}_1+H_1)^2 - 3\nu_2 (\overline{U}_1+H_1)* (\overline{U}_2+H_2) - \nu - \overline{U}_2-H_2\|_{1,\tau} \\
    &\leq \|\nu_2^2 \overline{U}_1^3* \overline{U}_2 + \nu \nu_2 \overline{U}_1^3 + 3\nu \nu_2 \overline{U}_1^2 - 3\nu_2 \overline{U}_1 *\overline{U}_2 - \nu - \overline{U}_2\|_{1,\tau} + \|(-1-3\nu_2 \overline{U}_1+\nu_2^2 \overline{U}_1^3) *H_2 \|_{1,\tau} \\
    &+  \|(6\nu\nu_2 \overline{U}_1 + 3 \nu \nu_2 \overline{U}_1^2 - 3\nu_2 \overline{U}_2 + 3\nu_2^2 \overline{U}_1^2 *\overline{U}_2)*H_1\|_{1,\tau}+ \|(-3\nu_2 + 3\nu_2^2 \overline{U}_1^2)*H_1 *H_2\|_{1,\tau} \\
    & + \|(3\nu \nu_2 + 3\nu \nu_2 \overline{U}_1 + 3\nu_2^2 \overline{U}_1)*H_1^2\|_{1,\tau} + \|3\nu_2^2 \overline{U}_1 *H_1^2 *H_2\|_{1,\tau} + \|(\nu \nu_2 + \nu_2^2 \overline{U}_2)*H_1^3\|_{1,\tau} + \nu_2^2 \| H_1^3 *H_2\|_{1,\tau} \\
    &\leq \|\nu_2^2 \overline{U}_1^3* \overline{U}_2 + \nu \nu_2 \overline{U}_1^3 + 3\nu \nu_2 \overline{U}_1^2 - 3\nu_2 \overline{U}_1* \overline{U}_2 - \nu - \overline{U}_2\|_{1,\tau} \\
    &+(\|-1-3\nu_2 \overline{U}_1+\nu_2^2 \overline{U}_1^3\|_{1,\tau} + \|6\nu\nu_2 \overline{U}_1 + 3 \nu \nu_2 \overline{U}_1^2 - 3\nu_2 \overline{U}_2 + 3\nu_2^2 \overline{U}_1^2* \overline{U}_2\|_{1,\tau})R \\
    &+ (\|-3\nu_2 + 3\nu_2^2 \overline{U}_1^2\|_{1,\tau} + \|3\nu \nu_2 + 3\nu \nu_2 \overline{U}_1 + 3\nu_2^2 \overline{U}_1\|_{1,\tau}) R^2 + (\|3\nu_2^2 \overline{U}_1 \|_{1,\tau} + \|\nu \nu_2 + \nu_2^2 \overline{U}_2\|_{1,\tau})R^3 + \nu_2^2 R^4 \\
    &\bydef Z_{2,1}.
\end{align}}
Returning to \eqref{introduce Z211}, we get
\begin{align}
    \left\|\frac{\nu_2^2 U_1^3 *U_2 + \nu \nu_2 U_1^3 + 3\nu \nu_2 U_1^2 - 3\nu_2 U_1 *U_2 - \nu - U_2}{(1 + U_1 + \nu_2 U_1^2)^3} *W_1*V_1\right\|_{1,\tau} \leq Z_{2,1} Z_{2,2}.
\end{align}
Therefore, we obtain
\begin{align}
\|D^2F_p(\mathbf{U})\|_{\mathcal{B}(\ell^1_{e,\tau},\mathcal{B}(\ell^1_{e,\tau}))} \leq 2\nu_1(Z_{2,1} Z_{2,2} + Z_{2,3}).
\end{align}
Multiplying by $\|A_p^N\|_{\mathcal{B}(\ell^1_{e.\tau})} + \mathcal{L}_{\infty}$, we obtain the final result.
As we now have all the needed quantities, we conclude the proof.
\end{proof}
\bibliographystyle{abbrv}
\bibliography{biblio}

@article{thomas_1d,
    author = {{Al Saadi}, Fahad and Worthy, Annette and Alriheieli, Haifaa and Nelson, Mark},
    title = "{Localised spatial structures in the {T}homas model}",
    journal = {Mathematics and Computers in Simulation},
    volume = {194},
    pages = {141-158},
    year = {2022}
}

@article{gen_rd,
    author = {W. Hao and C. Xue},
    title = {Spatial pattern formation in reaction–diffusion models: a computational approach},
    journal = {Journal of Mathematical Biology},
    year = {2020},
    volume = {80},
    pages = {521-543},
    doi = {https://doi.org/10.1007/s00285-019-01462-0}
}

@book{murray_book1,
  title={Mathematical Biology},
  author={J. D. Murray},
  volume={1},
  year={2003},
  publisher={Springer Nature Link}
}

@book{murray_book2,
  title={Mathematical Biology II},
  author={J. D. Murray},
  volume={2},
  year={2003},
  publisher={Springer Nature Link}
}

@article{original_thomas,
    author = {L. Thomas and A.J. Gibson and S.K. Bhattacharyya },
    title = {Spatial and temporal variations of the atmospheric sodium layer observed with a steerable laser radar},
    journal = {Nature},
    year = {1976},
    volume = {263},
    pages = {115-116},
    doi = {https://doi.org/10.1038/263115a0}
}

@article{thomas_1,
    author = {J.F. Hervagault and A. Friboulet and J.P. Kernevez and D. Thomas },
    title = {Spatiotemporal behaviors in immobilized enzyme systems},
    journal = {Biochimie},
    year = {1980},
    volume = {62},
    pages = {367-373},
    doi = {https://doi.org/10.1016/S0300-9084(80)80167-2}
}

@article{thomas_2,
    author = {J.P. Kernevez and E. Doedel and M.C. Duban and J.F. Hervagault and G. Joly and D. Thomas },
    title = {Spatio-{T}emporal {O}rganization in {I}mmobilized {E}nzyme {S}ystems},
    journal = {Rhythms in Biology and Other Fields of Application},
    year = {1983},
    volume = {49},
    pages = {50-75}
}

@article{thomas_3,
    author = {J.P. Kernevez and G. Joly and M. Duban and B. Bunow and D. Thomas },
    title = {Hysteresis, oscillations, and pattern formation in realistic immobilized enzyme systems},
    journal = {Journal of Mathematical Biology},
    year = {1979},
    volume = {7},
    issue = {1},
    pages = {41-56},
    doi={10.1007/BF00276413}
}

@article{thomas_sanderwanner,
    author = {Evelyn Sander and Thomas Wanner },
    title = {Pattern formation in a nonlinear model for animal coats},
    journal = {Journal of Differential Equations},
    year = {2003},
    volume = {191},
    issue = {10},
    pages = {143-174},
    doi={https://doi.org/10.1016/S0022-0396(02)00156-0}
}

@article{murray_2,
    author = {J.D. Murray},
    title = {A Pre-pattern formation mechanism for animal coat markings},
    journal = {Journal of Theoretical Biology},
    year = {1981},
    volume = {88},
    issue = {1},
    pages = {161-199},
    doi={https://doi.org/10.1016/0022-5193(81)90334-9}
}

@article{murray_1,
    author = {J.D. Murray},
    title = {On pattern formation mechanisms for lepidopteran wing patterns and mammalian coat markings},
    journal = {Philosophical Transactions B},
    year = {1981},
    volume = {295},
    issue = {1078},
    pages = {473-496},
    doi={https://doi.org/10.1098/rstb.1981.0155}
}

@article{automatic_differentiation,
    author = {Jean-Philippe Lessard and J.D. Mireles-James and J. Ransford},
    title = {Automatic differentiation for {F}ourier series and the radii polynomial approach},
    journal = {Physica D: Nonlinear Phenomena},
    year = {2016},
    volume = {334},
    pages = {174-186},
    doi = {https://doi.org/10.1016/j.physd.2016.02.007}
}

@article{lindsay_suspensionbrdige,
    author = {van der Aalst, Lindsey and van den Berg, Jan Bouwe and Lessard, Jean-Philippe},
    title = {Periodic localised traveling waves in the two-dimensional suspension bridge equation},
    journal = {Nonlinearity},
    year = {2025},
    volume = {38},
    number ={7},
    pages = {075029},
    doi = {10.1088/1361-6544/ade5e5}
}

@article{JB_symmetries_1,
  title={{Rigorously Computing Symmetric Stationary Sates of the Ohta-Kawasaki Problem in Three Dimensions}},
  author={van den Berg, Jan-Bouwe and Williams, JF},
  journal={SIAM J. Math Anal.},
  year={2019},
  publisher={Society for Industrial and Applied Mathematics}
}

@article{suspension_bridge,
    author = {van den Berg, Jan Bouwe and Breden, Maxime and Lessard, Jean-Philippe and Murray, Maxime},
    title = {Continuation of homoclinic orbits in the suspension bridge equation: A computer-assisted proof},
    journal = {Journal of Differential Equations},
    year = {2018},
    volume = {264},
    issue = {5},
    pages = {3086-3130},
    doi = {https://doi.org/10.1016/j.jde.2017.11.011}
}

@article{chebyshev_parameterization,
    author = {van den Berg, Jan Bouwe and Deschenes, Andrea and Lessard, Jean-Philippe and Mireles-James, Jason},
    title = {Stationary coexistence of hexagons and rolls via rigorous computations},
    journal = {SIADS},
    year = {2015},
    volume = {2},
    pages = {942-979},
    doi = {10.1137/140984506}
}

@article{jp_saddle_node,
    author = {Lessard, Jean-Philippe and Sander, Evelyn and Wanner, Thomas},
    title = {Rigorous continuation of bifurcation points in the diblock copolymer equation},
    journal = {Journal of Computational Dynamics},
    year = {2017},
    volume = {4(1\&2)},
    pages = {71-118},
    doi = {10.3934/jcd.2017003}
}

@article{jp_saddle_node1,
    author = {Lessard, Jean-Philippe },
    title = {Rigorous verification of saddle-node bifurcations in {ODE}s},
    journal = {Indagationes Mathematicae},
    year = {2016},
    volume = {27},
    pages = {1013-1026},
    doi = {https://doi.org/10.1016/j.indag.2016.06.012}
}

@article{jp_cusps,
title = {Cusp bifurcations: Numerical detection via two-parameter continuation and computer-assisted proofs of existence},
journal = {Discrete and Continuous Dynamical Systems - B},
volume = {30},
number = {6},
pages = {2135-2158},
year = {2025},
issn = {1531-3492},
doi = {10.3934/dcdsb.2024181},
url = {https://www.aimsciences.org/article/id/675c0ed9d0b64a5d1d3497ef},
author = {Jean-Philippe Lessard and Alessandro Pugliese}
}

@article{period_kuramoto,
  title={A Posteriori Verification of Invariant Objects of Evolution Equations: Periodic Orbits in the {K}uramoto--{S}ivashinsky {PDE}},
  author={Gameiro, Marcio and Lessard, Jean-Philippe},
  journal={SIAM Journal on Applied Dynamical Systems},
  volume={16},
  number={1},
  pages={687--728},
  year={2017},
  publisher={SIAM}
}

@article {MR3633778,
    AUTHOR = {Figueras, Jordi-Llu\'{\i}s and de la Llave, Rafael},
     TITLE = {Numerical computations and computer assisted proofs of
              periodic orbits of the {K}uramoto-{S}ivashinsky equation},
   JOURNAL = {SIAM J. Appl. Dyn. Syst.},
  FJOURNAL = {SIAM Journal on Applied Dynamical Systems},
    VOLUME = {16},
      YEAR = {2017},
    NUMBER = {2},
     PAGES = {834--852},
   MRCLASS = {65G40 (35B32 35Q53 35R20 47J15 65H20)},
  MRNUMBER = {3633778},
MRREVIEWER = {Ferenc Agoston Bartha},
       DOI = {10.1137/16M1073790},
       URL = {https://doi-org.proxy3.library.mcgill.ca/10.1137/16M1073790},
}

@article{Sander_equilibrium,
title = {Equilibrium validation in models for pattern formation based on {S}obolev embeddings},
journal = {Discrete and Continuous Dynamical Systems - B},
volume = {26},number = {1},pages = {603-632},
year = {2021},
issn = {1531-3492},
doi = {10.3934/dcdsb.2020260},
url = {/article/id/0d611d64-31c0-4483-bce5-ab7986e7d976},
author = {Evelyn Sander and Thomas Wanner},
}

@article{gabriel_pfc,
  title={{M}icroscopic patterns in the 2{D} phase-field crystal model},
  author={Boissoniere, Gabriel M.L. and Choksi, Rustum and Lessard, Jean-Philippe},
  journal={Nonlinearity},
  year={2022},
volume={35},
pages={1500-1520},
  publisher={London Mathematical Society}
}

@article {MR4379799,
    AUTHOR = {Cyranka, Jacek and Lessard, Jean-Philippe},
     TITLE = {Validated forward integration scheme for parabolic {PDE}s via
              {C}hebyshev series},
   JOURNAL = {Commun. Nonlinear Sci. Numer. Simul.},
  FJOURNAL = {Communications in Nonlinear Science and Numerical Simulation},
    VOLUME = {109},
      YEAR = {2022},
     PAGES = {Paper No. 106304, 32},
      ISSN = {1007-5704},
   MRCLASS = {65M70 (65G20)},
  MRNUMBER = {4379799},
       DOI = {10.1016/j.cnsns.2022.106304},
       URL = {https://doi-org.proxy3.library.mcgill.ca/10.1016/j.cnsns.2022.106304},
}

@article {MR3392647,
    AUTHOR = {Breden, Maxime and Desvillettes, Laurent and Lessard,
              Jean-Philippe},
     TITLE = {Rigorous numerics for nonlinear operators with tridiagonal
              dominant linear part},
   JOURNAL = {Discrete Contin. Dyn. Syst.},
  FJOURNAL = {Discrete and Continuous Dynamical Systems. Series A},
    VOLUME = {35},
      YEAR = {2015},
    NUMBER = {10},
     PAGES = {4765--4789},
      ISSN = {1078-0947},
   MRCLASS = {47J05 (34B08 42A10 47J07 65L10)},
  MRNUMBER = {3392647},
MRREVIEWER = {Dharmendra Kumar Gupta},
       DOI = {10.3934/dcds.2015.35.4765},
       URL = {https://doi-org.proxy3.library.mcgill.ca/10.3934/dcds.2015.35.4765},
}

@article{jay_jp_gs,
  title={Rigorous numerics for symmetric connecting orbits: even homoclinics of the {G}ray-{S}cott equation},
  author={van den Berg, Jan Bouwe and Breden, Maxime and Lessard, Jean-Philippe and Mireles-James, Jason and Mischaikow, Konstantin},
  journal={SIAM Journal on Mathematical Analysis},
  volume={43},
  number={4},
  pages={1557--1594},
  year={2011},
  publisher={SIAM}
}

@article{cyranka2018construction,
  title={A construction of two different solutions to an elliptic system},
  author={Cyranka, Jacek and Mucha, Piotr Bogus{\l}aw},
  journal={Journal of Mathematical Analysis and Applications},
  volume={465},
  number={1},
  pages={500--530},
  year={2018},
  publisher={Elsevier}
}

@article{polynomial_chaos,
author = {Breden, Maxime},
title = {{A Posteriori Validation of Generalized Polynomial Chaos Expansions}},
journal = {SIAM Journal on Applied Dynamical Systems},
volume = {22},
number = {2},
pages = {765-801},
year = {2023},
doi = {10.1137/22M1493197},
URL = {https://doi.org/10.1137/22M1493197},
eprint = {  https://doi.org/10.1137/22M1493197}
}

@article{marschal,
  title={{From the Lagrange Triangle to the Figure Eight Choreography: Proof of Marchal's Conjecture}},
  author={Calleja, Renato and Garcia-Azpeitia, Carlos and Henot, Olivier and Lessard, Jean-Philippe and Mireles-James, Jason},
  journal={arXiv:2406.17564},
  year={2024},
  publisher={arXiv}
}

@article{dominic_sh_periodic,
  title={{Proving periodic solutions and branches in the 2D Swift Hohenberg PDE with hexagonal and triangular symmetry}},
  author={Blanco, Dominic},
  journal={arXiv:2602.12491},
  year={2026},
  publisher={arXiv}
}

@article{miguel_soliton,
  title={{Computer-Assisted Proofs of Gap Solitons in Bose-Einstein
Condensates}},
  author={Ayala, Miguel and  Garcia-Azpeitia, Carlos and Lessard, Jean-Philippe},
  journal={arXiv:2503.04701
},
  year={2026},
  publisher={arXiv}
}

@article{matthieu_lindsay,
  title={{Existence proofs of traveling wave solutions on an infinite strip for the suspension bridge equation and proof of orbital st}},
  author={van der Aalst, Lindsey and Cadiot, Matthieu},
  journal={arXiv:2509.16693},
  year={2026},
  publisher={arXiv}
}

@article{olivier_kevin_paper,
  title={{Global Continuation of Stable Periodic Orbits in Systems of Competing Predators}},
  author={Church, Kevin E.M. and Dai, Jia-Yuan and Henot, Olivier and Lappicy, Phillipo},
  journal={arXiv:2504.03058v2},
  year={2026},
  publisher={arXiv}
}

@article{maxime_paper_continuation,
title = {Computer-assisted proofs for the many steady states of a chemotaxis model with local sensing},
journal = {Physica D: Nonlinear Phenomena},
volume = {466},
pages = {134221},
year = {2024},
author = {M. Breden and M. Payan}}

@misc{maxime_general,
  title={{Computer-assisted proofs for differential equations and dynamical systems}},
  author={Breden, Maxime},
  year={2025},
}

@article{continuation_1,
  title={{Global bifurcation diagrams of steady states of systems of PDEs via rigorous numerics: a 3-component reaction-diffusion system}},
  author={Breden, Maxime and Lessard, Jean-Philippe and Vanicat, Matthieu},
  journal={Acta Applicandae Mathematicae},
volume = {128},
number = {1},
pages = {113-152},
  year={2013},
doi = {10.1007/s10440-013-9823-6},
URL = {https://doi.org/10.1007/s10440-013-9823-6}
}

@article{continuation_2,
author = {Day, Sarah and Lessard, Jean-Philippe and Mischaikow, Konstantin},
title = {{Validated Continuation for Equilibria of PDEs}},
journal = {SIAM Journal on Numerical Analysis},
volume = {45},
number = {4},
pages = {1398-1424},
year = {2007},
doi = {10.1137/050645968},
URL = {https://doi.org/10.1137/050645968},
eprint = {  https://doi.org/10.1137/050645968}
}

@article{continuation_3,
title = {{Continuation of homoclinic orbits in the suspension bridge equation: A computer-assisted proof}},
journal = {Journal of Differential Equations},
volume = {264},
number = {5},
pages = {3086-3130},
year = {2018},
doi = {https://doi.org/10.1016/j.jde.2017.11.011},
url = {https://www.sciencedirect.com/science/article/pii/S0022039617306010},
author = {van den Berg, Jan Bouwe and Breden, Maxime and Lessard, Jean-Philippe and Murray, Maxime}
}

@article{unbounded_domain_cadiot,
author = {Cadiot, Matthieu and Lessard, Jean-Philippe and Nave, Jean-Christophe},
title = {{Rigorous Computation of Solutions of Semilinear PDEs on Unbounded Domains via Spectral Methods}},
journal = {SIAM Journal on Applied Dynamical Systems},
volume = {23},
number = {3},
pages = {1966-2017},
year = {2024},
doi = {10.1137/23M1607507},
URL = {https://doi.org/10.1137/23M1607507},
eprint = {  https://doi.org/10.1137/23M1607507}
}

@article{blanco_cadiot_fassler_saddle,
  title={Proving the existence of localized patterns and saddle node bifurcations in 1{D} activator-inhibitor type models},
  author={Blanco, Dominic and  Cadiot, Matthieu and Fassler, Daniel},
  journal={arXiv:2509.17099},
  year={2025},
  publisher={arXiv}
}

@misc{breden2025solutionsdifferentialequationsfreudweighted,
      title={Solutions of differential equations in {F}reud-weighted {S}obolev spaces}, 
      author={Maxime Breden and Hugo Chu},
      year={2025},
      eprint={2501.13672},
      archivePrefix={arXiv},
      primaryClass={math.CA},
      url={https://arxiv.org/abs/2501.13672}, 
}

@article{breden2024constructiveproofssemilinearpdes,
      title={Constructive proofs for some semilinear {PDE}s on ${H}^2(e^{|x|^2/4},\mathbb{R}^d)$}, 
      author={Maxime Breden and Hugo Chu},
      year={2025},
      journal = {Numerische Mathematik},
      volueme={157},
      issue = {6},
      pages={2097-2143},
      doi={https://doi.org/10.1007/s00211-025-01504-4}
}

@misc{ThomasProofs.jl,
  author = {Dominic Blanco},
  title  = {Thomas{P}roofs.jl},
  URL    = {https://github.com/dominicblanco/ThomasProofs.jl},
  note = {\url{ https://https://github.com/dominicblanco/ThomasProofs.jl}},
  year   = {2026},
  doi    = {10.5281/zenodo.19487919}
}

@article{sh_cadiot,
title = {{Stationary non-radial localized patterns in the planar Swift-Hohenberg PDE: Constructive proofs of existence}},
journal = {Journal of Differential Equations},
volume = {414},
pages = {555-608},
year = {2025},
issn = {0022-0396},
doi = {https://doi.org/10.1016/j.jde.2024.09.015},
url = {https://www.sciencedirect.com/science/article/pii/S0022039624005941},
author = {Matthieu Cadiot and Jean-Philippe Lessard and Jean-Christophe Nave}
}

@article{gs_cadiot_blanco,
  title={{Localized stationary patterns in the 2D Gray Scott model: computer-assisted proofs of existence}},
  author={Cadiot, Matthieu and  Blanco, Dominic},
  journal={Nonlinearity},
  volume={38},
number={4},
pages={045016},
  year={2025},
  publisher={London Mathematical Society}
}

@article{symmetry_blanco_cadiot,
  title={Proving symmetry of localized solutions and application to dihedral patterns in the planar {S}wift-{H}ohenberg {PDE}},
  author={Blanco, Dominic and  Cadiot, Matthieu},
  journal={arXiv:2509.10375},
  year={2025},
  publisher={arXiv}
}

@article{whitham_cadiot,
  title={Constructive proofs of existence and stability of solitary waves in the {W}hitham
and capillary–gravity {W}hitham equations},
  author={Matthieu Cadiot},
  journal={Nonlinearity},
  volume={38},
number={3},
pages={035021},
  year={2025},
  publisher={London Mathematical Society}
}

@misc{julia_blanco_fassler,
  author = {Blanco, Dominic and Cadiot, Matthieu and Fassler, Daniel},
  title  = {ProofActivatorInhibitor.jl},
  URL    = {https://github.com/dominicblanco/LocalizedPatternsActivatorInhibitor.jl},
  note = {https://github.com/dominicblanco/LocalizedPatternsActivatorInhibitor.jl},
  year   = {2025},
  doi    = {10.5281/zenodo.17170735}
}

@mastersthesis{marco_thesis,
    author ={Marco Mignacca} ,
    title ={Rigorous proof of one-dimensional steady states in a neural field model} ,
    school ={McGill University} ,
    year ={2025} 
}

@article{olivier_radial,
  title={Constructive proofs for localized radial solutions of semilinear elliptic systems on {$\mathbb{R}^d$}},
  author={{van den Berg}, Jan Bouwe and Hénot, Olivier and Lessard, Jean-Philippe},
  journal={Nonlinearity},
  volume={36},
number={12},
pages={6476-6512},
  year={2023},
  publisher={London Mathematical Society}
}

@article {MR1976080,
    AUTHOR = {Cabr\'{e}, Xavier and Fontich, Ernest and de la Llave, Rafael},
     TITLE = {The parameterization method for invariant manifolds. {II}.
              {R}egularity with respect to parameters},
   JOURNAL = {Indiana Univ. Math. J.},
  FJOURNAL = {Indiana University Mathematics Journal},
    VOLUME = {52},
      YEAR = {2003},
    NUMBER = {2},
     PAGES = {329--360},
      ISSN = {0022-2518},
   MRCLASS = {37D10},
  MRNUMBER = {1976080},
MRREVIEWER = {Weishi Liu},
       DOI = {10.1512/iumj.2003.52.2407},
       URL = {https://doi-org.proxy3.library.mcgill.ca/10.1512/iumj.2003.52.2407},
}

@article {MR2177465,
    AUTHOR = {Cabr\'{e}, Xavier and Fontich, Ernest and de la Llave, Rafael},
     TITLE = {The parameterization method for invariant manifolds. {III}.
              {O}verview and applications},
   JOURNAL = {J. Differential Equations},
  FJOURNAL = {Journal of Differential Equations},
    VOLUME = {218},
      YEAR = {2005},
    NUMBER = {2},
     PAGES = {444--515},
      ISSN = {0022-0396},
   MRCLASS = {37D10 (37M99)},
  MRNUMBER = {2177465},
       DOI = {10.1016/j.jde.2004.12.003},
       URL = {https://doi-org.proxy3.library.mcgill.ca/10.1016/j.jde.2004.12.003},
}

@article {MR1976079,
    AUTHOR = {Cabr\'{e}, Xavier and Fontich, Ernest and de la Llave, Rafael},
     TITLE = {The parameterization method for invariant manifolds. {I}.
              {M}anifolds associated to non-resonant subspaces},
   JOURNAL = {Indiana Univ. Math. J.},
  FJOURNAL = {Indiana University Mathematics Journal},
    VOLUME = {52},
      YEAR = {2003},
    NUMBER = {2},
     PAGES = {283--328},
      ISSN = {0022-2518},
   MRCLASS = {37D10},
  MRNUMBER = {1976079},
MRREVIEWER = {Weishi Liu},
       DOI = {10.1512/iumj.2003.52.2245},
       URL = {https://doi-org.proxy3.library.mcgill.ca/10.1512/iumj.2003.52.2245},
}

@book{plum_numerical_verif,
  title={Numerical verification methods and computer-assisted proofs for partial differential equations},
  author={Nakao, Mitsuhiro T and Plum, Michael and Watanabe, Yoshitaka},
  year={2019},
  publisher={Springer}
}

@phdthesis{plum_thesis_navierstokes,
    author       = {Wunderlich, Jonathan Matthias},
    year         = {2022},
    title        = {Computer-assisted Existence Proofs for {N}avier-{S}tokes Equations on an Unbounded Strip with Obstacle},
    doi          = {10.5445/IR/1000150609},
    publisher    = {{Karlsruher Institut für Technologie (KIT)}},
    pagetotal    = {215},
    school       = {Karlsruher Institut für Technologie (KIT)},
    language     = {english}
}

@misc{julia_olivier,
author = {Olivier Hénot},
title = {RadiiPolynomial.jl},
year = {2022},
URL = {  https://github.com/OlivierHnt/RadiiPolynomial.jl
},
eprint = { https://doi.org/10.1137/141000671
},

 note = {\url{  https://github.com/OlivierHnt/RadiiPolynomial.jl}}
  
}

@misc{julia_interval,
author = {L. Benet and D.P. Sanders},
title = {IntervalArithmetic.jl},
year = {2022},
URL = { https://github.com/JuliaIntervals/IntervalArithmetic.jl
},
eprint = { https://doi.org/10.1137/141000671
},
  note = {\url{ https://github.com/JuliaIntervals/IntervalArithmetic.jl}}
}

@article{AlSaadi_unifying_framework,
    author = {{Al Saadi}, Fahad and Champneys, Alan and Verschueren, Nicolas},
    title = "{Localized patterns and semi-strong interaction, a unifying framework for reaction–diffusion systems}",
    journal = {IMA Journal of Applied Mathematics},
    volume = {86},
    number = {5},
    pages = {1031-1065},
    year = {2021}
}

@article{Champneys_Bistability,
title = {Bistability, wave pinning and localisation in natural reaction–diffusion systems},
journal = {Physica D: Nonlinear Phenomena},
volume = {416},
pages = {132735},
year = {2021},
author = {Alan Champneys and Fahad {Al Saadi} and Victor F. Breña–Medina and Verônica A. Grieneisen and Athanasius F.M. Marée and Nicolas Verschueren and Bert Wuyts}}

@article{schankenberg_og,
title = {Simple chemical reaction systems with limit cycle behaviour},
journal = {Journal of Theoretical Biology},
volume = {81},
number = {3},
pages = {389-400},
year = {1979},
issn = {0022-5193},
url = {https://www.sciencedirect.com/science/article/pii/0022519379900420},
author = {J. Schnakenberg},
}

@article{selkov_schankenberg_og,
author = {Sel'kov, E. E.},
title = {Self-Oscillations in Glycolysis 1. A Simple Kinetic Model},
journal = {European Journal of Biochemistry},
volume = {4},
number = {1},
pages = {79-86},
year = {1968}
}

@article{brusselator_og,
    author = {Prigogine, I. and Lefever, R.},
    title = {Symmetry Breaking Instabilities in Dissipative Systems. II},
    journal = {The Journal of Chemical Physics},
    year = {1968}
}

@article{root_hair_og,
author = {Payne, R and Grierson, C},
title = {A theoretical model for ROP localisation by auxin in Arabidopsis root hair cells},
journal = {PLoS ONE},
year = {2009}
}

@article{GrayScott1985,
  author    = {P. Gray and S. K. Scott},
  title     = {Sustained oscillations and other exotic patterns of behavior in isothermal reactions},
  journal   = {Journal of Physical Chemistry},
  volume    = {89},
  number    = {1},
  pages     = {22--32},
  year      = {1985},
  doi       = {10.1021/j100247a003}
}

@article{SITEUR201481,
title = {Beyond Turing: The response of patterned ecosystems to environmental change},
journal = {Ecological Complexity},
volume = {20},
pages = {81-96},
year = {2014},
issn = {1476-945X},
doi = {https://doi.org/10.1016/j.ecocom.2014.09.002},
url = {https://www.sciencedirect.com/science/article/pii/S1476945X14000944},
author = {Koen Siteur and Eric Siero and Maarten B. Eppinga and Jens D.M. Rademacher and Arjen Doelman and Max Rietkerk}
}

@article{ZELNIK201727,
title = {Desertification by front propagation?},
journal = {Journal of Theoretical Biology},
volume = {418},
pages = {27-35},
year = {2017},
issn = {0022-5193},
doi = {https://doi.org/10.1016/j.jtbi.2017.01.029},
url = {https://www.sciencedirect.com/science/article/pii/S0022519317300280},
author = {Yuval R. Zelnik and Hannes Uecker and Ulrike Feudel and Ehud Meron}}

@article{continue1,
  author    = {van den Berg, Jan-Bouwe and Lessard, Jean-Philippe and Mischaikow, Konstantin},
  title     = {Global smooth solution curves using rigorous branch following},
  journal   = {Mathematics of Computation},
  volume    = {79},
  number = {271},
  pages     = {1565--1584},
  year      = {2010}
}

@article{continue2,
  author    = {Sander, Evelyn and Wanner, Thomas},
  title     = {Equilibrium validation in models for pattern formation based on Sobolev embeddings},
  journal   = {Discrete and Continuous Dynamical Systems - B},
  volume    = {26},
  issue = {1},
  pages     = {603-632},
  year      = {2020},
  doi = {10.3934/dcdsb.2020260}
}

@article{continue3,
  author    = {Arioli, Gianni and Koch, Hans},
  title     = {{Computer-Assisted Methods for the Study of Stationary Solutions in Dissipative Systems, Applied to the Kuramoto-Sivashinski Equation}},
  journal   = {Archive for Rational Mechanics and Analysis},
  volume    = {197},
  issue = {3},
  pages     = {1033-1051},
  year      = {2010},
  doi = {10.1007/s00205-010-0309-7}
}

@article{olivier_embed,
  author    = {Olivier Henot},
  title     = {On polynomial forms of nonlinear functional differential equations},
  journal   = {Journal of Computational Dynamics},
  volume = {8},
  issue = {3},
  pages     = {307-323},
  year      = {2021},
  doi = {10.3934/jcd.2021013}
}

@article{KAM,
  author    = {Figueras, Jordi-Lluis and Haro, Alex and Luque, Alejandro},
  title     = {Rigorous computer assisted application of KAM theory: a modern approach},
  journal   = {Foundations of Computational Mathematics},
  volume = {17},
  issue = {5},
  pages     = {1123-1193},
  year      = {2017}
}

@article{nakao_sug3,
  author    = {Nakao, M.T. and Watanabe, Y. and Yamamoto, N. and Nishida, T. and Kim, M.-N.},
  title     = {{Computer Assisted Proofs
of Bifurcating Solutions for Nonlinear Heat Convection Problems}},
  journal   = {J. Sci. Comput. 43},
  pages     = {388--401},
  year      = {2010}
}

@article{arioli_sug2,
  author    = {G. Arioli},
  title     = {{Computer Assisted Proof of Branches of Stationary and Periodic Solutions, and
Hopf Bifurcations, for Dissipative PDEs}},
  journal   = {Commun. Nonlinear Sci. Numer. Simul. },
  volume     = {105},
  year      = {2022}
}
\end{document}